\documentclass[11pt,a4paper]{article}
\usepackage[utf8]{inputenc}
\usepackage[T1]{fontenc}
\usepackage[english]{babel}
\usepackage[a4paper,margin=1.08in]{geometry}
\usepackage{lmodern}
\usepackage{mathtools,amssymb,amsthm}
\usepackage{authblk}
\usepackage{graphicx}
\usepackage{caption}
\usepackage[dvipsnames]{xcolor}
\usepackage{csquotes}
\usepackage{comment}
\usepackage{tikz-cd}
\usepackage[
backend=biber,
style=alphabetic,
sorting=nyt
]{biblatex}
\usepackage[hidelinks]{hyperref}
\usepackage{varioref}
\usepackage[nameinlink,capitalise,noabbrev]{cleveref}
\hypersetup{
    linktocpage=true,
    pdftitle={A kernel calculus for solutions to Stein equations and their derivatives},
    pdfauthor={Ferdinand Rapin and Yvik Swan},
    pdfsubject={Stein equations, kernel representations, and Stein factors},
    pdfkeywords={Stein's method, Stein equation, Stein factors, density approach, kernel representation, Edgeworth expansion, Curie--Weiss model}
}
\graphicspath{{./}}
\allowdisplaybreaks[2]
\newtheorem{theorem}{Theorem}[section]
\newtheorem{proposition}[theorem]{Proposition}
\newtheorem{lemma}[theorem]{Lemma}
\newtheorem{corollary}[theorem]{Corollary}
\theoremstyle{definition}
\newtheorem{definition}[theorem]{Definition}
\newtheorem{example}[theorem]{Example}
\newtheorem{notation}[theorem]{Notation}
\newtheorem{assumption}[theorem]{Assumption}
\theoremstyle{remark}
\newtheorem{remark}[theorem]{Remark}
\numberwithin{equation}{section}

\DeclareMathOperator{\sgn}{\text{sgn}}
\DeclareMathOperator{\E}{\mathbb{E}}

\title{Representations of solutions to Stein equations and their derivatives}
\author[1]{Ferdinand Rapin}
\author[1,2]{Yvik Swan}
\affil[1]{Département de mathématiques, Université Libre de Bruxelles}
\affil[2]{Vrije Universiteit Brussel}
\date{\today}

\begin{document}

\maketitle

\begin{abstract}
We propose pointwise representation of the Stein solution to  a first-order Stein equation associated with an univariate absolutely continuous distribution. The representation extends to all the derivatives of the solution and with respect to any derivative of the test function $h$. The mechanism behind these representations is an array of kernels that observe two key identities, yielding a specific calculus for the Stein framework considered here. The first two derivatives of the Stein solution are expressed without any assumption between the target density
\(p\) and the non-vanishing weight \(w\), under classical regularity, integrability, and boundary
conditions. The representations records every correction terms appearing by differentiation, explaining how choosing the Stein kernel as weight have a 
special role in Taylor and pairing arguments. The calculus of the kernel extends these representation to any order and exhibit natural and explicit conditions to obtain a one term representation of the $n$-th derivative of the solution $f$ with respect to exactly one of
the three neighbouring test-function orders of derivatives, \(n-1,n,n+1\). These conditions are simple to ensure by choosing the weight accordingly. The representations yield pointwise envelopes,
uniform and weighted Stein factors, ensure sharpness of the constants, and explains the obstructions to
bounds at incompatible derivative orders. We recover the known sharp results for the Gaussian and the Gamma distribution, improve known factors on the rest of the integrated Pearson distribution, which is the family under which every correction term cancels. We also study the Subbotin and symmetrized Maxwell families, for which the formulas retain additional pointwise terms. Applications sharpen constants in known distributional
approximations and use the higher-order calculus to obtain Edgeworth
corrections for the Beta approximation of the Pólya--Eggenberger urn and the quartic Subbotin approximation in the critical Curie--Weiss model.
\end{abstract}

\medskip
\noindent\textbf{Keywords:}
Stein's method; Stein equation; Stein factors; density approach; kernel
representation; Edgeworth expansion; Curie--Weiss model.

\smallskip
\noindent\textbf{Mathematics Subject Classification (2020):}
60F05, 60E15; secondary 82B20, 62E17.

\tableofcontents 

\section{Introduction}\label{sec:introduction}

Stein's method is an effective and popular tool for quantitative
distributional approximation. At its core, it replaces the problem of directly comparing probability distributions with the analytic study of a characterizing operator.

The main ideas are most easily illustrated in the standard Gaussian setting.
Let $N\sim \mathcal N(0,1)$ have density
\(\varphi(x)=({2\pi})^{-1/2}e^{-x^2/2}\).
The Gaussian integration-by-parts identity, also called Stein's identity,
states
that
\begin{equation}
    \mathbb E[f'(N)]
    =
    \mathbb E[Nf(N)]
    \quad
    \mbox{for all } f\in\mathcal F_N,
    \label{eq:gaussian-stein-identity}
\end{equation}
where $\mathcal F_N$ is the class of locally absolutely continuous functions
$f:\mathbb R\to\mathbb R$ for which both expectations are well defined. A
sufficient condition is that $f'$ be Gaussian integrable; see
\cite{nourdin_peccati_2012}. If $f\in\mathcal F_N$, then
$x\mapsto(f(x)\varphi(x))'$ is Lebesgue integrable on $\mathbb R$, and
\(\int_{-\infty}^{\infty}(f(x)\varphi(x))'\,dx
=\mathbb E[f'(N)-Nf(N)]=0\).
Stein's identity also characterizes the standard Gaussian distribution:
if $X$ is a real random variable such that
$  \mathbb E[f'(X)]
    =
    \mathbb E[Xf(X)]$
for all smooth functions $f:\mathbb R\to\mathbb R$ with compact support, say,
then $X\sim\mathcal N(0,1)$. 
These assertions are classical and their proofs
are standard; see, e.g.,\
\cite{chen_goldstein_shao_2011,nourdin_peccati_2012}. The first-order
differential operator \(\mathcal Af(x)=f'(x)-xf(x)\) is called the standard
Gaussian Stein operator.

Let $X$ be a real random variable whose law we wish to compare with the
standard Gaussian law, and let $\mathcal G\subset\mathcal F_N$. From the
characterizing identity, the quantity
\begin{equation}
    \label{eq:gaussian-stein-discrepancy}
    \sup_{g\in\mathcal G}
    \left|
        \mathbb E[g'(X)-Xg(X)]
    \right|
\end{equation}
measures the failure of $X$ to satisfy the Gaussian Stein identity. We call
such a quantity a \emph{Stein discrepancy}. For suitable choices of
\(\mathcal G\), these discrepancies provide useful analytic and computational
measures of non-Gaussianity; see
\cite{anastasiou2021stein,liu_mackey_oates_2026}. To relate them to standard
probabilistic distances, consider integral probability metrics (IPMs).
These provide natural measures
of non-asymptotic probabilistic discrepancy. Given two random variables \(X\)
and \(Y\) and a class
\(\mathcal H\subseteq L^1(X)\cap L^1(Y)\), the corresponding IPM is
\begin{equation}
    \label{eq:integral-probability-metric}
    d_{\mathcal H}(X,Y)
    :=
    \sup_{h\in\mathcal H}
    \left|
        \mathbb E[h(X)]-\mathbb E[h(Y)]
    \right|.
\end{equation}
Classical choices of the test class \(\mathcal H\) include indicators of
half-lines, which yield the Kolmogorov distance; bounded Borel functions
with supremum norm at most one, which yield twice the total variation
distance under the usual convention; and \(1\)-Lipschitz functions, which
yield the Wasserstein-1, or Kantorovich--Rubinstein, distance. Other choices
give the bounded-Lipschitz or Fortet--Mourier distance, metrics based on
smoothed indicators, and Zolotarev ideal metrics defined through bounds on
higher-order derivatives.

Stein's method connects the IPMs in \eqref{eq:integral-probability-metric} to the Stein
discrepancies in \eqref{eq:gaussian-stein-discrepancy} through the \emph{Stein equation}.
For an integrable test function \(h\in L^1(N)\), we use the shorthand $\Phi(h)$ to denote the expectation of $h$ under $N$. The (Gaussian) \(h\)-Stein equation is the ordinary differential equation (ODE) 
\begin{equation}\label{eq:gaussian-stein-eq}
    f'(x)-xf(x)=h(x)-\Phi(h),
    \qquad x\in\mathbb R.
\end{equation}
A solution is an absolutely continuous function
$f_{\varphi,h}\in\mathcal F_N$ for which a version of the derivative
satisfies \eqref{eq:gaussian-stein-eq} at every $x\in\mathbb R$.
Since \(f'-xf=(f\varphi)'/\varphi\), the unique solution in
\(\mathcal F_N\) is
\begin{equation}\label{eq:gaussian-stein-solution}
    f_{\varphi,h}(x)
    :=
    \frac{1}{\varphi(x)}
    \int_{-\infty}^x
    \bigl(h(t)-\Phi(h)\bigr)\varphi(t)\,dt.
\end{equation}
Taking expectations with respect to \(X\) in
\eqref{eq:gaussian-stein-eq} gives
\(  \mathbb E[
        f_{\varphi,h}'(X)-Xf_{\varphi,h}(X)
    ]
    =
    \mathbb E[h(X)]-\Phi(h)\),
and hence any IPM for which the expectations are well defined can be
rewritten as
\begin{equation}\label{eq:stein-transfer}
    d_{\mathcal H}(X,N)
    =
    \sup_{h\in\mathcal H}
    \left|
        \mathbb E\left[
            f_{\varphi,h}'(X)-Xf_{\varphi,h}(X)
        \right]
    \right|.
\end{equation}
We call this Stein's \emph{transfer principle}: the problem of bounding an IPM
between \(X\) and the standard Gaussian is transferred into that of bounding a
Stein discrepancy over the class of solutions 
\eqref{eq:gaussian-stein-solution}.

The usefulness of \eqref{eq:stein-transfer} depends on analytic control of
the Stein solution \(f_{\varphi,h}\), both over \(x\in\mathbb R\) and in terms
of \(h\in\mathcal H\). One needs existence, regularity and sharp bounds on
\(f_{\varphi,h}\) and its derivatives; in particular, pointwise and uniform
bounds over all \(h\in\mathcal H\) are crucial. 
\begin{definition}
Bounds on the solutions \eqref{eq:gaussian-stein-solution} and their
derivatives that are uniform in $h$ and pointwise in $x$ are called
\emph{Stein envelopes}; bounds that are uniform in both $h$ and $x$ are
called \emph{Stein factors}.
\end{definition}

Much is known about Gaussian Stein envelopes and factors. We begin with
the Kolmogorov class, for which the relevant expressions are explicit. We collect in the following theorem known representations and uniform Stein factors for the Kolmogorov Stein solution and its first derivatives.
\begin{theorem} 
\label{theo:kolmosteif}
Let \(\Phi(x)=\int_{-\infty}^x\varphi(u)\,du\) and
\(h_z=\mathbf 1_{(-\infty,z]}\). The solution of the $h_z$-Stein equation
$f'(x)-xf(x)= h_z(x)-\Phi(z)$
is
\begin{equation}
\label{eq:explicikolmo}
 f_{\varphi,h_z}(x)
 =
 \frac{\Phi(z\wedge x)\bigl(1-\Phi(z\vee x)\bigr)}{\varphi(x)},
 \qquad x\in\mathbb R.
\end{equation}
It differentiable on \(\mathbb R\setminus\{z\}\), and
\begin{equation}
\label{eq:kolmogorov-all-derivatives}
 f_{\varphi,h_z}'(x)
 =
 xf_{\varphi,h_z}(x)
 +h_z(x)-\Phi(z)=\left\{\begin{array}{cc}
    (1-\Phi(z))(x\Phi(x)+1)/\varphi(x) & \text{ if } x\le z,  \\
    \Phi(z)(x(1-\Phi(x))-1)/\varphi(x) & \text{ if } x >z.
 \end{array}\right.
\end{equation}
Its one-sided derivative satisfy
\begin{equation}
\label{eq:kolmogorov-derivative-jumps}
 f_{\varphi,h_z}'(z+)-f_{\varphi,h_z}'(z-)
 =-1.
\end{equation}
The second derivative exists on $\mathbb R \setminus\{z\}$, 
\begin{align*}
    f_{\varphi,h_z}''(x)&=(x^2+1)f_{\varphi,h_z}(x)+x(h_z(x)-\Phi(z)) \\&=\begin{cases}
        (1-\Phi(z))\big((x^2+1)\Phi(x)/\varphi(x)+x\big) & \text{ if }x\le z, \\
        \Phi(z)\Big((x^2+1)(1-\Phi(x))/\varphi(x)+x\big) & \text{ if }x > z.
    \end{cases}
\end{align*}
Consequently, the Stein envelopes are
\begin{align}
    \sup_{z\in \mathbb R} |f_{\varphi,h_z}(x)| &= \frac{\Phi(x)(1-\Phi(x))}{\varphi(x)},\label{eq:kolmogorov-exact-envelope-f} \\
    \sup_{\substack{z\in \mathbb R\\z\neq x}}|f_{\varphi,h_z}'(x)|&=(1-\Phi(|x|)\left(|x|\frac{\Phi(|x|)}{\varphi(x)}+1\right), \label{eq:kolmogorov-exact-envelope-f'}\\
     \sup_{\substack{z\in \mathbb R\\z\neq x}}|f_{\varphi,h_z}''(x)|&=(1-\Phi(|x|)\left((x^2+1)\frac{\Phi(|x|)}{\varphi(x)}+|x|\right),\label{eq:kolmogorov-exact-envelope-f''}
\end{align}
and the Stein factors are 
\begin{equation}\label{eq:gaussian-kolmogorov-uniform-factors}
    \sup_{z\in\mathbb R} \| f_{\varphi,h_z}\| = \sqrt{\pi/8}, \qquad \sup_{z\in\mathbb R} \| f_{\varphi,h_z}'\|=1, \qquad\text{and} \qquad \sup_{z\in\mathbb R}\|f''_{\varphi,h_z}\| =\infty .
\end{equation}
\end{theorem}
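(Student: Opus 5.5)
The plan is to compute everything explicitly from the solution formula \eqref{eq:gaussian-stein-solution} with $h=h_z$, and then reduce each supremum over $z$ to a one-variable monotonicity question. The only analytic inputs are the Mills-ratio inequalities $x(1-\Phi(x))<\varphi(x)$ for $x>0$ and $x\Phi(x)+\varphi(x)=\int_{-\infty}^x\Phi(u)\,du>0$.

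\textbf{Explicit formulas.} First, $\Phi(h_z)=\Phi(z)$. For $x\le z$,
\[
\int_{-\infty}^x\bigl(h_z-\Phi(z)\bigr)\varphi=(1-\Phi(z))\Phi(x).
\]
For $x>z$, the same integral equals $\Phi(z)-\Phi(z)\Phi(x)=\Phi(z)(1-\Phi(x))$. Both cases together give \eqref{eq:explicikolmo}, and this expression is continuous in $x$.

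Away from $z$, the Stein equation holds pointwise, so $f'=xf+h_z-\Phi(z)$. Substituting \eqref{eq:explicikolmo} yields \eqref{eq:kolmogorov-all-derivatives}, which reads $(1-\Phi(z))\bigl(x\Phi(x)/\varphi(x)+1\bigr)$ on $x\le z$, with the analogous form on $x>z$. Since $f$ is continuous at $z$, the jump of $f'$ at $z$ equals the jump of $h_z$ there, which is $-1$.

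Differentiating the equation once more on $\mathbb R\setminus\{z\}$, where $h_z'=0$, gives
\[
f''=f+xf'=(1+x^2)f+x\bigl(h_z-\Phi(z)\bigr).
\]
Substituting the two branches gives the stated piecewise form of $f''$.

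\textbf{Envelopes.} For fixed $x$, each branch is a monotone function of $z$ times a fixed function of $x$.
\begin{itemize}
\item For $f$: on $z\ge x$ the prefactor $1-\Phi(z)$ decreases in $z$, and on $z<x$ the prefactor $\Phi(z)$ increases in $z$. Both branches are therefore maximized as $z\to x$, which gives \eqref{eq:kolmogorov-exact-envelope-f}.
\item For $f'$: the branch $x\le z$ is positive because $x\Phi(x)+\varphi(x)>0$. The branch $x>z$ is negative by the Mills inequality applied through the symmetry $x\mapsto -x$. In absolute value, the candidates are therefore
\[
A(x)=(1-\Phi(x))\bigl(x\Phi(x)/\varphi(x)+1\bigr),\qquad B(x)=\Phi(x)\bigl(1-x(1-\Phi(x))/\varphi(x)\bigr).
\]
\item For $f''$: the same two-candidate comparison applies with $(x^2+1)$ in place of $x$.
\end{itemize}

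Since $B(x)=A(-x)$ by symmetry, the envelope is $\max(A(x),A(-x))$. The claim in \eqref{eq:kolmogorov-exact-envelope-f'} is that this maximum equals $A(|x|)$, i.e.\ that $A(x)\ge A(-x)$ for $x\ge0$. Expanding, this is
\[
2x\Phi(x)(1-\Phi(x))/\varphi(x)\ \ge\ 2\Phi(x)-1 \qquad (x\ge 0).
\]
I expect this comparison to be the main obstacle, together with its analogue for $f''$. My first attempt will be to note that both sides vanish at $0$ and compare their derivatives, using $(\Phi(1-\Phi)/\varphi)'=x\Phi(1-\Phi)/\varphi+1-2\Phi$. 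If that fails, I will fall back on standard lower bounds for $\Phi(1-\Phi)/\varphi$.

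\textbf{Stein factors.} For $f$, the quantity $\Phi(1-\Phi)/\varphi$ is even. I will show it is decreasing on $x>0$ by the same derivative identity, combined with the bound $x(1-\Phi)<\varphi$ applied to the ratio. Its maximum is then the value at $0$, namely $\tfrac14\sqrt{2\pi}=\sqrt{\pi/8}$.

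For $f'$, Mills gives $x(1-\Phi(x))/\varphi(x)\le1$ for $x\ge0$. Hence
\[
A(x)\le \Phi(x)+(1-\Phi(x))=1,
\]
and $A(x)\to1$ as $x\to\infty$ by the Mills asymptotics, so the supremum is $1$.

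For $f''$, the envelope behaves like $(x^2+1)\Phi/(x+\dots)\sim|x|$ as $|x|\to\infty$, again by Mills asymptotics. It is therefore unbounded, and the supremum is $\infty$.
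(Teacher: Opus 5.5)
Your skeleton is the paper's. The explicit solution, the formulas for $f'$ and $f''$ off $z$, the jump, and the reduction of each envelope to the one-sided limits $z\downarrow x$, $z\uparrow x$ are exactly Theorem~\ref{theo:general-kolmogorov-all-orders} and Corollary~\ref{cor:general-kolmogorov-envelopes} with $p=\varphi$, $w=1$. The paper stops at the two-candidate maximum and cites the classical literature for the factors. The steps you add beyond that are where the proposal has genuine gaps.

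\textbf{First gap: the comparison $A(x)\ge A(-x)$.} The $|x|$-form of the first-derivative envelope rests on $A(x)\ge A(-x)$ for $x\ge0$, and you leave this open. Your first attempt does not obviously help. With $g=2x\Phi(1-\Phi)/\varphi-(2\Phi-1)$ and $N\sim\mathcal N(0,1)$, the condition $g'\ge0$ is equivalent to $\Phi(x)\,\mathbb E[(N-x)_+^2]\ge\varphi(x)\,\mathbb E[(N-x)_+]$, which is no easier to prove.

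A clean argument uses $\Psi(x)=\int_{-\infty}^x\Phi=x\Phi(x)+\varphi(x)$ and $\overline\Psi(x)=\int_x^\infty(1-\Phi)=\varphi(x)-x(1-\Phi(x))$. Then $A(x)=(1-\Phi(x))\Psi(x)/\varphi(x)$ and $A(-x)=\Phi(x)\overline\Psi(x)/\varphi(x)$, so the claim is $\Psi/\Phi\ge\overline\Psi/(1-\Phi)$ on $[0,\infty)$. Log-concavity of $\varphi$ passes to $\Psi$ and $\overline\Psi$. Hence $\Psi/\Psi'=\Psi/\Phi$ is nondecreasing and $\overline\Psi/(1-\Phi)$ is nonincreasing, and both equal $2\varphi(0)$ at $x=0$.

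The $f''$ analogue you also flag is trivial, but only with the correct sign. For $x>z$ one gets $f''(x)=\Phi(z)\bigl((x^2+1)(1-\Phi(x))/\varphi(x)-x\bigr)$, not $+x$ as displayed in the statement. So you cannot ``recover the stated form''; the displayed $f'$ branches also misplace $\varphi$, which you silently corrected. With the correct sign, both branches are positive: they equal $(1-\Phi(z))\mathbb E[(x-N)_+^2]/\varphi(x)$ and $\Phi(z)\mathbb E[(N-x)_+^2]/\varphi(x)$. The two candidates $C(x)=(1-\Phi(x))\bigl((x^2+1)\Phi(x)/\varphi(x)+x\bigr)$ and $C(-x)$ then satisfy $C(x)-C(-x)=x$ identically.

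\textbf{Second gap: the factor $\sqrt{\pi/8}$.} Your argument for $\sup_x\Phi(1-\Phi)/\varphi=\sqrt{\pi/8}$ fails as written. Set $R=\Phi(1-\Phi)/\varphi$, so $R'=xR+1-2\Phi$. The Mills bound $x(1-\Phi)<\varphi$ only gives $xR<\Phi$, hence $R'<1-\Phi$, which is a positive upper bound and settles nothing. On $(0,\infty)$ you need the strictly sharper inequality $x\Phi(1-\Phi)<(2\Phi-1)\varphi$.

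This follows from the fix above:
\begin{itemize}
\item First, $(R/\varphi)'=(1-2\Phi+2xR)/\varphi=(A(x)-A(-x))/\varphi\ge0$, so $R/\varphi$ increases from $\pi/2$ to $\infty$.
\item Set $u:=(2\Phi-1)\varphi-x\Phi(1-\Phi)$. It vanishes at $0$ and at $\infty$, and $u'=2\varphi^2-\Phi(1-\Phi)=\varphi^2(2-R/\varphi)$.
\item So $u$ first increases and then decreases. Hence $u>0$ and $R'=-u/\varphi<0$ on $(0,\infty)$.
\end{itemize}
Equivalently, $1/R(x)=\lambda(x)+\lambda(-x)$ with $\lambda=\varphi/(1-\Phi)$ the Gaussian hazard rate, and one can invoke the classical convexity of $\lambda$.

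Your treatment of the factor $1$ for $f'$ and of the unbounded $f''$ envelope is fine. In fact $\|f'\|\le1$ does not need the envelope comparison at all, since $A(y)<1-\Phi(y)$ for $y<0$.
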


\begin{remark}
The uniform bounds are classical. The function
\eqref{eq:explicikolmo} has been studied extensively since
\cite{stein_approx_1986}; see
\cite[Section~2.2]{chen_goldstein_shao_2011} for an overview. Our method and representation presented in Section \ref{sec:kolmogorov-solutions} will recover these result, and extend them to higher derivatives and other distributions.
\end{remark}
\begin{figure}
 \centering
 \href{https://yvik.swan.web.ulb.be/animationoxford.html}{%
   \includegraphics[width=\textwidth]{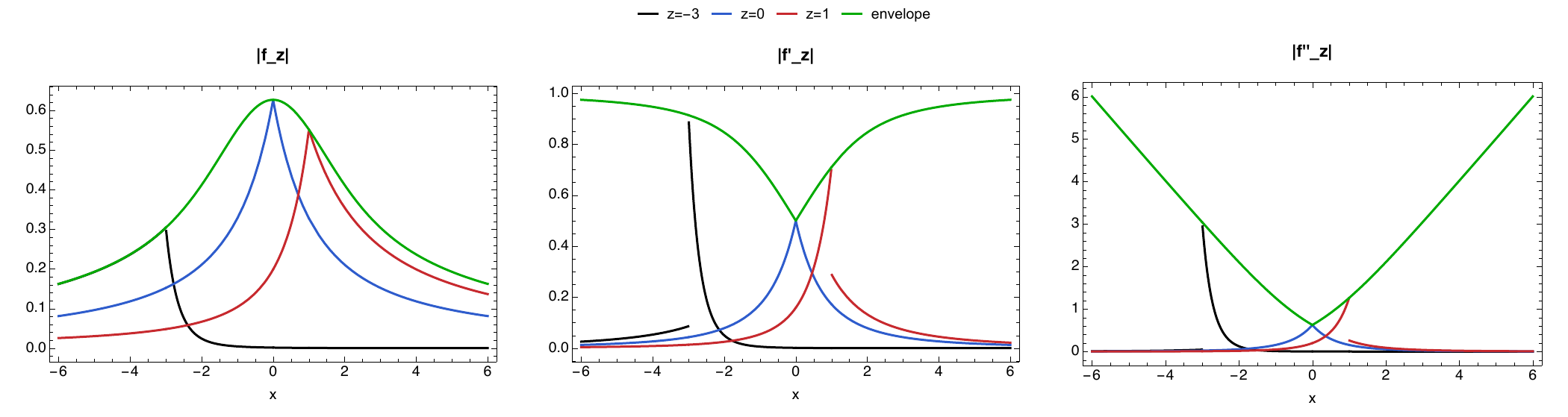}
 } 
 \caption{Kolmogorov Stein solutions and their first two derivatives for
 \(z=-3,0,1\). The green curves are the envelopes 
 \eqref{eq:kolmogorov-exact-envelope-f}-\eqref{eq:kolmogorov-exact-envelope-f''}. The envelopes of orders \(0\)
 and \(1\) are bounded, whereas the order-\(2\) envelope is unbounded, in
 agreement with \eqref{eq:gaussian-kolmogorov-uniform-factors}.}
 \label{fig:kolmogorov-factors}
\end{figure}

Figure~\ref{fig:kolmogorov-factors} illustrates these bounds and, in
particular, the strikingly different behaviour of the first- and
second-order envelopes. Such bounds
  are used for approximation in the Kolmogorov distance via Stein's
method. The absence of higher-order regularity explains why Berry--Esseen
bounds require additional smoothing arguments, using, for instance,
smoothed indicator functions in
\eqref{eq:gaussian-stein-solution}; see
\cite{chen_shao_2001,chen_goldstein_shao_2011,nourdin_peccati_2012}.

Developing Stein's method in other IPMs, such as total variation, Wasserstein and
Zolotarev distances requires similar pointwise envelopes and
uniform factors for the corresponding Stein solutions \(f_{\varphi,h}\) and
their derivatives. For general smooth test classes, the resulting bounds
are naturally expressed in terms of the supremum norms of $h$ and its derivatives. 

\begin{notation}
\label{not:derivative-norms}
For a pointwise-defined function \(g:\mathbb R\to\mathbb R\), set
\(g^{(0)}=g\), and let \(g^{(i)}\) denote its \(i\)th derivative wherever
that derivative exists. For \(i\geq0\), we set
$\|g^{(i)}\|
    :=
    \operatorname*{ess\,sup}_{x\in\mathbb R}
    |g^{(i)}(x)|$
when \(g^{(i-1)}\) is locally absolutely continuous, and
\(\|g^{(i)}\|=\infty\) otherwise. Here and below, derivatives of locally
absolutely continuous functions are understood almost everywhere. 
 \end{notation}
 
Considerable attention has been devoted to expressing
\(\|f_{\varphi,h}^{(n)}\|\) in terms of \(\|h^{(k)}\|\), for
\(n,k\in\mathbb N\). The following theorem summarizes the envelope and factors known for the Gaussian Stein solution and its derivative. Each bound is understood whenever the seminorm on its right-hand side
is finite.

\begin{theorem} \label{thm:known-gaussian-stein-factors}
Let \(f_{\varphi,h}\) be the solution of the Gaussian Stein equation
given by \eqref{eq:gaussian-stein-solution}. A representation of $f'$ is obtained directly from \eqref{eq:gaussian-stein-eq} and for $n \ge 2,$ given that $h$ is $(n-1)$-times differentiable and $h^{(n-2)}$ is absolutely continuous, 
\begin{equation}\label{eq:gaussian-daly-rep-fn}
    f_{\varphi,h}^{(n)}(x) = h^{(n-1)}(x)-\frac{1}{k!}\left(\frac{1-\Phi}{\varphi}\right)^{(n)}(x)I_{n,1}(x) -\frac{1}{k!}\left(\frac{\Phi}{\varphi}\right)^{(n)}(x)I_{n,2}(x),
\end{equation}
with
\begin{align*}
    I_{n,1}(x)&:=\int_{-\infty}^xh^{(n-1)}(t)\varphi(t) \left(\frac{\Phi}{\varphi}\right)^{(n-2)}(t)dt, \\
     I_{n,2}(x)&:=\int_x^{\infty}h^{(n-1)}(t)\varphi(t) \left(\frac{1-\Phi}{\varphi}\right)^{(n-2)}(t)dt.
\end{align*}
The solution and its first derivatives admits the following envelopes
\begin{align}
    |f_{\varphi,h}(x)|
    &\leq
    \min\left\{
        2\frac{\Phi(x)\overline\Phi(x)}{\varphi(x)}
        \| h^{(0)}\|,
        \|h^{(1)}\|
    \right\},
    \label{eq:gaussian-envelope-f}\\
    |f_{\varphi,h}'(x)|
    &\leq
    \min\Bigg\{
        2\| h^{(0)}\|,
        2
        \frac{
            \left(\displaystyle\int_{-\infty}^x\Phi(u)\,du\right)
            \left(\displaystyle\int_x^\infty\overline\Phi(u)\,du\right)
        }{\varphi(x)}
        \|h^{(1)}\|,
        \frac12\|h^{(2)}\|
    \Bigg\}.
    \label{eq:gaussian-envelope-f-prime}
\end{align}
The corresponding uniform Stein factors are
\begin{align}
    \|f_{\varphi,h}\|
    &\leq
    \min\left\{2
        \sqrt{\frac{\pi}{8}}\,\| h^{(0)}\|,
        \|h^{(1)}\|
    \right\},
    \label{eq:gaussian-f-factors}\\
    \|f_{\varphi,h}'\|
    &\leq
    \min\left\{
        2\| h^{(0)}\|,
        \sqrt{\frac{2}{\pi}}\,\|h^{(1)}\|,
        \frac12\|h^{(2)}\|
    \right\},
    \label{eq:gaussian-f-prime-factors}
\end{align}
and, for every \(n\geq1\),
\begin{equation}\label{eq:gaussian-general-magic-factors}
    \|f_{\varphi,h}^{(n)}\|
    \leq
    \min\left\{
        c^{n,n-1}_\varphi\|h^{(n-1)}\|,
        c^{n,n}_\varphi\|h^{(n)}\|,
        c^{n,n+1}_\varphi\|h^{(n+1)}\|
    \right\},
\end{equation}
where
\[
    c^{n,n-1}_\varphi
    =2
    \qquad
    c^{n,n}_\varphi
    =
    \frac{\Gamma\left((n+1)/2\right)}
         {\sqrt{2}\,\Gamma\left(n/2+1\right)},
    \qquad
    c^{n,n+1}_\varphi
    =
    \frac{1}{n+1}.
\]
Each constant in
\eqref{eq:gaussian-general-magic-factors} is optimal for the
corresponding seminorm. Moreover, if
\(  k\in\mathbb N_0\setminus\{n-1,n,n+1\}\),
then there is no finite constant \(c^{n,k}_\varphi\), depending only on \(n\) and
\(k\), such that
$\|f_{\varphi,h}^{(n)}\|
    \leq
    c^{n,k}_\varphi\|h^{(k)}\|$
holds uniformly over all test functions for which the right-hand side
is finite.
\end{theorem}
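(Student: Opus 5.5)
This theorem collects known results, so the plan is to assemble the standard arguments in a unified way rather than build new machinery.

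\emph{Representation.} I start from \eqref{eq:gaussian-stein-solution}. Write $\tilde h=h-\Phi(h)$, so $\int\tilde h\varphi=0$. This gives the two-sided form
\[
f_{\varphi,h}(x)=\frac{1-\Phi(x)}{\varphi(x)}\int_{-\infty}^x \tilde h\varphi-\frac{\Phi(x)}{\varphi(x)}\int_x^\infty \tilde h\varphi .
\]
Differentiating this $n$ times, the boundary terms produced at each step cancel. The reason is that the Wronskian-type identity $\Phi\,(\frac{1-\Phi}{\varphi})-(1-\Phi)\frac{\Phi}{\varphi}=0$ persists, in differentiated form, at every derivative order up to $n-1$; this yields the $h^{(n-1)}(x)$ term. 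To move the derivatives onto $h$, I integrate by parts $n-1$ times inside $I_{n,i}$. I use that $\varphi\,(\Phi/\varphi)^{(j)}$ and $\varphi\,((1-\Phi)/\varphi)^{(j)}$ are antiderivative-type kernels satisfying $(\varphi g^{(j)})'=\varphi(g^{(j+1)}-xg^{(j)})$, together with $(\Phi/\varphi)'-x\Phi/\varphi=1$. I carry out the induction on $n$ and fix the normalising constant (the $1/k!$, which should read $1/(n-2)!$ or be absorbed) by checking $n=2$ directly against $f''=(x^2+1)f+x\tilde h+h'$.

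\emph{Envelopes and uniform factors.} For $\|h^{(0)}\|$ bounds I use $|\tilde h|\le 2\|h\|$, or the sharper oscillation version, in the two-sided formula. This gives $2\Phi\overline\Phi/\varphi$, whose supremum is $\sqrt{\pi/8}$ at $x=0$. For $f'$ in terms of $h'$, I write $\tilde h(t)=\int (\mathbf 1_{u\le t}-\Phi(u))h'(u)\,du$. Substituting this produces the kernel $\int_{-\infty}^x\Phi\cdot\int_x^\infty\overline\Phi/\varphi$, with maximum $\sqrt{2/\pi}$ at $0$. The bounds $\|f\|\le\|h'\|$, $\|f'\|\le\tfrac12\|h''\|$, and in general $c^{n,n+1}=1/(n+1)$ come from the Mehler/Ornstein--Uhlenbeck representation
\[
f_{\varphi,h}(x)=-\int_0^\infty \frac{e^{-t}}{\sqrt{1-e^{-2t}}}\E[h'(e^{-t}x+\sqrt{1-e^{-2t}}N)]\,dt .
\]
Differentiating it $n$ times gives the factor $\int_0^\infty e^{-(n+1)t}\,dt=1/(n+1)$. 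The order-$n$ constant follows by moving one derivative onto the Gaussian via integration by parts: the resulting term $\E|N|\,e^{-(n+1)t}/\sqrt{1-e^{-2t}}$, integrated in $t$, is a Beta integral giving $\Gamma((n+1)/2)/(\sqrt2\,\Gamma(n/2+1))$. The order-$(n-1)$ factor $2$ comes from the representation formula \eqref{eq:gaussian-daly-rep-fn}. Since $(\Phi/\varphi)^{(n)}$ and $((1-\Phi)/\varphi)^{(n)}$ have fixed signs, the absolute values factor out of the integrals. The resulting function of $x$ is then bounded by $1$, using the Wronskian identity at order $n-1$ to evaluate the total mass exactly. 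This is Daly's argument.

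\emph{Optimality and obstructions.} I test each constant on explicit extremals.
\begin{itemize}
\item Monomials $h(x)=x^{n+1}$ give equality in $c^{n,n+1}$.
\item Functions with $h^{(n)}=\operatorname{sgn}$-type saturate $c^{n,n}$.
\item For $c^{n,n-1}$, I take $h^{(n-1)}$ to be an indicator near a point and exploit the jump of $f^{(n)}$, in analogy with \eqref{eq:kolmogorov-derivative-jumps}.
\end{itemize}
For $k\notin\{n-1,n,n+1\}$, two cases arise. If $k\le n-2$, I use a Kolmogorov-type $h$ with bounded $h^{(k)}$ and discontinuous $h^{(k)}$, which makes $f^{(n)}$ unbounded or undefined, as in $\sup\|f''_{\varphi,h_z}\|=\infty$. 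If $k\ge n+2$, I use polynomials $h=x^{k}$ rescaled, or $h(x)=\sin(\lambda x)/\lambda^{k}$ with $\lambda\to 0$: then $\|h^{(k)}\|$ stays bounded while $f^{(n)}$ does not, since the Mehler formula gives $f^{(n)}\approx h^{(n+1)}/(n+1)$ at low frequency.

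I expect the main obstacle to be the sharp order-$(n-1)$ constant $2$ for all $n$. It requires the sign-definiteness of $(\Phi/\varphi)^{(n)}$ for every $n$, which I would prove by noting that $\Phi/\varphi=\int_0^\infty e^{-sx-s^2/2}\,ds$ is completely monotone in the appropriate direction. It also requires exact evaluation of the mixed kernel integrals.
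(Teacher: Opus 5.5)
Here the paper gives no argument of its own: its proof is a list of citations (Daly for the representation and the $h^{(n-1)}$ factor, the Ornstein--Uhlenbeck semigroup for the $h^{(n+1)}$ and $h^{(n)}$ factors, Gaunt for optimality and the obstructions). Your plan reconstructs those same arguments, and the skeleton is sound. Your route to $c^{n,n-1}=2$ uses a sign-definite kernel whose total mass is fixed by the order-$(n-1)$ Wronskian; this is exactly the paper's centred $K^{n,n-1}_{p,w}$ representation. Your extremals $x^{n+1}$, $h^{(n)}=\operatorname{sgn}$ (evaluated at $x=0$) and $x^k$ for $k\ge n+2$ also work.

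\textbf{Sharpness of the constant $2$.} This step fails as proposed. Once you have $f^{(n)}(x)=\int\kappa_x(t)\bigl(h^{(n-1)}(x)-h^{(n-1)}(t)\bigr)\,dt$ with $\kappa_x\ge0$ of total mass one, you get $|f^{(n)}(x)|\le\operatorname{osc}(h^{(n-1)})$. For an indicator, $\operatorname{osc}(h^{(n-1)})=\|h^{(n-1)}\|$, so indicators never give a ratio above $1$, and the size of the jump of $f^{(n)}$ gives even less. You need an $h^{(n-1)}$ taking both values $\pm1$, for instance $h^{(n-1)}=1-2\mathbf 1_{(x_0-\varepsilon,x_0+\varepsilon)}$. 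Then $f^{(n)}(x)=-2\int_{|t-x_0|\ge\varepsilon}\kappa_x(t)\,dt\to-2$ uniformly for $x$ in the window as $\varepsilon\to0$, because $\kappa_x(t)$ is locally bounded. Alternatively, take a $\pm1$ step at $z$ with $z\to\infty$. This uses the fact that the mass of $\kappa_z$ on $(z,\infty)$, namely $\varphi(z)B^{(n)}(z)|A^{(n-1)}(z)|/(n-1)!$ with $A=\overline\Phi/\varphi$ and $B=\Phi/\varphi$, tends to $1$.

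\textbf{Mehler formula.} Your display has the wrong weight. It should read $f_{\varphi,h}(x)=-\int_0^\infty e^{-t}\,\mathbb E\bigl[h'(e^{-t}x+\sqrt{1-e^{-2t}}N)\bigr]\,dt$. The weight $e^{-t}(1-e^{-2t})^{-1/2}$ appears only after the Gaussian integration by parts, in front of $\mathbb E[h(\cdot)N]$. With your display, the $h'$-constant for $f$ would be $\int_0^\infty e^{-t}(1-e^{-2t})^{-1/2}\,dt=\pi/2$, and the factor $1/(n+1)$ would not come out.

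\textbf{The representation.} Differentiating the centred two-sided formula does not make the boundary terms cancel: the first two derivatives already produce $(A+B)\varphi\tilde h=\tilde h$ and $(A'+B')\varphi\tilde h=x\tilde h$. You must alternate one integration by parts with one differentiation. The identity that kills the boundary terms is the generalised Wronskian $\varphi\bigl(A^{(j+1)}B^{(j)}-A^{(j)}B^{(j+1)}\bigr)=(-1)^{j+1}j!$, which follows from $g^{(j+1)}-xg^{(j)}=jg^{(j-1)}$ for $g\in\{A,B\}$ and $j\ge1$. The identity $\Phi A-\overline\Phi B=0$ that you quote only disposes of the initial steps.

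Carrying this out shows that the coefficient of the two kernel terms is $(-1)^{n-1}/(n-2)!$, not $-1/(n-2)!$. So the printed formula is also off in sign for odd $n$, and calibrating at $n=2$ would miss this. For example, with $h(x)=x^4$ one has $f_{\varphi,h}=-x^3-3x$, hence $f'''(0)=-6$. Meanwhile $h''(0)=0$ and $A'''(0)I_{3,1}(0)+B'''(0)I_{3,2}(0)=-6$, which forces coefficient $+1$.

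\textbf{Smaller slips.} Inserting $|\tilde h|\le2\|h\|$ into the centred formula gives $4\Phi\overline\Phi/\varphi$. You must use the uncentred $h$ instead, which is legitimate because $\Phi A=\overline\Phi B$; also $\sup_x2\Phi\overline\Phi/\varphi=2\sqrt{\pi/8}$, not $\sqrt{\pi/8}$. The kernel in $\tilde h(t)=\int(\cdot)\,h'(u)\,du$ is $\mathbf 1_{\{u\le t\}}-\overline\Phi(u)$. Finally, $\int_0^\infty e^{-sx-s^2/2}\,ds$ equals $\overline\Phi/\varphi$, not $\Phi/\varphi$.
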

\begin{proof}
The representation \eqref{eq:gaussian-daly-rep-fn} was obtained by Daly in \cite{daly_2008}. The pointwise estimates
\eqref{eq:gaussian-envelope-f} and
\eqref{eq:gaussian-envelope-f-prime} follow from kernel covariance
representations; see, for example,
\cite[Example~2.31]{ernst_distances_2021}.
Taking suprema in the corresponding envelopes yields the uniform
bounds in
\eqref{eq:gaussian-f-factors} and
\eqref{eq:gaussian-f-prime-factors}. These classical estimates
can also be obtained directly from the explicit solution of the
Gaussian Stein equation; see, for instance,
\cite[Section~2.2]{chen_goldstein_shao_2011}. The bound
\(\|f_{\varphi,h}\|\leq\|h^{(1)}\|\)
is due to D\"obler
\cite{dobler2012steinsmethodexchangeablepairs}.

We now turn to the Stein factors in
\eqref{eq:gaussian-general-magic-factors}. The estimate involving
\(\|h^{(n+1)}\|\) follows from the Ornstein--Uhlenbeck semigroup
representation of the solution to the associated Poisson equation.
This approach goes back to the generator method of
\cite{barbour_1988,gotze_1991}. The corresponding higher-order
estimate was obtained, under stronger regularity assumptions, by
\cite{Goldstein_Rinott}, with the general form
following by approximation.
The estimate involving \(\|h^{(n)}\|\) is obtained from the same
semigroup representation by transferring one derivative from the test
function to the Gaussian density through Gaussian integration by
parts; see \cite{gaunt_2016}. Finally, the estimate involving
\(\|h^{(n-1)}\|\) was proved by \cite{daly_2008} using representations \eqref{eq:gaussian-daly-rep-fn} and stochastic comparison arguments.

The sharpness of the constants \(2\) and \(1/(n+1)\) was already
known; see \cite[p.~568]{daly_2008}. Gaunt
\cite{gaunt2024steinfactorssteinsmethod} proved the optimality of the
intermediate constant and completed the picture by showing that no
uniform estimate in terms of \(\|h^{(k)}\|\) is possible when
\(k\notin\{n-1,n,n+1\}\).
\end{proof}

\begin{remark}
Taking $h(x)=h_z(x):=\mathbf 1_{x\le z}$, using the bounds with respect to $\|h^{(0)}\|$ in Theorem~\ref{thm:known-gaussian-stein-factors} yields Stein factor greater by a factor 2 than the Stein factors available in Theorem~\ref{theo:kolmosteif}. This is natural as in the latter, one can exploit the explicit form of the test function. Bounds from Theorem~\ref{thm:known-gaussian-stein-factors} should therefore not be used in the Kolmogorov case.
\end{remark}

\begin{remark}
Lef\`evre and Utev \cite{lefevre_utev_2003} also determined the exact
diagonal operator norm with respect to the total variation seminorm of the test function. 
It may be interesting to determine whether analogous exact bounds can
be obtained in terms of total variation seminorms of derivatives of
orders different from \(n\).
\end{remark}

One of the principal strengths of Stein's method is its adaptability to
different target distributions. Beyond the Gaussian case, the Gamma
distribution is a major example for which a substantial theory of Stein
solutions has been developed; see
\cite{luk_1994,gaunt_2013,gaunt_pickett_reinert_2017}. A detailed overview is given in \cite{bailly_gaunt_ouimet_richards_vonsachs_2026}. The resulting
bounds have much the same general flavor as their Gaussian counterparts:
they control the solution and its derivatives in terms of regularity
seminorms of the test function. An important difference is that the
coefficient of the derivative in the Gamma Stein operator vanishes at the
left endpoint of the support. Consequently, the natural estimates often
involve the multiplicative weight \(x\). Indicator, smooth, weighted, and
higher-order bounds have been obtained through a variety of arguments
adapted to the test class and to the behavior of the solution near the
boundary.

A similarly substantial collection of Stein factors is available for the
Beta distribution; see
\cite{goldstein_reinert_beta_2013,dobler_beta_2015,
dobler_gaunt_vollmer_2017}. Here again, the estimates resemble those of the
Gaussian and Gamma settings, but the bounded support and the degeneracy of
the Stein operator at both endpoints introduce additional
boundary-dependent features.

In the same spirit, bounds have been obtained for quartic-exponential and
related Subbotin-type distributions arising in statistical mechanics
\cite{Chatterjee_2011,eichelsbacher_lowe_2010}, for Pearson and integrated
Pearson distributions
\cite{schoutens_2001,germain_swan_2025}, and for
the generalized inverse Gaussian and Kummer distributions
\cite{konzou_koudou_2020}. More general density-based approaches to
univariate Stein operators and their solutions were developed in
\cite{ley_swan_2013,dobler_beta_2015,ley_reinert_swan_2017,
ernst_reinert_swan_2020}; universal first- and second-derivative bounds
for general targets were also obtained in \cite{eden_viquez_2015}.

These results concern diverse distributions but share a common setting: the
target is univariate, has a positive and sufficiently regular density \(p\)
on the interior of an interval, and admits a tractable first-order Stein
operator
\begin{equation}
    \mathcal A_{p,w}f(x)
    =
    w(x)f'(x)+s_w(x)f(x),
    \qquad
    s_w(x)=\frac{(w(x)p(x))'}{p(x)}.
    \label{eq:weighted-density-stein-operator-intro}
\end{equation}
More precisely, there is a sufficiently rich class
\(\mathcal F_{p,w}\) such that
\[
    \mathbb E[\mathcal A_{p,w}f(Z)]=0,
    \qquad f\in\mathcal F_{p,w},
\]
whenever \(Z\sim p\). Every sufficiently regular density admits operators
of this form; the substantive issue is to choose the weight function \(w\) so that the
coefficients, boundary conditions, canonical solution, and resulting
Stein factors remain tractable.

The weighted density Stein operator  \eqref{eq:weighted-density-stein-operator-intro} is naturally related to a
one-dimensional diffusion generator. Indeed,
\[
    \mathcal L_{p,w}g
    :=
    \mathcal A_{p,w}(g')
    =
    wg''+s_wg'
\]
is, under suitable assumptions, the infinitesimal generator of a diffusion
with invariant density \(p\). This observation is the starting point of the generator approach to Stein's
method
\cite{barbour_1988,kusuoka_tudor_2012,dobler_gaunt_vollmer_2017}.
Classical examples
are the Hermite, Laguerre, and Jacobi generators associated with the
Gaussian, Gamma, and Beta laws. Semigroup representations of the
corresponding Poisson equation, combined with derivative intertwinings or
gradient estimates, yield smooth bounds relating derivatives of the Stein
solution to derivatives of the test function
\cite{dobler_gaunt_vollmer_2017}. For a modern overview of diffusion Stein
operators and the generator approach, see
\cite{gorham_duncan_vollmer_mackey_2019,anastasiou2021stein} and the
references therein.

The generator approach explains part, but not all, of the common
structure. Even in the classical examples, it does not by itself recover
the complete family of neighbouring-order estimates involving
\(h^{(n-1)}\), \(h^{(n)}\), and \(h^{(n+1)}\), nor the pointwise envelopes
from which the corresponding uniform factors arise. Such bounds have
generally been obtained through distribution-specific arguments adapted
to the density, support, and boundary behaviour, leaving their common
first-order mechanism largely implicit.

\paragraph{Contributions.}
The principal contribution of this paper are pointwise representations of the Stein solution and its derivatives originating form a single general mechanism. This underlying mechanism also easily produces  Stein envelopes and (weighted) factors for derivatives of all order. Sharpness of the envelopes and factors follows immediately from the obtained representations. These representations also provide insights on which couple of indices $(n,k)$ yields uniform Stein factor of the form $\|f^{(n)}_{p,w,h}\| \le c^{n,k}_{p,w}\|h^{(k)}\|$.

The representations rely on a two-index array of kernels \(K_{p,w}^{i,j}\), for an arbitrary absolutely continuous density \(p\) and admissible non-vanishing weight \(w\). The first index records differentiation of the Stein solution and the second records transfer of derivatives to the test function. Lemma~\ref{lma:basis-prop-of-kernels} shows that this array is generated by two operations: differentiation in the spatial variable and integration by parts in the test variable. Crucially, both operations identify explicitly the pointwise correction terms that are usually hidden in distribution-specific calculations.

At low orders, this gives universal representations of the canonical solution \(f_{p,w,h}\) and its first two derivatives without imposing a specific relation between \(p\) and \(w\); see Theorem~\ref{thm:universal-low-order-calculus}. The correction coefficients reveal a distinguished normalization. When \(p\) has finite mean and \(w=\tau_p\) is its Stein kernel, all lower-order correction terms vanish. This gives a direct explanation, at the level of the Stein equation and its derivatives, for the effectiveness of the Stein-kernel weight. The resulting kernel signs, centring identities, and absolute moments yield pointwise envelopes as well as uniform and weighted Stein factors.

The same construction extends to arbitrary derivative order. Theorem~\ref{thm:kernel-representations-of-derivatives} gives three corrected representations of \(f_{p,w,h}^{(n)}\), involving the neighbouring test-function orders, \(h^{(n-1)}\), \(h^{(n)}\), and \(h^{(n+1)}\) respectively. Corollary~\ref{cor:closed-three-neighbour-calculus} provides explicit conditions on $p$ and $w$ under which the pointwise correction in these representations cancels. These identities hold at every admissible order for the Gaussian and integrated Pearson families if $w=\tau_p$. They need not hold for a general target, even with the Stein-kernel weight: the Subbotin and symmetrized Maxwell families illustrate how the remaining terms are retained and controlled by the calculus.

For the Gaussian target, the kernel array recovers all the statements of Theorem~\ref{thm:known-gaussian-stein-factors}, see Example~\ref{ex:gau} in particular optimality of the Stein factors at all three neighbouring test-function orders and why no uniform single-seminorm bound exists at any other derivative order. Similar results are obtained for the integrated Pearson family and we discuss in details the Exponential (Example~\ref{ex:expon}), Gamma (Example~\ref{ex:gamma}), Beta (Example~\ref{ex:beta}) and Student (Example~\ref{ex:student}) distributions. In summary the framework both unifies a collection of previously distribution-specific arguments and yields bounds that were not previously available in this general form.

The sharpening in the Stein factors, opens the door to revisit all the quantitative results obtained through Stein method to improve the constants, thus reducing the number of observation needed to obtain a desired precision in a given metric. As another application, the authors have already used the Stein factors in the analysis of Satterthwaite's approximation, capitalizing on the explicit dependence to the parameters of the target Gamma distribution. In this article, we present in addition an all order Edgeworth correction for the Pólya--Eggenberger Beta approximation with an \(O(n^{-2})\) remainder for smooth test functions, and a correction for the critical Curie--Weiss magnetization with an \(O(n^{-3/4})\) remainder uniformly over Lipschitz test functions.

\begin{remark}
The present paper is restricted to first-order Stein equations for
absolutely continuous target distributions, with the equations studied on
the support of the target. A discrete analogue lies outside the present
scope; relevant Stein operators and covariance identities are developed in
\cite{ernst_distances_2021, ernst_reinert_swan_2020}.
Possible extensions include adapting the calculus to second- and
higher-order operators, such as those associated with the Laplace,
variance-gamma, and product distributions
\cite{pike_ren_2014,gaunt_laplace_2021,
gaunt_variance_gamma_2014,gaunt_variance_gamma_factors_2022,
gaunt_products_2018,gaunt_mijoule_swan_2019}, and studying Stein solutions
outside the support of the target distribution, as initiated in
\cite{gammaStein_DoblerPeccati}.
\end{remark}
 \subsection*{Outline of the paper}

Section~\ref{sec:density-approach} contains the general theory.  We  first consider the Kolmogorov solution and its all-order derivatives. Then  we introduce the kernel array that we use to obtain formulas for the solution and its first two derivatives. We then
derive the neighbouring-order representations at arbitrary order, and
state conditions under which all pointwise correction terms cancel. We conclude by a look at Stein factors for the weighted derivatives of the solution.

Section~\ref{sec:closed-kernel-calculi} applies the representation formula to several distributions. Section~\ref{sec:integrated-pearson} focus on the integrated Pearson family, for which the
pointwise terms cancel at every admissible order for the Stein kernel weight. Section~\ref{subsec:subbotin-example} and \ref{subsec:symmetrized-maxwell} treats two cases for which the
pointwise terms do not all cancel: Subbotin laws, when the weight is constant,   and the symmetrized Maxwell
family, when the weight is the Stein kernel.

Section~\ref{sec:applications} presents some improvements in known quantitative results and then use the new control of the higher order derivatives to produce in Edgeworth expansions. We obtain corrections for a beta urn model and for the critical Curie--Weiss magnetization.

\section{Representations, Stein envelopes and Stein factors}
\label{sec:density-approach}

This section presents representations of derivatives of the Stein solution of any order, for an arbitrary target density
\(p\) and non-vanishing weight \(w\). After introducing the weighted
density operator, its canonical solution, and the Stein kernel, we propose representations of all the derivatives of the Kolmogorov solution. We then define
the kernel array and its two basic moves. Using the kernels, we study the solution and its two first derivatives, obtaining the universal formulas, after what we  give the general representations at arbitrary order and the conditions
under which their pointwise terms cancel. We conclude by looking at the possibilities of controlling the weighted derivatives of the solution, for a weight $\omega$ potentially different from the weight $w$ defining the Stein equation. 

 \subsection{Density Stein operators}
\label{subsec:density-operators}
Throughout, let \(p\) be a probability density on \(\mathbb R\), and let \(Z\) be a random variable with density \(p\). We adopt the notation
\[
 P(x):=\int_l^x p(t)\,dt,
 \qquad
 \overline P(x):=\int_x^u p(t)\,dt,
 \qquad
 \mathcal Ph:=\mathbb E[h(Z)].
\]
We impose the following standing assumption on \(p\).

\begin{assumption}
\label{ass:density-support}
There exist \(m\geq1\) and
$-\infty\leq l=e_0<e_1<\cdots<e_{m-1}<e_m=u\leq\infty$
such that
$\{x\in\mathbb R:p(x)>0\}
 =
 \mathcal I_p
 :=
 \bigcup_{r=1}^m(e_{r-1},e_r)$.
The density \(p\) is locally absolutely continuous on each
\((e_{r-1},e_r)\). Thus the closure of its positivity set is \([l,u]\),
and its only possible zeros in \((l,u)\) are
\(e_1,\ldots,e_{m-1}\).
\end{assumption}

\begin{remark}
The theory extends to densities whose positivity set has finitely
many components separated by gaps of positive length. In that setting,
integration by parts produces additional boundary terms at the endpoints
of the gaps. These terms must be retained throughout the kernel calculus,
and we exclude this case to keep the representations uncluttered.
\end{remark}

Let \(w : \mathbb R \to \mathbb R\) be nonzero on \(\mathcal I_p\), and suppose that \(wp\) is
locally absolutely continuous on each \((e_{r-1},e_r)\). Define
\[
 s_w(x):=\frac{(w(x)p(x))'}{p(x)},
 \qquad\text{for a.e. }x\in\mathcal I_p.
\]
We denote by \(\mathcal F_{p,w}\) the class of functions \(f\) that are
locally absolutely continuous on each \((e_{r-1},e_r)\), for which all
the limits below exist, and which satisfy
\(\lim_{x\to l+} w(x)f(x)p(x)
 =
 \lim_{x\to u-} w(x)f(x)p(x)
 =
 0\),
and
$\lim_{x\to e_r-} w(x)f(x)p(x)
 =
 \lim_{x\to e_r+} w(x)f(x)p(x)$, \(1\leq r<m\).
For \(f\in\mathcal F_{p,w}\), define the differential Stein operator
\begin{equation}
\label{eq:general-density-stein-operator}
 \mathcal A_{p,w}f(x)
 :=
 w(x)f'(x)+s_w(x)f(x)
 =
 \frac{(wfp)'(x)}{p(x)},
 \qquad\text{a.e. }x\in\mathcal I_p.
\end{equation}
The associated Stein equation is
\begin{equation}
\label{eq:w-stein-equation}
 w(x)f'(x)+s_w(x)f(x)
 =
 h(x)-\mathcal Ph,
 \qquad\text{a.e. }x\in\mathcal I_p.
\end{equation}
Finally, for \(h\in L^1(p)\), define the integral Stein operator 
\begin{equation}
\label{eq:caninv}
 \mathcal S_ph(x)
 :=
 \frac{1}{p(x)}
 \int_l^x\bigl(h(t)-\mathcal Ph\bigr)p(t)\,dt
 =
 \frac{1}{p(x)}
 \int_x^u\bigl(\mathcal Ph-h(t)\bigr)p(t)\,dt,
 \qquad x\in\mathcal I_p.
\end{equation}
\begin{remark}
    The operator \eqref{eq:general-density-stein-operator} generalizes the standard Gaussian operator (obtained by chosing $p=\varphi,w=1$). The integral Stein operator \eqref{eq:caninv} is the one generalizing Stein's lemma through the Stein's integration by parts formula 
    $$\E [-(\mathcal{S}_{p,w} h)(X)g'(X)]=\E[(h(X)-\mathcal{P}h)g(X)],$$ for $X\sim p$ and functions $g,h$ satisfying standard integrability conditions (see \cite{ernst_reinert_swan_2020}).
\end{remark} 

\begin{proposition}
\label{prop:stein-solution}
Under Assumption~\ref{ass:density-support}, let \(h\in L^1(p)\). Then
\begin{equation}
    f_{p,w,h}(x):=\frac{\mathcal S_ph(x)}{w(x)},
 \qquad x\in\mathcal I_p, \label{eq:canonical-weighted-stein-solution}
\end{equation}
belongs to \(\mathcal F_{p,w}\) and solves
\eqref{eq:w-stein-equation}. \end{proposition}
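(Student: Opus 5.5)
The plan is to work directly with the auxiliary function
\[
 F(x):=\int_l^x\bigl(h(t)-\mathcal Ph\bigr)p(t)\,dt,\qquad x\in[l,u],
\]
which is well defined and absolutely continuous on all of \([l,u]\) because \(h\in L^1(p)\), so the integrand lies in \(L^1(\mathbb R)\). By construction \(w(x)f_{p,w,h}(x)p(x)=\mathcal S_ph(x)p(x)=F(x)\) for \(x\in\mathcal I_p\). Every requirement in the definition of \(\mathcal F_{p,w}\) and in \eqref{eq:w-stein-equation} will be read off from properties of \(F\).

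\textbf{Step 1: regularity.} On each component \((e_{r-1},e_r)\), the functions \(p\) and \(w\) are nonvanishing, \(p\) is locally absolutely continuous, and \(wp\) is locally absolutely continuous. Since a nonvanishing continuous function is bounded away from zero on compacts, \(1/(wp)\) is locally absolutely continuous there. Hence \(f_{p,w,h}=F\cdot\bigl(1/(wp)\bigr)\), a product of locally absolutely continuous functions, is locally absolutely continuous on each component. Incidentally this shows that \(w=(wp)/p\) is locally absolutely continuous too, although it is not needed.

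\textbf{Step 2: boundary and junction conditions.} We have \(\lim_{x\to l+}wfp=\lim_{x\to l+}F(x)=F(l)=0\). At the other end, \(\lim_{x\to u-}F(x)=\int_l^u(h-\mathcal Ph)p=\mathcal Ph-\mathcal Ph=0\), using \(\int p=1\). At an interior point \(e_r\), continuity of \(F\) gives equal one-sided limits \(F(e_r)\). All required limits therefore exist and match.

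\textbf{Step 3: the equation.} For a.e. \(x\in\mathcal I_p\), the Lebesgue differentiation theorem gives \(F'(x)=(h(x)-\mathcal Ph)p(x)\). Since \(wfp=F\) is locally absolutely continuous, the product rule applied to \((wp)\cdot f\) yields
\[
 (wfp)'=(wp)'f+wp\,f'.
\]
Dividing by \(p\) gives \(wf'+s_wf=F'/p=h-\mathcal Ph\) a.e., which is \eqref{eq:w-stein-equation}. Along the way this also confirms the identity \(\mathcal A_{p,w}f=(wfp)'/p\) in \eqref{eq:general-density-stein-operator}.

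The only mildly delicate point is Step 1. One must make sure the product rule and local absolute continuity are applied on compact subintervals where \(wp\) is bounded away from zero. This requires \(wp\) to be continuous and nonvanishing on each open component, which holds because it is locally absolutely continuous and both factors are nonzero on \(\mathcal I_p\).
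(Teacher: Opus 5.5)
Your proof is correct and follows the same route as the paper. Both arguments identify \(w f_{p,w,h} p\) with the indefinite integral \(\int_l^x(h-\mathcal Ph)p\), read off the endpoint and junction limits from it, and differentiate on each component. Your Step 1 also justifies the local absolute continuity of \(f_{p,w,h}\) via \(1/(wp)\), which the paper's proof leaves implicit.
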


\begin{proof}
By definition,
\(w(x)f_{p,w,h}(x)p(x)
 =
 \int_l^x(h(t)-\mathcal Ph)p(t)\,dt\).
This expression tends to zero at \(l\) and \(u\) and has matching
one-sided limits at every \(e_r\). Hence
\(f_{p,w,h}\in\mathcal F_{p,w}\). Differentiating on each
\((e_{r-1},e_r)\) gives
\(\mathcal A_{p,w}f_{p,w,h}=h-\mathcal Ph\) and the claim follows. 
\end{proof}

The low-order formulas are naturally expressed through the following
function.

\begin{definition}
\label{def:stein-kernel}
Suppose that \(Z\) has density \(p\) and finite mean
\(\mu:=\mathbb EZ\). The Stein covariance kernel of \(p\) is the function
\[
 \tau_p(x)
 := \mathcal{S}_p\mathrm{Id}(x)\left(=
 \frac{1}{p(x)}
 \int_l^x(\mu-t)p(t)\,dt
 =
 \frac{1}{p(x)}
 \int_x^u(t-\mu)p(t)\,dt\right),
 \qquad x\in\mathcal I_p.
\]
\end{definition}

\begin{remark}
The function in Definition~\ref{def:stein-kernel} has its origins in the
weighted covariance and variance inequalities of
\cite{cacoullos_1982,cacoullos_papathanasiou_1985} and in the general
framework of \cite{stein_approx_1986}. Its interpretation as a canonical
inverse-Stein object in the density approach was developed systematically
in \cite{ley_reinert_swan_2017}. Stein kernels play an important role in
Malliavin--Stein and information-theoretic methods
\cite{nourdin_peccati_2012,nourdin_peccati_swan_2014,
ledoux_nourdin_peccati_2015}, and in weighted Poincar\'e and concentration
inequalities \cite{saumard_2019,ernst_reinert_swan_2020}. When the support
of \(p\) is connected, Definition~\ref{def:stein-kernel} gives the
canonical non-negative Stein kernel, unique up to \(p\)-null sets; see
\cite{doebler_2025}.
\end{remark}

The choice \(w=\tau_p\) gives
\[
 \mathcal A_{p,\tau_p}f(x)
 =\tau_p(x)f'(x)+(\mu-x)f(x),
\]
because \((\tau_pp)'=(\mu-x)p\). Thus the Stein kernel is itself an
admissible weight whenever the displayed quantities and boundary limits
are well defined. Theorem~\ref{thm:universal-low-order-calculus} show why this choice is
distinguished, even without assuming any form for \(\tau_p\).

Rearranging \eqref{eq:caninv} gives
\begin{equation}
\label{eq:inverseoperatorrepresentation}
 \mathcal S_ph(x)
 =
 \frac{\overline P(x)}{p(x)}
 \int_l^x h(v)p(v)\,dv
 -
 \frac{P(x)}{p(x)}
 \int_x^u h(v)p(v)\,dv,
\end{equation}
and, for $x\in\mathcal I_p$,
\begin{equation}
\label{eq:stein-solution-h-not-C1}
 f_{p,w,h}(x)
 =
 \frac{\overline P(x)}{w(x)p(x)}
 \mathbb E\bigl[h(Z)\mathbf 1_{\{Z\leq x\}}\bigr]
 -
 \frac{P(x)}{w(x)p(x)}
 \mathbb E\bigl[h(Z)\mathbf 1_{\{Z>x\}}\bigr].
\end{equation}

\subsection{Kolmogorov solutions}

\label{sec:kolmogorov-solutions}
For lower-half-line indicators all relevant quantities are explicit.
Direct algebra extends Theorem~\ref{theo:kolmosteif} to the general
operator \(\mathcal A_{p,w}\) and also provides representation of all-order derivatives. In this subsection we strengthen
Assumption~\ref{ass:density-support} by requiring \(wp\) to be strictly
positive and differentiable throughout \((l,u)\).

Set
\[
 a_w(x):=-\frac{s_w(x)}{w(x)}
 =-\frac{(w(x)p(x))'}{w(x)p(x)},
 \qquad x\in(l,u),
\]
and, for $n\geq 0$, define the generalized Laplace coefficients \(A_{p,w,n}\) and \(B_{p,w,n}\) recursively by
\begin{align}
 A_{p,w,0}=1&,\quad 
 A_{p,w,n+1}=A_{p,w,n}'+a_wA_{p,w,n},\label{eq:generalized-laplace-coefficients-An}\\
 B_{p,w,0}=0&, \quad B_{p,w,n+1}=B_{p,w,n}'+w^{-1}A_{p,w,n}, \label{eq:generalized-laplace-coefficients-Bn}
\end{align}
The first coefficients are
\[
 A_{p,w,1}=a_w,\quad B_{p,w,1}=w^{-1},\qquad
 A_{p,w,2}=a_w'+a_w^2,\quad B_{p,w,2}=(w^{-1})'+a_ww^{-1}.
\]
The coefficients \(A_{p,w,n}\) also admit the explicit representation
\begin{equation}
\label{eq:generalized-laplace-A-explicit}
 A_{p,w,n}(x)
 =
 w(x)p(x)
 \left(\frac{1}{wp}\right)^{(n)}(x).
\end{equation}

\begin{theorem} 
\label{theo:general-kolmogorov-all-orders}
Suppose that $w p$ is strictly positive and differentiable throughout $(l, u)$.  
Let \(z\in(l,u)\), let \(h_z=\mathbf 1_{(-\infty,z]}\). Then
\begin{equation}
\label{eq:general-explicit-kolmogorov}
 f_{p,w,h_z}(x)
 =
 \frac{P(x\wedge z)\overline P(x\vee z)}{w(x)p(x)},
 \qquad x\in (l, u).
\end{equation}
This function is continuous on \((l,u)\). If \(a_w\) and \(1/w\) are
\(n\) times differentiable, then \(f_{p,w,h_z}\) is \(n\) times
differentiable away from \(z\), and for every \(x\neq z\),
\begin{equation}
\label{eq:general-kolmogorov-all-derivatives}
 f_{p,w,h_z}^{(n)}(x)
 =
 A_{p,w,n}(x)f_{p,w,h_z}(x)
 +
 B_{p,w,n}(x)\bigl(h_z(x)-P(z)\bigr).
\end{equation}
Its one-sided derivatives satisfy
\begin{equation}
\label{eq:general-kolmogorov-derivative-jumps}
 f_{p,w,h_z}^{(n)}(z+)
 -
 f_{p,w,h_z}^{(n)}(z-)
 =
 -B_{p,w,n}(z),
 \qquad n\geq1.
\end{equation}
In particular, \(f_{p,w,h_z}\) is continuous at \(z\), whereas its first derivative has the jump
\[
 f_{p,w,h_z}'(z+)
 -
 f_{p,w,h_z}'(z-)
 =
 -\frac{1}{w(z)}.
\]
\end{theorem}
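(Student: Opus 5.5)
We begin with the explicit formula \eqref{eq:general-explicit-kolmogorov}, using the representation \eqref{eq:stein-solution-h-not-C1} from Proposition~\ref{prop:stein-solution}. Since \(\mathcal P h_z=P(z)\), for \(x\le z\) we have \(\mathbb E[h_z(Z)\mathbf 1_{\{Z\le x\}}]=P(x)\) and \(\mathbb E[h_z(Z)\mathbf 1_{\{Z>x\}}]=P(z)-P(x)\). Hence
\[
w p f_{p,w,h_z}(x)=\overline P(x)P(x)-P(x)\bigl(P(z)-P(x)\bigr)=P(x)\overline P(z).
\]
The case \(x>z\) is analogous: there \(wpf=\overline P(x)P(z)\). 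Both cases are captured by \(P(x\wedge z)\overline P(x\vee z)\). Continuity on \((l,u)\) follows because \(P\) and \(\overline P\) are continuous and \(wp\) is positive and continuous. At \(x=z\) the two branches agree.

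For the derivatives we argue by induction on \(n\), working on each of \((l,z)\) and \((z,u)\) separately. On these open intervals \(h_z\) is constant, so \(c:=h_z(x)-P(z)\) is a constant on each piece. From the Stein equation \eqref{eq:w-stein-equation} we get
\[
f'=-\frac{s_w}{w}f+\frac{c}{w}=a_wf+w^{-1}c,
\]
which is the case \(n=1\), since \(A_{p,w,1}=a_w\) and \(B_{p,w,1}=w^{-1}\). The case \(n=0\) holds trivially with \(A_{p,w,0}=1\) and \(B_{p,w,0}=0\).

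Assume \(f^{(n)}=A_{p,w,n}f+B_{p,w,n}c\) on a piece. Differentiating gives
\[
f^{(n+1)}=A_{p,w,n}'f+A_{p,w,n}(a_wf+w^{-1}c)+B_{p,w,n}'c.
\]
This is exactly \(A_{p,w,n+1}f+B_{p,w,n+1}c\) by the recursions \eqref{eq:generalized-laplace-coefficients-An}--\eqref{eq:generalized-laplace-coefficients-Bn}.

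The regularity bookkeeping is the only delicate point. We need \(A_{p,w,n}\) and \(B_{p,w,n}\) to be differentiable at each step. By the recursion, \(A_{p,w,k}\) is a polynomial in \(a_w,\dots,a_w^{(k-1)}\), and \(B_{p,w,k}\) involves derivatives of \(1/w\) up to order \(k-1\) together with \(a_w\)-terms. So differentiability of \(a_w\) and \(1/w\) up to order \(n\) suffices inductively to define the coefficients up to order \(n\) and to justify each differentiation. The induction is ordered so that at step \(k\) only derivatives of order at most \(k\) are used.

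For the jumps \eqref{eq:general-kolmogorov-derivative-jumps}, we take one-sided limits at \(z\) in \eqref{eq:general-kolmogorov-all-derivatives}. The first term \(A_{p,w,n}f\) is continuous at \(z\). This uses continuity of \(f\) and of \(A_{p,w,n}\) at \(z\); the latter holds since \(A_{p,w,n}\) is built from derivatives of order at most \(n-1\) of a function that is \(n\) times differentiable. For the second term, \(c=1-P(z)\) on the left and \(c=-P(z)\) on the right, so its jump is \(B_{p,w,n}(z)\bigl(-P(z)-(1-P(z))\bigr)=-B_{p,w,n}(z)\). This requires continuity of \(B_{p,w,n}\) at \(z\), which holds by the same argument.

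Setting \(n=1\) gives the stated jump \(-1/w(z)\).
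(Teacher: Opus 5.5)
Your proposal is correct and follows the paper's proof. You substitute $h_z$ into \eqref{eq:stein-solution-h-not-C1}, rewrite the Stein equation on $(l,z)$ and $(z,u)$ as $f'=a_wf+w^{-1}(h_z-P(z))$ and induct using the recursions for $A_{p,w,n}$ and $B_{p,w,n}$, then read off the jump from the continuity of $f$ at $z$ and the unit downward jump of $h_z$; your explicit check that $A_{p,w,n}$ and $B_{p,w,n}$ are continuous at $z$ fills in a detail the paper leaves implicit.
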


\begin{proof}
Substituting \(h_z\) into
\eqref{eq:stein-solution-h-not-C1} gives
\[
 f_{p,w,h_z}(x)
 =
 \frac{P(x\wedge z)-P(x)P(z)}{w(x)p(x)}
 =
 \frac{P(x\wedge z)\overline P(x\vee z)}{w(x)p(x)},
\]
which proves~\eqref{eq:general-explicit-kolmogorov}. Away from \(z\), the function \(h_z-P(z)\) is constant, and the Stein equation can be written as
$f_{p,w,h_z}'
 =
 a_w f_{p,w,h_z}
 +
 w^{-1}(h_z-P(z))$.
Repeated differentiation, together with the recursions in~\eqref{eq:generalized-laplace-coefficients-An} and~\eqref{eq:generalized-laplace-coefficients-Bn}, yields~\eqref{eq:general-kolmogorov-all-derivatives}. Finally, the continuity of \(f_{p,w,h_z}\) at \(z\) and the identity
$h_z(z+)-h_z(z-)=-1$
give~\eqref{eq:general-kolmogorov-derivative-jumps}.
\end{proof}

\begin{remark}
For the standard Gaussian density Stein operator, taking \(w\equiv1\), yields \(s_1(x)=-x\) and $a_1(x)=x,$ thus retrieve the results of Theorem~\ref{theo:kolmosteif}.
\end{remark}

The explicit representation~\eqref{eq:general-kolmogorov-all-derivatives} also yields the exact pointwise envelopes and, upon maximization over \(x\), the corresponding Kolmogorov Stein factors.

\begin{corollary} 
\label{cor:general-kolmogorov-envelopes}
Still under the assumptions of the previous theorem, for \(n\geq1\),
\begin{equation}
\label{eq:general-kolmogorov-exact-envelope}
\begin{split}
\sup_{\substack{z\in(l,u)\\ z\neq x}}
 \left|f_{p,w,h_z}^{(n)}(x)\right|
 =
 \max\Bigg\{&
 P(x)
 \left|
 \frac{\overline P(x)A_{p,w,n}(x)}{w(x)p(x)}
 -
 B_{p,w,n}(x)
 \right|,
 \\
 &
 \overline P(x)
 \left|
 \frac{P(x)A_{p,w,n}(x)}{w(x)p(x)}
 +
 B_{p,w,n}(x)
 \right|
 \Bigg\}.
\end{split}
\end{equation}
For \(n=0\), the supremum may be taken over all \(z\in (l,u)\).
\end{corollary}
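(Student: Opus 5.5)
We start from the explicit representation \eqref{eq:general-kolmogorov-all-derivatives} in Theorem~\ref{theo:general-kolmogorov-all-orders} and split according to the relative position of $x$ and $z$. Fix $x\in(l,u)$ and $n\ge1$.

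\emph{Case $z>x$.} Here $h_z(x)=1$, so $h_z(x)-P(z)=\overline P(z)$. By \eqref{eq:general-explicit-kolmogorov}, $f_{p,w,h_z}(x)=P(x)\overline P(z)/(w(x)p(x))$. Substituting gives
\[
 f^{(n)}_{p,w,h_z}(x)=\overline P(z)\left(\frac{P(x)A_{p,w,n}(x)}{w(x)p(x)}+B_{p,w,n}(x)\right).
\]
The only dependence on $z$ is through the scalar factor $\overline P(z)$. This factor is decreasing in $z$, lies in $(0,\overline P(x))$ for $z\in(x,u)$, and tends to $\overline P(x)$ as $z\downarrow x$. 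Hence the supremum over $z>x$ equals $\overline P(x)\,\bigl|P(x)A_{p,w,n}(x)/(w(x)p(x))+B_{p,w,n}(x)\bigr|$. It is approached but not attained, which is harmless because the statement is a supremum.

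\emph{Case $z<x$.} Here $h_z(x)-P(z)=-P(z)$ and $f_{p,w,h_z}(x)=P(z)\overline P(x)/(w(x)p(x))$. Substituting gives
\[
 f^{(n)}_{p,w,h_z}(x)=P(z)\left(\frac{\overline P(x)A_{p,w,n}(x)}{w(x)p(x)}-B_{p,w,n}(x)\right).
\]
The factor $P(z)$ increases to $P(x)$ as $z\uparrow x$, so the supremum over $z<x$ is $P(x)\,\bigl|\overline P(x)A_{p,w,n}(x)/(w(x)p(x))-B_{p,w,n}(x)\bigr|$.

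Taking the maximum of the two one-sided suprema yields \eqref{eq:general-kolmogorov-exact-envelope}. The two limits as $z\to x^\pm$ are exactly the two one-sided values of $f^{(n)}$ at $x$ in the corresponding configurations. Consistently with \eqref{eq:general-kolmogorov-derivative-jumps}, these values differ by the jump $\pm B_{p,w,n}$, which is why two distinct terms appear.

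\emph{Case $n=0$.} Since $A_{p,w,0}=1$ and $B_{p,w,0}=0$, both expressions reduce to $P(x)\overline P(x)/(w(x)p(x))$ in absolute value. Because $f_{p,w,h_z}$ is continuous and defined at $z=x$ by \eqref{eq:general-explicit-kolmogorov}, the value at $z=x$ is exactly this quantity. Including $z=x$ therefore does not change the supremum, and the supremum is in fact attained.

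The main point to check is the monotonicity and range of $\overline P(z)$ and $P(z)$ on the relevant intervals. Both are continuous, monotone, and positive on $(l,u)$ under the positivity assumption on $wp$ (so $p>0$ there), which justifies the limits $\overline P(z)\to\overline P(x)$ as $z\downarrow x$ and $P(z)\to P(x)$ as $z\uparrow x$. No other step poses an obstacle; the argument is pure substitution plus monotonicity of the distribution function.
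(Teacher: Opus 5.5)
Your proof is correct and follows the paper's argument: substitute the explicit formula \eqref{eq:general-kolmogorov-all-derivatives} separately for $z<x$ and $z>x$, observe that the $z$-dependence is only through the scalar factor $P(z)$ (resp.\ $\overline P(z)$), and let $z\to x$ using monotonicity and continuity of the distribution function. You also write out the $z>x$ case and the $n=0$ remark (the supremum is attained at $z=x$), both of which the paper leaves implicit.
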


\begin{proof}
If \(z<x\), then \(h_z(x)=0\), and
\(
 f_{p,w,h_z}^{(n)}(x)
 =
 P(z)
 (
 {\overline P(x)A_{p,w,n}(x)} / ({w(x)p(x)} ) 
 -
 B_{p,w,n}(x)
 ).
\)
Since \(P(z)\uparrow P(x)\) as \(z\uparrow x\), it follows that
\[
 \sup_{z<x}
 \left|f_{p,w,h_z}^{(n)}(x)\right|
 =
 P(x)
 \left|
 \frac{\overline P(x)A_{p,w,n}(x)}{w(x)p(x)}
 -
 B_{p,w,n}(x)
 \right|.
\]
A similar reasoning holds if \(z>x\), with the roles of $P$ and $\overline P$ exchanged. This proves~\eqref{eq:general-kolmogorov-exact-envelope}. 
\end{proof}

\begin{example}
For \(\varepsilon\in\{-1,1\}\), set
\[
 P_\varepsilon(x)
 :=
 \begin{cases}
 P(x), & \varepsilon=1,\\
 \overline P(x), & \varepsilon=-1.
 \end{cases}
\]
For ease of reference, we record the pointwise Kolmogorov envelopes of
orders \(0\), \(1\), and \(2\), first in compact form and then in an
expanded form suitable for direct application.

At order \(0\),
\begin{align}
\sup_{z\in(l,u)}
\left|f_{p,w,h_z}(x)\right|
&=
\max_{\varepsilon\in\{-1,1\}}
P_\varepsilon(x)
\left|
\frac{P_{-\varepsilon}(x)}{w(x)p(x)}
\right|
\notag\\
&=
\frac{P(x)\overline P(x)}
{|w(x)|p(x)}.
\label{eq:general-kolmogorov-zero-envelope}
\end{align}

At order \(1\),
\begin{align}
\sup_{\substack{z\in(l,u)\\ z\neq x}}
\left|f_{p,w,h_z}'(x)\right|
&=
\max_{\varepsilon\in\{-1,1\}}
P_\varepsilon(x)
\left|
\left(
\frac{P_{-\varepsilon}}{wp}
\right)'(x)
\right|
\notag\\
&=
\max\Bigg\{
P(x)
\left|
-\frac{1}{w(x)}
-\frac{\overline P(x)(wp)'(x)}
       {(w(x)p(x))^2}
\right|,
\notag\\
&\hspace{2.4cm}
\overline P(x)
\left|
\frac{1}{w(x)}
-\frac{P(x)(wp)'(x)}
       {(w(x)p(x))^2}
\right|
\Bigg\}.
\label{eq:general-kolmogorov-first-envelope}
\end{align}

At order \(2\),
\begin{align}
\sup_{\substack{z\in (l,u)\\ z\neq x}}
\left|f_{p,w,h_z}''(x)\right|
&=
\max_{\varepsilon\in\{-1,1\}}
P_\varepsilon(x)
\left|
\left(
\frac{P_{-\varepsilon}}{wp}
\right)''(x)
\right|
\notag\\
&=
\max\Bigg\{
P(x)
\left|
-\frac{p'(x)}{w(x)p(x)}
+\frac{2(wp)'(x)}
       {w(x)^2p(x)}
\right.
\notag\\
&\hspace{3.2cm}\left.
+\overline P(x)
\frac{
2\bigl((wp)'(x)\bigr)^2
-w(x)p(x)(wp)''(x)
}{
\bigl(w(x)p(x)\bigr)^3
}
\right|,
\notag\\
&\hspace{0.7cm}
\overline P(x)
\left|
\frac{p'(x)}{w(x)p(x)}
-\frac{2(wp)'(x)}
       {w(x)^2p(x)}
\right.
\notag\\
&\hspace{3.2cm}\left.
+P(x)
\frac{
2\bigl((wp)'(x)\bigr)^2
-w(x)p(x)(wp)''(x)
}{
\bigl(w(x)p(x)\bigr)^3
}
\right|
\Bigg\}.
\label{eq:general-kolmogorov-second-envelope}
\end{align}
\end{example}
\begin{remark}
For any admissible weight \(w\), maximizing
\eqref{eq:general-kolmogorov-exact-envelope} over \(x\in(l,u)\) gives the
optimal Kolmogorov Stein factor
\[
\sup_{z\in(l,u)}\|f_{p,w,h_z}^{(n)}\|,
\]
where derivatives are understood away from \(z\) and \(\|\cdot\|\) is the
essential-supremum norm. This quantity need not be finite. The choice of
\(w\) is therefore important: a suitable weight can compensate for
unfavourable boundary or tail behaviour of \(p\).
\end{remark}

\begin{proposition} \label{prop:score-weighted-kolmogorov-factor}
    For differentiable $p$, write
    \(\rho_p=p'/p\) its score function. If $p$ is log-concave, then at every point
    where \(\rho_p(x)\neq0\),
    \[\sup_{z\in (l,u)}|f_{p,w,h_z}(x)| \le \frac{\max\big(P(x),\overline{P}(x) \big)}{w(x)|\rho_p(x)|}.
    \]
    In particular,
    \[\sup_{z\in (l,u)}|f_{p,1,h_z}(x)\rho_p(x)| \le 1 \color{red}.\color{black}
    \]
\end{proposition}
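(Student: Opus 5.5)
The plan is to start from the exact order-zero envelope \eqref{eq:general-kolmogorov-zero-envelope}, which holds under the standing assumptions of this subsection:
\[
\sup_{z\in(l,u)}|f_{p,w,h_z}(x)|=\frac{P(x)\overline P(x)}{|w(x)|p(x)} .
\]
Since \(wp>0\) and \(p>0\) on \((l,u)\), we have \(w>0\), so \(|w(x)|=w(x)\). The identity \(P(x)\overline P(x)=\min(P(x),\overline P(x))\,\max(P(x),\overline P(x))\) then reduces the claim to the one-sided tail estimate
\[
\min\bigl(P(x),\overline P(x)\bigr)\le \frac{p(x)}{|\rho_p(x)|},\qquad \rho_p(x)\ne 0 .
\]
In fact I will prove the slightly sharper statement that the tail lying on the side toward which \(p\) decreases is at most \(p(x)/|\rho_p(x)|\).

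The key step uses log-concavity: \(\rho_p=(\log p)'\) is non-increasing on \((l,u)\). I split into two cases according to the sign of \(\rho_p(x)\).
\begin{itemize}
\item If \(\rho_p(x)<0\), then for \(t>x\) concavity of \(\log p\) gives the tangent-line bound \(\log p(t)\le \log p(x)+\rho_p(x)(t-x)\). Hence \(p(t)\le p(x)e^{\rho_p(x)(t-x)}\), and integrating over \((x,u)\) yields \(\overline P(x)\le p(x)/|\rho_p(x)|\).
\item If \(\rho_p(x)>0\), the same tangent-line bound for \(t<x\), integrated over \((l,x)\), gives \(P(x)\le p(x)/\rho_p(x)\).
\end{itemize}
In either case \(\min(P,\overline P)\le p/|\rho_p|\). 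Substituting this into the envelope gives
\[
\frac{P\overline P}{wp}\le \frac{\max(P,\overline P)}{w|\rho_p|}.
\]

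For the second statement, take \(w\equiv1\) and multiply by \(|\rho_p(x)|\). This gives \(\sup_z|f_{p,1,h_z}(x)\rho_p(x)|\le \max(P(x),\overline P(x))\le1\) at points where \(\rho_p(x)\ne0\). At points where \(\rho_p(x)=0\), the product is \(0\), so the bound holds trivially.

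The only delicate point is justifying the tangent-line inequality for \(\log p\) when \(p\) is merely differentiable and log-concave, rather than \(C^2\). This follows from the fact that a differentiable concave function lies below its tangent lines. Truncating the integrals at \(l\) or \(u\) only decreases them, so finite endpoints cause no difficulty.
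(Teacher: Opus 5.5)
Your proposal is correct and follows the paper's proof. Both arguments use the supporting-line inequality for the concave function $\log p$ on the tail toward which $p$ decreases, as dictated by the sign of $\rho_p(x)$, to get $\min(P(x),\overline P(x))\le p(x)/|\rho_p(x)|$, and then substitute this into the exact order-zero envelope \eqref{eq:general-kolmogorov-zero-envelope}. Your extra remarks only spell out steps the paper's short proof leaves implicit: that $w>0$, so $|w|=w$, and that $\max(P,\overline P)\le 1$ gives the second claim.
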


\begin{proof}
If
\(\rho_p(x)>0\), log-concavity and the supporting-line inequality for
\(\log p\) give \(P(x)\le p(x)/\rho_p(x)\). If
\(\rho_p(x)<0\), the same argument on the right tail gives
\(\overline P(x)\le p(x)/|\rho_p(x)|\). By \eqref{eq:general-kolmogorov-zero-envelope}, the two assertions follow.
\end{proof}

\begin{remark}
    Unlike other envelopes and (weighted) factors given in this paper, the bounds from Proposition~\ref{prop:score-weighted-kolmogorov-factor} are not sharp, in the sense that for many log-concave distributions there must not exists an $x\in (l,u)$ such that the equality is attained neither in the first nor the second inequality.
\end{remark}

\subsection{The kernel array}
\label{sec:kernel-representations}

We now define the array used throughout the paper.  
We begin by defining the weighted lower and upper Mills ratios
\[
 \underline m_{p,w}(x):=\frac{P(x)}{w(x)p(x)},
 \qquad
 \overline m_{p,w}(x):=\frac{\overline P(x)}{w(x)p(x)},
 \qquad x\in\mathcal I_p.
\]
Set \(\underline P_{0}=\overline P_{0}=p\) and, for \(j\geq1\),
\[
 \underline P_{j}(x)
 :=
 \int_l^x\underline P_{j-1}(t)\,dt =
 \frac{\mathbb E[(x-Z)_+^{j-1}]}{(j-1)!},
 \quad
 \overline P_{j}(x)
 :=
 \int_x^u\overline P_{j-1}(t)\,dt =\frac{\mathbb E[(Z-x)_+^{j-1}]}{(j-1)!},
\]
whenever these integrals are finite. 

For \(i,j\geq0\), define
\begin{equation}
\label{eq:def-kernels}
 K_{p,w}^{i,j}(x,v)
 := \frac{1}{p(v)} \left[
 (-1)^j
 \overline m_{p, w}^{(i)}(x) \underline P_j(v)\mathbf 1_{\{v\leq x\}}
 -
 \underline m_{p, w}^{(i)}
 (x)
 \overline P_j(v) \mathbf 1_{\{v>x\}}\right],
\end{equation}
for \(x,v\in\mathcal I_p\). 
For $i\ge0, j\ge -1$
\begin{align}
\label{eq:def-M}
  M_{p, w}^{i,j}(x)
  & :=
 (-1)^{j}\overline m_{p, w}^{(i)}(x)\underline P_{j+1}(x)
 -
 \underline m_{p, w}^{(i)}(x)\overline P_{j+1}(x),
\\
\label{eq:general-absolute-kernel-envelope}
  U^{i,j}_{p,w}(x)
 & :=
 |\overline m_{p, w}^{(i)}(x)|\underline P_{j+1}(x)
 +
 |\underline m_{p, w}^{(i)}(x)|\overline P_{j+1}(x).
\end{align}

\begin{remark}[Notation]
The subscripts \(p,w\) specify the target density and weight.
Unparenthesized superscripts \(i,j\) record, respectively, the derivative
orders of the solution and the test function. Parenthesized superscripts
denote ordinary differentiation. 
\end{remark}

\begin{figure}[htbp]
\centering

\resizebox{\textwidth}{!}{%
\begin{tikzpicture}[
  >=Stealth,
  kernel/.style={
    draw=blue!45!black,
    rounded corners=2.5pt,
    fill=gray!4,
    minimum width=1.35cm,
    minimum height=.68cm,
    inner sep=2pt,
    font=\small
  },
  move/.style={->, semithick, draw=blue!55!black
  },
  index/.style={font=\scriptsize, text=blue!55!black
  },
  corr/.style={font=\scriptsize, text=orange!75!black, inner sep=1.6pt
  },
]

\path (-1.90,2.30) coordinate (leftanchor);

\draw[move] (.6,4.85) -- (2.55,4.85)
  node[midway,above,font=\scriptsize,text=blue!55!black]
  {differentiate in \(x\)};
\draw[move] (-1.75,4.00) -- (-1.75,2.20);
\node[font=\scriptsize,text=blue!55!black,rotate=270] at (-2.0,3.10)
  {differentiate \(h\)};

\node[index] at (0,4.60)    {$i=0$};
\node[index] at (2.80,4.60) {$i=1$};
\node[index] at (5.60,4.60) {$i=2$};
\node[index] at (8.00,4.60) {$\cdots$};

\node[index] at (-1.25,3.90) {$j=0$};
\node[index] at (-1.25,2.30) {$j=1$};
\node[index] at (-1.25,0.70) {$j=2$};
\node[index] at (-1.25,-0.50) {$\vdots$};

\node[kernel] (k00) at (0,3.90)    {$K_{p,w}^{0,0}$};
\node[kernel] (k10) at (2.80,3.90) {$K_{p,w}^{1,0}$};
\node[kernel] (k20) at (5.60,3.90) {$K_{p,w}^{2,0}$};
\node (r0) at (8.00,3.90) {$\cdots$};

\node[kernel] (k01) at (0,2.30)    {$K_{p,w}^{0,1}$};
\node[kernel] (k11) at (2.80,2.30) {$K_{p,w}^{1,1}$};
\node[kernel] (k21) at (5.60,2.30) {$K_{p,w}^{2,1}$};
\node (r1) at (8.00,2.30) {$\cdots$};

\node[kernel] (k02) at (0,0.70)    {$K_{p,w}^{0,2}$};
\node[kernel] (k12) at (2.80,0.70) {$K_{p,w}^{1,2}$};
\node[kernel] (k22) at (5.60,0.70) {$K_{p,w}^{2,2}$};
\node (r2) at (8.00,0.70) {$\cdots$};

\node (v0) at (0,-0.50)    {$\vdots$};
\node (v1) at (2.80,-0.50) {$\vdots$};
\node (v2) at (5.60,-0.50) {$\vdots$};
\node at (8.00,-0.50) {$\ddots$};

\draw[move] (k00) -- (k10) node[midway,above,corr]{$-M_{p,w}^{0,-1}h$};
\draw[move] (k10) -- (k20) node[midway,above,corr]{$-M_{p,w}^{1,-1}h$};
\draw[move] (k20) -- (r0)  node[midway,above,corr]{$-M_{p,w}^{2,-1}h$};

\draw[move] (k01) -- (k11) node[midway,above,corr]{$-M_{p,w}^{0,0}h'$};
\draw[move] (k11) -- (k21) node[midway,above,corr]{$-M_{p,w}^{1,0}h'$};
\draw[move] (k21) -- (r1)  node[midway,above,corr]{$-M_{p,w}^{2,0}h'$};

\draw[move] (k02) -- (k12) node[midway,above,corr]{$-M_{p,w}^{0,1}h''$};
\draw[move] (k12) -- (k22) node[midway,above,corr]{$-M_{p,w}^{1,1}h''$};
\draw[move] (k22) -- (r2)  node[midway,above,corr]{$-M_{p,w}^{2,1}h''$};

\draw[move] (k00) -- (k01) node[midway,right,corr]{$M_{p,w}^{0,0}h$};
\draw[move] (k01) -- (k02) node[midway,right,corr]{$M_{p,w}^{0,1}h'$};
\draw[move] (k02) -- (v0)  node[midway,right,corr]{$M_{p,w}^{0,2}h''$};

\draw[move] (k10) -- (k11) node[midway,right,corr]{$M_{p,w}^{1,0}h$};
\draw[move] (k11) -- (k12) node[midway,right,corr]{$M_{p,w}^{1,1}h'$};
\draw[move] (k12) -- (v1)  node[midway,right,corr]{$M_{p,w}^{1,2}h''$};

\draw[move] (k20) -- (k21) node[midway,right,corr]{$M_{p,w}^{2,0}h$};
\draw[move] (k21) -- (k22) node[midway,right,corr]{$M_{p,w}^{2,1}h'$};
\draw[move] (k22) -- (v2)  node[midway,right,corr]{$M_{p,w}^{2,2}h''$};

\end{tikzpicture}%
}

\vspace{1.1em}

\begin{tikzpicture}
\node[draw=gray!55, fill=gray!2, rounded corners=4pt,
      align=center, inner sep=8pt, text width=12.4cm] {
  \textbf{A path ending at \((n,j)\)}\\[5pt]
  $\displaystyle
  \begin{aligned}
  f_{p,w,h}^{(n)}(x)
  &= \textcolor{blue!65!black}{
       \mathbb E\!\left[K_{p,w}^{n,j}(x,Z)h^{(j)}(Z)\right]}
  \\[3pt]
  &\quad+ \textcolor{orange!75!black}{
       \text{sum of the pointwise corrections along the path}}.
  \end{aligned}$
};
\end{tikzpicture}

\vspace{.8em}

\begin{tikzpicture}
\node[draw=blue!45!black, fill=blue!3, rounded corners=4pt,
      align=center, inner sep=8pt, text width=12.4cm] {
  \textbf{If the pointwise corrections cancel}\\[5pt]
  $\displaystyle
  \begin{aligned}
  f_{p,w,h}^{(n)}(x)
  &= \mathbb E\!\left[K_{p,w}^{n,n-1}(x,Z)
       \bigl(h^{(n-1)}(Z)-h^{(n-1)}(x)\bigr)\right]
  \\[3pt]
  &= \mathbb E\!\left[K_{p,w}^{n,n}(x,Z)h^{(n)}(Z)\right]
  \\[3pt]
  &= \mathbb E\!\left[K_{p,w}^{n,n+1}(x,Z)h^{(n+1)}(Z)\right].
  \end{aligned}$
};
\end{tikzpicture}

\caption{The kernel calculus. Horizontal moves differentiate the Stein
solution, vertical moves transfer a derivative to the test function, and the
array continues in both directions. The orange label on each arrow is the
pointwise term at \(x\) created by that move, the argument \(x\) is suppressed on the
arrows for readability. A path therefore gives the kernel term shown in blue together with
the accumulated corrections shown in orange. When these cancel, the three
neighbouring test-function orders give the same derivative of the solution,
with the lower-neighbour representation centred. This representation helps visualize the paths that produce cancelable correction terms, under suitable condition on $p,w$.}
\label{fig:kernel-array-calculus}
\end{figure}

The following theorem is the central tool for obtaining representations of the Stein solution and derivatives.

\begin{lemma}
\label{lma:basis-prop-of-kernels}
Fix $i,j\geq0$ and $x\in\mathcal I_p$. Assume that
\(\underline P_{j+1}(x)\), \(\overline P_{j+1}(x)\), and the
Mills-ratio derivatives through order $i+1$ appearing below are
finite, with the latter locally absolutely continuous near $x$.
For the identities involving $h$, assume that $h$ is locally
absolutely continuous and that
\[
\mathbb E\!\left[
 |K_{p,w}^{i,j}(x,Z)h(Z)|
 +|K_{p,w}^{i,j+1}(x,Z)h'(Z)|
 +|K_{p,w}^{i+1,j}(x,Z)h(Z)|
\right]<\infty,
\]
and that the endpoint products occurring in the two explicit
integrations by parts vanish; in particular it is sufficient that
\[
\lim_{v\downarrow l}h(v)P(v)
=\lim_{v\uparrow u}h(v)\overline P(v)
=\lim_{v\downarrow l}h(v)\underline P_{j+1}(v)
=\lim_{v\uparrow u}h(v)\overline P_{j+1}(v)=0.
\]
A convenient verifiable sufficient condition is that $h$ and $h'$
have polynomial growth of degree $r$, that
\(\mathbb E|Z|^{j+r+1}<\infty\), and that the displayed Mills-ratio
derivatives are finite and locally absolutely continuous. Then
\(K_{p,w}^{i,j}(x,\cdot)\in L^1(p)\), with
\begin{align}
  \mathbb E[K_{p,w}^{i,j}(x,Z)]
 &=
 M_{p, w}^{i,j}(x), 
 \label{eq:expectation-of-kernel}\\
 \mathbb E[|K_{p,w}^{i,j}(x,Z)|]
 &=
 U_{p, w}^{i, j}(x).
 \label{eq:integrability-of-kernel}
\end{align}
Moreover, the solution operator \eqref{eq:caninv} satisfies 
\begin{equation}
\label{eq:inv-Stein-op-applied-to-kernels}
 -\mathcal S_pK_{p,w}^{i,j}(x,\cdot)(v)
 =
 K_{p,w}^{i,j+1}(x,v)+M_{p, w}^{i,j}(x)\left(\frac{P(v)}{p(v)}\mathbf 1_{\{v\leq x\}}
 -
 \frac{\overline P(v)}{p(v)}\mathbf 1_{\{v>x\}}\right).
\end{equation}
Consequently, for every locally absolutely continuous \(h\) for which
the expressions are well defined,
\begin{align}
 \mathbb E[K_{p,w}^{i,j}(x,Z)h(Z)]
 &=
 \mathbb E[K_{p,w}^{i,j+1}(x,Z)h'(Z)]
 +
 M_{p, w}^{i,j}(x)h(x),
 \label{eq:SIBP-for-kernels}\\
 \frac{d}{dx}\mathbb E[K_{p,w}^{i,j}(x,Z)h(Z)]
 &=
 \mathbb E[K_{p,w}^{i+1,j}(x,Z)h(Z)]
 -
 M_{p, w}^{i,j-1}(x)h(x).
 \label{eq:differentiation-of-kernels}
\end{align}
Equivalently, after replacing \(h\) by the appropriate derivative, the
two moves on the array are
\begin{align}
\mathbb E\!\left[
K_{p,w}^{i,j-1}(x,Z)h^{(j-1)}(Z)
\right]
&=
\mathbb E\!\left[
K_{p,w}^{i,j}(x,Z)h^{(j)}(Z)
\right]
+
M_{p,w}^{i,j-1}(x)h^{(j-1)}(x),
\qquad j\geq1,
\label{eq:master-horizontal-move}\\
\frac{d}{dx}
\mathbb E\!\left[
K_{p,w}^{i,j}(x,Z)h^{(j)}(Z)
\right]
&=
\mathbb E\!\left[
K_{p,w}^{i+1,j}(x,Z)h^{(j)}(Z)
\right]
-
M_{p,w}^{i,j-1}(x)h^{(j)}(x), \qquad j\geq1.
\label{eq:master-vertical-move}
\end{align}
Together with
\begin{equation}
\label{eq:master-start}
f_{p,w,h}(x)
=
\mathbb E\!\left[K_{p,w}^{0,0}(x,Z)h(Z)\right],
\end{equation}
these two identities generate the representations of the Stein solution
and all of its derivatives. Whenever the pointwise terms generated along
the chosen path do not cancel, they must be retained.
\end{lemma}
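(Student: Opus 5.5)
The plan is to verify each identity by direct computation from the definition \eqref{eq:def-kernels}. All four identities rest on two facts: $\underline P_j,\overline P_j\ge0$, and $\underline P_j'=\underline P_{j-1}$, $\overline P_j'=-\overline P_{j-1}$.

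\emph{Mean and absolute mean.} For \eqref{eq:expectation-of-kernel}, the factor $1/p(v)$ in $K_{p,w}^{i,j}$ cancels against the density. This gives
\[
\mathbb E[K_{p,w}^{i,j}(x,Z)]=(-1)^j\overline m_{p,w}^{(i)}(x)\int_l^x\underline P_j(v)\,dv-\underline m_{p,w}^{(i)}(x)\int_x^u\overline P_j(v)\,dv ,
\]
and the two integrals are $\underline P_{j+1}(x)$ and $\overline P_{j+1}(x)$, which is $M_{p,w}^{i,j}(x)$. For \eqref{eq:integrability-of-kernel}, the two indicator pieces have disjoint supports and $\underline P_j,\overline P_j\ge0$. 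Hence $|K_{p,w}^{i,j}|p$ equals $|\overline m^{(i)}_{p,w}(x)|\underline P_j\mathbf 1_{\{v\le x\}}+|\underline m^{(i)}_{p,w}(x)|\overline P_j\mathbf 1_{\{v>x\}}$, and integrating gives $U^{i,j}_{p,w}(x)$. Since this is finite by assumption, $K_{p,w}^{i,j}(x,\cdot)\in L^1(p)$.

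\emph{The identity for $\mathcal S_p$.} To prove \eqref{eq:inv-Stein-op-applied-to-kernels}, I split according to the position of $v$ relative to $x$.
\begin{itemize}
\item For $v\le x$, use the lower-integral form of \eqref{eq:caninv}:
\[
p(v)\,\mathcal S_pK(v)=(-1)^j\overline m^{(i)}_{p,w}(x)\,\underline P_{j+1}(v)-M^{i,j}_{p,w}(x)P(v),
\]
and negating this gives $K^{i,j+1}_{p,w}(x,v)+M^{i,j}_{p,w}(x)P(v)/p(v)$.
\item For $v>x$, use the upper-integral form of \eqref{eq:caninv}:
\[
p(v)\,\mathcal S_pK(v)=M^{i,j}_{p,w}(x)\overline P(v)+\underline m^{(i)}_{p,w}(x)\,\overline P_{j+1}(v),
\]
whose negative is again the claimed expression.
\end{itemize}

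\emph{Integration by parts \eqref{eq:SIBP-for-kernels}.} I apply the Stein integration by parts formula of the Remark following \eqref{eq:caninv}, with $K_{p,w}^{i,j}(x,\cdot)$ in place of $h$ and $h$ in place of $g$. This gives
\[
\mathbb E[(K-M^{i,j}_{p,w}(x))h(Z)]=\mathbb E[K^{i,j+1}_{p,w}(x,Z)h'(Z)]+M^{i,j}_{p,w}(x)\Bigl(\int_l^xP h'-\int_x^u\overline P h'\Bigr).
\]
An ordinary integration by parts, using the vanishing endpoint products $h P\to0$ at $l$ and $h\overline P\to0$ at $u$, turns the bracket into $h(x)-\mathcal Ph$. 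The $M\,\mathcal Ph$ terms then cancel on both sides, which yields \eqref{eq:SIBP-for-kernels}. Rather than quote the Remark, I would redo this argument by hand via Fubini, to see precisely which integrability hypotheses are used. I expect this to be the main obstacle: the interchange of integrals and the vanishing of the boundary terms (including $h\,\underline P_{j+1}$ and $h\,\overline P_{j+1}$) must be justified. The polynomial-growth condition together with $\mathbb E|Z|^{j+r+1}<\infty$ gives this, via Markov-type tail bounds $\overline P_{j+1}(v)=O(v^{-r})$.

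\emph{Differentiation \eqref{eq:differentiation-of-kernels} and assembly.} I write
\[
\mathbb E[K^{i,j}_{p,w}(x,Z)h(Z)]=(-1)^j\overline m^{(i)}_{p,w}(x)\int_l^x\underline P_jh-\underline m^{(i)}_{p,w}(x)\int_x^u\overline P_jh .
\]
Differentiating by the product rule, valid by the local absolute continuity of the Mills-ratio derivatives, produces two kinds of terms. The terms where the derivative falls on the Mills ratios give $\mathbb E[K^{i+1,j}_{p,w}h]$. The boundary terms are $\bigl((-1)^j\overline m^{(i)}_{p,w}\underline P_j+\underline m^{(i)}_{p,w}\overline P_j\bigr)(x)h(x)$, and this equals $-M^{i,j-1}_{p,w}(x)h(x)$ by \eqref{eq:def-M} with index $j-1$. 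The master moves \eqref{eq:master-horizontal-move}--\eqref{eq:master-vertical-move} follow by substituting $h^{(j-1)}$ or $h^{(j)}$ for $h$. Finally, \eqref{eq:master-start} is exactly \eqref{eq:stein-solution-h-not-C1}, since $\underline P_0=\overline P_0=p$ and $w f_{p,w,h}=\mathcal S_ph$.
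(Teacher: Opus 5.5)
Your proposal is correct and follows the paper's proof essentially step by step: direct integration for $M^{i,j}_{p,w}$ and $U^{i,j}_{p,w}$, piecewise evaluation of $\mathcal S_p$ on $\{v\le x\}$ and $\{v>x\}$, the Stein integration by parts combined with the centring identity $\mathcal Ph+\mathbb E[Q_x(Z)h'(Z)]=h(x)$, and the Leibniz rule, where you also check explicitly that the boundary terms equal $-M^{i,j-1}_{p,w}(x)h(x)$. One small correction: for the endpoint products under polynomial growth you need $\overline P_{j+1}(v)=o(v^{-r})$ rather than $O(v^{-r})$ (the latter only makes $h\overline P_{j+1}$ bounded), and the moment assumption does give this, since $\overline P_{j+1}(v)\le \mathbb E\bigl[|Z|^{j+r+1}\mathbf 1_{\{Z>v\}}\bigr]/(j!\,v^{r+1})$ for $v>0$.
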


\begin{proof}
Integrating \eqref{eq:def-kernels} over \((l,x]\) and \((x,u)\) gives
\eqref{eq:expectation-of-kernel}; taking absolute values gives
\eqref{eq:integrability-of-kernel}.
Applying \eqref{eq:caninv} to
\(K_{p,w}^{i,j}(x,\cdot)\) gives
\[
 -\mathcal S_pK_{p,w}^{i,j}(x,\cdot)(v)
 =
 \begin{cases}
 K_{p,w}^{i,j+1}(x,v)
 +M_{p, w}^{i,j}(x)P(v)/p(v), & v\leq x,\\
 K_{p,w}^{i,j+1}(x,v)
 -M_{p, w}^{i,j}(x)\overline P(v)/p(v), & v>x,
 \end{cases}
\]
which is \eqref{eq:inv-Stein-op-applied-to-kernels}.
To prove \eqref{eq:SIBP-for-kernels}, integration by parts and
\eqref{eq:expectation-of-kernel} give
\begin{equation}\label{eq:kernel-integration-by-parts-intermediate}
 \mathbb E[K_{p,w}^{i,j}(x,Z)h(Z)]
 =
 M_{p, w}^{i,j}(x)\mathcal Ph
 -
 \mathbb E\left[
  \mathcal S_pK_{p,w}^{i,j}(x,\cdot)(Z)h'(Z)
 \right],
\end{equation}
Set
\(
 Q_x(v)
 :=
 {P(v)}/{p(v)}\mathbf 1_{\{v\leq x\}}
 -
 {\overline P(v)}/{p(v)}\mathbf 1_{\{v>x\}}
\)
for \(x,v\in\mathcal I_p\). By
\eqref{eq:inv-Stein-op-applied-to-kernels},
\begin{equation}\label{eq:kernel-inverse-substitution}
    \mathbb E\left[
  \mathcal S_pK_{p,w}^{i,j}(x,\cdot)(Z)h'(Z)
 \right] =\E\left[- K^{i,j+1}_{p,w}(x,Z)h'(Z)\right] + M^{i,j}_{p,w}(x)\E[Q_x(Z)h'(Z)].
\end{equation}
Moreover,
\begin{equation}\label{eq:kernel-centering-identity}
 \mathcal Ph+\mathbb E[Q_x(Z)h'(Z)]
 =
 \int_l^x(hP)'(t)\,dt
 -
 \int_x^u(h\overline P)'(t)\,dt
 =
 h(x),
\end{equation}
the endpoint terms vanishing by assumption. 
Combining \eqref{eq:kernel-integration-by-parts-intermediate},
\eqref{eq:kernel-inverse-substitution}, and
\eqref{eq:kernel-centering-identity} gives
\eqref{eq:SIBP-for-kernels}. Finally,
\eqref{eq:differentiation-of-kernels} follows from the Leibniz rule
applied to \eqref{eq:def-kernels}.
\end{proof}
The upcoming example compute the firsts kernels and their expectation. Some results comes from the recursion formula of the next Lemma, obtained by simple differentiation.
\begin{lemma}
For $i,j\ge 0$,
    \begin{equation}
\label{eq:M-derivative-recursion}
 \bigl(M_{p,w}^{i,j}\bigr)'
 =
 M_{p,w}^{i+1,j}-M_{p,w}^{i,j-1}.
\end{equation}
Iterating yields, for every \(i\geq1\)
and \(j\geq0\),
\begin{equation}
\label{eq:M-all-order-recursion}
 M_{p,w}^{i,j}
 =
 \bigl(M_{p,w}^{0,j}\bigr)^{(i)}
 +
 \sum_{r=0}^{i-1}
 \bigl(M_{p,w}^{r,j-1}\bigr)^{(i-1-r)}.
\end{equation}
\end{lemma}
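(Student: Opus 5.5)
The plan is to differentiate the definition \eqref{eq:def-M} directly with the product rule, and then iterate the resulting one-step identity by induction on $i$.

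\textbf{One-step recursion.} Write $M_{p,w}^{i,j}=(-1)^j\overline m_{p,w}^{(i)}\underline P_{j+1}-\underline m_{p,w}^{(i)}\overline P_{j+1}$. The only facts needed about the iterated tails are
\[
\underline P_{j+1}'=\underline P_j,\qquad \overline P_{j+1}'=-\overline P_j,\qquad j\ge0,
\]
with $\underline P_0=\overline P_0=p$. These follow from the integral definitions by the fundamental theorem of calculus (a.e., or everywhere where $p$ is continuous). The product rule then gives
\[
(M^{i,j}_{p,w})'=\Big[(-1)^j\overline m^{(i+1)}\underline P_{j+1}-\underline m^{(i+1)}\overline P_{j+1}\Big]+\Big[(-1)^j\overline m^{(i)}\underline P_{j}+\underline m^{(i)}\overline P_{j}\Big].
\]
The first bracket is $M^{i+1,j}_{p,w}$ by definition. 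The second bracket should be recognised as $-M^{i,j-1}_{p,w}$, because
\[
M^{i,j-1}_{p,w}=(-1)^{j-1}\overline m^{(i)}\underline P_j-\underline m^{(i)}\overline P_j=-\big[(-1)^j\overline m^{(i)}\underline P_j+\underline m^{(i)}\overline P_j\big].
\]
Since $M^{i,-1}_{p,w}$ is defined for $j-1=-1$, the case $j=0$ is covered, with $\underline P_0=\overline P_0=p$. This proves \eqref{eq:M-derivative-recursion}, assuming the Mills-ratio derivatives through order $i+1$ exist, as in Lemma~\ref{lma:basis-prop-of-kernels}.

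\textbf{Iteration.} Rewrite \eqref{eq:M-derivative-recursion} as $M^{i+1,j}=(M^{i,j})'+M^{i,j-1}$ and induct on $i\ge1$. For $i=1$ this reads $M^{1,j}=(M^{0,j})'+M^{0,j-1}$, which is \eqref{eq:M-all-order-recursion} with the single term $r=0$. Assuming the formula at level $i$, apply the rewritten recursion to get
\[
M^{i+1,j}=(M^{0,j})^{(i+1)}+\sum_{r=0}^{i-1}(M^{r,j-1})^{(i-r)}+M^{i,j-1}.
\]
The last term is the $r=i$ summand $(M^{i,j-1})^{(0)}$, so the sum runs over $r=0,\dots,i$ with exponent $(i+1)-1-r$, which is the formula at level $i+1$.

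The only delicate point is regularity. Differentiating the sum $i-1-r$ times requires $M^{r,j-1}$, and hence $\underline P_j$, $\overline P_j$ and the Mills ratios, to be sufficiently differentiable. I would state this as a standing smoothness assumption, understood a.e. as in Notation~\ref{not:derivative-norms}, rather than treat it as a genuine obstacle.
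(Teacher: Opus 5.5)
Your proof is correct and follows the same route as the paper, which gives no written proof beyond saying the recursion is ``obtained by simple differentiation'' and then iterated; that is exactly your product-rule computation with $\underline P_{j+1}'=\underline P_j$ and $\overline P_{j+1}'=-\overline P_j$ (the case $j=0$ handled through $M_{p,w}^{i,-1}$), followed by the telescoping induction on $M_{p,w}^{i+1,j}=(M_{p,w}^{i,j})'+M_{p,w}^{i,j-1}$. Treating the required differentiability as a standing assumption matches the paper's own convention.
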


\begin{example}
\label{ex:low-order-kernel-means}
Assume that \(\mathbb E|Z|<\infty\) and that all the derivatives
displayed below exist. 
Direct computations give the initial identities
\[
 M_{p,w}^{0,0}=0,
 \qquad M^{0,-1}_{p,w}=M_{p,w}^{1,0}=-\frac{1}{w},
 \qquad
 M_{p,w}^{0,1}=-\frac{{\tau_p}}{w},
\]
where the last identity follows from 
\[
 \overline P(x)\underline P_2(x)
 +
 P(x)\overline P_2(x)
 =
 {\tau_p}(x)p(x).
\]
For every \(i\geq1\),
\begin{equation}
\label{eq:Mj1-all-orders}
 M_{p,w}^{i,0}
 =
 -\sum_{r=1}^{i}
 \binom{i}{r}
 p^{(r-1)}
 \left(\frac{1}{wp}\right)^{(i-r)},
 \qquad i\geq1.
\end{equation}
The next two coefficients are
\begin{align*}
 M_{p,w}^{1,1}
 =
 -\left(\frac{{\tau_p}}{w}\right)', \qquad
 M_{p,w}^{2,1}
 =
 -\frac{1}{w}
 -
 \left(\frac{{\tau_p}}{w}\right)''.
\end{align*}
Higher orders follow from the same recursion. By \eqref{eq:expectation-of-kernel},
\begin{align}
&  \mathbb E\bigl[K_{p,w}^{0,0}(x,Z)\bigr]
 =
 0,
 \quad \mathbb E\bigl[K_{p,w}^{1,0}(x,Z)\bigr]
 =
 -\frac{1}{w(x)}, 
  \quad  \mathbb E\bigl[K_{p,w}^{0,1}(x,Z)\bigr]
 =
 -\frac{{\tau_p}(x)}{w(x)},
  \label{eq:low-order-mean-01}\\
 & \mathbb E\bigl[K_{p,w}^{1,1}(x,Z)\bigr]
 =
 -\left(\frac{{\tau_p}(x)}{w(x)}\right)'
 \mbox{ and }
 \mathbb E\bigl[K_{p,w}^{2,1}(x,Z)\bigr]
 =
 -\frac{1}{w(x)}
 -
 \left(\frac{{\tau_p}(x)}{w(x)}\right)''.
  \label{eq:low-order-mean-21}
\end{align}
The corresponding absolute means are
\begin{align}
    \mathbb E\bigl[|K_{p,w}^{0,0}(x,Z)|\bigr]
 &=
 2\frac{P(x)\overline P(x)}
 {|w(x)|p(x)} \mbox{ and }
 \mathbb E\bigl[|K_{p,w}^{0,1}(x,Z)|\bigr]
=
 \frac{{\tau_p}(x)}{|w(x)|}. 
 \label{eq:low-order-abs-mean-01}
\end{align}
Without further assumptions, the other absolute means do not simplify
beyond \eqref{eq:integrability-of-kernel}. If however, \(w>0\) and
\begin{equation}\label{eq:cond-first-order-signs-mills}
   \overline m_{p,w}'\leq0, \mbox{ and }\underline m_{p,w}'\geq0 
   \end{equation}
 on $\mathcal I_p$, then
$K_{p,w}^{1,0}(x,v)
 =
 \overline m_{p,w}'(x)\mathbf 1_{\{v\leq x\}}
 -
 \underline m_{p,w}'(x)\mathbf 1_{\{v>x\}}
 \leq0
$ on the full support, and therefore
\begin{equation}
\label{eq:absolute-mean-K10-simplified}
 \mathbb E\bigl[|K_{p,w}^{1,0}(x,Z)|\bigr]
 =
 -\mathbb E\bigl[K_{p,w}^{1,0}(x,Z)\bigr]
 =
 \frac{1}{w(x)}.
\end{equation}

\end{example}

The identities in Example~\ref{ex:low-order-kernel-means} explain why the
Stein-kernel weight is distinguished. In that setup, the required one-sided signs of \eqref{eq:cond-first-order-signs-mills} follow
from the next lemma.
\begin{lemma}
\label{lem:low-order-mills-monotonicity}
Assume that \(\mathbb E|Z|<\infty\). Then, 
almost everywhere on \(\mathcal I_p\),
\begin{equation}
    \underline m_{p, {\tau_p}}'\geq0,
\mbox{ and } 
\overline m_{p, {\tau_p}}'\leq0.
\end{equation}
\end{lemma}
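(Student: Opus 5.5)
We compute the two derivatives directly, using $(\tau_pp)'=(\mu-x)p$, and reduce each sign condition to an integral inequality that is a disguised integral of a nonnegative quantity. Write $\underline m:=\underline m_{p,\tau_p}=P/(\tau_pp)$. Since $\tau_pp(x)=\int_l^x(\mu-t)p(t)\,dt>0$ on $\mathcal I_p$, we have, almost everywhere,
\[
\underline m'(x)=\frac{p(x)\,\tau_p(x)p(x)-P(x)(\mu-x)p(x)}{(\tau_p(x)p(x))^2}
=\frac{p(x)}{(\tau_pp)^2(x)}\Bigl(\tau_p(x)p(x)-(\mu-x)P(x)\Bigr).
\]
It therefore suffices to show that $D(x):=\int_l^x(\mu-t)p(t)\,dt-(\mu-x)P(x)\ge0$. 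Writing $P(x)=\int_l^xp(t)\,dt$, this is
\[
D(x)=\int_l^x\bigl[(\mu-t)-(\mu-x)\bigr]p(t)\,dt=\int_l^x(x-t)p(t)\,dt=\underline P_2(x)\ge0 .
\]
This gives $\underline m'\ge0$.

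For the upper Mills ratio we use the second expression $\tau_pp(x)=\int_x^u(t-\mu)p(t)\,dt$ together with $\overline P'=-p$. Then
\[
\overline m'(x)=\frac{-p\,\tau_pp-\overline P(\mu-x)p}{(\tau_pp)^2},
\]
and the numerator equals $-p(x)\bigl[\int_x^u(t-\mu)p\,dt+(\mu-x)\overline P(x)\bigr]=-p(x)\int_x^u(t-x)p(t)\,dt=-p(x)\overline P_2(x)\le0$. This gives $\overline m'\le0$.

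The only point requiring care is justification rather than computation. First, $\tau_pp$ is locally absolutely continuous and strictly positive on each component of $\mathcal I_p$: it is an integral of an $L^1$ function, and its positivity follows because $\mu$ is the mean, so the partial integrals $\int_l^x(\mu-t)p$ are positive on $(l,u)$. Second, $P$ and $\overline P$ are absolutely continuous. Hence the quotient rule holds almost everywhere. Finiteness of $\underline P_2$ and $\overline P_2$ follows from $\mathbb E|Z|<\infty$. Incidentally, the computation shows $\underline m'=p\underline P_2/(\tau_pp)^2$ and $\overline m'=-p\overline P_2/(\tau_pp)^2$, which is consistent with the identity $\overline P\underline P_2+P\overline P_2=\tau_pp$ of Example~\ref{ex:low-order-kernel-means}.
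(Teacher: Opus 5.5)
Your proof is correct and takes essentially the same approach as the paper. The paper also differentiates the two Mills ratios using $(\tau_pp)'=(\mu-x)p$ together with the identities $\underline P_2=\tau_pp-(\mu-x)P$ and $\overline P_2=\tau_pp+(\mu-x)\overline P$, obtaining $\underline m_{p,\tau_p}'=\underline P_2/(\tau_p^2p)$ and $\overline m_{p,\tau_p}'=-\overline P_2/(\tau_p^2p)$. The only difference is that you derive these identities by linearity from $\underline P_2(x)=\int_l^x(x-t)p(t)\,dt$ rather than by integration by parts, which avoids having to handle a boundary term.
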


\begin{proof} Integration by parts gives 
\begin{equation*}
    \underline P_2 = -s_{\tau_p} P +\tau p, \qquad \overline{P}_2 = s_{\tau_p} \overline{P}+\tau p.
\end{equation*}
Algebraic manipulations then lead to 
\begin{equation*}
\underline m_{p,\tau_p}'
=
\frac{\underline P_2}{\tau_p^2p},
\qquad
\overline m_{p,\tau_p}'
=
-\frac{\overline P_2}{\tau_p^2p},
\end{equation*}
proving the claim.
\end{proof}
\begin{remark}
    \label{remk:nourdinviens}
Suppose that \(\mathbb E|Z|<\infty\). Since \(\tau_p>0\) on
\(\mathcal I_p\), Lemma~\ref{lem:low-order-mills-monotonicity} verifies
\eqref{eq:cond-first-order-signs-mills} for \(w=\tau_p\), and
\eqref{eq:absolute-mean-K10-simplified} applies. The resulting absolute
means are uniformly bounded whenever \(\tau_p\) is bounded away from zero;
see, for example, \cite{NourdinViens2009}.
\end{remark}

\subsection{Universal low-order representations}
\label{subsec:universal-low-order}

Equation~\eqref{eq:stein-solution-h-not-C1} yields the initial identity $f_{p,w,h}(x)
 =
 \mathbb E\bigl[K_{p,w}^{0,0}(x,Z)h(Z)\bigr]$. 
Equation~\eqref{eq:SIBP-for-kernels} transfers one derivative to the
test function and raises the second index, whereas by \eqref{eq:differentiation-of-kernels} differentiating the solution raises the first kernel
index  At orders zero, one, and two,
the resulting formulas can be written explicitly for a general density and
weight. These are the orders needed in most Taylor and exchangeable-pair
arguments.

\begin{theorem}
\label{thm:universal-low-order-calculus}
Assume that \(\mathbb E|Z|<\infty\) and that the required regularity,
integrability, and boundary conditions in
Lemma~\ref{lma:basis-prop-of-kernels} hold at the displayed indices.
Each identity below is asserted whenever the derivatives and integrals
appearing in that identity exist. Then
\begin{align}
 f_{p,w,h}(x)
 &=
 \mathbb E[K_{p,w}^{0,0}(x,Z)h(Z)],
 \label{eq:ker00}\\
 &=
 \mathbb E[K_{p,w}^{0,1}(x,Z)h'(Z)],
 \label{eq:ker01}\\
 f_{p,w,h}'(x)
 &=
 \mathbb E\left[
 K_{p,w}^{1,0}(x,Z)\bigl(h(Z)-h(x)\bigr)
 \right],
 \label{eq:ker10}\\
 &=
 \mathbb E[K_{p,w}^{1,1}(x,Z)h'(Z)],
 \label{eq:ker11}\\
 &=
 \mathbb E[K_{p,w}^{1,2}(x,Z)h''(Z)]
 -
 \left(\frac{\tau_p(x)}{w(x)}\right)'h'(x),
 \label{eq:ker12}\\
 f_{p,w,h}''(x)
 &=
 \mathbb E[K_{p,w}^{2,1}(x,Z)h'(Z)]
 +
 \frac{1}{w(x)}h'(x),
 \label{eq:ker21}\\
 &=
 \mathbb E[K_{p,w}^{2,2}(x,Z)h''(Z)]
 -
 \left(\frac{\tau_p(x)}{w(x)}\right)''h'(x),
 \label{eq:ker22}
\end{align}
Moreover, \eqref{eq:ker21} may equivalently be written as
\begin{equation}
\label{eq:ker21-centered}
 f_{p,w,h}''(x)
 =
 \mathbb E\left[
 K_{p,w}^{2,1}(x,Z)
 \bigl(h'(Z)-h'(x)\bigr)
 \right]
 -
 \left(\frac{\tau_p(x)}{w(x)}\right)''h'(x).
\end{equation}
\end{theorem}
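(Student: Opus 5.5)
The plan is to walk through the kernel array of Figure~\ref{fig:kernel-array-calculus}. We start from \eqref{eq:master-start} and use only the two moves of Lemma~\ref{lma:basis-prop-of-kernels}, namely the vertical move \eqref{eq:SIBP-for-kernels} and the horizontal move \eqref{eq:differentiation-of-kernels}. At each step we read off the pointwise correction from the values of $M^{i,j}_{p,w}$ computed in Example~\ref{ex:low-order-kernel-means}: $M^{0,0}=0$, $M^{0,-1}=M^{1,0}=-1/w$, $M^{0,1}=-\tau_p/w$, $M^{1,1}=-(\tau_p/w)'$ and $M^{2,1}=-1/w-(\tau_p/w)''$. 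Identity \eqref{eq:ker00} is \eqref{eq:master-start}, which itself is just \eqref{eq:stein-solution-h-not-C1} rewritten with the definition \eqref{eq:def-kernels} for $i=j=0$.

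\emph{Order zero.} A vertical move from $(0,0)$ gives \eqref{eq:ker01}, with correction $M^{0,0}_{p,w}h(x)=0$.

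\emph{Order one.} Differentiating \eqref{eq:ker00} by \eqref{eq:differentiation-of-kernels} gives
\[
f'=\mathbb E[K^{1,0}h]-M^{0,-1}h(x)=\mathbb E[K^{1,0}h]+h(x)/w(x).
\]
Since $\mathbb E[K^{1,0}(x,Z)]=-1/w(x)$, this is exactly the centred form \eqref{eq:ker10}. For \eqref{eq:ker11} there are two routes, and we use both as a consistency check. The first differentiates \eqref{eq:ker01} via \eqref{eq:master-vertical-move}, with correction $-M^{0,0}h'(x)=0$. The second applies a vertical move at $(1,0)$, where the correction $M^{1,0}h(x)=-h(x)/w$ cancels the $+h(x)/w$. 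A further vertical move at $(1,1)$ then produces \eqref{eq:ker12}, with correction $M^{1,1}h'(x)=-(\tau_p/w)'h'(x)$.

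\emph{Order two.} Differentiating \eqref{eq:ker11} gives
\[
f''=\mathbb E[K^{2,1}h']-M^{1,0}h'(x)=\mathbb E[K^{2,1}h']+h'(x)/w,
\]
which is \eqref{eq:ker21}. A vertical move at $(2,1)$ adds $M^{2,1}h'(x)=-h'(x)/w-(\tau_p/w)''h'(x)$; the $1/w$ terms cancel and we obtain \eqref{eq:ker22}. Finally, subtracting $h'(x)\mathbb E[K^{2,1}]=h'(x)M^{2,1}(x)$ inside the expectation in \eqref{eq:ker21} gives the centred identity \eqref{eq:ker21-centered}.

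The algebra above is mechanical once the $M$-values are in hand. Those values follow from the recursion \eqref{eq:M-derivative-recursion}: $M^{1,1}=(M^{0,1})'+M^{0,0}$ and $M^{2,1}=(M^{1,1})'+M^{1,0}$. The remaining value $M^{0,1}=-\tau_p/w$ comes from $\overline P\,\underline P_2+P\,\overline P_2=\tau_pp$, which itself follows from the integration-by-parts formulas for $\underline P_2$ and $\overline P_2$ in the proof of Lemma~\ref{lem:low-order-mills-monotonicity}.

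The main obstacle is justifying each move at each index. Three things are needed: the integrability of $K^{i,j}h^{(j)}$, the vanishing of the boundary products in the integration by parts, and differentiation under the expectation in \eqref{eq:differentiation-of-kernels}. For the last point, we would split $\mathbb E[K^{i,j}(x,Z)h^{(j)}(Z)]$ into $\overline m^{(i)}(x)\int_l^x(\cdots)$ and $\underline m^{(i)}(x)\int_x^u(\cdots)$, and apply the product rule together with the fundamental theorem of calculus for locally absolutely continuous functions. The boundary terms at $v=x$ are then exactly $-M^{i,j-1}(x)h^{(j)}(x)$. The statement's hypothesis, that these conditions hold at the displayed indices, delegates the integrability and vanishing requirements to Lemma~\ref{lma:basis-prop-of-kernels}. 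The proof therefore only has to verify that the indices used along each path, $(0,0),(0,1),(1,0),(1,1),(1,2),(2,1),(2,2)$, are precisely those covered by that hypothesis.
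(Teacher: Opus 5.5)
Your proposal is correct and follows the same route as the paper. Both start from \eqref{eq:master-start}, apply the differentiation and integration-by-parts moves of Lemma~\ref{lma:basis-prop-of-kernels} along the paths $(0,0)\to(0,1)\to(1,1)\to(1,2)$, $(0,0)\to(1,0)$ and $(1,1)\to(2,1)\to(2,2)$, and read off the pointwise corrections from the low-order values of $M^{i,j}_{p,w}$. Your indexing $M^{0,1}_{p,w}=-\tau_p/w$, $M^{1,1}_{p,w}=-(\tau_p/w)'$ and $M^{2,1}_{p,w}-M^{1,0}_{p,w}=-(\tau_p/w)''$ is the one consistent with \eqref{eq:def-M} and Example~\ref{ex:low-order-kernel-means}; the paper's one-line proof lists these values with the second index shifted up by one.
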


\begin{proof}
The identities follow by applying
\eqref{eq:master-horizontal-move}--\eqref{eq:master-vertical-move}
from \((0,0)\), together with
\[
 M_{p,w}^{0,2}=-\frac{\tau_p}{w},
 \qquad
 M_{p,w}^{1,2}=-\left(\frac{\tau_p}{w}\right)',
 \qquad
 M_{p,w}^{2,2}-M_{p,w}^{1,1}
 =-\left(\frac{\tau_p}{w}\right)''.
\]
The centred form follows from \eqref{eq:low-order-mean-21}.
\end{proof}
\begin{remark}
    If one would want to go further for any of the derivative, it would involve terms that can not be simplified, implying multiple-terms representations for any weight $w$. For instance, one compute
    \begin{align*}
    f_{p,w,h}(x)&=
 \mathbb E[K_{p,w}^{0,2}(x,Z)h''(Z)]
 -
 \frac{\tau_p(x)}{w(x)}h'(x), \\
 f_{p,w,h}'(x)&= \mathbb E[K^{1,3}_{p,w}(x,Z)h'''(Z)]+M^{1,3}_{p,w}h''(x)- \left(\frac{\tau_p(x)}{w(x)}\right)'h'(x)\\
        f''_{p,w,h}(x)= &=
 \mathbb E[K_{p,w}^{2,3}(x,Z)h'''(Z)]
 +
 M_{p,w}^{2,3}(x)h''(x)
 -
 \left(\frac{\tau_p(x)}{w(x)}\right)''h'(x).
 \label{eq:ker23}
    \end{align*}
    Such representations typically yields worse Stein factors hence won't be considered in this paper. Moreover, $M^{i,2}_{p,w}$ involves $\underline P_3$ and $\overline{P}_3$ that are untractable without more conditions on the density.
\end{remark}

We now derive envelopes and factors from the representations of Theorem~\ref{thm:universal-low-order-calculus} for general distribution $p$ and weight $w$.
Throughout this subsection, we assume
that \(\mathbb E|Z|<\infty\) and that \(w>0\) on \(\mathcal I_p\). We also use Notation~\ref{not:derivative-norms}, in particular,
estimates involving \(\|h^{(j)}\|\) are understood to be asserted only
when this seminorm is finite. 
The indicator-specific theory was treated separately in
Section~\ref{sec:kolmogorov-solutions}. As mentionned in the Introduction, bound obtained in this Section involving $\|h^{(0)}\|$ will be greater by a factor of 2 than the bounds obtained in the Kolmogorov case because in the latter, one can exploit the explicit form of the test function.

The first envelopes follows directly from the two representations of the
solution.
\begin{proposition}
\label{prop:stein-solution-low-order-factors}
Let \(h\in L^1(p)\) and \(x\in\mathcal I_p\). Then

\begin{equation}
\label{eq:general-low-order-solution-envelope}
 |f_{p,w,h}(x)|
 \leq
 \min\left\{2
  \frac{P(x)\overline P(x)}{w(x)p(x)}
  \| h^{(0)}\|,
  \frac{{\tau_p}(x)}{w(x)}
  \|h^{(1)}\|
 \right\}.
\end{equation}
\end{proposition}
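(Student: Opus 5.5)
The plan is to bound each of the two representations \eqref{eq:ker00} and \eqref{eq:ker01} from Theorem~\ref{thm:universal-low-order-calculus} separately, and then take the minimum.

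\emph{First bound.} By \eqref{eq:master-start}, $f_{p,w,h}(x)=\mathbb E[K_{p,w}^{0,0}(x,Z)h(Z)]$. The integrand is bounded by $|K_{p,w}^{0,0}(x,Z)|\,\|h^{(0)}\|$, so
\[
|f_{p,w,h}(x)|\le \|h^{(0)}\|\,\mathbb E|K_{p,w}^{0,0}(x,Z)|.
\]
The absolute mean is given by \eqref{eq:low-order-abs-mean-01}, which equals $U^{0,0}_{p,w}(x)=(\overline m_{p,w}P+\underline m_{p,w}\overline P)(x)$ because $\underline P_1=P$ and $\overline P_1=\overline P$. With $w>0$ this is $2P(x)\overline P(x)/(w(x)p(x))$. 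The only requirement is $h\in L^1(p)$ and $h$ bounded, and no boundary conditions enter since no integration by parts is used.

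\emph{Second bound.} When $\|h^{(1)}\|<\infty$, $h$ is Lipschitz and hence has linear growth. Since $\mathbb E|Z|<\infty$, the endpoint conditions of Lemma~\ref{lma:basis-prop-of-kernels} with $j=0$ hold: $h(v)P(v)\to0$ and $h(v)\overline P(v)\to 0$ follow from $|v|P(v)\le \mathbb E[|Z|\mathbf 1_{Z\le v}]\to0$, and similarly for the upper tail. Then \eqref{eq:SIBP-for-kernels} with $M^{0,0}_{p,w}=0$ gives \eqref{eq:ker01}. Consequently
\[
|f_{p,w,h}(x)|\le\|h^{(1)}\|\,\mathbb E|K_{p,w}^{0,1}(x,Z)|=\|h^{(1)}\|\,\tau_p(x)/w(x),
\]
using \eqref{eq:low-order-abs-mean-01}. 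This uses $U^{0,1}_{p,w}=(\overline m\,\underline P_2+\underline m\,\overline P_2)$, the identity $\overline P\underline P_2+P\overline P_2=\tau_pp$ from Example~\ref{ex:low-order-kernel-means}, and the fact that $K^{0,1}$ has a constant sign because $w>0$.

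\emph{Main obstacle.} The main point of care is verifying the integrability and boundary hypotheses under only a finite first moment. In particular, the product $h(v)\underline P_{1}(v)$ must vanish at an infinite endpoint when $h$ grows linearly. I would handle this with the tail-moment argument above, and if needed extend first to truncated $h$ and then pass to the limit by dominated convergence using $\mathbb E|K^{0,1}_{p,w}(x,Z)|<\infty$.
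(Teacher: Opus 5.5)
Your proposal is correct and follows the paper's proof: both bounds come from taking absolute values in the representations \eqref{eq:ker00} and \eqref{eq:ker01} and evaluating the absolute kernel means in \eqref{eq:low-order-abs-mean-01}. The differences are minor: the paper first uses the centring $\mathbb E[K^{0,0}_{p,w}(x,Z)]=0$ to subtract a constant from $h$, which is not needed for the stated bound in $\|h^{(0)}\|$, and you explicitly check the endpoint conditions of Lemma~\ref{lma:basis-prop-of-kernels} under $\mathbb E|Z|<\infty$, which the paper leaves implicit.
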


 \begin{proof}
Since \(K_{p,w}^{0,0}(x,\cdot)\) is centred, subtracting a suitable
constant from \(h\) gives
\[
 |f_{p,w,h}(x)|
 \leq
 \mathbb E\bigl[|K_{p,w}^{0,0}(x,Z)|\bigr]
 \|h^{(0)}\|.
\]
The first identity in \eqref{eq:low-order-abs-mean-01} therefore yields
\[
 |f_{p,w,h}(x)|
 \leq2
 \frac{P(x)\overline P(x)}{w(x)p(x)}
 \| h^{(0)}\|.
\]
On the other hand, \eqref{eq:ker01} and the second identity in
\eqref{eq:low-order-abs-mean-01} give
\[
 |f_{p,w,h}(x)|
 \leq
 \frac{{\tau_p}(x)}{w(x)}
 \|h^{(1)}\|.
\]
\end{proof}

As observed in Example~\ref{ex:low-order-kernel-means}, under
\eqref{eq:cond-first-order-signs-mills} the absolute kernel means in
\eqref{eq:general-absolute-kernel-envelope} reduce to
\begin{equation}
\label{eq:first-derivative-absolute-kernel-means}
 U^{1,j}_{p,w}
 =
 -\overline m_{p,w}'\underline P_{j+1}
 +
 \underline m_{p,w}'\overline P_{j+1}
 \geq0
 \quad\text{a.e. on }\mathcal I_p,
 \qquad j\geq0.
\end{equation}
Substitution of these absolute means into the low-order kernel
representations gives the first-derivative envelope.
\begin{proposition}
\label{prop:first-derivative-factors}
Suppose that~\eqref{eq:cond-first-order-signs-mills} holds. Then, almost everywhere on \(\mathcal I_p\),
\begin{equation}
\label{eq:first-derivative-envelop}
 |f_{p,w,h}'(x)| \le \min\left\{
  \frac{2}{w(x)}\|h^{(0)}\|, 
  U^{1,1}_{p, w}(x)\|h^{(1)}\|, 
 \left| \left(\frac{{\tau_p}(x)}{w(x)}\right)'
 \right|\|h^{(1)}\|
 +
 U^{1,2}_{p, w}(x)\|h^{(2)}\| \right\}.
\end{equation}
\end{proposition}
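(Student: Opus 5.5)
Each of the three bounds in \eqref{eq:first-derivative-envelop} comes from one of the first-derivative representations of Theorem~\ref{thm:universal-low-order-calculus}, combined with the absolute kernel means of Lemma~\ref{lma:basis-prop-of-kernels} and Example~\ref{ex:low-order-kernel-means}. The minimum then follows because all three bounds hold simultaneously at every $x$ where the corresponding seminorm is finite.

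\textbf{First bound.} Start from the centred representation \eqref{eq:ker10}, $f_{p,w,h}'(x)=\mathbb E[K^{1,0}_{p,w}(x,Z)(h(Z)-h(x))]$. A crude bound $|h(Z)-h(x)|\le 2\|h^{(0)}\|$ combined with \eqref{eq:absolute-mean-K10-simplified} gives $2\|h^{(0)}\|/w(x)$ directly. Under \eqref{eq:cond-first-order-signs-mills} the kernel $K^{1,0}_{p,w}(x,\cdot)$ is nonpositive, and \eqref{eq:absolute-mean-K10-simplified} is exactly what the sign condition is needed for. One could sharpen the constant by subtracting an optimal constant $c$, which shows the factor is attained in the indicator limit, but that is not needed for the stated inequality.

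\textbf{Second bound.} Apply \eqref{eq:ker11}, $f_{p,w,h}'(x)=\mathbb E[K^{1,1}_{p,w}(x,Z)h'(Z)]$, and bound it by $\mathbb E|K^{1,1}_{p,w}(x,Z)|\,\|h^{(1)}\|$. By \eqref{eq:integrability-of-kernel} this equals $U^{1,1}_{p,w}(x)\|h^{(1)}\|$. Under the sign condition, $U^{1,1}_{p,w}$ takes the form \eqref{eq:first-derivative-absolute-kernel-means}.

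\textbf{Third bound.} Apply \eqref{eq:ker12}, use the triangle inequality, bound $|h'(x)|\le\|h^{(1)}\|$, and again use \eqref{eq:integrability-of-kernel} with $j=2$ to get $U^{1,2}_{p,w}(x)\|h^{(2)}\|$.

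\textbf{Main obstacle.} The delicate point is justifying the hypotheses of Lemma~\ref{lma:basis-prop-of-kernels}, namely integrability and the vanishing of boundary terms, for arbitrary $h$ with only a finite seminorm. For $\|h^{(0)}\|<\infty$, $h$ need not be absolutely continuous, but \eqref{eq:ker10} only uses the differentiation move from \eqref{eq:ker00}, so no integration by parts in $h$ is required. The boundary term $M^{0,-1}_{p,w}h$ is handled via the centering $\mathbb E K^{1,0}=-1/w$.

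For $\|h^{(1)}\|$ or $\|h^{(2)}\|$ finite, $h$ has at most linear or quadratic growth. The integrability conditions then reduce to finiteness of $U^{1,1}_{p,w}(x)$ and $U^{1,2}_{p,w}(x)$, together with moment conditions ensuring that $h\underline P_{j+1}$ and $h\overline P_{j+1}$ vanish at the endpoints. These are exactly the standing assumptions of Theorem~\ref{thm:universal-low-order-calculus}. If this fails for some $h$, I would fall back on a truncation/approximation argument: approximate $h$ by compactly supported smooth functions with controlled seminorms, and pass to the limit using dominated convergence in the kernel representations.
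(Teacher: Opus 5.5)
Your proof is correct and follows the paper's argument: the three bounds come from taking absolute values in \eqref{eq:ker10}, \eqref{eq:ker11} and \eqref{eq:ker12}. You use \eqref{eq:absolute-mean-K10-simplified} for the first bound and \eqref{eq:integrability-of-kernel} (the definition of $U^{1,j}_{p,w}$) for the other two, with the regularity and boundary hypotheses inherited from Theorem~\ref{thm:universal-low-order-calculus}. One small correction to your side remark: subtracting a constant changes nothing, since the centred integrand $h(Z)-h(x)$ is invariant under $h\mapsto h-c$; the natural sharpening is $|f_{p,w,h}'(x)|\le\operatorname{osc}(h)/w(x)$, because $-w(x)K^{1,0}_{p,w}(x,\cdot)$ is a probability weight under \eqref{eq:cond-first-order-signs-mills}.
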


\begin{proof}
Representation~\eqref{eq:ker10}, together with
\eqref{eq:absolute-mean-K10-simplified}, gives
\[
 |f_{p,w,h}'(x)|
 \leq
 \frac{2}{w(x)}\|h^{(0)}\|.
\]
The second estimate follows from \eqref{eq:ker11} and the definition of
\(U_{p,w}^{1,1}\), while the third follows similarly from
\eqref{eq:ker12}.
\end{proof}
\begin{remark}
\label{rem:stein-kernel-first-derivative}
For \(w=\tau_p\), recall that we have
\begin{equation}
\label{eq:stein-kernel-mills-identities}
\underline m_{p,\tau_p}'
=
\frac{\underline P_2}{\tau_p^2p},
\qquad
\overline m_{p,\tau_p}'
=
-\frac{\overline P_2}{\tau_p^2p}.
\end{equation}
Consequently,
\(
U_{p,\tau_p}^{1,1}
=
2{\underline P_2\overline P_2}/{\tau_p^2p}.
\)
Moreover, for \(w=\tau_p\), \((\tau_p/w)'=0\)  so
\eqref{eq:first-derivative-envelop} becomes
\begin{equation}
\label{eq:tau-first-derivative-combined-envelope}
\begin{split}
|f_{p,\tau_p,h}'(x)|
\leq
\min\Bigg\{&
\frac{2}{\tau_p(x)}\| h^{(0)}\|,
2\frac{\underline P_2(x)\overline P_2(x)}{\tau^2_p(x) p(x)}
\|h^{(1)}\|,
\left(
\frac{\overline{P}_2(x)\underline P_3(x)+\underline P_2(x)\overline P_3(x)}{\tau^2_p(x) p(x)}
\right)
\|h^{(2)}\|
\Bigg\}.
\end{split}
\end{equation}
The first term in the minimum is uniformly bounded whenever
\(\tau_p\) is bounded away from zero, as is the case for all the
distributions considered in \cite{NourdinViens2009}. In general,
obtaining sharp uniform bounds for the remaining terms is a nontrivial
analytic problem that generally requires distribution-specific estimates.
Depending on the target, these estimates may be obtained either directly
from the derivatives of the Mills ratios or from their representations in
\eqref{eq:stein-kernel-mills-identities}.
\end{remark}

\begin{lemma}[Second-derivative envelopes]
\label{lem:second-derivative-envelopes}
Almost everywhere on \(\mathcal I_p\),
\begin{equation}
\label{eq:second-derivative-envelope}
\begin{split}
 |f_{p,w,h}''(x)|
 \leq
 \min\Bigg\{&
 \left(
  U_{p,w}^{2,1}(x)+\frac{1}{w(x)}
 \right)
 \|h^{(1)}\|,
 \\
 &
 \left|
  \left(\frac{{\tau_p}(x)}{w(x)}\right)''
 \right|
 \|h^{(1)}\|
 +
 U_{p,w}^{2,2}(x)\|h^{(2)}\|
 \Bigg\}.
\end{split}
\end{equation}
\end{lemma}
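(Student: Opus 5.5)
The plan is to read both bounds directly off the two second-derivative representations in Theorem~\ref{thm:universal-low-order-calculus}, namely \eqref{eq:ker21} and \eqref{eq:ker22}. We then bound each kernel expectation by the corresponding absolute kernel mean from \eqref{eq:integrability-of-kernel} in Lemma~\ref{lma:basis-prop-of-kernels}.

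For the first term of the minimum, we start from \eqref{eq:ker21}:
\[
 f_{p,w,h}''(x)=\mathbb E[K_{p,w}^{2,1}(x,Z)h'(Z)]+\frac{1}{w(x)}h'(x).
\]
Bounding $|h'(Z)|$ and $|h'(x)|$ by $\|h^{(1)}\|$ (valid almost everywhere, hence the a.e. qualifier), and using $w>0$ together with \eqref{eq:integrability-of-kernel} at $(i,j)=(2,1)$, gives
\[
 |f_{p,w,h}''(x)|\le \bigl(U^{2,1}_{p,w}(x)+1/w(x)\bigr)\|h^{(1)}\|.
\]
Note that one should \emph{not} use the centred form \eqref{eq:ker21-centered} here. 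That form would replace the term $1/w$ by $|(\tau_p/w)''|$, but centring $h'(Z)-h'(x)$ costs a factor $2$ in the envelope. So the uncentred version is the one that matches the stated bound.

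For the second term of the minimum, we use \eqref{eq:ker22}:
\[
 f_{p,w,h}''(x)=\mathbb E[K_{p,w}^{2,2}(x,Z)h''(Z)]-\bigl(\tau_p/w\bigr)''(x)\,h'(x).
\]
The triangle inequality, combined with $\mathbb E|K^{2,2}_{p,w}(x,Z)|=U^{2,2}_{p,w}(x)$, gives the second bound immediately. Taking the minimum of the two bounds concludes the argument.

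The only point requiring care is justifying the representations and the identity for the absolute means. This needs the integrability of $K^{2,j}_{p,w}(x,\cdot)$ and the existence of the Mills-ratio derivatives up to order three, as assumed in Lemma~\ref{lma:basis-prop-of-kernels}. These conditions are inherited from the standing assumptions of Theorem~\ref{thm:universal-low-order-calculus}. Each bound is asserted only where the relevant seminorm is finite, with the convention of Notation~\ref{not:derivative-norms}.
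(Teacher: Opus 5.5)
Your proposal is correct and is exactly the paper's argument: the paper obtains both entries of the minimum by taking absolute values in \eqref{eq:ker21} and \eqref{eq:ker22}, using $w>0$, and bounding the kernel expectations by the absolute means $U^{2,1}_{p,w}$ and $U^{2,2}_{p,w}$ from \eqref{eq:integrability-of-kernel}. Your side remark on the centred form is also sound: since $\mathbb E[K^{2,1}_{p,w}(x,Z)]=-1/w(x)-(\tau_p/w)''(x)$, one has $1/w\le U^{2,1}_{p,w}+|(\tau_p/w)''|$, so the uncentred bound is never worse than the one obtained from \eqref{eq:ker21-centered}.
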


\begin{proof}
The three estimates follow directly by taking absolute values in
\eqref{eq:ker21} and \eqref{eq:ker22} 
respectively.
\end{proof}
Choosing $w=\tau_p$ in \eqref{eq:second-derivative-envelope} yields a small simplification in the second argument of the minimum but $U^{2,j}_{p,\tau_p}$ does not simplify as $U^{1,j}_{p,\tau_p}$ did in \eqref{eq:tau-first-derivative-combined-envelope}.

Taking the essential supremum over \(x\in\mathcal I_p\) in these
pointwise envelopes yields uniform
Stein-factor bounds.

\begin{corollary}
\label{cor:general-stein-factors}
Under the assumptions of
Proposition~\ref{prop:stein-solution-low-order-factors},
\begin{equation}
\label{eq:general-solution-uniform-factors}
 \|f_{p,w,h}\|
 \leq
 \min\left\{ 2
  \left\|
   \frac{P\overline P}{wp}
  \right\|
  \|h^{(0)}\|,
  \left\|
   \frac{{\tau_p}}{w}
  \right\|
  \|h^{(1)}\|
 \right\}.
\end{equation}
If, in addition, \eqref{eq:cond-first-order-signs-mills} holds and the required regularity
conditions are satisfied, then
\begin{equation}
\label{eq:general-first-derivative-uniform-factors}
\begin{split}
 \|f_{p,w,h}'\|
 \leq
 \min\Bigg\{&
  \left\|\frac{2}{w}\right\|
  \| h^{(0)}\|,
  \|U_{p,w}^{1,1}\|
  \|h^{(1)}\|,
  \\
  &
  \left\|
   \left(\frac{{\tau_p}}{w}\right)'
  \right\|
  \|h^{(1)}\|
  +
  \|U_{p,w}^{1,2}\|
  \|h^{(2)}\|
 \Bigg\}.
\end{split}
\end{equation}
Each estimate is understood whenever the quantities appearing on its
right-hand side are finite.
\end{corollary}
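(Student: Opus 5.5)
The corollary follows by taking the essential supremum over \(x\in\mathcal I_p\) in the pointwise envelopes already established. The one point needing care is the interchange of the minimum with the supremum.

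For \eqref{eq:general-solution-uniform-factors}, fix \(h\) and \(x\in\mathcal I_p\). By Proposition~\ref{prop:stein-solution-low-order-factors}, \(|f_{p,w,h}(x)|\) is bounded by each of the two terms separately. These are \(2P(x)\overline P(x)/(w(x)p(x))\,\|h^{(0)}\|\), valid for \(h\in L^1(p)\) with \(\|h^{(0)}\|<\infty\), and \((\tau_p(x)/w(x))\|h^{(1)}\|\), valid when \(\|h^{(1)}\|<\infty\). Bounding each pointwise factor by its essential supremum gives
\[
|f_{p,w,h}(x)|\le 2\Big\|\tfrac{P\overline P}{wp}\Big\|\,\|h^{(0)}\|
\quad\text{and}\quad
|f_{p,w,h}(x)|\le \Big\|\tfrac{\tau_p}{w}\Big\|\,\|h^{(1)}\|
\]
for a.e.\ \(x\). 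Each right-hand side is independent of \(x\), so taking the essential supremum in \(x\) gives \(\|f_{p,w,h}\|\) bounded by each constant, hence by their minimum.

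The point to stress is order of operations. We use \(\operatorname{ess\,sup}_x\min\{a(x),b(x)\}\le\min\{\operatorname{ess\,sup}a,\operatorname{ess\,sup}b\}\), which holds because a supremum of a minimum is at most the minimum of the suprema. We never need the reverse inequality.

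For \eqref{eq:general-first-derivative-uniform-factors}, assume \eqref{eq:cond-first-order-signs-mills}. Proposition~\ref{prop:first-derivative-factors} then gives, almost everywhere on \(\mathcal I_p\), three separate bounds for \(|f'_{p,w,h}(x)|\):
\[
\frac{2}{w(x)}\|h^{(0)}\|,\qquad U^{1,1}_{p,w}(x)\|h^{(1)}\|,\qquad \Big|\Big(\tfrac{\tau_p}{w}\Big)'(x)\Big|\,\|h^{(1)}\|+U^{1,2}_{p,w}(x)\|h^{(2)}\|.
\]
In each, we replace the \(x\)-dependent coefficient by its essential supremum. For the third bound we use subadditivity of the supremum, \(\operatorname{ess\,sup}(a+b)\le\operatorname{ess\,sup}a+\operatorname{ess\,sup}b\). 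Taking the essential supremum in \(x\) and then the minimum over the three resulting constants gives the claim.

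The only mild obstacle is bookkeeping about null sets and finiteness. The pointwise envelopes hold only almost everywhere, and only at points where the representations of Theorem~\ref{thm:universal-low-order-calculus} are valid. Because the norm is an essential supremum, this is harmless. Each estimate is asserted only when its right-hand side is finite, so no \(\infty\cdot0\) issues arise. Note also that \(\|2/w\|\) is meaningful because \(w>0\) on \(\mathcal I_p\).
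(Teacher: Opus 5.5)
Your proposal is correct and follows the paper's argument: the paper obtains the corollary simply by taking the essential supremum over \(x\in\mathcal I_p\), term by term, in the pointwise envelopes of Propositions~\ref{prop:stein-solution-low-order-factors} and~\ref{prop:first-derivative-factors}. Your remarks on bounding each term separately before taking the minimum, and on subadditivity of the essential supremum for the two-term bound, spell out what the paper leaves implicit.
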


\begin{remark}
These bounds identify conditions under which the Stein solution and its
derivatives are uniformly controlled by the sup-norms of the test function
and its derivatives. For example, when \(w=1\),
the first term in the minimum in
\eqref{eq:general-solution-uniform-factors} is bounded for every
distribution with bounded Mills ratio, whereas the second term is
bounded when \(w=\tau_p\).

Similarly, in the first-derivative bound
\eqref{eq:general-first-derivative-uniform-factors}, the first term is
bounded when \(w=1\), and more generally when \(w\) is bounded away
from zero; see also Remark~\ref{remk:nourdinviens}. Obtaining an
analytic bound for the second term through a suitable choice of \(w\) is
generally a nontrivial problem, although numerical bounds are often
available. When \(w=\tau_p\), the last term is
bounded for every distribution whose Stein kernel satisfies
\(\tau_p''=\mathrm{constant}\); see
Remark~\ref{rem:stein-kernel-first-derivative}.
\end{remark}

\subsection{All-order representations}
\label{subsec:all-order-calculus}
We now move to derivatives of arbitrary order. We first give the formulas for the three neighbouring orders with the explicit correction terms. Then we present two conditions under which the correction terms vanish.
 
\begin{theorem}
\label{thm:kernel-representations-of-derivatives}
Let \(n\geq1\). Assume that \(h^{(n-1)}\) is locally absolutely
continuous, that the kernel identities in
Lemma~\ref{lma:basis-prop-of-kernels} apply at every index used
in \eqref{eq:general-lower-neighbour-representation}--\eqref{eq:general-diagonal-representation}, and
that the displayed correction coefficients possess the indicated
derivatives. Then
\begin{align}
 f_{p,w,h}^{(n)}(x)
 &=
 \mathbb E[K_{p,w}^{n,n-1}(x,Z)h^{(n-1)}(Z)]
 -
 M_{p, w}^{1,0}(x)h^{(n-1)}(x)
 \notag\\
 &\quad+
 \sum_{r=0}^{n-2}
 \left[
  \bigl(M_{p, w}^{r+1,r}-M_{p, w}^{1,0}\bigr)'h^{(r)}
 \right]^{(n-2-r)}(x),
 \label{eq:general-lower-neighbour-representation}\\
 f_{p,w,h}^{(n)}(x)
 &=
 \mathbb E[K_{p,w}^{n,n}(x,Z)h^{(n)}(Z)]
 +
 \bigl(M_{p, w}^{n,n-1}(x)-M_{p, w}^{1,0}(x)\bigr)h^{(n-1)}(x)
 \notag\\
 &\quad+
 \sum_{r=0}^{n-2}
 \left[
  \bigl(M_{p, w}^{r+1,r}-M_{p, w}^{1,0}\bigr)'h^{(r)}
 \right]^{(n-2-r)}(x),
 \label{eq:general-diagonal-representation}
\end{align}
If, in addition, \(h^{(n)}\) is locally absolutely continuous
and the kernel identity is applicable at index \((n,n+1)\), then
\begin{align}
 f_{p,w,h}^{(n)}(x)
 &=
 \mathbb E[K_{p,w}^{n,n+1}(x,Z)h^{(n+1)}(Z)]
 +
 M_{p, w}^{n,n}(x)h^{(n)}(x)
 \notag\\
 &\quad+
 \sum_{r=0}^{n-1}
 \left[
  \bigl(M_{p, w}^{r,r}\bigr)'h^{(r)}
 \right]^{(n-1-r)}(x).
 \label{eq:general-upper-neighbour-representation}
\end{align}
Empty sums are understood to be zero.
\end{theorem}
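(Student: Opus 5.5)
The plan is to prove the lower-neighbour and upper-neighbour representations by induction on \(n\), walking along a staircase path in the kernel array of Figure~\ref{fig:kernel-array-calculus}. Each step combines one horizontal move \eqref{eq:master-vertical-move} (differentiate in \(x\)) with one vertical move \eqref{eq:master-horizontal-move} (transfer a derivative to \(h\)). The only algebraic input is the recursion \eqref{eq:M-derivative-recursion}, \((M^{i,j}_{p,w})'=M^{i+1,j}_{p,w}-M^{i,j-1}_{p,w}\), which lets the pointwise correction created at each step be rewritten as an exact derivative. The diagonal representation \eqref{eq:general-diagonal-representation} will then follow from \eqref{eq:general-lower-neighbour-representation} by one further vertical move.

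\emph{Lower-neighbour form.} For \(n=1\), apply \eqref{eq:differentiation-of-kernels} to \eqref{eq:master-start}:
\[
f_{p,w,h}'(x)=\mathbb E[K^{1,0}_{p,w}(x,Z)h(Z)]-M^{0,-1}_{p,w}(x)h(x).
\]
Since \(M^{0,-1}_{p,w}=M^{1,0}_{p,w}=-1/w\) by Example~\ref{ex:low-order-kernel-means}, this is \eqref{eq:general-lower-neighbour-representation} with an empty sum.

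Now assume the formula at order \(n\) and differentiate it. The kernel term gives
\[
\mathbb E[K^{n+1,n-1}_{p,w}h^{(n-1)}]-M^{n,n-2}_{p,w}h^{(n-1)}
\]
by \eqref{eq:differentiation-of-kernels}. A vertical move \eqref{eq:SIBP-for-kernels} then rewrites
\[
\mathbb E[K^{n+1,n-1}_{p,w}h^{(n-1)}]=\mathbb E[K^{n+1,n}_{p,w}h^{(n)}]+M^{n+1,n-1}_{p,w}h^{(n-1)}.
\]
The product rule gives \((M^{1,0}_{p,w}h^{(n-1)})'=(M^{1,0}_{p,w})'h^{(n-1)}+M^{1,0}_{p,w}h^{(n)}\), which produces the required term \(-M^{1,0}_{p,w}h^{(n)}\). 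The remaining coefficient of \(h^{(n-1)}\) is
\[
M^{n+1,n-1}_{p,w}-M^{n,n-2}_{p,w}-(M^{1,0}_{p,w})'=(M^{n,n-1}_{p,w}-M^{1,0}_{p,w})',
\]
by \eqref{eq:M-derivative-recursion}. This is exactly the new summand \(r=n-1\), carrying zero further derivatives. Differentiating the old sum raises each exponent \(n-2-r\) to \(n-1-r\), so all terms match the formula at order \(n+1\).

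\emph{Diagonal form.} Apply \eqref{eq:master-horizontal-move} once to the kernel term of \eqref{eq:general-lower-neighbour-representation}:
\[
\mathbb E[K^{n,n-1}_{p,w}h^{(n-1)}]=\mathbb E[K^{n,n}_{p,w}h^{(n)}]+M^{n,n-1}_{p,w}h^{(n-1)}.
\]
Substituting this gives \eqref{eq:general-diagonal-representation} directly.

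\emph{Upper-neighbour form.} This is a separate induction, started at \(n=0\). Applying \eqref{eq:SIBP-for-kernels} to \eqref{eq:master-start} gives
\[
f_{p,w,h}=\mathbb E[K^{0,1}_{p,w}h']+M^{0,0}_{p,w}h .
\]
Assume that
\[
f^{(n)}_{p,w,h}=\mathbb E[K^{n,n+1}_{p,w}h^{(n+1)}]+M^{n,n}_{p,w}h^{(n)}+\sum_{r=0}^{n-1}\bigl[(M^{r,r}_{p,w})'h^{(r)}\bigr]^{(n-1-r)}
\]
and differentiate. The kernel term gives
\[
\mathbb E[K^{n+1,n+1}_{p,w}h^{(n+1)}]-M^{n,n}_{p,w}h^{(n+1)},
\]
and a vertical move turns this into
\[
\mathbb E[K^{n+1,n+2}_{p,w}h^{(n+2)}]+M^{n+1,n+1}_{p,w}h^{(n+1)}-M^{n,n}_{p,w}h^{(n+1)}.
\]
The product rule applied to \(M^{n,n}_{p,w}h^{(n)}\) gives \((M^{n,n}_{p,w})'h^{(n)}+M^{n,n}_{p,w}h^{(n+1)}\). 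The \(M^{n,n}_{p,w}h^{(n+1)}\) terms cancel, and \((M^{n,n}_{p,w})'h^{(n)}\) becomes the new summand \(r=n\). Evaluating at \(n=1\) reproduces \eqref{eq:general-upper-neighbour-representation} as stated.

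The algebra is short once the recursion is used. The main obstacle is justification: each application of \eqref{eq:differentiation-of-kernels} requires differentiating under the expectation, and each vertical move requires the integrability and vanishing-boundary hypotheses of Lemma~\ref{lma:basis-prop-of-kernels} at the specific index being visited. I would handle this by listing the finitely many indices visited along each staircase path, namely \((k,k-1)\) and \((k,k)\) for the lower path and \((k,k)\) and \((k,k+1)\) for the upper path, with \(k\le n\). The theorem's hypotheses assume the kernel identities at exactly these indices. Separately, the differentiability of the correction coefficients \(M^{r+1,r}_{p,w}\), \(M^{1,0}_{p,w}\) and \(M^{r,r}_{p,w}\) up to the needed orders is assumed in the statement. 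This is what legitimizes the Leibniz expansions of \([\,\cdot\,]^{(n-2-r)}\) at each step.
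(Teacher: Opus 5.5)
Your lower-neighbour induction and the passage to the diagonal form coincide with the paper's proof. You use the same staircase (differentiate at $(n,n-1)$, then transfer one derivative at $(n+1,n-1)$), closed by \eqref{eq:M-derivative-recursion}. You also spell out the product-rule step on $-(M^{1,0}_{p,w}h^{(n-1)})'$, which the paper leaves implicit.

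For \eqref{eq:general-upper-neighbour-representation} you take a genuinely different route. The paper runs no second induction. It applies \eqref{eq:SIBP-for-kernels} once more to the diagonal representation, moving from $(n,n)$ to $(n,n+1)$. It then rewrites $(M^{n,n-1}_{p,w}-M^{1,0}_{p,w})h^{(n-1)}+\sum_{r=0}^{n-2}[(M^{r+1,r}_{p,w}-M^{1,0}_{p,w})'h^{(r)}]^{(n-2-r)}$ as $\sum_{r=0}^{n-1}[(M^{r,r}_{p,w})'h^{(r)}]^{(n-1-r)}$ through a telescoping identity. That identity needs \eqref{eq:M-derivative-recursion} together with $M^{0,0}_{p,w}=0$ and $M^{0,-1}_{p,w}=M^{1,0}_{p,w}$.

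Your induction along the super-diagonal $(k,k)\to(k,k+1)\to(k+1,k+1)$ uses none of this. The two copies of $M^{k,k}_{p,w}h^{(k+1)}$ cancel outright, so only the two moves and the product rule enter. The argument is therefore independent of the lower and diagonal forms, and the shape of the upper correction sum appears directly.

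What the paper's route buys is economy and an explicit link between the diagonal and upper correction sums. Beyond the lower/diagonal path it invokes the kernel identity at a single extra place, which is exactly what the theorem's additional hypothesis supplies. Your route instead invokes the identities at $(k,k)$ and $(k,k+1)$ for every $k\le n$. So your closing claim that the theorem assumes them ``at exactly these indices'' is not literally right. Likewise, your own lower path visits $(k,k-1)$ and $(k+1,k-1)$, not $(k,k)$, apart from the final diagonal step. Since the hypotheses are plainly meant to license the moves actually used, this is a bookkeeping point rather than a gap, but it is the concrete cost of your route.
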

 \begin{proof}
We begin by proving
\eqref{eq:general-lower-neighbour-representation}. The base case \(n=1\) is just \eqref{eq:ker10}. Now assume that \eqref{eq:general-lower-neighbour-representation} holds for some \(n\geq1\).
Differentiating its kernel term and then applying
\eqref{eq:SIBP-for-kernels} gives
\begin{align*}\frac{d}{dx}
 \mathbb E[K^{n,n-1}_{p,w}(x,Z)h^{(n-1)}(Z)]
 &=
 \mathbb E[K^{n+1,n-1}_{p,w}(x,Z)h^{(n-1)}(Z)]
 -
 M^{n,n-2}_{p,w}(x)h^{(n-1)}(x)\\
 &=
 \mathbb E[K^{n+1,n}_{p,w}(x,Z)h^{(n)}(Z)]\\
 &\quad+
 \bigl(M^{n+1,n-1}_{p,w}(x)-M^{n,n-2}_{p,w}(x)\bigr)
 h^{(n-1)}(x).
\end{align*}
Recall identity \eqref{eq:M-derivative-recursion} that states
\(
 M^{n+1,n-1}_{p,w}-M^{n,n-2}_{p,w}
 =
 \bigl(M^{n,n-1}_{p,w}\bigr)',
\)
and combine this with \(M^{1,0}_{p,w}=M^{0,-1}_{p,w}\) to obtain
\eqref{eq:general-lower-neighbour-representation} with \(n\) replaced by \(n+1\).
The first representation therefore follows by induction.

Applying \eqref{eq:SIBP-for-kernels} once to its kernel term  proves \eqref{eq:general-diagonal-representation}. Applying it once more and using
\eqref{eq:M-derivative-recursion}, together with the telescoping identity
\begin{align*}
 &\sum_{r=0}^{n-1}
 \left[
  \bigl(M^{r,r}_{p,w}\bigr)'h^{(r)}
 \right]^{(n-1-r)}
 \\
 &\qquad=
 \bigl(M^{n,n-1}_{p,w}-M^{1,0}_{p,w}\bigr)h^{(n-1)}
 +
 \sum_{r=0}^{n-2}
 \left[
  \bigl(M^{r+1,r}_{p,w}-M^{1,0}_{p,w}\bigr)'h^{(r)}
 \right]^{(n-2-r)},
\end{align*}
gives \eqref{eq:general-upper-neighbour-representation}.
\end{proof}
\begin{remark}
The array can be iterated to express \(f_{p,w,h}^{(n)}\) in terms of
\(h^{(j)}\) for arbitrary \(j\), with explicit coefficients. We restrict
attention to
\eqref{eq:general-lower-neighbour-representation}, \eqref{eq:general-diagonal-representation}, and
\eqref{eq:general-upper-neighbour-representation} because these are the orders most commonly
used in Stein-factor estimates. Moreover, since the
Stein solution \(f_{p,w,h}\) is, in general, only one degree smoother than
\(h\), no bound on \(f_{p,w,h}^{(n)}\) that is uniform over \(h\) can be
expected to involve derivatives of \(h\) of order strictly less than
\(n-1\); see, for instance, the Kolmogorov case discussed in
Section~\ref{sec:kolmogorov-solutions}. Beyond the three neighbouring orders,
additional correction terms generally remain and provide no comparable
single-seminorm simplification. Related iterative representations are
developed in \cite{dobler_gaunt_vollmer_2017}.
\end{remark}
\begin{corollary}
\label{cor:closed-three-neighbour-calculus}
Assume that, at every index for which the representations are defined,
\begin{equation}
\label{eq:closed-three-neighbour-condition}
M_{p,w}^{r,r-1}=-\frac1w,
\qquad
M_{p,w}^{r,r}=0.
\end{equation}
Then the correction terms generated by
\eqref{eq:master-horizontal-move}--\eqref{eq:master-vertical-move}
cancel, and, whenever the displayed quantities exist,
\begin{align}
f_{p,w,h}^{(n)}(x)
&=
\mathbb E\!\left[
K_{p,w}^{n,n-1}(x,Z)
\bigl(h^{(n-1)}(Z)-h^{(n-1)}(x)\bigr)
\right],
\qquad n\geq1,
\label{eq:closed-lower-neighbour}\\
f_{p,w,h}^{(n)}(x)
&=
\mathbb E\!\left[
K_{p,w}^{n,n}(x,Z)h^{(n)}(Z)
\right],
\qquad n\geq0,
\label{eq:closed-diagonal}\\
f_{p,w,h}^{(n)}(x)
&=
\mathbb E\!\left[
K_{p,w}^{n,n+1}(x,Z)h^{(n+1)}(Z)
\right],
\qquad n\geq0.
\label{eq:closed-upper-neighbour}
\end{align}
Thus a single kernel array solves the Stein equation at the three
neighbouring test-function orders \(n-1,n,n+1\). Bounds follow by
taking absolute kernel moments; sharpness follows whenever the sign of
the kernel can be attained by an admissible test derivative.
\end{corollary}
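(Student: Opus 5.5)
The plan is to substitute the hypothesis \eqref{eq:closed-three-neighbour-condition} into the three representations of Theorem~\ref{thm:kernel-representations-of-derivatives} and check that every pointwise correction vanishes. The key observation is that the correction sums only involve the combinations $M_{p,w}^{r+1,r}-M_{p,w}^{1,0}$ and $M_{p,w}^{r,r}$. By \eqref{eq:closed-three-neighbour-condition} with index $r+1$, we have $M_{p,w}^{r+1,r}=-1/w=M_{p,w}^{1,0}$, so each difference is identically zero. The factor $(M^{r+1,r}_{p,w}-M^{1,0}_{p,w})'h^{(r)}$ is therefore the zero function, and all of its further derivatives vanish as well. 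Similarly $M^{r,r}_{p,w}\equiv 0$ kills every summand of the sum in \eqref{eq:general-upper-neighbour-representation}. I will use the hypothesis as holding on a neighbourhood of $x$, not merely at the point, so that differentiating these identities is legitimate.

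Next I treat the surviving boundary terms one by one.

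In \eqref{eq:general-lower-neighbour-representation} the remaining term is $-M^{1,0}_{p,w}(x)h^{(n-1)}(x)$. By \eqref{eq:closed-three-neighbour-condition} at $r=n$, $M^{n,n-1}_{p,w}=-1/w$. Combined with \eqref{eq:expectation-of-kernel}, this gives $\mathbb E[K^{n,n-1}_{p,w}(x,Z)]=-1/w(x)=M^{1,0}_{p,w}(x)$. Hence
\[
-M^{1,0}_{p,w}(x)h^{(n-1)}(x)=-\mathbb E[K^{n,n-1}_{p,w}(x,Z)]\,h^{(n-1)}(x),
\]
and absorbing this into the expectation yields the centred form \eqref{eq:closed-lower-neighbour}.

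In \eqref{eq:general-diagonal-representation} the remaining term is $(M^{n,n-1}_{p,w}-M^{1,0}_{p,w})h^{(n-1)}(x)$, which vanishes by the same identity. This gives \eqref{eq:closed-diagonal} for $n\ge1$.

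In \eqref{eq:general-upper-neighbour-representation} the remaining term is $M^{n,n}_{p,w}(x)h^{(n)}(x)=0$, which gives \eqref{eq:closed-upper-neighbour} for $n\ge1$.

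The case $n=0$ must be handled separately, since Theorem~\ref{thm:kernel-representations-of-derivatives} assumes $n\ge1$.
\begin{itemize}
\item \eqref{eq:closed-diagonal} at $n=0$ is exactly \eqref{eq:master-start}.
\item \eqref{eq:closed-upper-neighbour} at $n=0$ follows from one application of \eqref{eq:SIBP-for-kernels} with $i=j=0$, using $M^{0,0}_{p,w}=0$. This is given both by the hypothesis and by Example~\ref{ex:low-order-kernel-means}; equivalently, it is \eqref{eq:ker01}.
\end{itemize}

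Finally, for the closing remarks of the statement:
\begin{itemize}
\item The bounds follow by taking absolute values inside the expectations and using \eqref{eq:integrability-of-kernel}. This gives $|f^{(n)}_{p,w,h}(x)|\le U^{n,j}_{p,w}(x)\|h^{(j)}\|$ for $j=n,n+1$, and $2U^{n,n-1}_{p,w}(x)\|h^{(n-1)}\|$ for the centred version.
\item Sharpness: when $K^{n,j}_{p,w}(x,\cdot)$ has a constant sign on each of $\{v\le x\}$ and $\{v>x\}$, choose $h^{(j)}$ equal to (a smoothing of) $\operatorname{sgn}K^{n,j}_{p,w}(x,\cdot)$. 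Equality is then attained, or approached, in the bound.
\end{itemize}

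The main obstacle is not algebraic. It is bookkeeping of hypotheses: checking that the Lemma~\ref{lma:basis-prop-of-kernels} integrability and boundary conditions hold at every index along each path, and interpreting the hypothesis as a functional identity near $x$ rather than a pointwise one. I would state explicitly that the corollary is asserted under those standing conditions, as the phrase ``whenever the displayed quantities exist'' indicates.
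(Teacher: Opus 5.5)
Your proposal is correct and follows the paper's implicit argument. The corollary comes from substituting $M_{p,w}^{r+1,r}=M_{p,w}^{1,0}=-1/w$ and $M_{p,w}^{r,r}=0$ into the three representations of Theorem~\ref{thm:kernel-representations-of-derivatives}, absorbing the remaining lower-neighbour term via $\mathbb E[K_{p,w}^{n,n-1}(x,Z)]=M_{p,w}^{n,n-1}(x)=-1/w(x)$; your separate treatment of $n=0$ through \eqref{eq:master-start} and \eqref{eq:ker01} also covers a case the paper leaves unstated, since that theorem is stated only for $n\geq1$.
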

\begin{remark}
More generally, taking absolute values in the representations of
Theorem~\ref{thm:kernel-representations-of-derivatives} and then
the essential supremum over \(x\) yields mixed Stein-factor inequalities
of the form
\begin{equation}
\label{eq:general-multiseminorm-factor}
 \|f_{p,w,h}^{(n)}\|
 \leq
 \sum_{j=0}^{k}
 c_{p,w,n}^{j}\|h^{(j)}\|,
\end{equation}
where the constants \(c_{p,w,n}^{j}\) are determined by the uniform
bounds on the relevant absolute kernel means and pointwise correction
coefficients. These constants need not be finite without additional
assumptions on \(p\) and \(w\). \\
Of particular interest are the cases in which the lower-order
correction terms vanish or can be absorbed into a single derivative
seminorm, leading to a pure Stein-factor inequality
\begin{equation}
\label{eq:general-pure-stein-factor}
 \|f_{p,w,h}^{(n)}\|
 \leq
 c_{p,w}^{n,k}\|h^{(k)}\|.
\end{equation}
Section~\ref{sec:closed-kernel-calculi} identifies families for which these
pointwise coefficients cancel at every admissible order.

\end{remark}

\subsection{Weighted Stein factors}
\label{subsec:weighted-stein-factors}

Uniform bounds on ordinary derivatives may be ill-suited to targets whose
Stein kernel vanishes at the boundary or grows in the tails. In such cases it
is more natural to control weighted derivatives, and the kernel calculus
extends directly to this setting.

Let \(\omega:\mathcal I_p\to(0,\infty)\), and set
\begin{align}
\Lambda_{p,w,\omega}(x,v)
&:=\omega'(x)K_{p,w}^{1,1}(x,v)
  +\omega(x)K_{p,w}^{2,1}(x,v) \notag\\
&=-\bigl(\omega\overline m_{p,w}'\bigr)'(x)
  \frac{P(v)}{p(v)}\mathbf 1_{\{v\le x\}}
  -\bigl(\omega\underline m_{p,w}'\bigr)'(x)
  \frac{\overline P(v)}{p(v)}\mathbf 1_{\{v>x\}}.
\label{eq:weighted-kernel}
\end{align}
Here \(w\) specifies the Stein operator and \(\omega\) specifies the
derivative to be controlled. The canonical choice is \(\omega=w\).

\begin{proposition}
Assume that the hypotheses of
Lemma~\ref{lma:basis-prop-of-kernels} hold for the kernels and
products used below, and that \(\omega\) is locally absolutely
continuous with the displayed derivatives. Then, almost
everywhere on \(\mathcal I_p\),
\begin{equation}
\label{eq:weighted-derivative-representation}
\bigl(\omega f_{p,w,h}'\bigr)'(x)
=\frac{\omega(x)}{w(x)}h'(x)
 +\mathbb E\!\left[\Lambda_{p,w,\omega}(x,Z)h'(Z)\right].
\end{equation}
Consequently,
\[
\left|\bigl(\omega f_{p,w,h}'\bigr)'(x)\right|
\leq\left(\frac{\omega(x)}{w(x)}+V_{p,w,\omega}(x)\right)
\|h^{(1)}\|,
\]
where
\[
V_{p,w,\omega}
:=\left|\bigl(\omega\overline m_{p,w}'\bigr)'\right|\underline P_2
 +\left|\bigl(\omega\underline m_{p,w}'\bigr)'\right|\overline P_2.
\]
If
\begin{equation}
\label{eq:weighted-second-monotonicity}
\bigl(\omega\overline m_{p,w}'\bigr)'\geq0,
\qquad
\bigl(\omega\underline m_{p,w}'\bigr)'\geq0,
\end{equation}
then
\[
V_{p,w,\omega}
=\frac{\omega}{w}
 +\left[\omega\left(\frac{\tau_p}{w}\right)'\right]',
\]
and hence
\begin{equation}
\label{eq:weighted-derivative-envelope}
\left|\bigl(\omega f_{p,w,h}'\bigr)'(x)\right|
\leq
\left\{2\frac{\omega(x)}{w(x)}
 +\left[\omega\left(\frac{\tau_p}{w}\right)'\right]'(x)\right\}
\|h^{(1)}\|.
\end{equation}
\end{proposition}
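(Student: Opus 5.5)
The plan is to differentiate the product $\omega f_{p,w,h}'$ with the Leibniz rule and then substitute the low-order kernel representations of Theorem~\ref{thm:universal-low-order-calculus}. By \eqref{eq:ker11} we have $f_{p,w,h}'=\mathbb E[K^{1,1}_{p,w}(x,Z)h'(Z)]$. By \eqref{eq:ker21}, $f''_{p,w,h}=\mathbb E[K^{2,1}_{p,w}(x,Z)h'(Z)]+h'(x)/w(x)$. Hence
\[
(\omega f_{p,w,h}')'=\omega' f'_{p,w,h}+\omega f''_{p,w,h}
=\mathbb E\bigl[(\omega'K^{1,1}_{p,w}+\omega K^{2,1}_{p,w})(x,Z)h'(Z)\bigr]+\frac{\omega}{w}h'(x),
\]
which is \eqref{eq:weighted-derivative-representation} with the first form of $\Lambda_{p,w,\omega}$ in \eqref{eq:weighted-kernel}.

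The second form of $\Lambda_{p,w,\omega}$ comes from \eqref{eq:def-kernels} with $j=1$. There $\underline P_1=P$ and $\overline P_1=\overline P$, so
\[
K^{i,1}_{p,w}(x,v)=-\overline m^{(i)}_{p,w}(x)\frac{P(v)}{p(v)}\mathbf 1_{\{v\le x\}}-\underline m^{(i)}_{p,w}(x)\frac{\overline P(v)}{p(v)}\mathbf 1_{\{v>x\}}.
\]
Combining $\omega'\overline m'+\omega\overline m''=(\omega\overline m')'$, and likewise for $\underline m$, gives the displayed expression.

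For the envelope, take absolute values inside the expectation and use
\[
\mathbb E\bigl[P(Z)p(Z)^{-1}\mathbf 1_{\{Z\le x\}}\bigr]=\int_l^xP=\underline P_2(x),
\]
together with the analogous identity giving $\overline P_2(x)$. This yields $\mathbb E|\Lambda_{p,w,\omega}(x,Z)|=V_{p,w,\omega}(x)$, and the bound with $\omega/w+V$ follows.

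Under \eqref{eq:weighted-second-monotonicity} both coefficients are nonnegative, so $V=(\omega\overline m')'\underline P_2+(\omega\underline m')'\overline P_2=-\mathbb E[\Lambda_{p,w,\omega}(x,Z)]$. By linearity, this mean equals $\omega' M^{1,1}_{p,w}+\omega M^{2,1}_{p,w}$. Example~\ref{ex:low-order-kernel-means} gives $M^{1,1}_{p,w}=-(\tau_p/w)'$ and $M^{2,1}_{p,w}=-1/w-(\tau_p/w)''$, so
\[
V=\omega'(\tau_p/w)'+\omega/w+\omega(\tau_p/w)''=\omega/w+[\omega(\tau_p/w)']'.
\]
Adding the $\omega/w$ coming from the pointwise term gives \eqref{eq:weighted-derivative-envelope}.

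The main obstacle is justifying the interchange of derivative and expectation and the validity of \eqref{eq:ker11} and \eqref{eq:ker21} at the indices used. I would handle this by invoking the hypotheses of Lemma~\ref{lma:basis-prop-of-kernels}, assumed in the statement, together with local absolute continuity of $\omega$, so that the Leibniz rule applies almost everywhere. I would also double-check the sign bookkeeping when identifying $-\mathbb E[\Lambda_{p,w,\omega}]$ with $V$.
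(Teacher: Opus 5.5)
Your proof is correct and is essentially the paper's argument: the Leibniz rule combined with \eqref{eq:ker11} and \eqref{eq:ker21} gives the representation, and under \eqref{eq:weighted-second-monotonicity} one uses $\Lambda_{p,w,\omega}(x,\cdot)\le 0$ together with $\mathbb E[\Lambda_{p,w,\omega}(x,Z)]=\omega' M^{1,1}_{p,w}+\omega M^{2,1}_{p,w}=-\omega/w-[\omega(\tau_p/w)']'$. You also spell out two steps the paper leaves implicit: the $j=1$ kernel computation behind the second form of $\Lambda_{p,w,\omega}$, and the identity $\mathbb E|\Lambda_{p,w,\omega}(x,Z)|=V_{p,w,\omega}(x)$.
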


\begin{proof}
Equations~\eqref{eq:ker11} and \eqref{eq:ker21} give
\[
\bigl(\omega f_{p,w,h}'\bigr)'
=\frac{\omega}{w}h'
 +\mathbb E\!\left[
   \bigl(\omega'K_{p,w}^{1,1}+\omega K_{p,w}^{2,1}\bigr)(\cdot,Z)
   h'(Z)
 \right],
\]
which proves \eqref{eq:weighted-derivative-representation} and the first
bound. Under \eqref{eq:weighted-second-monotonicity},
\(\Lambda_{p,w,\omega}(x,\cdot)\leq0\), while
\[
\mathbb E[\Lambda_{p,w,\omega}(x,Z)]
=-\frac{\omega(x)}{w(x)}
 -\left[\omega\left(\frac{\tau_p}{w}\right)'\right]'(x).
\]
Taking absolute values yields the result.
\end{proof}

The specialization \(\omega=w=\tau_p\) removes the correction term in
\eqref{eq:weighted-derivative-envelope}.

\begin{corollary}
\label{cor:weighted-first-derivative-factor}
If
\begin{equation}
\label{eq:tau-weighted-monotonicity}
\bigl(\tau_p\overline m_{p,\tau_p}'\bigr)'\geq0,
\qquad
\bigl(\tau_p\underline m_{p,\tau_p}'\bigr)'\geq0,
\end{equation}
then for every Lipschitz function \(h\),
\begin{equation}
\label{eq:constant-weighted-factor}
\left\|\bigl(\tau_p f_{p,\tau_p,h}'\bigr)'\right\|
\leq2\|h^{(1)}\|.
\end{equation}
\end{corollary}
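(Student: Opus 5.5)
The corollary is the specialization $\omega=w=\tau_p$ of the preceding proposition. I would apply its envelope \eqref{eq:weighted-derivative-envelope}, after checking that the hypotheses translate correctly.

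\emph{Step 1: monotonicity hypothesis.} With $\omega=w=\tau_p$, condition \eqref{eq:weighted-second-monotonicity} becomes exactly \eqref{eq:tau-weighted-monotonicity}. So under the corollary's assumption, $\Lambda_{p,\tau_p,\tau_p}(x,\cdot)\le 0$. Hence
\[
V_{p,\tau_p,\tau_p}=-\mathbb E[\Lambda_{p,\tau_p,\tau_p}(x,Z)]=\frac{\tau_p}{\tau_p}+\Bigl[\tau_p\Bigl(\frac{\tau_p}{\tau_p}\Bigr)'\Bigr]'=1+0=1 .
\]
The correction vanishes because $(\tau_p/w)'\equiv 0$. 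One can also see this directly from $M^{1,1}_{p,w}=-(\tau_p/w)'$ and $M^{2,1}_{p,w}=-1/w-(\tau_p/w)''$ in Example~\ref{ex:low-order-kernel-means}: these give $\mathbb E[\Lambda]=\omega' M^{1,1}+\omega M^{2,1}=-1$.

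\emph{Step 2: pointwise bound.} Representation \eqref{eq:weighted-derivative-representation} with $\omega/w=1$ reads $(\tau_p f')'(x)=h'(x)+\mathbb E[\Lambda(x,Z)h'(Z)]$. Taking absolute values gives
\[
|(\tau_p f'_{p,\tau_p,h})'(x)|\le (1+1)\|h^{(1)}\|
\]
for almost every $x\in\mathcal I_p$. Taking the essential supremum yields \eqref{eq:constant-weighted-factor}.

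\emph{Main obstacle: regularity for a merely Lipschitz $h$.} The representations \eqref{eq:ker11} and \eqref{eq:ker21} require the integrability and boundary conditions of Lemma~\ref{lma:basis-prop-of-kernels}.
\begin{itemize}
\item $h$ is locally absolutely continuous with $h'\in L^\infty$, and it has linear growth.
\item $\mathbb E|K^{1,1}h'|\le U^{1,1}\|h'\|$ is finite.
\item $\mathbb E|\Lambda h'|\le \|h'\|\,\mathbb E|\Lambda|=\|h'\|$ is finite by Step 1.
\item The boundary terms $h(v)P(v)\to0$ and $h(v)\overline P(v)\to0$ follow from $\mathbb E|Z|<\infty$, since $|v|P(v)\le \mathbb E[|Z|\mathbf 1_{Z\le v}]$ as $v\to-\infty$, and similarly in the upper tail.
\item The terms $h\,\underline P_2$ and $h\,\overline P_2$ would need $\mathbb E Z^2<\infty$. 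However, \eqref{eq:ker21} only uses moves with $j\le1$ for $h'$, and the move from $K^{\cdot,0}$ to $K^{\cdot,1}$ only needs the $P$, $\overline P$ terms.
\end{itemize}
I would note this explicitly. If needed, I would first prove the bound for $h$ with compactly supported derivative and pass to the limit using the uniformity of the constant 2.
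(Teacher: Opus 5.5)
Your proposal is correct and is essentially the paper's proof: you specialize \eqref{eq:weighted-derivative-representation} to $\omega=w=\tau_p$, use the hypothesis to get $\Lambda_{p,\tau_p,\tau_p}(x,\cdot)\le 0$ with $\mathbb E[\Lambda_{p,\tau_p,\tau_p}(x,Z)]=-1$, and conclude. The only differences are that the paper writes the last step as $h'(x)$ minus an average of $h'$ against the probability density $|\Lambda_{p,\tau_p,\tau_p}(x,\cdot)|\,p$, which even gives the bound $\operatorname{osc}(h')$, where you use the triangle inequality, and that the paper leaves implicit the regularity checks you spell out.
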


\begin{proof}
Under \eqref{eq:tau-weighted-monotonicity},
\(\Lambda_{p,\tau_p,\tau_p}(x,\cdot)\leq0\) and
\(\mathbb E[\Lambda_{p,\tau_p,\tau_p}(x,Z)]=-1\). Thus
\eqref{eq:weighted-derivative-representation} expresses
\(\bigl(\tau_p f_{p,\tau_p,h}'\bigr)'(x)\) as the difference between
\(h'(x)\) and an average of \(h'\), proving the claim.
\end{proof}

\begin{remark}
\label{rem:weighted-factor-sufficient-condition}
A convenient sufficient condition for \eqref{eq:tau-weighted-monotonicity}
is
\begin{equation}
\label{eq:weighted-factor-sufficient-condition}
2\tau_p(x)+(\mu-x)\tau_p'(x)\geq0,
\qquad x\in\mathcal I_p.
\end{equation}
Indeed, since \(s_{\tau_p}(x)=\mu-x\),
\begin{align*}
\bigl(\tau_p\underline m_{p,\tau_p}'\bigr)'
&=\frac{(s_{\tau_p}^2+\tau_p)P-s_{\tau_p}\tau_p p}{\tau_p^2p},
&
\bigl(\tau_p\overline m_{p,\tau_p}'\bigr)'
&=\frac{(s_{\tau_p}^2+\tau_p)\overline P+s_{\tau_p}\tau_p p}
        {\tau_p^2p},
\end{align*}
and \eqref{eq:weighted-factor-sufficient-condition} ensures that both
quantities are non-negative. The condition holds for the beta distribution;
more general polynomial-Stein-kernel examples are covered by
\cite[Proposition~2.11 and its proof]{germain_swan_2025}.
\end{remark}

The preceding first-order weighted construction extends to arbitrary order, provided the coefficient identities
below hold. Note that these identities constrain only the weight $w$ defining the Stein
operator; the function $\omega$ specifying the derivative to be controlled remains free.
Assume henceforth that
\begin{equation}\label{eq:struct}
M^{r,r-1}_{p,w}=-\frac1w \quad (r\ge1),\qquad M^{r,r}_{p,w}=0\quad(r\ge0).
\end{equation}

For $n\ge0$ and $\omega:\mathcal I_p\to(0,\infty)$ locally absolutely continuous, define
\begin{align}\label{eq:Lambdan}
\Lambda^{n}_{p,w,\omega}(x,v)
&:=\omega'(x)K^{n,n}_{p,w}(x,v)+\omega(x)K^{n+1,n}_{p,w}(x,v)\notag\\
&\phantom{:}=(-1)^n\big(\omega\,\overline{m}^{(n)}_{p,w}\big)'(x)\,
   \frac{\underline{P}_n(v)}{p(v)}\mathbf 1_{\{v\le x\}}
 -\big(\omega\,\underline{m}^{(n)}_{p,w}\big)'(x)\,
   \frac{\overline{P}_n(v)}{p(v)}\mathbf 1_{\{v>x\}} .
\end{align}
This is of course consistent with \eqref{eq:weighted-kernel} as for $n=1$ we have $\Lambda_{p,w,\omega}^1=\Lambda_{p,w,\omega}$. 

\begin{proposition}\label{prop:all-order-weighted-factor}
Assume \eqref{eq:struct}, that $\omega:\mathcal I_p\to(0,\infty)$ is locally absolutely
continuous, and that the regularity and integrability conditions of Lemma~\ref{lma:basis-prop-of-kernels}
hold at the indices $(n,n)$ and $(n+1,n)$. Then, for every $n\ge0$ and almost every
$x\in\mathcal I_p$,
\begin{equation}\label{eq:weighted-n}
\big(\omega f^{(n)}_{p,w,h}\big)'(x)
=\frac{\omega(x)}{w(x)}\,h^{(n)}(x)
 +\mathbb E\big[\Lambda^{n}_{p,w,\omega}(x,Z)h^{(n)}(Z)\big],
\quad
\mathbb E\big[\Lambda^{n}_{p,w,\omega}(x,Z)\big]=-\frac{\omega(x)}{w(x)} .
\end{equation}
Consequently,
\begin{equation}\label{eq:weighted-n-crude}
\Big|\big(\omega f^{(n)}_{p,w,h}\big)'(x)\Big|
\le\Big(\frac{\omega(x)}{w(x)}+V^{n}_{p,w,\omega}(x)\Big)\,\|h^{(n)}\|,
\quad
V^{n}_{p,w,\omega}:=\Big|\big(\omega\,\overline{m}^{(n)}_{p,w}\big)'\Big|\underline{P}_{n+1}
+\Big|\big(\omega\,\underline{m}^{(n)}_{p,w}\big)'\Big|\overline{P}_{n+1}.
\end{equation}
If moreover
\begin{equation}\label{eq:weighted-all-order-sign-condition}
(-1)^n\big(\omega\,\overline{m}^{(n)}_{p,w}\big)'\le0,
\qquad
\big(\omega\,\underline{m}^{(n)}_{p,w}\big)'\ge0 ,
\end{equation}
then $V^{n}_{p,w,\omega}=\omega/w$ and
\begin{equation}\label{eq:weighted-n-osc}
\Big|\big(\omega f^{(n)}_{p,w,h}\big)'(x)\Big|
\le\frac{\omega(x)}{w(x)}\operatorname{osc}\big(h^{(n)}\big),
\qquad \text{hence} \qquad
\Big\|\big(\omega f^{(n)}_{p,w,h}\big)'\Big\|
\le 2\Big\|\frac{\omega}{w}\Big\|\,\|h^{(n)}\| .
\end{equation}
\end{proposition}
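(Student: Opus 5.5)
The plan is to mimic the proof of the first-order weighted proposition: differentiate $\omega f^{(n)}_{p,w,h}$ using the closed diagonal representation \eqref{eq:closed-diagonal}, then identify the kernel combination with $\Lambda^n_{p,w,\omega}$.

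First, by Corollary~\ref{cor:closed-three-neighbour-calculus}, which applies under \eqref{eq:struct}, we have $f^{(n)}_{p,w,h}(x)=\mathbb E[K^{n,n}_{p,w}(x,Z)h^{(n)}(Z)]$. Differentiating with \eqref{eq:master-vertical-move} at $(i,j)=(n,n)$ gives
\[
f^{(n+1)}_{p,w,h}(x)=\mathbb E[K^{n+1,n}_{p,w}(x,Z)h^{(n)}(Z)]-M^{n,n-1}_{p,w}(x)h^{(n)}(x).
\]
By \eqref{eq:struct}, $-M^{n,n-1}_{p,w}=1/w$ for $n\ge1$. For $n=0$ the correction is $-M^{0,-1}_{p,w}h=h/w$ by Example~\ref{ex:low-order-kernel-means}, so the same formula holds. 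The product rule then gives
\[
(\omega f^{(n)})'=\omega' f^{(n)}+\omega f^{(n+1)}=\frac{\omega}{w}h^{(n)}+\mathbb E\big[(\omega'K^{n,n}+\omega K^{n+1,n})(x,Z)h^{(n)}(Z)\big],
\]
which is the first identity in \eqref{eq:weighted-n} by the definition \eqref{eq:Lambdan}. The closed form in \eqref{eq:Lambdan} follows directly from \eqref{eq:def-kernels}, since $\omega'\overline m^{(n)}+\omega\overline m^{(n+1)}=(\omega\overline m^{(n)})'$, and likewise for $\underline m$.

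Next, for the mean we use \eqref{eq:expectation-of-kernel:}
\[
\mathbb E[\Lambda^n]=\omega' M^{n,n}_{p,w}+\omega M^{n+1,n}_{p,w}=0+\omega\cdot(-1/w),
\]
again by \eqref{eq:struct}. This step needs the index $(n+1,n)$ to lie within the range where \eqref{eq:struct} is assumed; that is why the hypotheses require Lemma~\ref{lma:basis-prop-of-kernels} at $(n,n)$ and $(n+1,n)$. The crude bound \eqref{eq:weighted-n-crude} follows by taking absolute values: $\mathbb E|\Lambda^n(x,Z)|$ is computed exactly as in \eqref{eq:integrability-of-kernel}. Integrating $\underline P_n/p$ against $p$ over $(l,x]$ gives $\underline P_{n+1}(x)$, and similarly on the upper side, which yields $V^n_{p,w,\omega}$.

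Finally, under the sign condition \eqref{eq:weighted-all-order-sign-condition}, both pieces of $\Lambda^n(x,\cdot)$ are nonpositive, since $\underline P_n,\overline P_n\ge0$. Hence $V^n=-\mathbb E[\Lambda^n]=\omega/w$. Writing $\nu_x$ for the probability measure with density $-\Lambda^n(x,v)p(v)\,w(x)/\omega(x)$, we get
\[
(\omega f^{(n)})'(x)=\frac{\omega(x)}{w(x)}\Big(h^{(n)}(x)-\int h^{(n)}\,d\nu_x\Big).
\]
This is bounded in absolute value by $(\omega/w)\operatorname{osc}(h^{(n)})\le 2(\omega/w)\|h^{(n)}\|$. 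Taking the essential supremum gives \eqref{eq:weighted-n-osc}. The only delicate point is the bookkeeping at $n=0$ and the validity of \eqref{eq:struct} at $(n+1,n)$. The rest is routine.
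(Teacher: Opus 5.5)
Your proposal is correct and follows essentially the same route as the paper. The shared steps are: the diagonal representation from Corollary~\ref{cor:closed-three-neighbour-calculus}; one differentiation using $M^{n,n-1}_{p,w}=-1/w$; the Leibniz rule to assemble $\Lambda^n_{p,w,\omega}$; the mean $\omega' M^{n,n}_{p,w}+\omega M^{n+1,n}_{p,w}=-\omega/w$ from \eqref{eq:struct}; and the sign condition, which turns the identity into a centred average. Your separate treatment of $n=0$ via $M^{0,-1}_{p,w}=-1/w$ (a case \eqref{eq:struct} as stated does not cover) and your probability-measure rewriting are small clarifications of steps the paper leaves implicit.
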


\begin{proof}
Under \eqref{eq:struct}, Corollary~\ref{cor:closed-three-neighbour-calculus} applies so that
$f^{(n)}_{p,w,h}(x)=\mathbb E\big[K^{n,n}_{p,w}(x,Z)h^{(n)}(Z)\big]$. Differentiating and
using \eqref{eq:differentiation-of-kernels} at $(i,j)=(n,n)$ together with $M^{n,n-1}_{p,w}=-1/w$ gives
\[
f^{(n+1)}_{p,w,h}(x)=\mathbb E\big[K^{n+1,n}_{p,w}(x,Z)h^{(n)}(Z)\big]+\frac{h^{(n)}(x)}{w(x)} .
\]
Multiplying the first display by $\omega'(x)$, the second by $\omega(x)$ and adding yields
the identity in \eqref{eq:weighted-n}, by the Leibniz rule and the definition
\eqref{eq:Lambdan}. Taking expectations and using \eqref{eq:expectation-of-kernel} gives
\[
\mathbb E\big[\Lambda^{n}_{p,w,\omega}(x,Z)\big]
=\omega'(x)M^{n,n}_{p,w}(x)+\omega(x)M^{n+1,n}_{p,w}(x)=-\frac{\omega(x)}{w(x)},
\]
by \eqref{eq:struct}. Integrating $|\Lambda^n_{p,w,\omega}(x,\cdot)|p$
separately over $(l,x]$ and $(x,u)$ gives
$\mathbb E\big[|\Lambda^{n}_{p,w,\omega}(x,Z)|\big]=V^{n}_{p,w,\omega}(x)$, whence
\eqref{eq:weighted-n-crude}.

Assume now \eqref{eq:weighted-all-order-sign-condition}. Since $\underline{P}_n\ge0$ and $\overline{P}_n\ge0$, both
branches in \eqref{eq:Lambdan} are non-positive, so $\Lambda^{n}_{p,w,\omega}(x,\cdot)\le0$
and therefore $V^{n}_{p,w,\omega}=-\mathbb E[\Lambda^{n}_{p,w,\omega}(x,Z)]=\omega/w$. Hence, \eqref{eq:weighted-n} rewrites
$$(\omega f_{p,w,h}^{(n)})'=\E[\Lambda_{p,w,\omega}^n(x,Z)(h^{(n)}(Z)-h(x))],$$
yielding \eqref{eq:weighted-n-osc}.
\end{proof}

\begin{corollary}[Canonical weight]\label{cor:omega-equals-w}
If \eqref{eq:struct} holds and $\omega=w$, then \eqref{eq:weighted-all-order-sign-condition} reads
$(-1)^n\big(w\,\overline{m}^{(n)}_{p,w}\big)'\le0$ and
$\big(w\,\underline{m}^{(n)}_{p,w}\big)'\ge0$, and \eqref{eq:weighted-n-osc} becomes
\[
\Big\|\big(w f^{(n)}_{p,w,h}\big)'\Big\|\le\operatorname{osc}\big(h^{(n)}\big)
\le2\|h^{(n)}\| .
\]
\end{corollary}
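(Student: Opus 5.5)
This corollary is a direct specialization of Proposition~\ref{prop:all-order-weighted-factor} to \(\omega=w\); the proof consists of checking that the hypotheses carry over and reading off the constants.

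\emph{Step 1: specialize the sign condition.} Setting \(\omega=w\) in \eqref{eq:weighted-all-order-sign-condition} replaces \(\omega\,\overline m^{(n)}_{p,w}\) and \(\omega\,\underline m^{(n)}_{p,w}\) by \(w\,\overline m^{(n)}_{p,w}\) and \(w\,\underline m^{(n)}_{p,w}\). This is exactly the displayed pair of inequalities, so the first assertion is purely notational. We must still check that \(\omega=w\) is admissible in the Proposition, i.e.\ that \(w\) is positive and locally absolutely continuous on \(\mathcal I_p\). Positivity is part of the standing convention of this subsection (\(w>0\)). Local absolute continuity of \(w\) is implicit once the Mills-ratio derivatives \(\overline m^{(n)}_{p,w}\) and \(\underline m^{(n)}_{p,w}\) and the products \(w\,\overline m^{(n)}_{p,w}\), \(w\,\underline m^{(n)}_{p,w}\) are differentiable, as the sign condition presupposes. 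I would state this explicitly as part of ``the regularity conditions of Lemma~\ref{lma:basis-prop-of-kernels}''.

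\emph{Step 2: read off the bound.} Under \eqref{eq:struct} and the sign condition, \eqref{eq:weighted-n-osc} gives pointwise
\[
\bigl|(w f^{(n)}_{p,w,h})'(x)\bigr|\le \frac{w(x)}{w(x)}\operatorname{osc}(h^{(n)})=\operatorname{osc}(h^{(n)}).
\]
Taking the essential supremum over \(x\) gives the first inequality. The second, \(\operatorname{osc}(h^{(n)})\le 2\|h^{(n)}\|\), holds because any two essential values of \(h^{(n)}\) differ by at most \(2\|h^{(n)}\|\).

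\emph{Justifying the oscillation bound.} The underlying mechanism is already in the proof of the Proposition. With \(\omega=w\), one has \(\Lambda^n_{p,w,w}(x,\cdot)\le0\) and \(\mathbb E[\Lambda^n_{p,w,w}(x,Z)]=-1\), so \(-\Lambda^n_{p,w,w}(x,\cdot)\,p\) is a probability density. Hence
\[
(wf^{(n)}_{p,w,h})'(x)=h^{(n)}(x)-\tilde{\mathbb E}_x\bigl[h^{(n)}\bigr],
\]
a difference between a value of \(h^{(n)}\) and an average of \(h^{(n)}\). Such a difference is bounded in absolute value by \(\operatorname{osc}(h^{(n)})\). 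The only care needed is that \(h^{(n)}(x)\) is an essential value, which holds for almost every \(x\).

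There is no real obstacle here. The only point needing attention is the Proposition's displayed rewriting \(h^{(n)}(Z)-h(x)\), which should read \(h^{(n)}(Z)-h^{(n)}(x)\). I would state the corrected centring explicitly so that the oscillation bound, rather than the cruder bound \((1+V^n)\|h^{(n)}\|\), is what is invoked.
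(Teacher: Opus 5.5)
Your proposal is correct and matches the paper, which gives no separate argument for this corollary. It is simply Proposition~\ref{prop:all-order-weighted-factor} with $\omega=w$: then $\omega/w\equiv1$, so $\mathbb E[\Lambda^n_{p,w,w}(x,Z)]=-1$, and the centred identity $(wf^{(n)}_{p,w,h})'(x)=\mathbb E[\Lambda^n_{p,w,w}(x,Z)(h^{(n)}(Z)-h^{(n)}(x))]$ yields the oscillation bound. You are right that the centring should be $h^{(n)}(x)$, not $h(x)$ as printed.

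One small remark on your admissibility check. Local absolute continuity of $w$ follows directly from the standing assumptions, since $wp$ and $p$ are locally absolutely continuous and $p>0$ on each component; and the proof of the Proposition never uses the sign of $\omega$, only $\omega/w\equiv1$.
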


\begin{corollary}[Unweighted derivative]\label{cor:omega-equals-one}
If \eqref{eq:struct} holds and $\omega\equiv1$, then \eqref{eq:weighted-all-order-sign-condition} reads
$(-1)^n\overline{m}^{(n)}_{p,w}$ non-increasing and $\underline{m}^{(n)}_{p,w}$
non-decreasing, and \eqref{eq:weighted-n-osc} becomes
\[
\big|f^{(n+1)}_{p,w,h}(x)\big|\le\frac{\operatorname{osc}(h^{(n)})}{w(x)},
\qquad
\big\|f^{(n+1)}_{p,w,h}\big\|\le\Big\|\frac1w\Big\|\operatorname{osc}\big(h^{(n)}\big).
\]
\end{corollary}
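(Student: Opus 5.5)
This follows by specializing Proposition~\ref{prop:all-order-weighted-factor} to $\omega\equiv1$. I will check the hypotheses, then read off the conclusions.

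\textbf{Sign condition.} With $\omega\equiv1$ we have $\omega'=0$, so \eqref{eq:Lambdan} reduces to
\[
\Lambda^n_{p,w,1}(x,v)=K^{n+1,n}_{p,w}(x,v)
=(-1)^n\overline m^{(n+1)}_{p,w}(x)\frac{\underline P_n(v)}{p(v)}\mathbf 1_{\{v\le x\}}-\underline m^{(n+1)}_{p,w}(x)\frac{\overline P_n(v)}{p(v)}\mathbf 1_{\{v>x\}}.
\]
Accordingly, \eqref{eq:weighted-all-order-sign-condition} becomes $(-1)^n\overline m^{(n+1)}_{p,w}\le0$ and $\underline m^{(n+1)}_{p,w}\ge0$. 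Since the Mills-ratio derivatives are assumed locally absolutely continuous, this is exactly the statement that $(-1)^n\overline m^{(n)}_{p,w}$ is non-increasing and $\underline m^{(n)}_{p,w}$ is non-decreasing.

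\textbf{The representation.} The left-hand side $(\omega f^{(n)}_{p,w,h})'$ is simply $f^{(n+1)}_{p,w,h}$. By \eqref{eq:weighted-n} and the centring $\mathbb E[\Lambda^n_{p,w,1}(x,Z)]=-1/w(x)$,
\[
f^{(n+1)}_{p,w,h}(x)=\mathbb E\bigl[K^{n+1,n}_{p,w}(x,Z)\bigl(h^{(n)}(Z)-h^{(n)}(x)\bigr)\bigr].
\]
This is also \eqref{eq:closed-lower-neighbour} at order $n+1$.

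\textbf{Bounds.} Under the sign condition the kernel is non-positive with total mass $1/w(x)$. So $-w(x)K^{n+1,n}_{p,w}(x,\cdot)p(\cdot)$ is a probability density, and the display says $w(x)f^{(n+1)}_{p,w,h}(x)$ equals $h^{(n)}(x)$ minus an average of $h^{(n)}$. Its absolute value is therefore at most $\operatorname{osc}(h^{(n)})$, which gives the pointwise bound $|f^{(n+1)}_{p,w,h}(x)|\le\operatorname{osc}(h^{(n)})/w(x)$. Taking the essential supremum over $x$ and bounding $1/w(x)\le\|1/w\|$ gives the uniform bound.

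\textbf{Main obstacle.} There is essentially none beyond bookkeeping. One point needs care: \eqref{eq:weighted-n-osc} as displayed in Proposition~\ref{prop:all-order-weighted-factor} centres at $h(x)$, which is a typo; the correct centring point is $h^{(n)}(x)$. The second point is that the regularity assumptions of Lemma~\ref{lma:basis-prop-of-kernels} must hold at the indices $(n,n)$ and $(n+1,n)$, which is exactly what the corollary presupposes.
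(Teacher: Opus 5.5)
Your proposal is correct and follows the paper's own route: the paper states this corollary as the $\omega\equiv1$ specialization of Proposition~\ref{prop:all-order-weighted-factor}, whose proof is exactly your argument ($\Lambda^n_{p,w,1}=K^{n+1,n}_{p,w}$ is non-positive with mean $-1/w$, so $w f^{(n+1)}_{p,w,h}$ equals $h^{(n)}(x)$ minus an average of $h^{(n)}$). You are also right that the centring point in that proof should be $h^{(n)}(x)$ rather than $h(x)$, although the slip is in the unnumbered display inside the proof, not in \eqref{eq:weighted-n-osc} itself.
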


\begin{remark}\label{rem:no-struct}
Without \eqref{eq:struct} the construction still applies, at the price of carrying the
correction terms of Theorem~\ref{thm:kernel-representations-of-derivatives}. 
Indeed, the same computation gives, for every $n\ge0$,
\[
\big(\omega f^{(n)}_{p,w,h}\big)'(x)
=-\omega(x)M^{n,n-1}_{p,w}(x)h^{(n)}(x)
+\mathbb E\big[\Lambda^{n}_{p,w,\omega}(x,Z)h^{(n)}(Z)\big]
+\big(\omega R_n\big)'(x),
\]
with 
\[
R_n:=\big(M^{n,n-1}_{p,w}-M^{1,0}_{p,w}\big)h^{(n-1)}
+\sum_{r=0}^{n-2}\Big[\big(M^{r+1,r}_{p,w}-M^{1,0}_{p,w}\big)'h^{(r)}\Big]^{(n-2-r)},
\qquad R_0=R_1=0,
\]
with $\mathbb E[\Lambda^{n}_{p,w,\omega}(x,Z)]
=\omega'M^{n,n}_{p,w}+\omega M^{n+1,n}_{p,w}$. For $n=1$ one has $R_1=0$ and
$M^{1,0}_{p,w}=-1/w$, so this reduces exactly to \eqref{eq:weighted-derivative-representation}, and for $n=0$ it
reduces to the analogous identity for $(\omega f_{p,w,h})'$ so no structural hypothesis is
needed at these two orders. For $n\ge2$ the term $(\omega R_n)'$ does not simplify, which
is why \eqref{eq:struct} is imposed.
\end{remark}

\begin{remark}\label{rem:case-by-case}
The structural identities \eqref{eq:struct} and the sign condition \eqref{eq:weighted-all-order-sign-condition} must
generally be checked case by case. For $w=1$ and $p=\varphi$, both hold at every order and
for $\omega=w=1$ hence the Gaussian factor at every order from \cite{daly_2008} is the instance $\omega=w=1$ of the signed-kernel
representation \eqref{eq:weighted-n-osc}.

For general $p$ and $w=\tau_p$, the structural identities hold at orders
zero and one; the sign condition at order one for $\omega=w=\tau_p$ is the result of Lemma~\ref{lem:low-order-mills-monotonicity}.
For the Gamma distribution with $w=\tau_p$, both conditions hold at every order; see
Example~\ref{ex:gamma}. 
\end{remark}

\section{Examples}
\label{sec:closed-kernel-calculi}

In this section, we explore the consequence of Theorem~\ref{thm:kernel-representations-of-derivatives} to specific targets. We begin with integrated Pearson distributions, that have quadratic Stein kernel, for which the correction coefficients satisfy the cancellation identities in
Corollary~\ref{cor:closed-three-neighbour-calculus}. In particular, we recover the Gaussian results of Theorem~\ref{thm:known-gaussian-stein-factors} and study the Exponential, Gamma, Beta and Student distributions. 
We then address two family of distributions for which the cancellation of Corollary~\ref{cor:closed-three-neighbour-calculus} do not hold. The Subbotin family, of densities proportional to $\exp(-a_\beta|x|^\beta)$ for which the natural choice of weight function is $w=1$ and the symmetrized Maxwell distribution for which $w=\tau_p$ is the designated choice.

\subsection{The integrated Pearson family}
\label{sec:integrated-pearson}

This class of distributions is characterized by the fact that its Stein kernel is
a polynomial of degree at most two; the Gaussian law corresponds to the
constant case. This is the form taken here by Stein's classical
characterization of the Pearson class
\cite[Theorem~1, p.~65]{stein_approx_1986}.The polynomial Stein
kernel controls the derivatives of the weighted Mills ratios and the iterated integrals of the density, causing the
pointwise terms in the three neighbouring representations to cancel at
every admissible order.

\begin{definition}[Integrated Pearson family]
\label{def:integrated-pearson}
Let \(Z\) be absolutely continuous, with finite mean \(\mu\), density
\(p\) and Stein kernel \(\tau_p\). We say that
\(Z\) belongs to the integrated Pearson family if
\[
 \tau_p(x)=\kappa_2x^2+\kappa_1x+\kappa_0.
\]
We then write \(Z\sim\mathrm{IP}(\kappa_2,\kappa_1,\kappa_0)\).
\end{definition}

Up to affine changes of variable, this family contains the Gaussian,
Gamma, Beta and Student distributions; see \cite{Integrated_pearson}.

For indicator tests, the integrated Pearson specialization follows
directly from the general formulas in Section~\ref{sec:kolmogorov-solutions}. The
weighted density solution \eqref{eq:general-explicit-kolmogorov} becomes
\begin{equation}
\label{eq:ip-kolmogorov-solution}
 f_{p,\tau_p,z}(x)
 =
 \frac{P(x\wedge z)\overline P(x\vee z)}{\tau_p(x)p(x)}.
\end{equation}
The pointwise envelopes for \(f_{p,\tau_p,z}\), \(f_{p,\tau_p,z}'\), and \(f_{p,\tau_p,z}''\) are exactly
those of \eqref{eq:general-kolmogorov-exact-envelope}, with $w = \tau_p$.
Thus the Kolmogorov theory does not require any integrated Pearson property. The Gaussian
case is already displayed in Figure~\ref{fig:kolmogorov-factors};
Figures~\ref{fig:ip-kolmogorov-gamma}--\ref{fig:ip-kolmogorov-student}
show the corresponding Student, Gamma and Beta examples. In the Gamma
and Beta cases, where \(\tau_p\) vanishes at the boundary, we display
\(|\tau_p f_{p,\tau_p,z}'|\) and \(|\tau_p f_{p,\tau_p,z}''|\) alongside \(|f_{p,\tau_p,z}|\).

\begin{figure}[p]
 \centering
 \includegraphics[width=\textwidth]{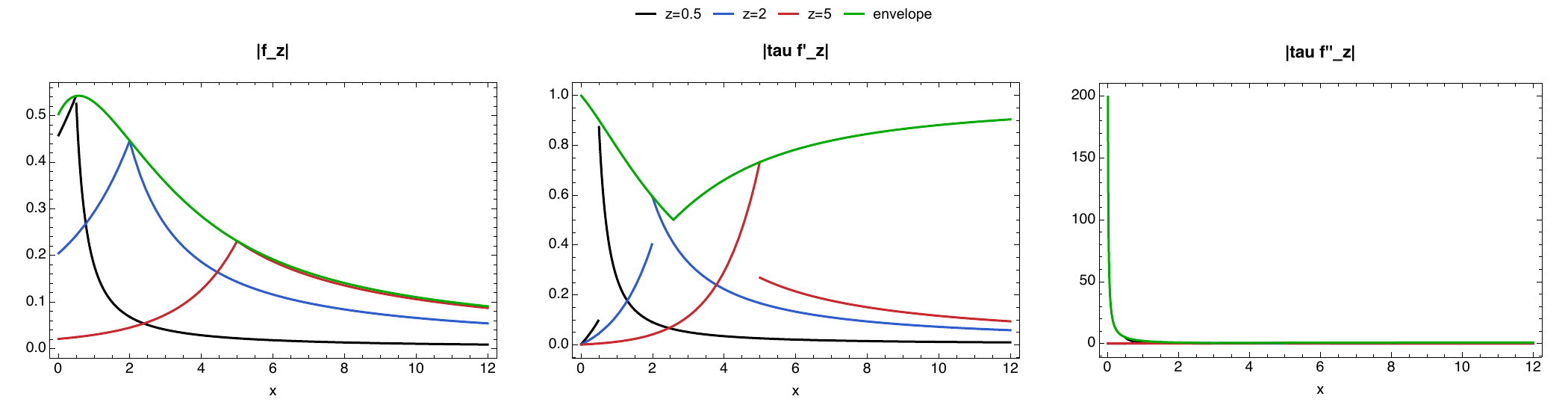}
 \caption{
 Kolmogorov Stein solutions for the Gamma integrated-Pearson member
 \(\mathcal G_{2,1}\). The colored curves correspond to \(z=0.5,2,5\),
 while the green curves are the envelopes for
 \(|f_{p,w,z}|\), \(|\tau_p f_{p,w,z}'|\), and \(|\tau_pf_{p,w,z}''|\). The rapid growth
 in the \(|\tau_p f_{p,w,z}''|\) panel reflects an infinite Kolmogorov factor,
 despite the existence of a finite smooth-test bound at the same
 derivative order.}
 \label{fig:ip-kolmogorov-gamma}
\end{figure}

\begin{figure}[p]
 \centering
 \includegraphics[width=\textwidth]{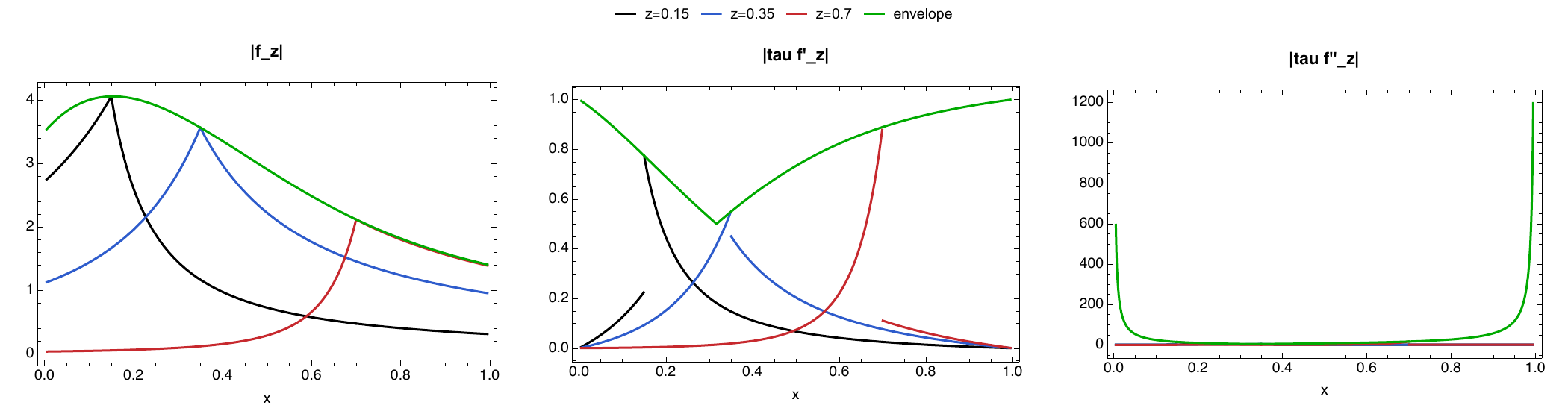}
 \caption{
 Kolmogorov Stein solutions for the Beta integrated-Pearson member
 \(\mathcal B_{2,5}\). The colored curves correspond to
 \(z=0.15,0.35,0.7\), while the green curves are the envelopes for
 \(|f_{p,w,z}|\), \(|\tau_p f_{p,w,z}'|\), and \(|\tau_p f_{p,w,z}''|\). Since
 \(\tau_p(x)=x(1-x)/7\) vanishes at both endpoints, the unweighted
 derivative envelopes blow up at the boundary. The weighted
 second-derivative panel displays the corresponding boundary growth.}
 \label{fig:ip-kolmogorov-beta}
\end{figure}

\begin{figure}[p]
 \centering
 \includegraphics[width=\textwidth]{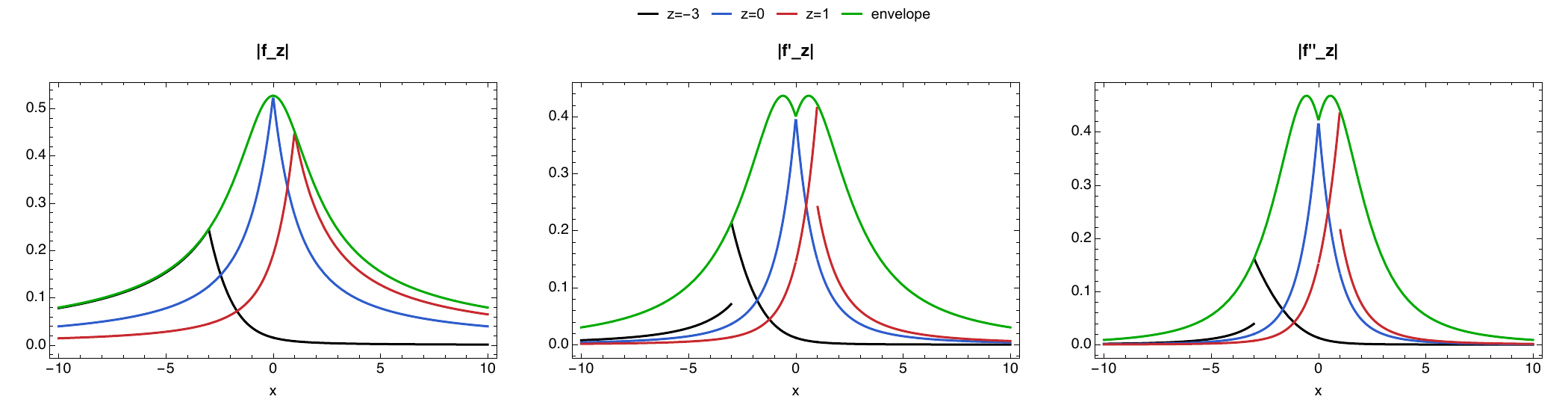}
 \caption{
 Kolmogorov Stein solutions for the Student integrated-Pearson member
 with \(\nu=5\). The colored curves correspond to \(z=-3,0,1\), the
 green curves are the exact Kolmogorov envelopes from
 \eqref{eq:general-kolmogorov-exact-envelope}. The polynomial Stein kernel
 \(\tau_p(x)=(x^2+5)/4\) damps the tails, so the envelope behavior is
 visibly different from the Gaussian case even though the density remains
 symmetric.}
 \label{fig:ip-kolmogorov-student}
\end{figure}

We next turn to smooth test functions. For an integrated Pearson target,
the polynomial form of \(\tau_p\) gives explicit recursions for the iterated
tails and ultimately makes the higher-order pointwise terms cancel. Set
\[
 s(x):=s_{\tau_p}(x)=\mu-x,
 \qquad
 \lambda_k(x):=s(x)+(k-1)\tau_p'(x),
 \qquad
 q_k:=k\bigl(1-(k-1)\kappa_2\bigr).
\]
\begin{definition}[Admissible orders]\label{def:admissible}
Let $Z\sim\mathrm{IP}(\kappa_2,\kappa_1,\kappa_0)$ and let $k\ge 0$ be an integer.
We say that the order $k$ is \emph{admissible} for $Z$ if $q_j>0$ for every
$1\le j\le k$. 
\end{definition}
Working with admissible orders allows for some analytical simplifications, stated in the next Lemma and proven in Appendix~\ref{sec:proofIPrepresentation}
\begin{lemma}[Characterisation of admissibility]\label{lem:admissible}
Let $Z\sim\mathrm{IP}$ and let $k\ge1$. The following are equivalent:
\begin{enumerate}
\item[(i)] $k$ is admissible;
\item[(ii)] $\mathbb{E}|Z|^{k}<\infty$;
\item[(iii)] $P_{k+1}(x)$ and $\overline P_{k+1}(x)$ are finite for some, equivalently
every, $x\in I_p$.
\end{enumerate}
Moreover, if $k$ is admissible, then
\(
\lim_{x\downarrow l}\lambda_j(x)P_j(x)=\lim_{x\downarrow l}\tau_p(x)P_{j-1}(x)=0,
\)
for every $1\le j\le k$, and symmetrically at $u$ for $\overline P_j$.
\end{lemma}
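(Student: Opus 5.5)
The plan is to exploit the first-order ODE satisfied by an integrated Pearson density. Since $(\tau_p p)'=s\,p$ with $s=\mu-x$ and $\tau_p$ is quadratic, we have
\[
\frac{p'}{p}=\frac{s-\tau_p'}{\tau_p}
\]
on $\mathcal I_p$. This identity gives both the tail behaviour of $p$ and the algebraic recursion for the iterated tails.

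\textbf{Step 1: (i)$\Leftrightarrow$(ii).} First note that $q_j>0$ is equivalent to $(j-1)\kappa_2<1$, which is monotone in $j$. So $k$ is admissible iff $q_k>0$, i.e.\ iff $\kappa_2\le 0$ or $k<1+1/\kappa_2$. I split according to the sign of $\kappa_2$.
\begin{itemize}
\item If $\kappa_2>0$, the support is unbounded on at least one side. The ODE gives $p'/p\sim-(2+1/\kappa_2)/x$ as $|x|\to\infty$, hence $p(x)\asymp |x|^{-2-1/\kappa_2}$. It follows that $\mathbb E|Z|^k<\infty$ iff $k<1+1/\kappa_2$, i.e.\ iff $q_k>0$.
\item If $\kappa_2=0$ and $\kappa_1\neq0$, the ODE gives Gamma-type exponential decay on the unbounded side.
\item If $\kappa_2=\kappa_1=0$, the tails are Gaussian.
\item If $\kappa_2<0$, $\tau_p$ has two roots and the support is bounded (Beta type).
\end{itemize}
In the last three cases all moments are finite and all $q_k>0$. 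As a cross-check, I can use the moment recursion $\mathbb E[(Z-\mu)g(Z)]=\mathbb E[\tau_p(Z)g'(Z)]$ with $g(x)=x^{k-1}$. It yields $(1-(k-1)\kappa_2)\mathbb E Z^k=$ (combination of lower moments), which shows that the threshold is exactly $q_k$.

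\textbf{Step 2: (ii)$\Leftrightarrow$(iii).} This follows from the representations
\[
\underline P_{k+1}(x)=\frac{\mathbb E[(x-Z)_+^{k}]}{k!},\qquad
\overline P_{k+1}(x)=\frac{\mathbb E[(Z-x)_+^{k}]}{k!}.
\]
For fixed $x$, $(x-Z)_+^k$ is comparable to $(Z^-)^k$ up to an integrable correction, and $(Z-x)_+^k$ is comparable to $(Z^+)^k$ in the same way. Finiteness of both quantities at one $x$ is therefore equivalent to $\mathbb E|Z|^k<\infty$, and the latter does not depend on $x$.

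\textbf{Step 3: boundary limits.} I first establish by induction the identity
\[
q_j\,\underline P_{j+1}=\tau_p\underline P_{j-1}-\lambda_j\underline P_j ,
\]
together with its mirror image for $\overline P$ (with signs adjusted). The case $j=1$ is $\underline P_2=\tau_p p-sP$, which appears in the proof of Lemma~\ref{lem:low-order-mills-monotonicity}. For the inductive step, I differentiate both sides and use $\lambda_j'=-1+2(j-1)\kappa_2$ together with $\tau_p''=2\kappa_2$. Both sides vanish at $l$ (to be justified below), so equality of the derivatives suffices.

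Given the identity, it is enough to show $\tau_p\underline P_{j-1}\to0$ at $l$. Indeed, $\underline P_{j+1}\to0$ by monotone convergence once it is finite, which holds by Step~2 since $j\le k$. Then $\lambda_j\underline P_j\to0$ follows from the identity. To get $\tau_p\underline P_{j-1}\to0$ I again split into cases.
\begin{itemize}
\item At a finite endpoint where $\tau_p$ vanishes, $\underline P_{j-1}$ is bounded for $j\ge2$. For $j=1$ one has $\tau_p p=\int_l^x(\mu-t)p\to0$.
\item At an infinite endpoint, I use the tail asymptotics from Step~1. For $\kappa_2>0$, $\underline P_{j-1}(x)\asymp|x|^{j-3-1/\kappa_2}$, so $\tau_p\underline P_{j-1}\asymp|x|^{j-1-1/\kappa_2}\to0$ exactly because $q_j>0$.
\item At an infinite endpoint with exponential or Gaussian tails, the limit is immediate.
\end{itemize}

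I expect the main obstacle to be making the tail asymptotics rigorous uniformly across the cases. This includes handling the Student-type case with $\kappa_1\neq0$ and the borderline where $\tau_p$ has a double root. The algebraic identity in Step~3 is routine by comparison.
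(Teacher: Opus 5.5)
\textbf{Gap in Step~3: the boundary limit is obtained circularly.} Your Steps~1 and~2 coincide with the paper's argument. That argument splits on the sign of $\kappa_2$, reads the tail behaviour off $(\log\tau_pp)'=(\mu-x)/\tau_p$, and uses the truncated-moment formulas for $\underline P_{k+1},\overline P_{k+1}$.

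Step~3 is circular as written. Matching derivatives only shows that
\[
q_j\underline P_{j+1}-\tau_p\underline P_{j-1}+\lambda_j\underline P_j\equiv C
\]
for some constant $C$. To get $C=0$ you appeal to ``both sides vanish at $l$''. For the right-hand side this means $\tau_p\underline P_{j-1}-\lambda_j\underline P_j\to0$. But you then deduce $\lambda_j\underline P_j\to0$ \emph{from} the identity. Knowing only $\underline P_{j+1}\to0$ and $\tau_p\underline P_{j-1}\to0$, you can conclude no more than $\lambda_j\underline P_j\to C$.

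In the paper the logic runs the other way. The boundary limits are proved directly in this lemma. They are what kill the boundary terms when the recursion of Lemma~\ref{lem:integrated-pearson-tail-recursions} is later obtained by integration.

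\textbf{How to close it.} You need an independent proof that $\lambda_j\underline P_j\to0$. The paper's device gives both limits at once, with no recursion and no tail asymptotics. If $l=-\infty$ and $Z<x<0$, then $|x|\le|Z|$ and $0<x-Z\le|Z|$, hence
\[
|x|\,(x-Z)_+^{j-1}\le|Z|^j\mathbf 1_{\{Z<x\}},
\qquad
x^2\,(x-Z)_+^{j-2}\le|Z|^j\mathbf 1_{\{Z<x\}}\quad(j\ge2).
\]
Since $\lambda_j$ is affine and $\tau_p$ is quadratic, both $\lambda_j\underline P_j$ and $\tau_p\underline P_{j-1}$ are $O\bigl(\mathbb E[|Z|^j\mathbf 1_{\{Z<x\}}]\bigr)$. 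This tends to $0$ by dominated convergence, because $\mathbb E|Z|^j<\infty$ for $j\le k$.

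Two cases remain. For $j=1$ in the second limit, use $\tau_pp(x)=\int_l^x(\mu-t)p(t)\,dt\to0$. At a finite endpoint, use that $\underline P_j\to0$ while $\lambda_j$ and $\tau_p$ stay bounded.

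This domination argument also removes the case-by-case tail analysis you identified as the main obstacle. Alternatively, within your own framework, use your asymptotics directly. For $\kappa_2>0$ they give $|x|\underline P_j\asymp|x|^{j-1-1/\kappa_2}$, which tends to $0$ exactly when $q_j>0$. Either way, the recursion is not needed for the boundary statement at all.
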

\begin{remark}
    If $\kappa_2\le 0$ (that is for the Gaussian, Gamma and Beta distributions), all orders are admissible. If $\kappa_2>0$, the admissible orders are $k\in \{0,1,\ldots,\lfloor 1+\frac{1}{\kappa_2} \rfloor\}.$ Remark that $k=0$ and $k=1$ are always admissible.
\end{remark}

\begin{lemma}[Integrated-tail recursions]\label{lem:integrated-pearson-tail-recursions}
Let $Z\sim\mathrm{IP}$ and let $k\ge1$ be admissible. Then
\begin{align}
q_kP_{k+1}(x)&=-\lambda_k(x)P_k(x)+\tau_p(x)P_{k-1}(x),\\
q_k\overline P_{k+1}(x)&=\phantom{-}\lambda_k(x)\overline P_k(x)+\tau_p(x)\overline P_{k-1}(x).
\end{align}
\end{lemma}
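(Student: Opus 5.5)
The plan is to prove the lower-tail identity by induction on \(k\), showing for each admissible \(k\) that the difference
\[
G_k:=-\lambda_k P_k+\tau_pP_{k-1}-q_kP_{k+1}
\]
is locally absolutely continuous with vanishing derivative, hence constant, and that its limit at \(l\) is zero. Throughout, \(P_j\) stands for \(\underline P_j\). Since \(P_j'=P_{j-1}\) for \(j\ge1\), \(\tau_p'=2\kappa_2x+\kappa_1\) and \(\lambda_k'=-1+2(k-1)\kappa_2\), everything reduces to bookkeeping on polynomial coefficients.

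\textbf{Base case \(k=1\).} Here \(q_1=1\), \(\lambda_1=s=\mu-x\) and \(P_0=p\), so the claim reads \(P_2=-sP+\tau_pp\). This is the identity already used in the proof of Lemma~\ref{lem:low-order-mills-monotonicity}. To derive it, note that \((\tau_pp)'=sp\) holds because \(\tau_pp=\int_l^x(\mu-t)p(t)\,dt\). Then
\[
G_1'=P+\bigl(-sp+(\tau_pp)'\bigr)-P=0 .
\]
Since \(\tau_pp\) is a continuous integral, \(G_1\) is continuous on \((l,u)\), including across the points \(e_r\). At \(l\) we have \(P_2\to0\), \(\tau_pp\to0\), and \(sP\to0\) by Lemma~\ref{lem:admissible}. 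Hence \(G_1\equiv0\).

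\textbf{Inductive step.} For \(k\ge2\) admissible, the order \(k-1\) is also admissible. Differentiating gives
\[
G_k'=-(\lambda_k'+q_k)P_k+(\tau_p'-\lambda_k)P_{k-1}+\tau_pP_{k-2}.
\]
Two coefficient identities do the work:
\[
-\lambda_k'-q_k=1-2(k-1)\kappa_2-k+k(k-1)\kappa_2=-(k-1)\bigl(1-(k-2)\kappa_2\bigr)=-q_{k-1},
\]
\[
\tau_p'-\lambda_k=-s-(k-2)\tau_p'=-\lambda_{k-1}.
\]
With these, \(G_k'=-q_{k-1}P_k-\lambda_{k-1}P_{k-1}+\tau_pP_{k-2}\), which vanishes by the inductive hypothesis at order \(k-1\). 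Thus \(G_k\) is constant on \((l,u)\); the \(P_j\) with \(j\ge1\) are continuous, so no jumps occur at the \(e_r\). Taking \(x\downarrow l\), we get \(P_{k+1}\to0\) because it is an integral from \(l\) that is finite by Lemma~\ref{lem:admissible}(iii). The terms \(\lambda_kP_k\) and \(\tau_pP_{k-1}\) tend to \(0\) by the boundary statement of Lemma~\ref{lem:admissible}. Hence \(G_k\equiv0\).

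\textbf{Upper tail.} The upper identity follows in the same way, either by repeating the computation with \(\overline P_j'=-\overline P_{j-1}\) and limits at \(u\), or by applying the lower identity to \(-Z\). The variable \(-Z\) is again integrated Pearson, with \(\kappa_1\mapsto-\kappa_1\), the same \(q_k\), and \(\lambda_k\mapsto-\lambda_k(-x)\).

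\textbf{Expected obstacle.} The main delicate point is not the algebra but the boundary behaviour, namely that all terms vanish at \(l\). This requires the admissibility hypothesis and is exactly what Lemma~\ref{lem:admissible} supplies.
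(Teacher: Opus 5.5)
Your proof is correct and follows essentially the paper's route: induction on \(k\) from the base case \(\underline P_2=-sP+\tau_pp\), driven by the same coefficient identities. Your \(-\lambda_k'-q_k=-q_{k-1}\) and \(\tau_p'-\lambda_k=-\lambda_{k-1}\) are equivalent, after an index shift, to the paper's \(q_k-\lambda_k'-\tau_p''=q_{k+1}\) and \(\lambda_k+\tau_p'=\lambda_{k+1}\). The paper integrates the order-\(k\) identity by parts from \(l\), whereas you differentiate the order-\(k\) candidate and fix the constant at \(l\); this is the same computation in the opposite direction, and it makes explicit the boundary limits from Lemma~\ref{lem:admissible} that the paper leaves implicit.
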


\begin{proof}
For \(k=1\), we have already shown that (for general $p$)
\[
 \underline P_2=-sP+\tau_p p,
 \qquad
 \overline P_2=s\overline P+\tau_p p.
\]
The induction step follows by integrating once more and using
\(\lambda_k+\tau_p'=\lambda_{k+1}\) and
\(q_k-\lambda_k'-\tau_p''=q_{k+1}\). The right-tail identity follows from
the corresponding integration by parts on \((x,u)\).
\end{proof}

This recursion simplifies both the kernels and their expectations and
collapses the correction sums in
Theorem~\ref{thm:kernel-representations-of-derivatives}. The resulting
coefficient identities are proved in
Appendix~\ref{sec:proofIPrepresentation}.

\begin{theorem}[Kernel representations for the integrated Pearson family]
\label{thm:integrated-pearson-kernel-representations}
Let \(Z\sim p\) belong to the integrated Pearson family, and let \(n\geq1\) be admissible. If \(w\) is such that
\((\tau_p/w)''\) is constant, then
\((M_{p,w}^{n,n-1})'=-(1/w)'\), and hence
\begin{equation}
\label{eq:IP-representation-n-n-1}
 f_{p,w,h}^{(n)}(x)
 =
 \mathbb E\!\left[
  K_{p,w}^{n,n-1}(x,Z)(h^{(n-1)}(Z)-h^{(n-1)}(x))
 \right].
\end{equation}
If \((\tau_p/w)'\) is constant, then
\(M_{p,w}^{n,n-1}=-1/w\), and therefore
\begin{align}
 f_{p,w,h}^{(n)}(x)
 &=
 \mathbb E\!\left[
  K_{p,w}^{n,n}(x,Z)h^{(n)}(Z)
 \right],
 \label{eq:IP-representation-n-n}\\
 f_{p,w,h}^{(n)}(x)
 &=
 \mathbb E\!\left[
  K_{p,w}^{n,n+1}(x,Z)h^{(n+1)}(Z)
 \right]
 -\frac{n}{q_n}\left(\frac{\tau_p}{w}\right)'(x)h^{(n)}(x).
 \label{eq:IP-representation-n-n+1-correction}
\end{align}
In particular, if $w=\tau_p$ is constant, the pointwise term in
\eqref{eq:IP-representation-n-n+1-correction} vanishes, giving
\begin{equation}
\label{eq:IP-representation-n-n+1}
 f_{p,\tau_p,h}^{(n)}(x)
 =
 \mathbb E\!\left[
  K_{p,\tau_p}^{n,n+1}(x,Z)h^{(n+1)}(Z)
 \right].
\end{equation}
\end{theorem}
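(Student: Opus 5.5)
The plan is to reduce everything to closed forms for the correction coefficients \(M_{p,w}^{r,r-1}\) and \(M_{p,w}^{r,r}\), and then feed them into Theorem~\ref{thm:kernel-representations-of-derivatives}.

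\textbf{Step 1: the factorised form.} Using Lemma~\ref{lem:integrated-pearson-tail-recursions}, I would first show by induction on \(j\) that
\[
M_{p,w}^{0,j}=-c_j\,\frac{\tau_p}{w}\,\mathcal Q_j,
\]
where \(c_j\) is an explicit constant built from \(q_1,\dots,q_j\) and \(\mathcal Q_j\) is a combination of \(\underline P_{j-1},\overline P_{j-1}\) of the same form as \(M^{0,j-2}\). The guiding cases are \(M^{0,0}=0\) and \(M^{0,1}=-\tau_p/w\).

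More useful is the diagonal structure. Write
\[
M^{i,j}=(-1)^j\,\overline m^{(i)}\underline P_{j+1}-\underline m^{(i)}\overline P_{j+1}.
\]
Substitute the recursion for \(\underline P_{j+1}\) and \(\overline P_{j+1}\). The Mills ratios satisfy the first-order equations
\[
(wp\,\underline m)'=p,\qquad (wp\,\overline m)'=-p,
\]
and differentiating these \(i\) times expresses \(\overline m^{(i)}\) and \(\underline m^{(i)}\) in terms of lower derivatives. With these, the combination \(\lambda_k\overline m^{(i)}-\tau_p\overline m^{(i+1)}\) should telescope. The target identities are:
\begin{itemize}
\item \(M_{p,w}^{n,n}=-\dfrac{n}{q_n}\Bigl(\dfrac{\tau_p}{w}\Bigr)'\) when \((\tau_p/w)'\) is constant;
\item \(M_{p,w}^{n,n-1}=-1/w\) in that case, and more generally \((M^{n,n-1})'=-(1/w)'\) when only \((\tau_p/w)''\) is constant.
\end{itemize}
I would prove these by induction on \(n\) using the recursion \eqref{eq:M-derivative-recursion}, \((M^{i,j})'=M^{i+1,j}-M^{i,j-1}\). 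This gives
\[
M^{n+1,n}=(M^{n,n})'+M^{n,n-1},
\]
so the lower-neighbour statement follows from the diagonal one. If \(M^{n,n}\) is a constant multiple of \((\tau_p/w)'\), its derivative involves \((\tau_p/w)''\). In the hypothesis where \((\tau_p/w)'\) is constant, \((M^{n,n})'=0\), and hence \(M^{n+1,n}=M^{n,n-1}=\dots=M^{1,0}=-1/w\). In the weaker case \((\tau_p/w)''=\text{const}\), \((M^{n,n})'\) is constant, which yields \((M^{n+1,n})'=(M^{n,n-1})'=\dots=-(1/w)'\).

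\textbf{Step 2: the closed form of \(M^{n,n}\).} This is the main obstacle. I would first show, from the recursion and Lemma~\ref{lem:admissible} (which kills the boundary terms), that \(q_nM^{n,n}\) satisfies a first-order relation in \(n\). The relation should come from expressing \(M^{n,n}\) via \(M^{n,n-2}\) and \(M^{n,n-1}\) weighted by \(\lambda_n\) and \(\tau_p\). The \(\lambda_n\) term pairs with the Mills-ratio ODE \(wp\,\underline m'=p-(wp)'\underline m\). Writing \((wp)'=(\tau_pp)'\,w/\tau_p+\tau_pp\,(w/\tau_p)'\) should make \((\tau_p/w)'\) appear explicitly. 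I expect this bookkeeping (keeping signs \((-1)^j\) straight and checking that the \(n/q_n\) factor arises from the linear growth of \(\lambda_n\)) to be the delicate part. I would verify it at \(n=1,2\) against the values of \(M^{1,1}\) and \(M^{2,1}\) in Example~\ref{ex:low-order-kernel-means}.

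\textbf{Step 3: insertion into the general representations.} For \eqref{eq:IP-representation-n-n-1}, the correction sum in \eqref{eq:general-lower-neighbour-representation} involves \((M^{r+1,r}-M^{1,0})'=(M^{r+1,r})'+(1/w)'\), which vanishes by Step 1. What remains is the kernel term minus \(M^{1,0}h^{(n-1)}(x)\). Using \(\mathbb E[K^{n,n-1}]=M^{n,n-1}\), this equals the centred form provided \(M^{n,n-1}=M^{1,0}\). In the weaker hypothesis these two differ by a constant, so I would either absorb the constant or verify that it is zero from the base case; this needs checking. For \eqref{eq:IP-representation-n-n}, the coefficient \(M^{n,n-1}-M^{1,0}\) in \eqref{eq:general-diagonal-representation} vanishes together with the sum. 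For \eqref{eq:IP-representation-n-n+1-correction}, one further application of \eqref{eq:master-horizontal-move} to \eqref{eq:IP-representation-n-n} adds the term \(M^{n,n}h^{(n)}(x)\), which is \(-\tfrac{n}{q_n}(\tau_p/w)'h^{(n)}\). Finally, when \(w=\tau_p\) we have \((\tau_p/w)'=0\), and \eqref{eq:IP-representation-n-n+1} follows.
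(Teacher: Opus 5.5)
Your skeleton matches the paper's proof. You induct through \(M^{k+1,k}_{p,w}=(M^{k,k}_{p,w})'+M^{k,k-1}_{p,w}\). You express \(M^{k,k}_{p,w}\) through the tail recursion, \(q_kM^{k,k}_{p,w}=\lambda_kM^{k,k-1}_{p,w}+\tau_pM^{k,k-2}_{p,w}\). You then substitute into Theorem~\ref{thm:kernel-representations-of-derivatives}. When \((\tau_p/w)'\) is constant, this gives \eqref{eq:IP-representation-n-n}, \eqref{eq:IP-representation-n-n+1-correction} and \eqref{eq:IP-representation-n-n+1} as you describe.

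\textbf{Main gap: the centring step cannot be closed.} You left this step, under the weaker hypothesis, as ``needs checking''. It fails because the statement is false there. Substitution gives only
\[
f^{(n)}_{p,w,h}(x)=\mathbb E\bigl[K^{n,n-1}_{p,w}(x,Z)h^{(n-1)}(Z)\bigr]+\frac{h^{(n-1)}(x)}{w(x)}
=\mathbb E\bigl[K^{n,n-1}_{p,w}(x,Z)\bigl(h^{(n-1)}(Z)-h^{(n-1)}(x)\bigr)\bigr]+C_{n,w}h^{(n-1)}(x),
\]
with \(C_{n,w}=M^{n,n-1}_{p,w}+1/w\). This constant is nonzero for \(n\ge2\) whenever \((\tau_p/w)''\neq0\). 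For \(n=2\) the display is exactly \eqref{eq:ker21-centered}, and the recursion in Proposition~\ref{prop:summary-M-tau-over-w} itself gives \(C_{2,w}=-(\tau_p/w)''\).

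A concrete counterexample: take \(p=\varphi\), \(w(x)=1/(1+x^2)\) (so \((\tau_p/w)''=2\)), and \(h(x)=x\). Then \(f_{p,w,h}=-(1+x^2)\) and \(f''_{p,w,h}=-2\). The right-hand side of \eqref{eq:IP-representation-n-n-1} vanishes because \(h'\) is constant.

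So \eqref{eq:IP-representation-n-n-1} can be proved only for \(n=1\) or when \((\tau_p/w)''=0\). Otherwise the term \(C_{n,w}h^{(n-1)}(x)\) must be kept. The paper's proof passes over the same point (``substitution \dots\ proves''). Your hesitation was right, but the step should be reported as a failure of the claim, not left open.

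\textbf{Secondary gap: constancy of \((M^{n,n}_{p,w})'\).} In the weaker case you infer that \((M^{n,n}_{p,w})'\) is constant from the strong-case form \(M^{n,n}_{p,w}\propto(\tau_p/w)'\), which no longer holds. A cleaner route gives all closed forms at once and replaces the ``should telescope'' bookkeeping of your Step 2.

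Set \(r=\tau_p/w\). Then \(\underline m_{p,w}=r\,\underline m_{p,\tau_p}\) and \(\overline m_{p,w}=r\,\overline m_{p,\tau_p}\), so Leibniz gives \(M^{i,j}_{p,w}=\sum_{l}\binom il r^{(l)}M^{i-l,j}_{p,\tau_p}\).

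Next, put \(W_k:=\overline P_{k+1}\underline P_k+\underline P_{k+1}\overline P_k\). Lemma~\ref{lem:integrated-pearson-tail-recursions} gives \(q_{k+1}W_{k+1}=\tau_pW_k\), so by induction \(W_k=p\tau_p^k/(q_1\cdots q_k)\). Combining this with Proposition~\ref{prop:equivalence-m-tau-P} and one more use of the tail recursion yields \(M^{k,k}_{p,\tau_p}=0\), \(M^{k,k-1}_{p,\tau_p}=-1/\tau_p\), \(M^{k-1,k}_{p,\tau_p}=-1/q_k\), and \(M^{k-2,k}_{p,\tau_p}=-\lambda_k/(q_{k-1}q_k)\). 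Hence, when \(r'''=0\),
\[
M^{n,n-1}_{p,w}=-\frac1w-\binom n2\frac{r''}{q_{n-1}},\qquad
M^{n,n}_{p,w}=-\frac{n}{q_n}r'-\binom n2\frac{r''\lambda_n}{q_{n-1}q_n}.
\]
From these you read off four things:
\begin{enumerate}
\item \((M^{n,n-1}_{p,w})'=-(1/w)'\) directly.
\item \((M^{n,n}_{p,w})'\) is constant, since \(\lambda_n\) is affine.
\item When \(r''=0\), \(M^{n,n-1}_{p,w}=-1/w\) and \(M^{n,n}_{p,w}=-\tfrac{n}{q_n}(\tau_p/w)'\).
\item The obstruction to the centred form is explicit: \(C_{n,w}=-\binom n2 r''/q_{n-1}\).
\end{enumerate}
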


To turn these representations into Stein factors, it remains to control
the derivatives of the weighted Mills ratios. The canonical choice
\(w=\tau_p\) provides precisely this control, as shown by the following
proposition.

\begin{proposition}[Mills ratios and iterated tails]
\label{prop:equivalence-m-tau-P}
Let \(Z\sim p\) belong to the integrated Pearson family. Then, for every
admissible \(k\geq0\),
\begin{align}
 p(x)\tau_p(x)^{k+1}
 \underline m_{p,\tau_p}^{(k)}(x)
 &=
 \left(\prod_{i=1}^k q_i\right)\underline P_{k+1}(x),
 \label{eq:IP-left-tail-mills-equivalence}\\
 p(x)\tau_p(x)^{k+1}
 \overline m_{p,\tau_p}^{(k)}(x)
 &=
 (-1)^k
 \left(\prod_{i=1}^k q_i\right)\overline P_{k+1}(x).
 \label{eq:IP-right-tail-mills-equivalence}
\end{align}
\end{proposition}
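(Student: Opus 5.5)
We argue by induction on \(k\), treating the left-tail identity \eqref{eq:IP-left-tail-mills-equivalence}; the right-tail identity follows by the same argument with \(\overline P\) in place of \(P\) and the sign conventions of Lemma~\ref{lem:integrated-pearson-tail-recursions}. Write \(\tau=\tau_p\), \(s=\mu-x\), and recall \((\tau p)'=sp\) and \(\tau''=2\kappa_2\).

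\emph{Base cases.} For \(k=0\) the identity is the definition \(\underline m_{p,\tau}=P/(\tau p)\), since \(\underline P_1=P\) and the empty product equals \(1\). For \(k=1\) it is the identity \(\underline m_{p,\tau}'=\underline P_2/(\tau^2p)\) from Lemma~\ref{lem:low-order-mills-monotonicity}, with \(q_1=1\).

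\emph{Induction step.} Set \(c_k=\prod_{i\le k}q_i\) and suppose that
\[
\underline m^{(k)}=c_k\,\underline P_{k+1}\,\tau^{-(k+1)}p^{-1}
\]
for some admissible \(k\), with \(k+1\) also admissible. We differentiate, using \(\underline P_{k+1}'=\underline P_k\) and
\[
\bigl(\tau^{-(k+1)}p^{-1}\bigr)'=-\tau^{-(k+2)}p^{-1}\bigl(s+k\tau'\bigr),
\]
where we have used \((\tau p)'=sp\), so that \((\tau^{k+1}p)'=\tau^k p\,(s+k\tau')\). This gives
\[
\underline m^{(k+1)}=c_k\,\tau^{-(k+2)}p^{-1}\bigl(\tau\underline P_k-\lambda_{k+1}\underline P_{k+1}\bigr),
\]
because \(\lambda_{k+1}=s+k\tau'\). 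Lemma~\ref{lem:integrated-pearson-tail-recursions} at order \(k+1\), namely
\[
q_{k+1}\underline P_{k+2}=-\lambda_{k+1}\underline P_{k+1}+\tau\underline P_k,
\]
identifies the bracket as \(q_{k+1}\underline P_{k+2}\). The claim at order \(k+1\) follows.

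\emph{Right tail and bookkeeping.} The right-tail case is identical except that \(\underline P_{k+1}'=\underline P_k\) is replaced by \(\overline P_{k+1}'=-\overline P_k\). The bracket becomes \(-\bigl(\tau\overline P_k+\lambda_{k+1}\overline P_{k+1}\bigr)=-q_{k+1}\overline P_{k+2}\), which produces the extra factor \(-1\) and hence \((-1)^{k+1}\).

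The only delicate point is this bookkeeping: checking that the exponent shift produces exactly \(\lambda_{k+1}\), and not \(\lambda_k\), so that the recursion is applied at the correct index. The other care point is admissibility. By Lemma~\ref{lem:admissible}, all iterated tails \(\underline P_j,\overline P_j\) with \(j\le k+1\) are finite, which is needed before the tail recursion can be invoked. This is exactly where admissibility of \(k\) enters.
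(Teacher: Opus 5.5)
Your proof is correct and follows the paper's argument. The base cases come from the definitions and from the $k=1$ differentiation using $(\tau_p p)'=(\mu-x)p$, and the induction step uses Lemma~\ref{lem:integrated-pearson-tail-recursions} at order $k+1$; your explicit check that the exponent shift yields $\lambda_{k+1}=s+k\tau_p'$ is the computation the paper leaves implicit.
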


\begin{proof}
For \(k=0\), the identities follow directly from the definitions of the
weighted Mills ratios. The case \(k=1\) follows by differentiating
\(\underline P_1/(\tau_p p)\) and
\(\overline P_1/(\tau_p p)\), using
\[
 (\tau_p p)'(x)=(\mu-x)p(x).
\]
The general case follows by induction from
Lemma~\ref{lem:integrated-pearson-tail-recursions}.
\end{proof}

\begin{corollary}
\label{cor:IP-Mills-monotonicity}
Let \(Z\sim p\) belong to the integrated Pearson family. For every admissible $k$,
\(\underline m_{p,\tau_p}^{(k)} \ge 0\) and
\((-1)^k\, \overline m_{p,\tau_p}^{(k)}\ge 0 \). In particular, the Gaussian, Gamma and Beta weighted Mills' ratios, \(\underline m_{p,\tau_p}\) and
\(\overline m_{p,\tau_p}\) are respectively absolutely and completely monotone.
\end{corollary}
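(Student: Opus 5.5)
The corollary should follow directly from Proposition~\ref{prop:equivalence-m-tau-P}, by reading off signs. For every admissible $k$, the identity \eqref{eq:IP-left-tail-mills-equivalence} expresses $\underline m_{p,\tau_p}^{(k)}$ as
\[
\underline m_{p,\tau_p}^{(k)}(x)=\Bigl(\prod_{i=1}^k q_i\Bigr)\frac{\underline P_{k+1}(x)}{p(x)\tau_p(x)^{k+1}} .
\]
Each factor has a definite sign:
\begin{itemize}
\item $q_i>0$ for $1\le i\le k$, by Definition~\ref{def:admissible};
\item $p>0$ on $\mathcal I_p$;
\item $\tau_p>0$ on $\mathcal I_p$, as recalled in Remark~\ref{remk:nourdinviens}, since $\tau_p p=\int_x^u(t-\mu)p(t)\,dt>0$ in the interior;
\item $\underline P_{k+1}(x)=\mathbb E[(x-Z)_+^{k}]/k!\ge0$, and this is finite by Lemma~\ref{lem:admissible}(iii).
\end{itemize}
Hence $\underline m_{p,\tau_p}^{(k)}\ge0$. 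In the same way, \eqref{eq:IP-right-tail-mills-equivalence} gives
\[
(-1)^k\overline m_{p,\tau_p}^{(k)}=\Bigl(\prod_{i=1}^k q_i\Bigr)\frac{\overline P_{k+1}}{p\,\tau_p^{k+1}}\ge0,
\]
because $\overline P_{k+1}(x)=\mathbb E[(Z-x)_+^k]/k!\ge0$. The case $k=0$ is immediate from the definitions, since $P,\overline P\ge0$.

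For the \enquote{in particular} clause, I will use the remark following Lemma~\ref{lem:admissible}. For the Gaussian, Gamma and Beta laws we have $\kappa_2\le0$, so $q_k=k(1-(k-1)\kappa_2)>0$ for every $k\ge1$, and every order is admissible. The sign statements then hold for all $k\ge0$. This says that $\underline m_{p,\tau_p}$ has all derivatives non-negative on $\mathcal I_p$, so it is absolutely monotone. It also says that $\overline m_{p,\tau_p}$ has alternating-sign derivatives, $(-1)^k\overline m^{(k)}\ge0$, so it is completely monotone.

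The only real point of care is regularity. Absolute and complete monotonicity require $C^\infty$ smoothness on the open support. I would obtain this from the identities themselves: $\tau_p p$ is $C^\infty$ there, because $(\tau_p p)'=(\mu-x)p$ and $p=\tau_p p/\tau_p$ with $\tau_p$ a polynomial, which bootstraps smoothness. The iterated tails $\underline P_{k+1}$ and $\overline P_{k+1}$ are then smooth as well. Beyond this, the corollary is a pure sign-reading argument, and I expect no obstacle.
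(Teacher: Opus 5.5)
Your proposal is correct and is essentially the paper's own (implicit) argument: the corollary is read off directly from the sign of each factor in Proposition~\ref{prop:equivalence-m-tau-P}, and the \enquote{in particular} clause follows because $\kappa_2\le 0$ makes every order admissible. Your smoothness bootstrap from $(\tau_p p)'=(\mu-x)p$ is a harmless supplement that the paper leaves unstated.
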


By Proposition~\ref{prop:equivalence-m-tau-P}, the diagonal absolute kernel
means in \eqref{eq:integrability-of-kernel} simplify, for \(n\geq0\), to
\begin{equation}
   U^{n,n}_{p,\tau_p}(x)
 =
 \frac{2}{(n!)^2}
 \left(\prod_{i=1}^nq_i\right)
 \frac{
  \mathbb E[(x-Z)_+^n]\,
  \mathbb E[(Z-x)_+^n]
 }{
  p(x)\tau_p(x)^{n+1}
 }.
\label{eq:integrated-pearson-diagonal-absolute-mean}
\end{equation}
Theorem~\ref{thm:integrated-pearson-kernel-representations} yields the following
integrated Pearson template for \(w=\tau_p\).

\begin{corollary}[Integrated-Pearson envelopes for \(w=\tau\)]
\label{cor:integrated-pearson-envelopes}
Let \(Z\sim\mathrm{IP}\), let \(n\geq2\), and suppose $n+1$ is admissible. 
Then
\begin{align}
 \label{eq:tau-factor-n-1nn+1}
  \begin{split}
 |f_{p,\tau_p,h}^{(n)}(x)|
 \leq \min \Bigg\{
 & \frac{2}{\tau_p(x)}\|h^{(n-1)}\|, \,
 U^{n,n}_{p,\tau_p}(x) 
 \|h^{(n)}\|, \, \frac{1}{q_{n+1}}\|h^{(n+1)}\|\Bigg\}.
 \end{split}
\end{align}
\end{corollary}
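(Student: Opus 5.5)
The plan is to specialise Theorem~\ref{thm:integrated-pearson-kernel-representations} to $w=\tau_p$ and read off each of the three bounds from the sign of the relevant kernel. For $w=\tau_p$ we have $\tau_p/w\equiv1$, so $(\tau_p/w)'$ and $(\tau_p/w)''$ both vanish and are in particular constant. Since $n+1$ is admissible, so is $n$, and the theorem gives three representations:
\begin{itemize}
\item the centred lower-neighbour representation \eqref{eq:IP-representation-n-n-1};
\item the diagonal representation \eqref{eq:IP-representation-n-n};
\item the upper-neighbour representation \eqref{eq:IP-representation-n-n+1-correction}, whose pointwise correction is multiplied by $(\tau_p/w)'=0$ and therefore vanishes.
\end{itemize}
Each bound then follows by taking absolute values and controlling $\mathbb E|K^{n,j}_{p,\tau_p}(x,Z)|$ for $j=n-1,n,n+1$.

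\emph{Diagonal bound.} This is immediate: $|f^{(n)}_{p,\tau_p,h}(x)|\le \mathbb E|K^{n,n}_{p,\tau_p}(x,Z)|\,\|h^{(n)}\|=U^{n,n}_{p,\tau_p}(x)\|h^{(n)}\|$ by \eqref{eq:integrability-of-kernel}.

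\emph{Lower-neighbour bound.} The two branches of $K^{n,n-1}_{p,\tau_p}(x,\cdot)$ carry the coefficients $(-1)^{n-1}\overline m^{(n)}_{p,\tau_p}(x)=-(-1)^n\overline m^{(n)}_{p,\tau_p}(x)$ and $-\underline m^{(n)}_{p,\tau_p}(x)$. Both are $\le0$ by Corollary~\ref{cor:IP-Mills-monotonicity}, and they multiply the nonnegative quantities $\underline P_{n-1}/p$ and $\overline P_{n-1}/p$. Hence the kernel is nonpositive. Its mass is then $\mathbb E|K^{n,n-1}|=-M^{n,n-1}_{p,\tau_p}=1/\tau_p$, where $M^{n,n-1}_{p,\tau_p}=-1/\tau_p$ is supplied by the same theorem. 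Consequently \eqref{eq:IP-representation-n-n-1} writes $f^{(n)}$ as $-1/\tau_p(x)$ times a weighted average of $h^{(n-1)}(Z)-h^{(n-1)}(x)$. This gives $|f^{(n)}(x)|\le \operatorname{osc}(h^{(n-1)})/\tau_p(x)\le 2\|h^{(n-1)}\|/\tau_p(x)$.

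\emph{Upper-neighbour bound.} By the same sign argument, now with coefficients $(-1)^{n+1}\overline m^{(n)}_{p,\tau_p}\le0$ and $-\underline m^{(n)}_{p,\tau_p}\le0$, the kernel $K^{n,n+1}_{p,\tau_p}(x,\cdot)$ is nonpositive. So it suffices to show $M^{n,n+1}_{p,\tau_p}=-1/q_{n+1}$. This is the main computation, and I would carry it out in three steps.
\begin{enumerate}
\item Substitute Proposition~\ref{prop:equivalence-m-tau-P} into \eqref{eq:def-M} to obtain
\[
M^{n,n+1}_{p,\tau_p}=-\Big(\prod_{i=1}^n q_i\Big)\frac{E_{n+1}}{p\,\tau_p^{n+1}},\qquad E_k:=\overline P_k\underline P_{k+1}+\underline P_k\overline P_{k+1}.
\]
\item Multiply $\overline P_{k}$ and $\underline P_{k}$ into the tail recursions of Lemma~\ref{lem:integrated-pearson-tail-recursions} at order $k$ and add. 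The $\lambda_k$ terms cancel, leaving $q_kE_k=\tau_pE_{k-1}$ for every admissible $k\le n+1$.
\item Use $E_0=\overline P p+Pp=p$ and iterate, which gives $E_{n+1}=p\,\tau_p^{n+1}/\prod_{i=1}^{n+1}q_i$.
\end{enumerate}
Substituting back yields $M^{n,n+1}_{p,\tau_p}=-1/q_{n+1}$, hence $\mathbb E|K^{n,n+1}|=1/q_{n+1}$ and $|f^{(n)}(x)|\le\|h^{(n+1)}\|/q_{n+1}$.

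I expect the only delicate point to be the bookkeeping of indices and signs in step~2: one has to check that the symmetric product genuinely telescopes, and that admissibility of $n+1$ is exactly what licenses the recursion up to $k=n+1$, through the boundary limits in Lemma~\ref{lem:admissible}. Taking the minimum of the three bounds gives \eqref{eq:tau-factor-n-1nn+1}.
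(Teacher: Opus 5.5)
Your proof is correct and follows the route the paper intends. The paper gives no written proof, only the remark that Theorem~\ref{thm:integrated-pearson-kernel-representations} yields the corollary; you specialise its three representations to $w=\tau_p$, get the kernel signs from Corollary~\ref{cor:IP-Mills-monotonicity}, and read off the absolute kernel means. Your telescoping identity $q_kE_k=\tau_pE_{k-1}$ is a worthwhile addition: it supplies $M^{n,n+1}_{p,\tau_p}=-1/q_{n+1}$, which the constant $1/q_{n+1}$ relies on and which the paper never derives explicitly.
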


At orders zero and one, \eqref{eq:tau-factor-n-1nn+1} gives
\begin{align}
 |f_{p,\tau_p,h}(x)|
 &\leq
 \min\left\{2 \frac{P(x)\overline{P}(x)}{\tau_p(x) p(x)}\| h^{(0)}\|,\|h^{(1)}\|\right\},
 \label{eq:ip-template-order-zero}\\
 |f_{p,\tau_p,h}'(x)|
 &\leq
 \min\left\{
  \frac{2}{\tau_p(x)}\| h^{(0)}\|,
  2\frac{\underline P_2(x)\overline{P}_2(x)}{\tau^2_p(x)p(x)}\|h^{(1)}\|,
  \frac{1}{q_2}\|h^{(2)}\|
 \right\}.
 \label{eq:ip-template-order-one}
\end{align}
As before, the bounds obtained for $\|h^{(0)}\|$ are greated by a factor \(2\) than those obtained in the Kolmogorov bound, that exploited the explicit form of the test function. 

The diagonal
factors
\(
c^{n,n}_{p,\tau_p}:=\sup_xU^{n,n}_{p,\tau_p}(x)\), \(n\geq1\)
have distribution-dependent exact analytic expressions. When
\(\inf_x\tau_p(x)>0\), the lower-neighbour estimates also give uniform
bounds; see Remark~\ref{remk:nourdinviens}. When \(\tau_p\) vanishes at a boundary, as for Gamma and Beta laws, those estimates remain useful in
weighted pointwise form.

\begin{example}[Gaussian distribution]
\label{ex:gau}
Let \(Z\sim\mathcal N(\mu,\sigma^2)\), and put
\(y=(x-\mu)/\sigma\). Then
\[
 p(x)=\sigma^{-1}\varphi(y),
 \qquad
 P(x)=\Phi(y),
 \qquad
 \overline P(x)=\overline\Phi(y),
 \qquad
 \tau_p(x)=\sigma^2.
\]
Here \(\kappa_2=0\), \(q_n=n\), and all orders are admissible. The
diagonal factors are :
\[
 c^{n,n}_{p,\tau_p}
 =
 \frac1\sigma
 \frac{\Gamma((n+1)/2)}
      {\sqrt2\,\Gamma(n/2+1)},
 \quad n\geq1.
\]
For \((\mu,\sigma)=(0,1)\), this recovers the Gaussian kernel bounds of
Theorem~\ref{thm:known-gaussian-stein-factors}.
\end{example}

\begin{example}[Exponential distribution]
\label{ex:expon}
Let \(Z\) have the exponential distribution with scale parameter
\(\beta>0\). Then for $x>0$,
\[
 p(x)=\frac1\beta e^{-x/\beta}, \qquad 
 \tau_p(x)=\beta x,
 \qquad
 \rho_p(x)=-\frac1\beta.
\]
As in the Gaussian case, we have that \(\kappa_2=0\) and \(q_n=n\) and that all orders are
admissible.

Since both the Stein kernel and the score function have simple
expressions, it is natural to consider the Stein equations associated
with the weights \(w=1\) and \(w=\tau_p\):
\[
 f'(x)-\frac1\beta f(x)=h(x)-\mathbb E[h(Z)]
\]
and
\[
 \beta x f'(x)-(x-\beta)f(x)
 =h(x)-\mathbb E[h(Z)],
\]
respectively. The corresponding Kolmogorov bounds are the specializations
of Section~\ref{sec:kolmogorov-solutions}; see also
\cite{ChatterjeeFulmanRollin2011,FulmanRoss2013}.

For the canonical solution \(f_{p,\tau_p,h}\), all the bounds from
Corollary~\ref{cor:integrated-pearson-envelopes} apply. In particular,
\[
 |f_{p,\tau_p,h}(x)|
 \leq
 \min\left\{
  U_{p,\tau_p}^{0,0}(x)\| h^{(0)}\|,
  \|h^{(1)}\|
 \right\},
\]
and
for \(n\geq 1\),
\[
 |f_{p,\tau_p,h}^{(n)}(x)|
 \leq
 \min\left\{
  \frac{2}{\beta x}\|h^{(n-1)}\|,
  U_{p,\tau_p}^{n,n}(x)\|h^{(n)}\|,
  \frac1{n+1}\|h^{(n+1)}\|
 \right\}.
\]
The diagonal factor is obtained in
Appendix~\ref{app:gamma-diagonal-factor} as a special case of the Gamma
distribution, and consequently, for \(n\geq1\),
\[
 \|f_{p,\tau_p,h}^{(n)}\|
 \leq
 \min\left\{
  \frac{2}{(n+1)\beta}\|h^{(n)}\|,
  \frac1{n+1}\|h^{(n+1)}\|
 \right\}.
\]

For the constant weight \(w=1\),
Lemma~\ref{lem:exp-Mn-Un} gives
\(
 M_{p,1}^{n,n-1}=-1,
\)
for all $n \ge 0$. 
Thus the correction sums in
\eqref{eq:general-lower-neighbour-representation} and
\eqref{eq:general-diagonal-representation} vanish. On the other hand,
\(M_{p,1}^{n,n}=-\beta\), so the correction sum in
\eqref{eq:general-upper-neighbour-representation} vanishes, but its final remainder term
does not. Combining the resulting two-term representation with the
values of \(U_{p,1}^{n,j}\) from
Lemma~\ref{lem:exp-Mn-Un} yields, for \(n\geq1\),
\[
 \|f_{p,1,h}^{(n)}\|
 \leq
 \min\left\{
  2\|h^{(n-1)}\|,
  \beta\|h^{(n)}\|,
  \beta\|h^{(n)}\|+\beta^2\|h^{(n+1)}\|
 \right\}.
\]
The first factor \(2\) coincides with the sharp factor
obtained by related methods in \cite{daly_2008}.
\end{example}

\begin{example}[Gamma distribution]
\label{ex:gamma}
Let $Z$ have a Gamma distribution, with shape $\alpha >0$ and scale $\beta >0$. Then, for $x>0$
\[
 p(x)=\frac{x^{\alpha-1}e^{-x/\beta}}{\Gamma(\alpha)\beta^\alpha}, \qquad 
 \tau_p(x)=\beta x,
 \qquad
 \kappa_2=0,
 \qquad
 q_n=n.
\]
All orders are admissible and the diagonal envelope is
\[
 U^{n,n}_{p,\tau_p}(x)
 =
 \frac{2}{n!}
 \frac{
  \mathbb E[(x-Z)_+^n]\mathbb E[(Z-x)_+^n]
 }{
  p(x)(\beta x)^{n+1}
 }.
\]
Appendix~\ref{app:gamma-diagonal-factor} shows that, whenever
\(0<\alpha\leq n+1\), \(U_{p,\tau_p}^{n,n}\) is maximized at the left endpoint.
Hence
\begin{equation}
\label{eq:gamma-diagonal-factor}
c^{n,n}_{p,\tau_p}
 =
 \lim_{x\downarrow0}U^{n,n}_{p,\tau_p}(x)
 =
 \frac{2}{\beta(\alpha+n)}.
\end{equation}
Consequently, for \(0<\alpha\leq n+1\), the diagonal bounds are
\begin{align*}
 |f_{p,\tau_p,h}(x)|
 &\leq
 \min\left\{U_{p,\tau_p}^{0,0}(x)\| h^{(0)}\|,\|h^{(1)}\|
 \right\} \\
 &\leq
 \min\left\{
 \frac{2}{\alpha\beta}\| h^{(0)}\|,
  \|h^{(1)}\|
 \right\}\quad(0<\alpha\leq1),
\end{align*}
and, for \(n\geq2\),
\begin{align*}
 |f_{p,\tau_p,h}^{(n)}(x)|
 &\leq
 \min\left\{
  \frac{2}{\beta x}\|h^{(n-1)}\|,
  U^{n,n}_{p,\tau_p}(x)\|h^{(n)}\|,
  \frac1{n+1}\|h^{(n+1)}\|
 \right\} \\
 &\leq
 \min\left\{  
 \frac{2}{\beta(\alpha+n)}\|h^{(n)}\|,
  \frac1{n+1}\|h^{(n+1)}\|
 \right\}\quad(0<\alpha\leq n+1).
\end{align*}
No lower-neighbour uniform term is available because
\(\tau_p(x)=\beta x\) vanishes at the left endpoint.

The weighted factors from Section~\ref{subsec:weighted-stein-factors} also
apply. Lemma~\ref{lem:gamma-weighted-monotonicity} verifies
\eqref{eq:weighted-all-order-sign-condition} when \(\alpha>1\), so
Proposition~\ref{prop:all-order-weighted-factor} gives
\[
\|(\tau_pf_{p,\tau_p,h}^{(n)})'\| \leq 2 \|h^{(n)}\|.
\]

An extensive comparison with the literature is available in \cite{bailly_swan2026}. The precise dependence of the envelopes and factors to the parameters allowed the authors to derive bounds on the Kolmogorov, Wasserstein and Zolotarev-2 distances between a sum of Gamma random-variable and their Satterthwaite's approximation in the aforementioned paper.

\end{example}

\begin{example}[Beta distribution]
\label{ex:beta}
Let \(Z\sim\mathcal B_{\alpha,\beta}\), where \(\alpha,\beta>0\). Its
density and Stein kernel are
\[
p(x)=\frac{x^{\alpha-1}(1-x)^{\beta-1}}{B(\alpha,\beta)},
\qquad
\tau_p(x)=\frac{x(1-x)}{\alpha+\beta},
\qquad 0<x<1.
\]
Moreover,
\[
\kappa_2=-\frac1{\alpha+\beta},
\qquad
q_n
=
n\left(1+\frac{n-1}{\alpha+\beta}\right)
=
\frac{n(\alpha+\beta+n-1)}{\alpha+\beta},
\]
and all orders are
admissible.

The sufficient condition
\eqref{eq:weighted-factor-sufficient-condition} from
Remark~\ref{rem:weighted-factor-sufficient-condition} also holds. Indeed, since
\(\mu=\alpha/(\alpha+\beta)\) and
\(\tau_p'(x)=(1-2x)/(\alpha+\beta)\),
\[
2\tau_p(x)+(\mu-x)\tau_p'(x)
=
\frac{\alpha(1-x)^2+\beta x^2}{(\alpha+\beta)^2}
\geq0.
\]
Thus the weighted lower-neighbour monotonicity condition
\eqref{eq:tau-weighted-monotonicity} is satisfied.

By \eqref{eq:integrated-pearson-diagonal-absolute-mean}, the diagonal envelope is
\begin{equation}
\label{eq:beta-diagonal-envelope}
U_{p,\tau_p}^{n,n}(x)
=
\frac{2}{n!}
\prod_{j=0}^{n-1}
\left(1+\frac{j}{\alpha+\beta}\right)
\frac{
\mathbb E[(x-Z)_+^n]\,
\mathbb E[(Z-x)_+^n]
}{
p(x)\bigl(x(1-x)/(\alpha+\beta)\bigr)^{n+1}
},
\qquad n\geq1.
\end{equation}
The constants $c_{p,\tau_p}^{n,n}$ are exact one-dimensional suprema over a bounded interval. However for
arbitrary parameters they are untractable analytically and have to be evaluated numerically. Selected cases admit analytic maximization, as shown
in Appendix~\ref{app:beta-diagonal-factor}.

The integrated-Pearson envelopes \eqref{eq:tau-factor-n-1nn+1}
now give
\begin{align*}
|f_{p,\tau_p,h}(x)|
&\leq
\min\left\{
U_{p,\tau_p}^{0,0}(x)\| h^{(0)}\|,
\|h^{(1)}\|
\right\}\\
&\leq
\min\left\{
c_{p,\tau_p}^{0,0}\| h^{(0)}\|,
\|h^{(1)}\|
\right\},
\end{align*}
for \(n\geq 1\),
\begin{align*}
|f_{p,\tau_p,h}^{(n)}(x)|
&\leq
\min\left\{
\frac{2(\alpha+\beta)}{x(1-x)}
\|h^{(n-1)}\|,
U_{p,\tau_p}^{n,n}(x)\|h^{(n)}\|,
\frac{\alpha+\beta}
{(n+1)(\alpha+\beta+n)}
\|h^{(n+1)}\|
\right\}\\
&\leq
\min\left\{
c_{p,\tau_p}^{n,n}\|h^{(n)}\|,
\frac{\alpha+\beta}
{(n+1)(\alpha+\beta+n)}
\|h^{(n+1)}\|
\right\}.
\end{align*}
The lower-neighbour estimates remain weighted because
\(\tau_p(x)=x(1-x)/(\alpha+\beta)\) vanishes at both endpoints.

For the asymmetric case \(\alpha=2,\beta=5\), numerical maximization gives
\[
c_{p,\tau_p}^{0,0}=8.1132878\ldots,
\
c_{p,\tau_p}^{1,1}=4.7677731\ldots,
\
c_{p,\tau_p}^{2,2}=3.5027178\ldots, 
\ c_{p,\tau_p}^{3,3}=2.8, 
\]
with maximizers near \(x=0.15368459\), \(x=0.09189\), and
\(x=0.02359\), for the three first estimates respectively. The last estimate is equal to $U^{3,3}_{p,\tau_p}(0)$.

For comparison with existing beta Stein factors, the literature commonly
uses the weight
\(\eta(x)=x(1-x)=(\alpha+\beta)\tau_p(x)\). It is direct to see that
\[
f_{p,\eta,h}=\frac{1}{\alpha+\beta}f_{p,\tau_p,h},
\]
thus our bound \(\|f_{p,\tau_p,h}\|\leq\|h^{(1)}\|\) becomes
\(
\|f_{p,\eta,h}\|\leq\frac{1}{\alpha+\beta}\|h^{(1)}\|.
\)
This agrees with the pointwise estimate of
\cite[Proposition~3.13(a)]{dobler_beta_2015} and improves by a factor
\(2\) the bound from \cite[Lemma~3.2]{goldstein_reinert_beta_2013}.

For the first derivative, Proposition~3.13(b) of
\cite{dobler_beta_2015} gives a pointwise bound expressed through the
beta distribution function and its integrated tails, while
\cite[Lemma~3.4]{goldstein_reinert_beta_2013} gives piecewise analytic
constants depending on the position of \(\alpha\) and \(\beta\) relative
to \(2\). Our diagonal envelope \(U_{p,\tau_p}^{1,1}\) provides a single
exact kernel expression whose supremum can be evaluated numerically.
The resulting bound is complementary to the earlier estimates: as
already observed in \cite{dobler_beta_2015}, the available
first-derivative constants do not uniformly dominate one another over
all parameter values.

Finally, Remark~3.18 of \cite{dobler_beta_2015} identifies the shifted
law \(\mathcal B_{\alpha+1,\beta+1}\) underlying the iterative treatment
of higher derivatives. Formula~\eqref{eq:beta-diagonal-envelope}
encodes the same shift mechanism through the integrated-tail
recursions, but yields a uniform all-order construction of the
neighbouring-order envelopes and factors.
\end{example}

\begin{example}[Student distribution]
\label{ex:student}
Let \(Z\) have the Student distribution with \(\nu>1\) degrees of
freedom, thus
\[
 p(x)
 =
 \frac{\Gamma((\nu+1)/2)}
      {\sqrt{\nu\pi}\,\Gamma(\nu/2)}
 \left(1+\frac{x^2}{\nu}\right)^{-(\nu+1)/2},\quad 
 \tau_p(x)=\frac{x^2+\nu}{\nu-1},
 \quad
 \kappa_2=\frac1{\nu-1},
 \quad
 q_n=\frac{n(\nu-n)}{\nu-1}.
\]
The admissible finite orders are \(n<\nu\).
Appendix~\ref{app:student-diagonal-factor} evaluates the symmetric
envelope at the origin. For \(n<\nu\),
\begin{equation}
\label{eq:student-diagonal-factor}
 U^{n,n}_{p,\tau_p}(0)
 =
 \frac{\nu-1}{2\sqrt{\pi\nu}}\,
 \frac{
  \Gamma((n+1)/2)^2\Gamma((\nu-n)/2)^2
 }{
  (n!)^2\Gamma(\nu/2)\Gamma((\nu+1)/2)
 }
 \prod_{i=1}^n i(\nu-i).
\end{equation}
When $n=1$, this is the supremum of $U_{p,\tau_p}^{n,n}$, so $c_{p,\tau_p}^{1,1}=\frac{2}{\sqrt{\nu}}\frac{\Gamma((\nu+1)/2)}{\Gamma(\nu/2)\Gamma(1/2)} $. Numerical maximization suggests that for $n\ge 2$ this value is the supremum for the
parameter ranges considered here, but the following bounds
use \(c^{n,n}_{p,\tau_p}\) through its exact supremum definition. Thus
\begin{align*}
 |f_{p,\tau_p,h}^{}(x)|  &  \le \min \left\{U_{p,\tau_p}^{0,0}(x) \|  h^{(0)}\|, \| h^{(1)}\| \right\}\\
 & \leq \min \left\{\frac{\nu-1}{2\sqrt{\nu}}\,
 \frac{
  \Gamma(1/2)\Gamma(\nu/2)
 }{
  \Gamma((\nu+1)/2)}\|  h^{(0)}\|, \| h^{(1)}\| \right\}.
  \end{align*}
 Moreover, for $n \ge 1$,
\begin{align*}
 |f_{p,\tau_p,h}^{(n)}(x)|
 &\leq
 \min\left\{
  \frac{2(\nu-1)}{x^2+\nu}\|h^{(n-1)}\|,
  U^{n,n}_{p,\tau_p}(x)\|h^{(n)}\|,
  \frac{\nu-1}{(n+1)(\nu-n-1)}\|h^{(n+1)}\|
 \right\} \\
 &\leq
 \min\left\{
  \frac{2(\nu-1)}{\nu}\|h^{(n-1)}\|,
  c^{n,n}_{p,\tau_p}\|h^{(n)}\|,
  \frac{\nu-1}{(n+1)(\nu-n-1)}\|h^{(n+1)}\|
 \right\},
\end{align*}
The first two entries in these minima are valid whenever \(n<\nu\),
whereas the last requires \(n+1<\nu\). For each fixed
\(n\), letting \(\nu\to\infty\) in \eqref{eq:student-diagonal-factor}
recovers the Gaussian diagonal value at the origin from Example~\ref{ex:gau}.
\end{example}

\subsection{The Subbotin distributions}
\label{subsec:subbotin-example}

Let \(\beta>1\) and let \(Z_\beta\) have density
\begin{equation}
\label{eq:subbotin-density}
 p_\beta(x)
 =
 C_\beta\exp\left(-\frac{|x|^\beta}{\beta(\beta-1)}\right),
 \qquad
 C_\beta
 =
 \frac{\beta}{2(\beta(\beta-1))^{1/\beta}\Gamma(1/\beta)},
 \qquad x\in\mathbb R.
\end{equation}
Denote by \(P_\beta\) and \(\overline P_\beta\) the distribution and
survival functions associated with \(p_\beta\).
This family is often called the Subbotin family. It contains the standard
Gaussian law at \(\beta=2\), while
\(\beta=4\) gives the quartic density proportional to \(e^{-x^4/12}\), which
appears in the critical Curie--Weiss model; see
Section~\ref{sec:applications}.

Subbotin  distributions are centred,
symmetric, and log-concave, and
\[
 \mathbb E|Z_\beta|^k
 =
 (\beta(\beta-1))^{k/\beta}
 \frac{\Gamma((k+1)/\beta)}{\Gamma(1/\beta)},
 \qquad k>-1.
\]
Their score is
\[
 \rho_{p_\beta}(x)
 :=
 \frac{p_\beta'(x)}{p_\beta(x)}
 =
 -\frac{\operatorname{sgn}(x)|x|^{\beta-1}}{\beta-1}.
\]
Their Stein kernel is 
\begin{equation}
\label{eq:subbotin-stein-kernel}
 \tau_{p_\beta}(x)
 =
 \frac{(\beta(\beta-1))^{2/\beta}}{\beta}
 \exp\left(\frac{|x|^\beta}{\beta(\beta-1)}\right)
 \Gamma\left(
  \frac2\beta,\frac{|x|^\beta}{\beta(\beta-1)}
 \right)
\end{equation}
where \(\Gamma(a,b)\) is the upper incomplete gamma function.
The Stein kernel is unbounded for \(1<\beta<2\), constant for
\(\beta=2\), and bounded for \(\beta>2\). More precisely, for \(\beta\geq2\),
\begin{equation}
\label{eq:subbotin-tau-supremum}
 \|\tau_{p_\beta}\|
 =
 \tau_{p_\beta}(0)
 =
 \frac{(\beta(\beta-1))^{2/\beta}}{\beta}\Gamma\left(\frac2\beta\right).
\end{equation}
The constant weight \(w=1\) leaves the score as the zeroth-order
coefficient and gives particularly explicit formulas. We therefore study
the Stein equation
\begin{equation}
\label{eq:subbotin-stein-equation}
 f'(x)
 -
\frac{\operatorname{sgn}(x)|x|^{\beta-1}}{\beta-1}f(x)
 =
 h(x)-\mathbb E[h(Z_\beta)]
\end{equation}
For the pointwise estimates developed here, \(w=1\) gives the simplest
coefficients, so we do not pursue other weights.

We first consider Kolmogorov test functions. The general formula in
Section~\ref{sec:kolmogorov-solutions} gives
\[
 f_{p_\beta, 1, h_z}(x)
 =
 \frac{P_\beta(x\wedge z)\overline P_\beta(x\vee z)}{p_\beta(x)}.
\]
Using
\eqref{eq:general-kolmogorov-zero-envelope},
\eqref{eq:general-kolmogorov-first-envelope}, and
\eqref{eq:general-kolmogorov-second-envelope}, the envelopes are
\begin{align}
 \sup_z |f_{p_\beta, 1, h_z}(x)|
 &=
 \frac{P_\beta(|x|)\overline P_\beta(|x|)}{p_\beta(|x|)},
 \label{eq:subbotin-kolmogorov-zero-envelope}\\
 \sup_{z\neq x}|f_{p_\beta, 1, h_z}'(x)|
 &=
 \max\left\{
 P_\beta(|x|)
 \left(
 1-
 \frac{\overline P_\beta(|x|)|x|^{\beta-1}}
      {(\beta-1)p_\beta(|x|)}
 \right),
 \right.
 \notag\\
 &\hspace{2.7cm}\left.
 \overline P_\beta(|x|)
 \left(
 1+
 \frac{P_\beta(|x|)|x|^{\beta-1}}
      {(\beta-1)p_\beta(|x|)}
 \right)
 \right\},
 \label{eq:subbotin-kolmogorov-first-envelope}\\
 \sup_{z\neq x}|f_{p_\beta, 1, h_z}''(x)|
 &=
 \max\left\{
 P_\beta(|x|)
 \left|
 \frac{\overline P_\beta(|x|)}{p_\beta(|x|)}
 \left(
 |x|^{\beta-2}
 +
 \frac{|x|^{2\beta-2}}{(\beta-1)^2}
 \right)
 -
 \frac{|x|^{\beta-1}}{\beta-1}
 \right|,
 \right.
 \notag\\
 &\hspace{1.2cm}\left.
 \overline P_\beta(|x|)
 \left[
 \frac{P_\beta(|x|)}{p_\beta(|x|)}
 \left(
 |x|^{\beta-2}
 +
 \frac{|x|^{2\beta-2}}{(\beta-1)^2}
 \right)
 +
 \frac{|x|^{\beta-1}}{\beta-1}
 \right]
 \right\}.
 \label{eq:subbotin-kolmogorov-second-envelope}
\end{align}
 Taking suprema in \(x\)  yields the sharp factors
\begin{equation}\label{eq:subbotin-kolmogorov-Stein-factor}
    \sup_z\|f_{p_\beta, 1, h_z}\|
 =
 \frac{1}{4p_\beta(0)}
 =
 \frac{(\beta(\beta-1))^{1/\beta}\Gamma(1/\beta)}{2\beta},
 \qquad
 \sup_z\|f_{p_\beta, 1, h_z}'\|
 =
 1,
\end{equation}
whereas
\[
 \sup_z\|f_{p_\beta, 1, h_z}''\|=\infty.
\] 
Proposition~\ref{prop:score-weighted-kolmogorov-factor} also gives
\begin{equation}\label{eq:subbotin-score-weighted-kolmogorov-factor}
    \sup_z\|\rho_{p_\beta} f_{p_\beta, 1, h_z}\|
 \le 
 1.
\end{equation}
We illustrate the corresponding pointwise solutions and envelopes in
Figure~\ref{fig:subbotin-kolmogorov-solutions}.
\begin{remark}
For \(\beta=2\), equations
\eqref{eq:subbotin-kolmogorov-Stein-factor} and
\eqref{eq:subbotin-score-weighted-kolmogorov-factor} recover the Gaussian
Kolmogorov factors of
Example~\ref{ex:gau}. The even-power subfamily
\(\beta=2k\) overlaps with the class treated by
\cite{eichelsbacher_lowe_2010}, who consider densities proportional to
\(\exp\{-a_kx^{2k}\}\). On this common subfamily, their zeroth-order
Kolmogorov constant is larger than ours by a factor \(2\), whereas their
first-derivative factor agrees with ours and is sharp. In the quartic case
\(p_4(x)=C_4 e^{-x^4/12}\), the exact indicator calculus also improves
the bounded-test constants in Lemma~4.2 of \cite{Chatterjee_2011}, namely
\[
\|f_{p_4,1,h_z}\|\leq\frac{2}{C_4},
\qquad
\|\rho_{p_4}f_{p_4,1,h_z}\|\leq2,
\qquad
\|f_{p_4,1,h_z}'\|\leq4.
\]
\end{remark}

\begin{figure}[p]
 \centering
 \includegraphics[width=\textwidth]{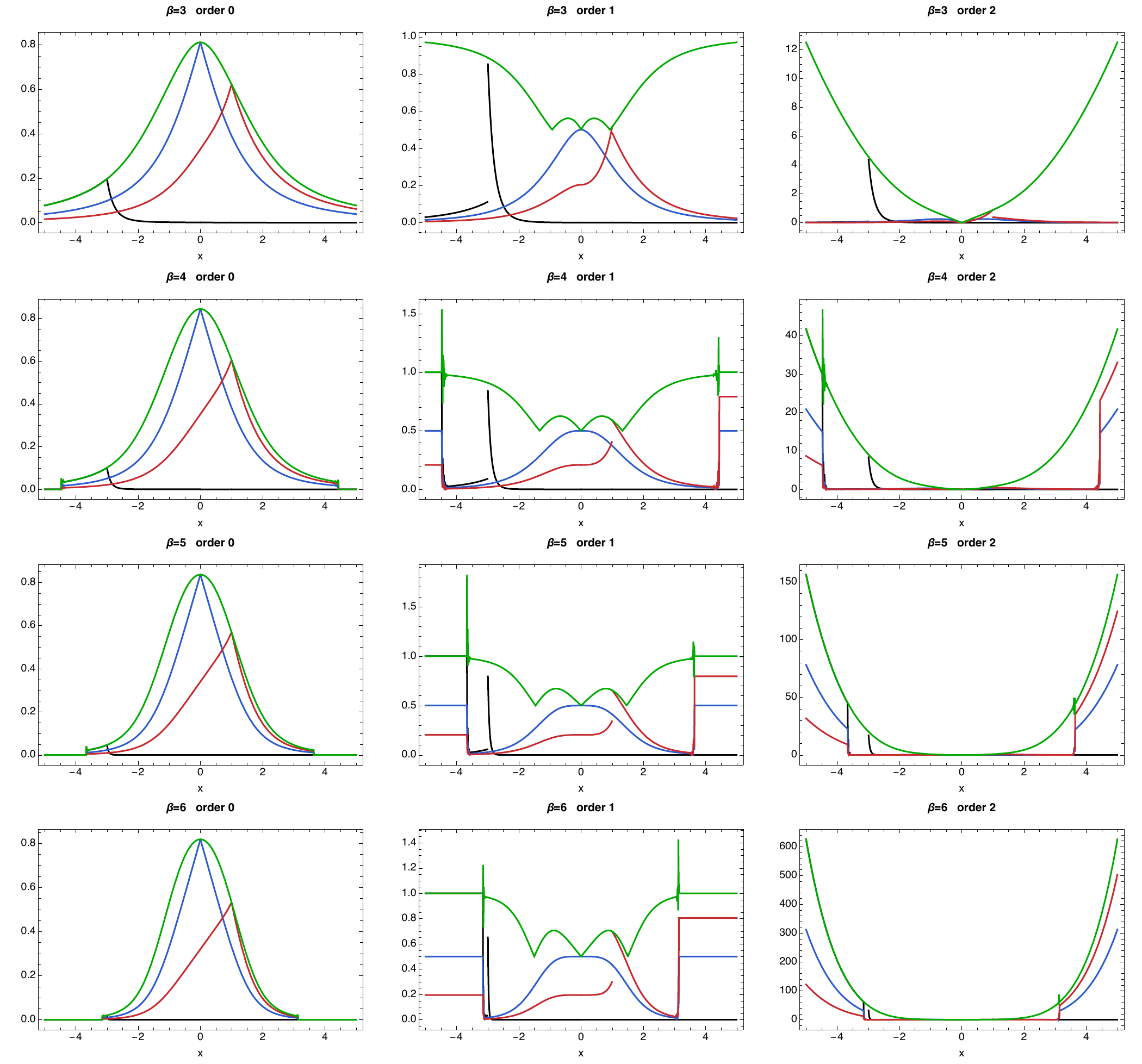}
 \caption{
 Kolmogorov Stein solutions and their first two derivatives for the
 Subbotin laws with \(\beta=3,4,5,6\), plotted for \(z=-3,0,1\). In each row,
 the coloured curves are
 \(|f_{p_\beta, 1, h_z}|\), \(|f_{p_\beta, 1, h_z}'|\), and \(|f_{p_\beta, 1, h_z}''|\), while the green
 curves are the exact Kolmogorov envelopes
 \eqref{eq:subbotin-kolmogorov-zero-envelope},
 \eqref{eq:subbotin-kolmogorov-first-envelope}, and
 \eqref{eq:subbotin-kolmogorov-second-envelope}. The order-\(0\) envelope
 is symmetric and maximized at the origin. The order-\(1\) envelope
 equals \(1/2\) at the origin, develops two symmetric off-centre local
 maxima, and tends to the sharp factor \(1\) in the tails. The order-\(2\)
 envelope is unbounded, reflecting the absence of a uniform Kolmogorov
 factor for \(f_{p_\beta, 1, h_z}''\).}
 \label{fig:subbotin-kolmogorov-solutions}
\end{figure}

For general test functions, apply the kernel calculus of
Section~\ref{sec:kernel-representations}. Since \(p_\beta\) is log-concave,
\[
 \underline m_{p_\beta,1}'\geq0,
 \qquad
 \overline m_{p_\beta,1}'\leq0.
\]
Hence the general envelopes of Section~\ref{subsec:universal-low-order}
give, almost everywhere,
\begin{align}
 |f_{p_\beta,1,h}(x)|
 &\leq
 \min\left\{
 {U^{0,0}_{p_\beta,1}(x)}\| h^{(0)}\|,
  \tau_{p_\beta}(x)\|h^{(1)}\|
 \right\},
 \label{eq:subbotin-solution-envelope}\\
 |f_{p_\beta,1,h}'(x)|
 &\leq
 \min\left\{
  2\| h^{(0)}\|,
  U_{p_\beta,1}^{1,1}(x)\|h^{(1)}\|,
  U_{p_\beta,1}^{1,2}(x)\|h^{(2)}\|
  +|\tau_{p_\beta}'(x)|\|h^{(1)}\|
 \right\},
 \label{eq:subbotin-first-derivative-envelope}\\
 |f_{p_\beta,1,h}''(x)|
 &\leq
 \min\Bigl\{
  \bigl(U_{p_\beta,1}^{2,1}(x)+1\bigr)\|h^{(1)}\|,
 \notag\\
 &\hspace{1.25cm}
  U_{p_\beta,1}^{2,2}(x)\|h^{(2)}\|
  +|\tau_{p_\beta}''(x)|\|h^{(1)}\|,
 \notag\\
 &\hspace{1.25cm}
  U_{p_\beta,1}^{2,3}(x)\|h^{(3)}\|
  +|M_{p_\beta,1}^{2,3}(x)|\|h^{(2)}\|
  +|\tau_{p_\beta}''(x)|\|h^{(1)}\|
 \Bigr\}.
 \label{eq:subbotin-second-derivative-envelope}
\end{align}
Taking suprema in the first envelope gives \[\|f_{p_\beta,1,h}\|
 \leq
 \frac{(\beta(\beta-1))^{1/\beta}\Gamma(1/\beta)}{\beta}\| h^{(0)}\|,\]
 which differs with the exact Kolmogorov Stein factor by a factor of 2, as expected since the results concerning the Kolmogorov solution exploited the explicit form of the test function in that case.
For \(\beta\geq2\), the Stein-kernel part of
\eqref{eq:subbotin-solution-envelope} also yields
\begin{equation}
\label{eq:subbotin-lipschitz-factor}
 \|f_{p_\beta,1,h}\|
 \leq
 \frac{(\beta(\beta-1))^{2/\beta}}{\beta}\Gamma\left(\frac2\beta\right)\|h^{(1)}\|.
\end{equation}
For \(1<\beta<2\), this argument gives no uniform Lipschitz factor, since
\(\tau_{p_\beta}\) diverges in the tails. We illustrate the corresponding pointwise solutions and envelopes in
Figure~\ref{fig:subbotin-diagonal-envelopes}. 

 \begin{remark}\label{rmk:subbotin-comparison-w-litterature}
When \(\beta=2\), these estimates reduce to the Gaussian kernel bounds of
Example~\ref{ex:gau}. For the quartic density
\(p_4(x)=C_4 e^{-x^4/12}\), \eqref{eq:subbotin-lipschitz-factor}
recovers the corresponding Lipschitz bound of
\cite{dobler_gaunt_vollmer_2017} for \(\|f_{p_4,1,h}\|\), while the
derivative envelopes improve their constants for
\(\|f_{p_4,1,h}'\|\) and \(\|f_{p_4,1,h}''\|\) by roughly factors 
\(6.6\) and \(1.7\), respectively. 
More precisely, the sharp bounds
\[
\|f_{p_4,1,h}\|
\leq\frac{\sqrt{3\pi}}2\|h'\|,
\qquad
\|f_{p_4,1,h}'\|
\leq c^{1,1}_{p_4,1}\|h'\|,
\qquad
\|f_{p_4,1,h}''\|
\leq c^{2,1}_{p_4,1}\|h'\|,
\]
holds with \(c^{1,1}_{p_4,1}\approx1.02325\) and
\(c^{2,1}_{p_4,1}\approx 2.30554\) (obtained by numerical maximization).
By contrast, the constants used in \cite{Chatterjee_2011} are
\[
\|f_{p_4,1,h}\|\leq c_2\|h'\|,
\qquad
\|f_{p_4,1,h}'\|
\leq\frac{1+c_2}{C_4}\|h'\|,
\qquad
\|f_{p_4,1,h}''\|
\leq2(1+c_2)\|h'\|,
\]
for \(
c_2
=
\frac3{C_4^2}
+
\left(\frac3{C_4^2}\right)^{1/3}
\approx37.39626
\) the constant satisfying their hypothesis $\mathrm{(H2)}$.


\end{remark}

\begin{figure}[p]
 \centering
 \includegraphics[width=\textwidth]{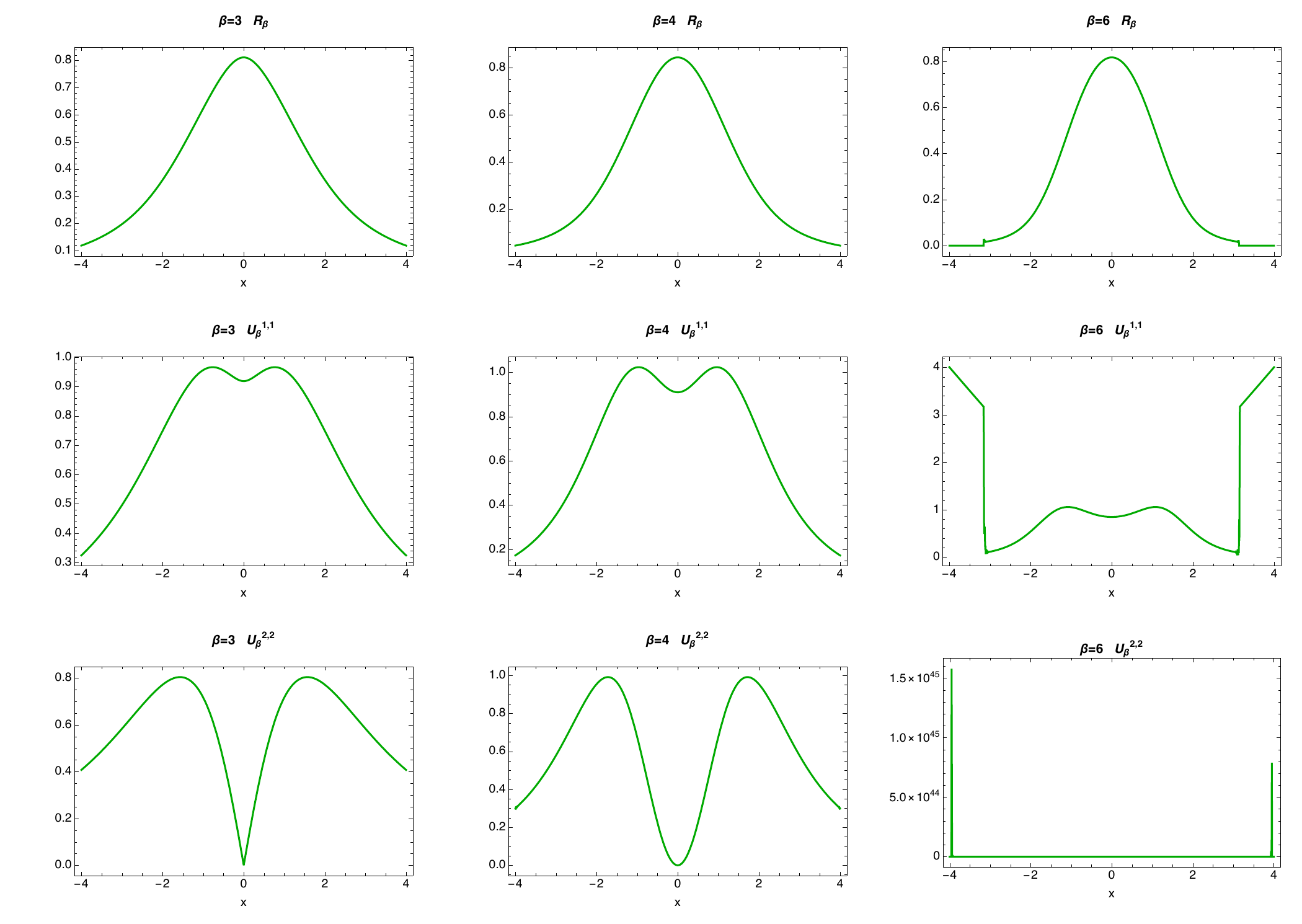}
 \caption{
 Diagonal pointwise kernel envelopes for the normalized Subbotin laws
 \eqref{eq:subbotin-density} with \(\beta=3,4,6\). The columns correspond to
 the three values of \(\beta\). The rows display, respectively,
 \(R_\beta := P_\beta\overline P_\beta/p_\beta\), \(U_\beta^{1,1}\), and
 \(U_\beta^{2,2}\), the
 coefficients appearing in
 \eqref{eq:subbotin-solution-envelope},
 \eqref{eq:subbotin-first-derivative-envelope}, and
 \eqref{eq:subbotin-second-derivative-envelope}.  They are bounded-test
 coefficient for \(f_{p_\beta,1,h}\); the Lipschitz coefficient
 for \(f_{p_\beta,1,h}'\) and the diagonal \(h''\)-coefficient
 for \(f_{p_\beta,1,h}''\) respectively. Thus the third row should not be read as the full
 second-derivative Lipschitz factor: the complete bound also contains
 lower-order correction terms, in particular \(1+U_\beta^{2,1}\) in the
 first entry of \eqref{eq:subbotin-second-derivative-envelope}. For
 \(\beta>2\), the higher-order diagonal envelopes develop symmetric
 off-centre maxima, showing that the Gaussian simplifications no longer
 persist in the Subbotin family.}
 \label{fig:subbotin-diagonal-envelopes}
\end{figure}

\subsection{Symmetrized Maxwell distributions}
\label{subsec:symmetrized-maxwell}

Let \(M\) have the Maxwell distribution with scale \(\sigma>0\), let
\(\varepsilon\) be an independent Rademacher variable, and set
\(Z_\sigma:=\varepsilon M\). Then \(Z_\sigma\) has density
\begin{equation}
\label{eq:symmax-density}
 p_\sigma(x)
 =
 \frac{x^2}{\sqrt{2\pi}\sigma^3}
 \exp\left(-\frac{x^2}{2\sigma^2}\right),
 \qquad x\in\mathbb R.
\end{equation}
Writing  $P_\sigma$ and $\overline{P}_\sigma$ the associated cdf and survival function, denoting as before the standard Gaussian density,
distribution, and survival functions by \(\varphi\), \(\Phi\), and
\(\overline\Phi\), respectively, we have for \(y=x/\sigma\),
\[
 P_\sigma(x)=\Phi(y)-y\varphi(y),
 \qquad
 \overline P_\sigma(x)=\overline\Phi(y)+y\varphi(y).
\]
The law is centred and symmetric, with
\[
 \mathbb E|Z_\sigma|^k
 =
 \sigma^k2^{k/2}
 \frac{\Gamma((k+3)/2)}{\Gamma(3/2)},
 \qquad k>-3,
\]
and \(\operatorname{Var}(Z_\sigma)=3\sigma^2\).

The score is singular at the origin:
\[
 \frac{p_\sigma'(x)}{p_\sigma(x)}
 =
 \frac2x-\frac{x}{\sigma^2},
 \qquad x\neq0.
\]
Consequently, the density-Stein solution with \(w\equiv1\) is singular
at \(0\). The Stein kernel,
\[
 \tau_{p_\sigma}(x)
 =
 \sigma^2+\frac{2\sigma^4}{x^2},
 \qquad x\neq0,
\]
removes this singularity, since
\[
 \tau_{p_\sigma}(x)p_\sigma(x)
 =
 \sigma(y^2+2)\varphi(y)
\]
has a strictly positive continuous extension at the origin. We therefore
work with the Stein-kernel weighted equation \(w=\tau_{p_\sigma}\):
\begin{equation}
\label{eq:symmax-kernel-equation}
\left(\sigma^2+\frac{2\sigma^4}{x^2}\right)f'(x)-xf(x)
 =
 h(x)-\mathbb E[h(Z_\sigma)].
\end{equation}
Although \(p_\sigma(0)=0\) and \(\tau_{p_\sigma}(0)\) is not finite, their
product extends continuously to a strictly positive value at \(0\). We apply
the kernel formulas separately on \((-\infty,0)\) and \((0,\infty)\) and
join the canonical weighted solution continuously at \(0\).

We first consider Kolmogorov test functions. 
Using the continuous extension of \(\tau_{p_\sigma}p_\sigma\) at \(0\),
the general Kolmogorov formula from
Section~\ref{sec:kolmogorov-solutions}, applied with
\(w=\tau_{p_\sigma}\), gives
\begin{equation*}
\label{eq:symmax-kolmogorov-solution}
 f_{p_\sigma, \tau_{p_\sigma}, h_z}(x)
 =
 \frac{
  P_\sigma(x\wedge z)\overline P_\sigma(x\vee z)
 }{
  \tau_{p_\sigma}(x)p_\sigma(x)
 }.
\end{equation*}
Using
\eqref{eq:general-kolmogorov-zero-envelope},
\eqref{eq:general-kolmogorov-first-envelope}, and
\eqref{eq:general-kolmogorov-second-envelope}, and then symmetry, the
first three exact envelopes are
\begin{align}
 \sup_z |f_{p_\sigma, \tau_{p_\sigma}, h_z}(x)|
 &=
 \frac{P_\sigma\overline P_\sigma}
      {\sigma (y^2+2)\varphi(y)},
 \label{eq:symmax-kolmogorov-zero-envelope}\\
 \sup_{z\neq x}|f_{p_\sigma, \tau_{p_\sigma}, h_z}'(x)|
 &=
 \frac{y^2}{\sigma^2(y^2+2)}
 \max\left\{
 P_\sigma\left(
 1-\frac{y\overline P_\sigma}{(y^2+2)\varphi(y)}
 \right),
 \right.
 \notag\\
 &\hspace{3.1cm}\left.
 \overline P_\sigma\left(
 1+\frac{yP_\sigma}{(y^2+2)\varphi(y)}
 \right)
 \right\},
 \label{eq:symmax-kolmogorov-first-envelope}\\
 \sup_{z\neq x}|f_{p_\sigma, \tau_{p_\sigma}, h_z}''(x)|
 &=
 \frac{y}{\sigma^3(y^2+2)^2}
 \max\left\{
 P_\sigma
 \left|
 \frac{y(y^4+y^2+6)\overline P_\sigma}
      {(y^2+2)\varphi(y)}
 -(y^4+4)
 \right|,
 \right.
 \notag\\
 &\hspace{2.4cm}\left.
 \overline P_\sigma
 \left(
 \frac{y(y^4+y^2+6)P_\sigma}
      {(y^2+2)\varphi(y)}
 +(y^4+4)
 \right)
 \right\}.
 \label{eq:symmax-kolmogorov-second-envelope}
\end{align}
Here the absolute value has disappeared from the first-derivative
envelope because
\[
 y\overline P_\sigma\leq (y^2+2)\varphi(y),
\]
which follows from
\(\tau_{p_\sigma}(x)p_\sigma(x)=\int_x^\infty t p_\sigma(t)\,dt\) for
\(x>0\).

Taking suprema in \(x\) yields
\[
 \sup_z\|f_{p_\sigma, \tau_{p_\sigma}, h_z}\|
 =
 \frac{\gamma_0^{\mathrm M}}{\sigma},
 \qquad
 \gamma_0^{\mathrm M}=0.3538352\ldots,
\]
with maximizers at \(x=\pm1.75750\ldots\,\sigma\). Moreover,
\[
 \sup_z\|f_{p_\sigma, \tau_{p_\sigma}, h_z}'\|
 =
 \frac1{\sigma^2},
 \qquad
 \sup_z\|f_{p_\sigma, \tau_{p_\sigma}, h_z}''\|
 =
 \infty.
\]
The first-derivative factor is approached in the tails, while the
second-derivative envelope is unbounded. Figure~\ref{fig:symmax-kolmogorov-solutions}
illustrates these envelopes together with the corresponding solutions.

\begin{figure}
 \centering
 \includegraphics[width=\textwidth]{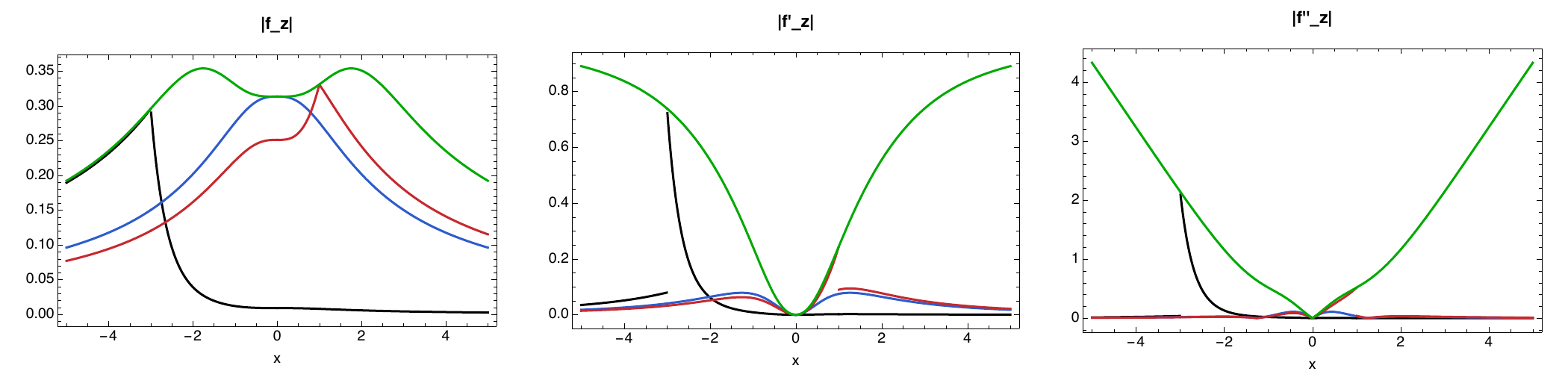}
 \caption{
 Kolmogorov Stein solutions and their first two derivatives for the
 symmetrized Maxwell law with \(\sigma=1\), plotted for \(z=-3\text{ (black)},0\text{ (blue)},1\text{ (red)}\).
 The coloured curves are
 \(|f_{p_\sigma, \tau_{p_\sigma}, h_z}|\), \(|f_{p_\sigma, \tau_{p_\sigma}, h_z}'|\), and
 \(|f_{p_\sigma, \tau_{p_\sigma}, h_z}''|\), while the green curves are the exact Kolmogorov
 envelopes
 \eqref{eq:symmax-kolmogorov-zero-envelope},
 \eqref{eq:symmax-kolmogorov-first-envelope}, and
 \eqref{eq:symmax-kolmogorov-second-envelope}. The Stein-kernel weight
 removes the singularity of the \(w=1\) score solution at the origin, and
 the resulting Kolmogorov solutions extend continuously through \(0\).
 The order-\(0\) envelope is bounded and has symmetric off-centre
 maxima, the order-\(1\) envelope tends to the sharp factor
 \(1/\sigma^2\) in the tails, and the order-\(2\) envelope is unbounded.}
 \label{fig:symmax-kolmogorov-solutions}
\end{figure}
 
 We next record the corresponding envelopes for general test functions.
One can check that 
\[
 \underline m_{p_\sigma,\tau_{p_\sigma}}'\geq0,
 \qquad
 \overline m_{p_\sigma,\tau_{p_\sigma}}'\leq0,
\]
so Lemma~\ref{lem:low-order-mills-monotonicity} applies.
Therefore the general envelopes give, almost everywhere,
\begin{align}
 |f_{p_\sigma,\tau_{p_\sigma},h}(x)|
 &\leq
 \min\left\{
  {U_{p_\sigma,\tau_{p_\sigma}}^{0,0}}\| h^{(0)}\|,
  \|h^{(1)}\|
 \right\},
 \label{eq:symmax-solution-envelope}\\
 |f_{p_\sigma,\tau_{p_\sigma},h}'(x)|
 &\leq
 \min\left\{
  \frac2{\tau_{p_\sigma}(x)}\| h^{(0)}\|,
  U_{p_\sigma,\tau_{p_\sigma}}^{1,1}(x)\|h^{(1)}\|,
  U_{p_\sigma,\tau_{p_\sigma}}^{1,2}(x)\|h^{(2)}\|
 \right\},
 \label{eq:symmax-first-derivative-envelope}\\
 |f_{p_\sigma,\tau_{p_\sigma},h}''(x)|
 &\leq
 \min\left\{
  \left(U_{p_\sigma,\tau_{p_\sigma}}^{2,1}(x)+\frac1{\tau_{p_\sigma}(x)}\right)\|h^{(1)}\|,
  U_{p_\sigma,\tau_{p_\sigma}}^{2,2}(x)\|h^{(2)}\|,
 \right.
 \notag\\
 &\hspace{2.8cm}\left.
  |M_{p_\sigma,\tau_{p_\sigma}}^{2,3}(x)|\|h^{(2)}\|
  +
  U_{p_\sigma,\tau_{p_\sigma}}^{2,3}(x)\|h^{(3)}\|
 \right\}.
 \label{eq:symmax-second-derivative-envelope}
\end{align}
In particular,
\[
 \|f_{p_\sigma,\tau_{p_\sigma},h}\|
 \leq
 \min\left\{
 2 \frac{\gamma_0^{\mathrm M}}{\sigma}\| h^{(0)}\|,
  \|h^{(1)}\|
 \right\}.
\]
Again, the $\|h^{(0)}\|$ bound is the Kolmogorov one multiplied by a factor of 2, as expected.
Numerically,
\[
 \|U_{p_\sigma,\tau_{p_\sigma}}^{1,1}\|
 =
 \frac{\gamma_1^{\mathrm M}}{\sigma},
 \qquad
 \gamma_1^{\mathrm M}=0.530424\ldots,
\]
with maximizers at \(x=\pm1.98480\ldots\,\sigma\). Higher-order
pointwise bounds are obtained recursively from
\eqref{eq:symmax-second-derivative-envelope}; the correction terms do
not collapse as they do in the Gaussian case.

\begin{remark}
For the two-sided Maxwell law, \cite{mckeague_pekoz_swan_2019} develop a
Wasserstein approximation theory based on a regularized Stein equation,
rather than Kolmogorov factors. In the standard case \(\sigma=1\), their
regularized solution is precisely \(f_{p_1,\tau_{p_1},h}\), and their
Proposition~4.5 gives
\[
 \|f_{p_1,\tau_{p_1},h}\|\leq 3\|h'\|,
 \qquad
 \|f_{p_1,\tau_{p_1},h}'\|\leq 4\|h'\|.
\]
Equations~\eqref{eq:symmax-solution-envelope} and
\eqref{eq:symmax-first-derivative-envelope} improve these constants to
\(1\) and \(0.530424\ldots\), respectively. The exact Kolmogorov envelopes and
factors in
\eqref{eq:symmax-kolmogorov-zero-envelope}--\eqref{eq:symmax-kolmogorov-second-envelope}
do not seem to appear in the literature.
\end{remark}

\section{Applications}
\label{sec:applications}
The representations and factors obtained through the kernel calculus strengthens existing approximation arguments in multiple
ways. As discussed in the introduction, better constants can be inserted without changing
the probabilistic coupling, improving the threshold number of observations $n$ guaranteeing a desired level of precision in an IPM. This is shortly presented in \ref{sec:Improvement-known-distances}. Also one can exploit the explicit dependence in the parameters of the distribution to quantify certain convergences, as the authors did in \cite{bailly_swan2026}. Finally Edgeworth corrections in Beta and quartic Subbotin approximations are obtained, using higher-order derivative control to identify
signed terms in exchangeable-pair expansions.

\subsection{Improvement in known quantitative results}\label{sec:Improvement-known-distances}
The results in this section all come from using our sharp bounds in already derived computation and from making use of the weighted bound from Corollary~\ref{cor:weighted-first-derivative-factor} in terms of the form $$\int \tau_p f_{p,\tau_p,h}'(\cdot)-\tau_pf_{p,\tau_p,h}'(x)$$ to bound them by $(\tau_pf_{p,\tau_p,h}')' \le 2\|h^{(1)}\|.$ 
\paragraph{Beta approximation}
Let $W_n= L_{2n}/(2n)$ where $L_{2n}$ is the last return to zero of a simple symmetric random walk during its first $2n$ steps considered by Goldstein and Reinert \cite{goldstein_reinert_beta_2013}. Let $Z$ be $\mathrm{Beta}(1/2,1/2)$ distributed. \cite{goldstein_reinert_beta_2013} obtain a bound on the Wasserstein distance between $W_n$ and $Z$
\[d_{\mathrm{W}}(W_n,Z) \le \frac{27}{2n}+\frac{8}{n^2}\]
that improves directly, by using the Stein factors of Example~\ref{ex:beta}, to
\[d_{\mathrm{W}}(W_n,Z) \le \frac{7}{n}.\]
The same kind of improvements apply to occupation-time and first-passage statistics aswell as the Beta approximation of the averaged number of white balls drawn from an $(\alpha,\beta,m)-$Pólya--Ergenberger urn.
\paragraph{Quartic approximation in the critical Curie--Weiss model}
 Let $\sigma_i \in \{-1,1\},$ be the spins considered in the Curie--Weiss model for $1\le i \le n$. Define
\begin{equation}
    W_n=n^{-3/4}\sum_{i=1}^n\sigma_i.
    \label{eq:criricuriewei}
\end{equation}
In this model, at critical temperature it is classical (\cite{Chatterjee_2011}) that the approximation distribution of $W_n$ is $p_4$ the quartic Subbotin distribution. In \cite{Chatterjee_2011} the authors obtains the following bound on the Kolmogorov and Wasserstein distance between $W_n$ and a $p_4$ distributed random variable $Z$.
\[
d_{\mathrm K}(W_n,Z)
\leq
124n^{-1/2}
+
814n^{-3/4},\quad \text{and} \quad
    d_{\mathrm W}(W_n,Z)
    \leq
    1533n^{-1/2}
    +
    154n^{-3/4},
\]
(constants where rounded for convenience, precise analytically determined constants can be obtained). These are improved to
\begin{equation}
d_{\mathrm K}(W_n,Z)
\leq
28n^{-1/2}
+
133n^{-3/4}, \quad \text{and} \quad
    d_{\mathrm W}(W_n,Z)
    \leq 31n^{-1/2}
    +
    5 n^{-3/4} \label{eq:improv-distances-CW}\end{equation}
which represent an improvement by a factor of roughly 20 of the number $n$ such that $d_{\mathrm K}(W_n,Z)$ is smaller than a given precision. For the Wasserstein distance, the improvement factor in the value of $n$ needed to attain a prescribed error is of roughly 2450.

\subsection{Edgeworth corrections}
\label{subsec:edgeworth-corrections}

Sharper constants improve an existing approximation without changing
its form. Higher-order envelopes provide a second use of the kernel
calculus: they allow signed terms in an exchangeable-pair expansion to be
retained and identified as explicit corrections. For the beta target, the
integrated Pearson formulas represent the required derivatives at every
admissible order; the first few orders suffice for the correction derived
here. For the quartic target, the corresponding argument requires only the
universal low-order formulas, even though higher-order pointwise terms do
not generally cancel.

\subsubsection{A Beta Edgeworth correction}
\label{subsec:beta-edgeworth}

Consider the Pólya--Eggenberger urn model of
\cite{dobler_beta_2015}. Initially, the urn contains \(\tilde a\) red and \(\tilde b\)
blue balls, and each drawn ball is returned together with \(c\)
additional balls of the same colour. Set \(a=\tilde a / c\), \(b=\tilde b/c\), let
\(S_n\) be the number of red balls drawn during the first \(n\) steps,
and put \(W_n=S_n/n\). Then
\(W_n\Rightarrow Z\sim\operatorname{Beta}(a,b)\).

Write \(p\) for the density of \(Z\), and set
\[
\eta(x)=x(1-x)=(a+b)\tau_p(x),
\quad
\gamma(x)=a-(a+b)x,
\quad
\ell(x)=\frac{(b-a)x+a}{2}.
\]
For \(f_h=f_{p,\eta,h}\), Theorem~\ref{thm:integrated-pearson-kernel-representations}
gives
\[
f_h^{(r)}(x)
=
\mathbb E[K_{p,\eta}^{r,r}(x,Z)h^{(r)}(Z)],
\qquad
\|f_h^{(r)}\|
\leq c_{p,\eta}^{r,r}\|h^{(r)}\|.
\]

The exchangeable-pair construction in \cite{dobler_beta_2015} yields
\begin{equation}
\label{eq:beta-edgeworth-intermediate}
\mathbb E[h(W_n)]-\mathbb E[h(Z)]
=
-\frac1n\mathbb E[\ell(W_n)f_h'(W_n)]
-\frac1{6n^2}\mathbb E[\gamma(W_n)f_h''(W_n)]
-\frac1\lambda\mathbb E[R_h],
\end{equation}
where \(\lambda=\{n(a+b+n-1)\}^{-1}\), and \(R_h\) is defined in
\eqref{eq:beta-app-Taylor-remainder}. The last two terms are of order
\(n^{-2}\) when $\|h^{(2)}\|$ and $\|h^{(3)}\|$ are bounded. Replacing \(W_n\) by \(Z\) in the first term gives
\[
-\mathbb E[\ell(Z)f_h'(Z)]
=
\mathbb E[\mathrm{e}_1(Z)h'(Z)],
\qquad
\mathrm{e}_1(z)
=
-\int_0^1\ell(x)K_{p,\eta}^{1,1}(x,z)p(x)\,dx
=
\frac{(a+b)z-a}{2}.
\]

\begin{theorem}[First-order beta Edgeworth expansion]
\label{thm:edgeworth-one-term}
Let \(a,b>0\) and \(h\in C^{2,1}([0,1])\). Then
\[
\left|
\mathbb E[h(W_n)]-\mathbb E[h(Z)]
-\frac1n\mathbb E[\mathrm{e}_1(Z)h'(Z)]
\right|
\leq
\frac{C_2(a,b)}{n^2}\|h^{(2)}\|
+
\frac{C_3(a,b)}{n^2}\|h^{(3)}\|.
\]
The derivative \(h^{(3)}\) is understood almost everywhere, and the
constants are given in
\eqref{eq:beta-app-explicit-constants}.
\end{theorem}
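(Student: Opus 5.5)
Starting point is the exchangeable-pair identity \eqref{eq:beta-edgeworth-intermediate}, taken as given from \cite{dobler_beta_2015}. Subtracting $\frac1n\mathbb E[\mathrm e_1(Z)h'(Z)]=-\frac1n\mathbb E[\ell(Z)f_h'(Z)]$ splits the error into three pieces:
\[
\mathbb E[h(W_n)]-\mathbb E[h(Z)]-\tfrac1n\mathbb E[\mathrm e_1(Z)h'(Z)]
=-\tfrac1n\bigl(\mathbb E[g(W_n)]-\mathbb E[g(Z)]\bigr)
-\tfrac1{6n^2}\mathbb E[\gamma(W_n)f_h''(W_n)]
-\tfrac1\lambda\mathbb E[R_h],
\]
with $g:=\ell f_h'$. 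Each piece is then bounded in terms of $\|h^{(2)}\|$ and $\|h^{(3)}\|$.

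The second and third pieces are handled directly. The middle term satisfies $|\gamma|\le\max(a,b)$ on $[0,1]$, and the diagonal bound $\|f_h''\|\le c^{2,2}_{p,\eta}\|h^{(2)}\|$ from Theorem~\ref{thm:integrated-pearson-kernel-representations} with $w=\eta$ (where $\tau_p/w$ is constant) then gives a contribution $\frac{\max(a,b)c^{2,2}_{p,\eta}}{6n^2}\|h^{(2)}\|$. For the remainder $R_h$ from \eqref{eq:beta-app-Taylor-remainder}, which is a Taylor remainder of $f_h$ around $W_n$ with third-order increments of the exchangeable pair, I use $\|f_h'''\|\le c^{3,3}_{p,\eta}\|h^{(3)}\|$ (and possibly $\|f_h''\|$). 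I also use the moment bounds $\mathbb E|W_n'-W_n|^3=O(n^{-3})$, which follow since $|W_n'-W_n|\le 1/n$. Dividing by $\lambda^{-1}\sim n^2$ then gives order $n^{-2}$ with an explicit constant.

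The first piece is the main step. I would treat it as the Wasserstein-type error of the first-order approximation, but applied to the new test function $g=\ell f_h'$ and only needing $O(n^{-1})$ accuracy. To do this I apply \eqref{eq:beta-edgeworth-intermediate} again with $h$ replaced by $g$, discarding the second and third terms after crude bounds:
\[
\bigl|\mathbb E[g(W_n)]-\mathbb E[g(Z)]\bigr|\le \tfrac1n\|\ell\|\,\|f_g'\|+\tfrac1{6n^2}\|\gamma\|\|f_g''\|+\tfrac1{\lambda}|\mathbb E R_g|.
\]
Everything here is controlled by $\|g^{(1)}\|$, $\|g^{(2)}\|$ and $\|g^{(3)}\|$ through the diagonal factors. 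The subtlety is $\|g^{(3)}\|$, which involves $f_h^{(4)}$ and hence $h^{(4)}$. That is not allowed, since $h\in C^{2,1}$ only.

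To avoid this, I would instead use a coarser bound for the first piece, staying at Lipschitz/second-order level. The remainder of the exchangeable pair only needs $\|f_g''\|$ if Taylor's expansion is stopped one order earlier, giving an $O(n^{-1})$ remainder controlled by $\|g^{(2)}\|$. Indeed, $\|f_g''\|\le c^{2,2}_{p,\eta}\|g''\|$ and $g''=\ell f_h'''+2\ell'f_h''$, whose norm is at most $\|\ell\|c^{3,3}_{p,\eta}\|h^{(3)}\|+|b-a|c^{2,2}_{p,\eta}\|h^{(2)}\|$. This yields $|\mathbb E g(W_n)-\mathbb E g(Z)|\le \frac Cn(\|h^{(2)}\|+\|h^{(3)}\|)$, and the prefactor $1/n$ produces the claimed $n^{-2}$.

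Collecting the three contributions gives $C_2(a,b)$ and $C_3(a,b)$, expressed through $c^{2,2}_{p,\eta},c^{3,3}_{p,\eta},\|\ell\|=\max(a,b)/2,\|\gamma\|$ and $a+b$. The obstacle I expect is verifying that this lower-order Taylor version of the exchangeable-pair identity holds for $g$ (boundary behaviour of $f_g$ at $0,1$, and finiteness of the diagonal factors, which Corollary~\ref{cor:IP-Mills-monotonicity} and the Beta example secure).
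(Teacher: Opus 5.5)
Your route is the paper's: the same three-piece decomposition, the same bound on the $\gamma f_h''$ term, and, for the first piece, the paper quotes $|\mathbb E\psi(W_n)-\mathbb E\psi(Z)|\le \frac{D(a,b)}{n}\|\psi''\|$ for $\psi=\ell f_h'$ from \cite[Theorem~2.1]{dobler_beta_2015}. Rerunning the exchangeable pair with Taylor stopped at second order, as you propose, is exactly what produces $D(a,b)$, and your bound on $\|g''\|$ is the paper's. Two steps need repair as written.

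\emph{The remainder $R_h$.} Cubic moments do not suffice, since $\lambda^{-1}\mathbb E|W_n'-W_n|^3$ is of order $n^{-1}$, not $n^{-2}$. What saves you is that $R_h$ contains $f_h''(x+t\delta)-f_h''(x)$. Using $\|f_h'''\|$ therefore buys an extra factor $|\delta|$, giving $|R_h|\le\frac{|\delta|^4}{24}\|f_h'''\|$. Then $\Delta_n^4=n^{-2}\Delta_n^2$ and $\mathbb E[\Delta_n^2\mid W_n]=2\lambda(\eta+\ell/n)(W_n)$ cancel $\lambda^{-1}$ and give the paper's $M(a,b)c^{3,3}_{p,\eta}n^{-2}\|h^{(3)}\|$. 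Bounding $R_h$ through $\|f_h''\|$, as your ``possibly'' allows, loses a power of $n$.

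\emph{The surviving first-order term for $g$.} The truncated expansion reads $\mathbb E g(W_n)-\mathbb E g(Z)=-\frac1n\mathbb E[\ell(W_n)f_g'(W_n)]+O(n^{-1}\|f_g''\|)$, and you only treat the second term. With diagonal factors, as in your first attempt, $\|f_g'\|\le c^{1,1}_{p,\eta}\|g'\|$ and $g'=\frac{b-a}{2}f_h'+\ell f_h''$. This leads to $\|f_h'\|\le c^{1,1}_{p,\eta}\|h^{(1)}\|$, a term the statement does not allow when $a\neq b$. Use instead the upper-neighbour factor $\|f_g'\|\le c^{1,2}_{p,\eta}\|g''\|$ from Theorem~\ref{thm:integrated-pearson-kernel-representations}, which applies because $\tau_p/\eta$ is constant. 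Since $\ell>0$ on $[0,1]$ and $\mathbb E\ell(W_n)=ab/(a+b)$, this gives exactly the $c^{1,2}_{p,\eta}\frac{ab}{a+b}$ part of $D(a,b)$. The truncated remainder supplies the $c^{2,2}_{p,\eta}\bigl(\frac1{12}+\frac{\max(a,b)}6\bigr)$ part.
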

For the $\mathrm{Beta}(2,5)$ distribution, these constants evaluate numerically at 
$$C_2(2,5)\approx 43.2263662, \quad \text{ and } \quad C_3(2,5)\approx 27.4924395.$$
The proof is given in Appendix~\ref{app:beta-edgeworth}. 
The diagonal representation of \(f_h^{(r)}\) is available at every
admissible order, not only at the orders used in
Theorem~\ref{thm:edgeworth-one-term}. Thus the argument extends to arbitrary finite order as expanding the
exchangeable-pair identity further produces successive derivatives of
the Stein solution. The diagonal representations from
Theorem~\ref{thm:integrated-pearson-kernel-representations}, the absolute kernel
bounds from Example~\ref{ex:beta}, and the integration-by-parts
identity \eqref{eq:SIBP-for-kernels} transfer these derivatives to the
test function. The correction functions are constructed recursively in
\eqref{eq:beta-app-correction-recursion}--\eqref{eq:beta-app-qj}.

\begin{theorem}[All-order beta Edgeworth expansion]
\label{thm:beta-edgeworth-all-orders}
Fix \(m\geq1\) and let
\(h\in C^{2m+1,1}([0,1])\), where the superscript means that
\(h^{(2m+1)}\) is Lipschitz. With the correction functionals
\(\mathcal{E}_j\) defined explicitly by
\eqref{eq:beta-app-correction-recursion},
\[
\mathbb E[h(W_n)]
=
\mathbb E[h(Z)]
+
\sum_{j=1}^m
\frac1{n^j}\mathcal{E}_j(h)
+
O(n^{-(m+1)}).
\]
The implicit constant depends only on \(a,b,m\), the derivative
seminorms of \(h\) through order \(2m+2\), and the diagonal factors
\(c_{p,\eta}^{r,r}\). There are
recursively computable functions \(\mathrm e_1,\ldots,\mathrm e_m\) such that
\[
\mathcal{E}_j(h)=\mathbb E[\mathrm{e}_j(Z)h^{(j)}(Z)],\qquad 1\leq j\leq m.
\]
Under these conditions the correction of order \(n^{-j}\) acts on
\(h^{(j)}\).
\end{theorem}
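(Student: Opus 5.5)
The plan is an induction on $m$ driven by the exchangeable-pair Taylor expansion of \cite{dobler_beta_2015}, pushed to order $2m+1$ instead of stopping at order three as in \eqref{eq:beta-edgeworth-intermediate}.

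\textbf{Step 1: the expansion.} For the exchangeable pair $(W_n,W_n')$ with $\mathbb E[W_n'-W_n\mid W_n]=-\lambda\gamma(W_n)$, the moments $\mathbb E[(W_n'-W_n)^k\mid W_n]$ are explicit polynomials in $W_n$ of size $O(n^{-k}\lambda)$. Applying the antisymmetric identity $\mathbb E[(W_n'-W_n)(F(W_n')+F(W_n))]=0$ with $F=f_h$ and Taylor expanding to order $2m+1$ will give
\[
\mathbb E[h(W_n)]-\mathbb E[h(Z)]=\sum_{k=1}^{2m}\frac{1}{n^{k}}\mathbb E\bigl[\ell_k(W_n)f_h^{(k)}(W_n)\bigr]+O\bigl(n^{-(m+1)}\bigr),
\]
where $\ell_k$ are polynomials, and $\ell_1=-\ell$, $\ell_2=-\gamma/6$ up to the normalization of \eqref{eq:beta-edgeworth-intermediate}. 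The remainder involves $f_h^{(2m+2)}$, and by Theorem~\ref{thm:integrated-pearson-kernel-representations} and Example~\ref{ex:beta} it is bounded by $c^{2m+2,2m+2}_{p,\eta}\|h^{(2m+2)}\|$. Some terms with $k>m$ already sit below the target order and are absorbed into the remainder, each bounded by $c^{k,k}_{p,\eta}\|h^{(k)}\|$.

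\textbf{Step 2: replacing $W_n$ by $Z$.} Each surviving term $\mathbb E[\ell_k(W_n)f_h^{(k)}(W_n)]$ is treated by applying the statement recursively, at lower order, to the new test function $g_k:=\ell_kf_h^{(k)}$. This gives
\[
\mathbb E g_k(W_n)=\mathbb E g_k(Z)+\sum_{j\ge1}n^{-j}\mathcal E_j(g_k)+\cdots .
\]
The recursion is well founded because $g_k$ carries a factor $n^{-k}$ and so only needs expansion to order $m-k$. It also needs $g_k\in C^{2(m-k)+1,1}$, which follows since $f_h^{(k+r)}$ is controlled by $\|h^{(k+r)}\|$ through the diagonal representations \eqref{eq:IP-representation-n-n}. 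The regularity count $h\in C^{2m+1,1}$ is exactly what makes every $g_k$ sufficiently smooth. Collecting powers of $n$ defines $\mathcal E_j$ recursively, which is \eqref{eq:beta-app-correction-recursion}.

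\textbf{Step 3: the form $\mathcal E_j(h)=\mathbb E[\mathrm e_j(Z)h^{(j)}(Z)]$.} Every contribution to $\mathcal E_j$ has the form $\mathbb E[q(Z)f_h^{(r)}(Z)]$ or $\mathbb E[q(Z)h^{(r)}(Z)]$, with $q$ polynomial and $r\le j$. Inserting $f_h^{(r)}(x)=\mathbb E[K^{r,r}_{p,\eta}(x,Z)h^{(r)}(Z)]$ and applying Fubini turns it into $\mathbb E[\tilde q(Z)h^{(r)}(Z)]$, just as $\mathrm e_1$ was obtained. When $r<j$, I apply the adjoint integration by parts $\mathbb E[\tilde q(Z)h^{(r)}(Z)]=\mathbb E[(\mathcal S_p\tilde q)(Z)\,\cdot\,]$, i.e.\ \eqref{eq:SIBP-for-kernels} read in the other direction, with the $M$-term vanishing because $\tilde q$ is centred. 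Each such step raises the order of $h$ by one. This requires each $\tilde q$ to have mean zero, which I will check from $\mathbb E[\mathcal A_{p,\eta}f(Z)]=0$ and the identity $\mathbb E h(W_n)-\mathbb E h(Z)=0$ for constant $h$ at every order. Polynomiality survives these steps because $\mathcal S_p$ maps polynomials to polynomials for Pearson targets.

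\textbf{Main obstacle.} I expect the bookkeeping in Step 3 to be the hard part. The difficulty is ensuring that exactly $j$ derivatives land on $h$ at order $n^{-j}$, since the exchangeable-pair expansion naturally produces terms with up to $2j$ derivatives. The first thing I would try is to group terms by parity in the recursion and show that the higher-derivative terms at order $n^{-j}$ collapse via repeated use of \eqref{eq:SIBP-for-kernels} in reverse. Failing that, I would weaken the claim to a finite sum $\sum_{r\le 2j}\mathbb E[\mathrm e_{j,r}(Z)h^{(r)}(Z)]$ and then integrate by parts down to order $j$ using the boundary-vanishing of $\eta p$ at $0$ and $1$.
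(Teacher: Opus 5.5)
Your architecture matches the paper's. You expand the exchangeable-pair identity, write $\mathbb E[\mathcal R_rh(W_n)]=\mathbb E[\mathcal R_rh(Z)]+\Delta_n(\mathcal R_rh)$ and re-expand, which is exactly the recursion \eqref{eq:beta-app-correction-recursion}. You then identify $\mathrm{e}_j$ via the diagonal kernels, Fubini and \eqref{eq:SIBP-for-kernels}.

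First, your Step~1 bookkeeping is wrong, and the error propagates. Since $\Delta_n\in\{0,\pm1/n\}$, the conditional moment $\mathbb E[\Delta_n^{2s+2}\mid W_n]=2\lambda n^{-2s}\{\eta(W_n)+\ell(W_n)/n\}$ is of order $n^{-2s}\lambda$, not $n^{-(2s+2)}\lambda$. So the $n^{-2s}$ coefficient is $-\mathbb E[\mathcal R_{2s}h(W_n)]$, which contains $\eta f_h^{(2s+1)}$, and powers of $n$ are not paired with derivative orders as in your display. Taking $\ell_2=-\gamma/6$ alone gives the wrong $\mathcal E_2$. The omitted term $-\frac1{12}\mathbb E[\eta(Z)f_h'''(Z)]=\frac1{12}\mathbb E[\gamma(Z)f_h''(Z)]$ is what turns $-\gamma/6$ into the $-\gamma/12$ of \eqref{eq:E2-reduced}. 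Consequently your claim $r\le j$ in Step~3 is false. The downward integration by parts is obligatory, not a fallback; the paper does it by hand in Example~\ref{ex:e2}. It is legitimate only because the surplus derivative always carries a factor $\eta$. Then $\mathbb E[e(Z)\eta(Z)g'(Z)]=-\mathbb E[(e'\eta+e\gamma)(Z)g(Z)]$ has no boundary terms, even when $\min(a,b)<1$ and $p$ is unbounded at an endpoint.

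Second, and this is the real gap, your justification of the centring needed to raise derivatives does not work. Knowing $\mathbb E h(W_n)=\mathbb E h(Z)$ for constant $h$ only gives $\mathcal E_j(1)=0$, which is trivial since $f_1\equiv0$, and says nothing about individual coefficients. What centres $\tilde q(z)=\mathbb E[q(X)K^{r,r}_{p,\eta}(X,z)]$ is $\mathbb E[\tilde q(Z)]=\mathbb E[q(X)M^{r,r}_{p,\eta}(X)]=0$, which holds because $\tau_p/\eta$ is constant (Proposition~\ref{prop:summary-M-tau-over-w}). Equivalently, use the upper-neighbour representation \eqref{eq:IP-representation-n-n+1-correction}, whose correction vanishes here. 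This buys one raise only: $\mathcal S_p\tilde q$ need not be centred, since $M^{r,r+1}_{p,\eta}\neq0$ in general (e.g.\ $M^{0,1}_{p,\eta}=-1/(a+b)$).

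So Step~3 must be run as an induction on $j$. Insert $\mathcal E_{j-r}(\mathcal R_rh)=\mathbb E[\mathrm{e}_{j-r}(Z)(\mathcal R_rh)^{(j-r)}(Z)]$ into \eqref{eq:beta-app-correction-recursion}. Then only $f_h^{(j-1)}$, $f_h^{(j)}$ and $\eta f_h^{(j+1)}$ occur at order $n^{-j}$. These are handled, respectively, by one centred raise, the diagonal kernel, and one downward step.

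If you instead expand the recursion completely, nested solutions $f_{\mathcal R_rh}$ produce terms several orders below $j$. You then need the global fact that $\mathcal E_j$ annihilates polynomials of degree $<j$. This does hold, by induction on the recursion: for polynomial $h$, $f_h$ is a polynomial of degree $\deg h-1$, and $\deg\mathcal R_rh\le\deg h-r$.

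Finally, polynomiality is not preserved by the Fubini step, which is not $\mathcal S_p$. For the uniform law with $r=1$ and $q(x)=x$, one gets $\tilde q(z)=-z\log z-z-(1-z)\log(1-z)$. So the regularity of $\mathrm{e}_{j-r}$ needed for the downward step must be checked directly; the theorem itself only asserts functions.
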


\begin{example}[Second-order correction]\label{ex:e2}
Applying the integration-by-parts identity

\[
\mathbb{E}\bigl[\gamma(Z)g'(Z)\bigr]=-\mathbb{E}\bigl[\eta(Z)g''(Z)\bigr],
\]
(valid for every polynomial $g$, since $\gamma p=(\eta p)'$ and $\eta p$ vanishes at $0$
and $1$), twice to the definition \eqref{eq:beta-app-correction-recursion} of $\mathcal E_2$ collapses the
two-derivative and three-derivative terms of $\mathcal R_2$ and
$\mathcal R_1(\mathcal R_1h)$ onto a single second-derivative functional:
\begin{equation}
\mathcal E_2(h)=\mathbb{E}\bigl[C(Z)\,f_h''(Z)\bigr],
\qquad
C(x)=-\frac{b-a}{4}\,\eta(x)+\gamma(x)\left(\frac{\ell(x)}{2}-\frac1{12}\right).
\label{eq:E2-reduced}
\end{equation}
Extracting the coefficient of $h''(z)p(z)$ by matching the moments $\mathcal E_2(x^m)$,
$m=3,4,5$, against $\mathbb{E}\bigl[\mathrm{e}_2(Z)Z^{m-2}\bigr]$ gives, for $(a,b)=(2,5)$,
\begin{equation}
\mathrm{e}_2(z)=\frac{5}{12}-\frac{23}{6}\,z+\frac{19}{3}\,z^{2},
\label{eq:e2-explicit}
\end{equation}
a quadratic such that $\mathbb{E}[\mathrm{e}_2(Z)]=0$, as expected.
\end{example}

\subsubsection{A quartic Subbotin Edgeworth correction}
\label{subsec:critical-curie-weiss-edgeworth}
Let \(Z_4\sim p_4\) the quartic Subootin distribution. By symmetry, \(\mathbb E[Z_4]=0\), while
\[
\mathbb E[Z_4^2]
=
\sqrt{12}\,
\frac{\Gamma(3/4)}{\Gamma(1/4)},
\qquad
\mathbb E[Z_4^6]=9\mathbb E[Z_4^2].
\]
Thus
\[
\mathrm e_1(x)
=
\frac{x^2}{2}
-
\frac{x^6}{30}
-
\frac15\mathbb E[Z_4^2]
\]
satisfies \(\mathbb E[\mathrm e_1(Z_4)]=0\).

\begin{theorem}[Critical Curie--Weiss Edgeworth correction]
\label{thm:critical-CW-edgeworth}
Let $W_n$ be as in \eqref{eq:criricuriewei}. There exists \(C<\infty\) such that, for every \(n\geq1\) and every
\(1\)-Lipschitz function \(h\),
\[
\left|
\mathbb E[h(W_n)]
-
\mathbb E[h(Z_4)]
-
\frac1{\sqrt n}\mathbb E[\mathrm e_1(Z_4)h(Z_4)]
\right|
\leq
Cn^{-3/4}.
\]
\end{theorem}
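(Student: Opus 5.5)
The plan is to run the exchangeable-pair argument of \cite{Chatterjee_2011} for the critical Curie--Weiss magnetization, but to keep the first subleading terms of the conditional moments instead of discarding them. Let \(f=f_{p_4,1,h}\) solve \eqref{eq:subbotin-stein-equation} with \(\beta=4\), so that \(f'(x)-\tfrac{x^3}{3}f(x)=h(x)-\mathbb E[h(Z_4)]\).

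\emph{Step 1: the exchangeable pair and its conditional moments.} Build \((W_n,W_n')\) by choosing a site uniformly and resampling its spin from the conditional Gibbs law. Put \(m=n^{-1/4}W_n\) and \(\lambda=n^{-3/2}\). Expanding \(m-\tanh m=\tfrac{m^3}{3}-\tfrac{2m^5}{15}+O(m^7)\) and \(1-m\tanh m\), I expect
\[
\mathbb E[W_n-W_n'\mid W_n]=\lambda\Big(\tfrac{W_n^3}{3}-\tfrac{2}{15}\tfrac{W_n^5}{\sqrt n}\Big)+\lambda\, O\!\big(n^{-1}|W_n|^7+n^{-1}|W_n|\big),
\]
\[
\tfrac1{2\lambda}\,\mathbb E[(W_n-W_n')^2\mid W_n]=1-\tfrac{W_n^2}{\sqrt n}+O\!\big(n^{-1}(1+W_n^4)\big).
\]
The \(|W_n|\) term inside the first error collects the \(O(n^{-1})\) corrections coming from self-interaction. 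Uniform-in-\(n\) bounds on \(\mathbb E|W_n|^k\), which are classical at criticality, will make every polynomial remainder harmless.

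\emph{Step 2: the signed \(n^{-1/2}\) term.} Apply the antisymmetry identity \(\mathbb E[(W_n-W_n')(F(W_n)+F(W_n'))]=0\) with \(F=f\), and Taylor-expand to second order. This gives
\[
\mathbb E[h(W_n)]-\mathbb E[h(Z_4)]=\frac1{\sqrt n}\,\mathbb E\Big[-W_n^2f'(W_n)+\tfrac{2}{15}W_n^5f(W_n)\Big]+\mathcal R_n .
\]
Here \(\mathcal R_n\) consists of the \(O(n^{-1})\) conditional-moment errors and the cubic Taylor remainder. The latter is bounded by \(\|f''\|\,\mathbb E|W_n-W_n'|^3/\lambda=O(n^{-3/4})\), since \(|W_n-W_n'|\le 2n^{-3/4}\). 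The factor \(\|f''\|\le c^{2,1}_{p_4,1}\|h'\|\) comes from the first entry of \eqref{eq:subbotin-second-derivative-envelope}, and \(\|f\|,\|f'\|\) are controlled by \eqref{eq:subbotin-lipschitz-factor} and \eqref{eq:subbotin-first-derivative-envelope}.

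\emph{Step 3: replace \(W_n\) by \(Z_4\).} Set \(g(x)=-x^2f'(x)+\tfrac{2}{15}x^5f(x)\). We need \(|\mathbb E g(W_n)-\mathbb E g(Z_4)|=O(n^{-1/4})\). I expect this step to be the main obstacle, because \(g\) is not globally Lipschitz. The plan is to run the same Stein argument once more with the test function \(g\), using the already established \(O(n^{-1/2})\) Wasserstein-type bound \eqref{eq:improv-distances-CW}. A local version of it is enough: \(g'\) grows only polynomially, with \(|g'(x)|\lesssim(1+|x|^6)\|h'\|\). This uses the envelopes for \(f,f',f''\) together with the pointwise decay \(|f(x)|\le\tau_{p_4}(x)\|h'\|\), where \(\tau_{p_4}\) is bounded and decays like \(|x|^{-2}\). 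With polynomial moments of \(W_n\) and \(Z_4\), the comparison then costs \(O(n^{-1/2})\cdot n^{-1/2}\) or better. If smoothness of \(g\) is insufficient, I would instead truncate at \(|x|\le n^{\epsilon}\) and use moment tails for the rest.

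\emph{Step 4: identify the correction.} It remains to rewrite \(\mathbb E[g(Z_4)]\) as \(\mathbb E[\mathrm e_1(Z_4)h(Z_4)]\).
\begin{itemize}
\item Integrating \(-Z^2f'(Z)\) by parts against \(p_4\), using \(p_4'/p_4=-x^3/3\), gives \(\mathbb E[g(Z_4)]=\mathbb E[\phi(Z_4)f(Z_4)]\) with \(\phi(x)=2x-\tfrac{x^5}{5}\).
\item Then \(f=\mathcal S_{p_4}h\) and the Stein integration-by-parts formula \(\mathbb E[-\mathcal S_{p_4}h\,\psi']=\mathbb E[(h-\mathcal P h)\psi]\) apply with \(\psi'=-\phi\), that is \(\psi=\tfrac{x^6}{30}-x^2+\text{const}\).
\item This gives \(\mathbb E[g(Z_4)]=\mathbb E[\mathrm e_1(Z_4)h(Z_4)]\), with \(\mathrm e_1\) a centred polynomial in \(x^2,x^6\) proportional to the displayed one. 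The displayed \(\mathrm e_1\) has \(x^2\) coefficient \(\tfrac12\), not \(1\), so the exact constants must be fixed by carrying the numerical factors from Step 1 through carefully.
\item Centring is \(\mathbb E[\mathrm e_1(Z_4)]=0\), which uses \(\mathbb E Z_4^6=9\,\mathbb E Z_4^2\).
\end{itemize}

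Collecting Steps 2–4 yields the \(Cn^{-3/4}\) bound, uniformly over \(1\)-Lipschitz \(h\).
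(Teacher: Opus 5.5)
Your Step~1 drift expansion is wrong at exactly the order the theorem is about. This is a genuine gap, not a bookkeeping issue.

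With $m=S_n/n$ and $m_i=m-\sigma_i/n$, one has $\frac1n\sum_i\tanh(m_i)=\tanh m-\frac{m}{n}\operatorname{sech}^2 m+O(n^{-2})$. Since $m/n=n^{-5/4}W_n$, this self-interaction term contributes $\lambda\,n^{-1/2}W_n$ to $\mathbb E[W_n-W_n'\mid W_n]$. That is the same relative order $n^{-1/2}$ as the quintic term, not $n^{-1}$ as you claim. The correct drift (the paper's Lemma~\ref{lem:CW-conditional-moment-expansions}) is
\[
\mathbb E[W_n-W_n'\mid W_n]=\lambda\Bigl\{\tfrac{W_n^3}{3}+n^{-1/2}\bigl(W_n-\tfrac{2}{15}W_n^5\bigr)\Bigr\}+\lambda\,O\bigl(n^{-1}(1+|W_n|^7)\bigr).
\]
If $\lambda n^{-1/2}W_n$ is left in your remainder, that remainder is $O(n^{-1/2})$ and the $n^{-3/4}$ rate is lost.

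Step~2 also contains a sign slip. Even with your own moments, the antisymmetry identity gives $+W_n^2f'-\tfrac{2}{15}W_n^5f$, not the reverse. With the correct moments it gives
\[
\mathbb E[h(W_n)]-\mathbb E[h(Z_4)]=n^{-1/2}\,\mathbb E\bigl[W_n^2f'(W_n)+\bigl(W_n-\tfrac{2}{15}W_n^5\bigr)f(W_n)\bigr]+O(n^{-3/4}).
\]

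This is why your Step~4 does not return the stated $\mathrm e_1$. Up to sign and centring, your polynomial is $x^2-\tfrac{x^6}{30}$, which is not proportional to $\tfrac{x^2}{2}-\tfrac{x^6}{30}$. No careful tracking of constants can fix that. The missing piece is $\mathbb E[Z_4f(Z_4)]$. By the Stein integration-by-parts formula with $\psi=-x^2/2$, it equals $-\tfrac12\mathbb E[Z_4^2\{h(Z_4)-\mathbb E h(Z_4)\}]$, and adding it turns $x^2$ into $\tfrac{x^2}{2}$.

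Once the functional is corrected, your Steps~3 and~4 are essentially the paper's argument:
\begin{itemize}
\item The transfer $L_n(h)=L(h)+O(n^{-1/4})$ uses a cut-off, the first-order $O(n^{-1/2})$ Wasserstein bound on the truncated Lipschitz part, and moment tails outside.
\item The identification of $\mathrm e_1$ uses the Stein equation plus the Stein identity applied to $Af$ with $A=\tfrac{x^2}{2}+\tfrac{x^6}{30}$. This is equivalent to your two integrations by parts.
\end{itemize}
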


\begin{proof}
We first consider \(C^2\) functions satisfying \(\|h'\|\leq1\), and
write \(f_h=f_{p_4,1,h}\). The exchangeable-pair identity
\cite[equation~(5.3)]{Chatterjee_2011}, together with
Lemma~\ref{lem:CW-conditional-moment-expansions}, gives
\begin{equation}
\label{eq:CW-Ln-expansion}
\mathbb E[h(W_n)]-\mathbb E[h(Z_4)]
=
n^{-1/2}L_n(h)+O(n^{-3/4}),
\end{equation}
uniformly in \(h\), where
\[
L_n(h)
=
\mathbb E\left[
W_n^2f_h'(W_n)
+
\left(
W_n-\frac{2}{15}W_n^5
\right)f_h(W_n)
\right].
\]
Here the quartic factors control \(f_h,f_h'\), and \(f_h''\); in
particular,
\(\|f_h''\|\leq c_{p_4,1}^{2,1}\|h'\|\).

Lemma~\ref{lem:CW-correction-transfer} shows that
\(L_n(h)=L(h)+O(n^{-1/4})\), where
\[
L(h)
=
\mathbb E\left[
Z_4^2f_h'(Z_4)
+
\left(
Z_4-\frac{2}{15}Z_4^5
\right)f_h(Z_4)
\right].
\]
Hence
\begin{equation}
\label{eq:CW-L-expansion}
\mathbb E[h(W_n)]-\mathbb E[h(Z_4)]
=
n^{-1/2}L(h)+O(n^{-3/4}).
\end{equation}

The Stein equation gives
\[
L(h)
=
\mathbb E\left[
Z_4^2\{h(Z_4)-\mathbb E[h(Z_4)]\}
+
\left(Z_4+\frac15Z_4^5\right)f_h(Z_4)
\right].
\]
Applying the quartic Stein identity to
\[
A(x)f_h(x),
\qquad
A(x)=\frac{x^2}{2}+\frac{x^6}{30},
\]
yields
\[
L(h)
=
\mathbb E\left[
\left(
\frac{Z_4^2}{2}-\frac{Z_4^6}{30}
\right)
\{h(Z_4)-\mathbb E[h(Z_4)]\}
\right]
=
\mathbb E[\mathrm e_1(Z_4)h(Z_4)].
\]
Substitution into \eqref{eq:CW-L-expansion} proves the claim for smooth
\(h\). Convolution approximation extends it to all \(1\)-Lipschitz
functions.
\end{proof}

\begin{figure}[htbp]
\centering
\includegraphics[width=\textwidth]{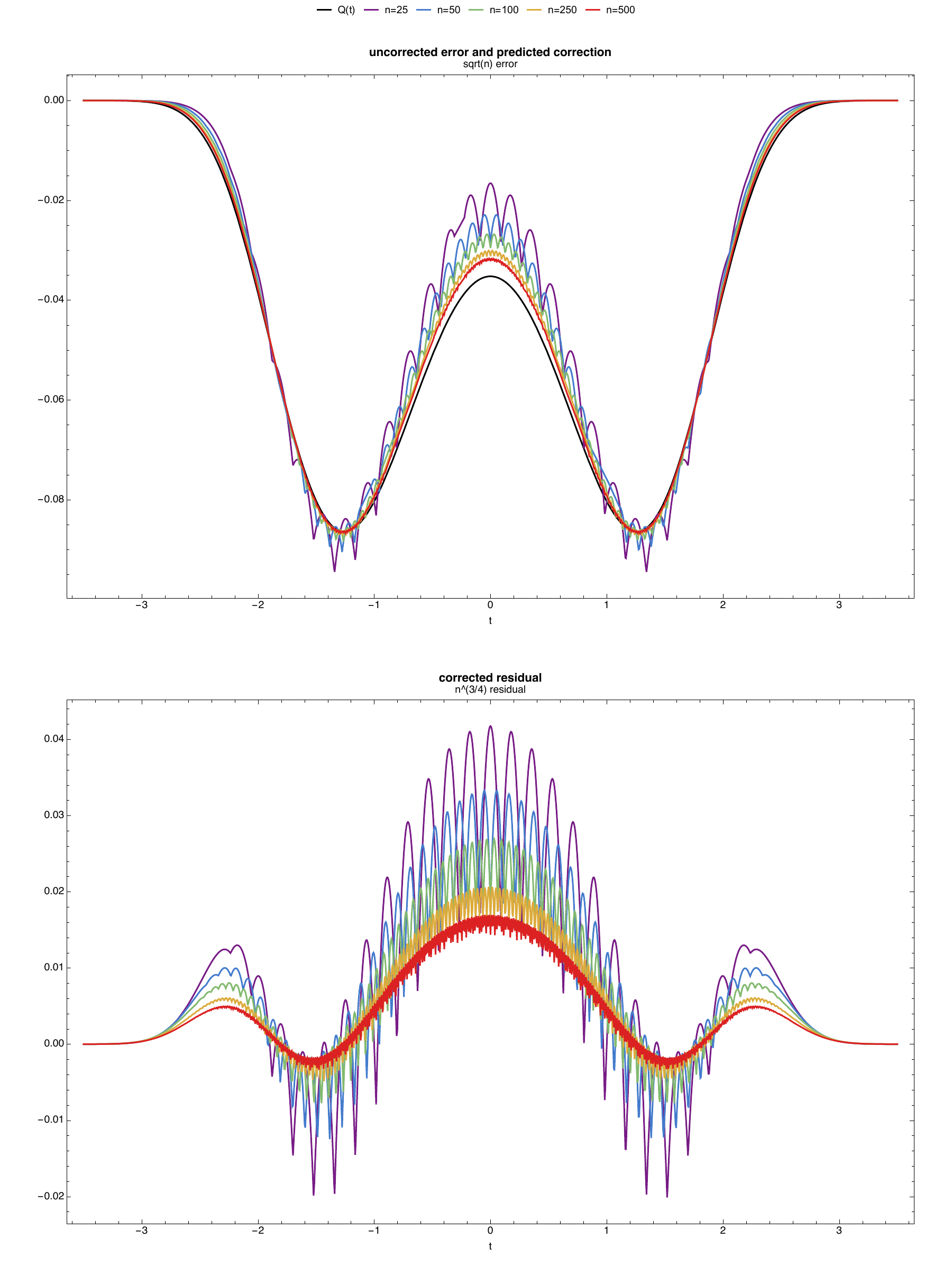}
\caption{
Exact finite-\(n\) Curie--Weiss errors tested against
\(h_t(x)=|x-t|\). The top panel displays
\(\sqrt n\{\mathbb E|W_n-t|-\mathbb E|Z-t|\}\) together with the
predicted correction \(Q(t)=\mathbb E[\mathrm e_1(Z)|Z-t|]\). The bottom panel
shows the corrected residual
\(n^{3/4}\{\mathbb E|W_n-t|-\mathbb E|Z-t|-n^{-1/2}Q(t)\}\). The residuals in the bottom panels seems to converge to a non-zero limit, showing that the rate $O(n^{-3/4})$ is correct.
The computation uses exact Curie--Weiss magnetization probabilities
rather than Monte Carlo simulation.}
\label{fig:curie-weiss-edgeworth-lipschitz-probes}
\end{figure}

\section*{Acknowledgements}

We thank Robert Gaunt for helpful early discussions.

\paragraph{Use of generative AI.}
Generative AI tools were used to assist with language editing, organization, and \LaTeX{} preparation. They were also used throughout the drafting process to support certain symbolic computations and numerical evaluations, and to assist in writing the \texttt{Mathematica} code used to produce several figures. Figure~\ref{fig:curie-weiss-edgeworth-lipschitz-probes} is the only figure generated entirely using an AI tool. All mathematical statements, proofs, computations, numerical results, and references were independently checked by the authors, who take full responsibility for the content of the paper.

\printbibliography

\appendix

\section{Proofs from Section~\ref{sec:integrated-pearson}}

\subsection{Proof of the representation of the integrated Pearson Stein solution}\label{sec:proofIPrepresentation}
\begin{proof}[Proof of Lemma~\ref{lem:admissible}]
Since $P_{k+1}(x)=\mathbb{E}[(x-Z)_+^{k}]/k!$ and
$\overline P_{k+1}(x)=\mathbb{E}[(Z-x)_+^{k}]/k!$, (ii) and (iii) are equivalent, and
the value of $x$ is immaterial. For (i)$\Leftrightarrow$(ii), note first that
$q_j=j(1-(j-1)\kappa_2)>0$ for all $j$ when $\kappa_2\le0$. If $\kappa_2<0$ then
$\tau_p$ is a concave quadratic, nonnegative only on a bounded interval, so $I_p$
is bounded and all moments are finite. If $\kappa_2=0$, then $(\log\tau_pp)'=(\mu-x)/\tau_p$
with $\tau_p$ affine, giving exponential (or Gaussian) tails, and again all moments
are finite. If $\kappa_2>0$, then $(\mu-x)/\tau_p(x)=-1/(\kappa_2x)+O(x^{-2})$ as
$|x|\to\infty$, whence $\tau_p(x)p(x)=|x|^{-1/\kappa_2+o(1)}$ and
$p(x)=|x|^{-1/\kappa_2-2+o(1)}$; therefore $\mathbb{E}|Z|^{r}<\infty$ if and only if
$r<1/\kappa_2+1$, which is exactly $q_j>0$ for $j\le r$.

For the boundary statement, assume $l=-\infty$ (the case $l$ finite is immediate, using
$\tau_pp\to0$ at the endpoints, which follows from $\tau_pp(x)=\int_l^x(\mu-t)p(t)\,dt$
and $\mathbb{E}|Z|<\infty$). Then $\lambda_j(x)=O(|x|)$ and
\[
|x|P_j(x)\le\frac{\mathbb{E}\big[|Z|^{j}\mathbf 1_{\{Z<x\}}\big]}{(j-1)!},
\qquad
\tau_p(x)P_{j-1}(x)=O\!\left(\frac{\mathbb{E}\big[|Z|^{j}\mathbf 1_{\{Z<x\}}\big]}{(j-2)!}\right),
\]
both of which tend to $0$ as $x\downarrow-\infty$ by dominated convergence, since
$\mathbb{E}|Z|^{j}\le\mathbb{E}|Z|^{k}<\infty$.
\end{proof}

\begin{proof}[Proof of Theorem~\ref{thm:integrated-pearson-kernel-representations}]
Recall the recursion \eqref{eq:M-derivative-recursion},
\begin{equation}\label{eq:repeated-M-derivative-recursion}
    (M^{i,j}_{p,w})' = M^{i+1,j}_{p,w}-M^{i,j-1}_{p,w}. 
\end{equation}
We first prove
\eqref{eq:IP-representation-n-n-1}. The initial identities are
\(M_{p,w}^{0,-1}=M_{p,w}^{1,0}=-1/w\). Suppose inductively that
\(M_{p,w}^{k,k-1}=-1/w+C_{k,w}\), where \(C_{k,w}\) is constant. The
integrated-tail recursions of \ref{lem:integrated-pearson-tail-recursions} and
\eqref{eq:repeated-M-derivative-recursion} give
\[
 q_k M_{p,w}^{k,k}
 =\lambda_k M_{p,w}^{k,k-1}+\tau_p M_{p,w}^{k,k-2}
\]
and, after differentiation,
\[
 q_k\bigl(M_{p,w}^{k,k}\bigr)'
 =-k\left(\frac{\tau_p}{w}\right)''
  -kC_{k,w}\tau_p''+C_{k,w}(\tau_p\rho_p)'.
\]
Since \((\tau_pp)'=(\mu-x)p\), we have
\(\tau_p\rho_p=\mu-x-\tau_p'\), and therefore
\((\tau_p\rho_p)'=-1-\tau_p''\). Thus the right-hand side is
constant whenever \((\tau_p/w)''\) is constant. Using
\[
 M_{p,w}^{k+1,k}
 =\bigl(M_{p,w}^{k,k}\bigr)'+M_{p,w}^{k,k-1}
\]
completes the induction and shows that
\((M_{p,w}^{n,n-1})'=-(1/w)'\). Substitution in
\eqref{eq:general-lower-neighbour-representation} proves
\eqref{eq:IP-representation-n-n-1}.

If \((\tau_p/w)'\) is constant, then
\((\tau_p/w)''=0\), and the same induction gives
\(C_{n,w}=0\) for every \(n\). Hence
\(M_{p,w}^{n,n-1}=-1/w\), which makes all correction terms in
\eqref{eq:general-diagonal-representation} vanish and proves
\eqref{eq:IP-representation-n-n}. A further use of the integrated-tail
recursion gives
\[
 q_nM_{p,w}^{n,n}
 =-n\left(\frac{\tau_p}{w}\right)'.
\]
Applying \eqref{eq:SIBP-for-kernels} to the diagonal representation
therefore proves \eqref{eq:IP-representation-n-n+1-correction}. If
\(\tau_p/w\) is constant, this last correction is zero and
\eqref{eq:IP-representation-n-n+1} follows.
\end{proof}

The coefficient identities used in the proof of
Theorem~\ref{thm:integrated-pearson-kernel-representations} are collected
in the following proposition.
\begin{proposition}\label{prop:summary-M-tau-over-w}
Let \(p\sim\mathrm{IP}\), let \(n\geq1\) be admissible. If \((\tau_p/w)''\) is constant, then
\[
 M_{p,w}^{n,n-1}(x)=-\frac1{w(x)}+C_{n,w},
 \qquad
 \bigl(M_{p,w}^{n,n-1}\bigr)'(x)=-\left(\frac1w\right)'(x),
\]
where \(C_{1,w}=0\) and
\[
 C_{n+1,w}=C_{n,w}+\frac1{q_n}
 \left[
 -n\left(\frac{\tau_p}{w}\right)''
 -nC_{n,w}\tau_p''+C_{n,w}(-1-\tau_p'')
 \right].
\]
If \((\tau_p/w)'\) is constant, then
\[
 M_{p,w}^{n,n-1}(x)=-\frac1{w(x)},
 \qquad
 M_{p,w}^{n,n}(x)
 =-\frac{n}{q_n}\left(\frac{\tau_p}{w}\right)'.
\]
In particular, if \(w=\tau_p\) , then
\[
 M_{p,\tau_p}^{n,n-1}=-\frac1{\tau_p},
 \qquad
 M_{p,\tau_p}^{n,n}=0.
\]
\end{proposition}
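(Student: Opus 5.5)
The plan is to prove the three groups of identities in the order stated, each by induction on $n$. The two ingredients are the derivative recursion \eqref{eq:M-derivative-recursion}, $(M_{p,w}^{i,j})'=M_{p,w}^{i+1,j}-M_{p,w}^{i,j-1}$, and the integrated-tail recursions of Lemma~\ref{lem:integrated-pearson-tail-recursions}.

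\emph{Step 1: a linear relation among the $M$'s.} Multiply the left recursion $q_k\underline P_{k+1}=-\lambda_k\underline P_k+\tau_p\underline P_{k-1}$ by $(-1)^k\overline m_{p,w}^{(k)}$, multiply the right recursion by $-\underline m_{p,w}^{(k)}$, and add. Using the sign convention in \eqref{eq:def-M}, this should give
\[
q_kM_{p,w}^{k,k}=\lambda_kM_{p,w}^{k,k-1}+\tau_pM_{p,w}^{k,k-2}.
\]
Both tails carry the same coefficients $\lambda_k$ and $\tau_p$, and the signs $(-1)^k$ versus $(-1)^{k-1}$ absorb the minus sign on $\lambda_k$. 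This relation holds for every weight $w$; only the admissibility of $k$ is used, so that all the tails are finite.

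\emph{Step 2: induction for $M^{n,n-1}$.} Assume $(\tau_p/w)''$ is constant, and suppose $M_{p,w}^{k,k-1}=-1/w+C_{k,w}$ with $C_{k,w}$ constant. The base case $k=1$ is Example~\ref{ex:low-order-kernel-means}, with $C_{1,w}=0$. The recursion gives $M_{p,w}^{k,k-2}=M_{p,w}^{k-1,k-2}+(M_{p,w}^{k-1,k-1})'$. I would rather eliminate $M^{k,k-2}$ by also carrying the Step~1 relation at the previous level as part of the induction hypothesis, i.e. a closed form for $M_{p,w}^{k,k}$. Substituting $\lambda_k=(\mu-x)+(k-1)\tau_p'$ and $\tau_p\rho_p=\mu-x-\tau_p'$, differentiating, and using that $\tau_p''$ is constant, one should obtain
\[
q_k(M_{p,w}^{k,k})'=-k(\tau_p/w)''-kC_{k,w}\tau_p''+C_{k,w}(-1-\tau_p''),
\]
which is a constant. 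Then $M_{p,w}^{k+1,k}=(M_{p,w}^{k,k})'+M_{p,w}^{k,k-1}$ is again of the form $-1/w$ plus a constant, and this yields the stated recursion for $C_{k+1,w}$. Differentiating shows $(M_{p,w}^{n,n-1})'=-(1/w)'$.

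\emph{Step 3: the case $(\tau_p/w)'$ constant.} Here $(\tau_p/w)''=0$, so the recursion for $C$ has zero forcing term and $C_{1,w}=0$; hence $C_{n,w}=0$ for all $n$. Plugging $M^{k,k-1}=-1/w$ and $M^{k,k-2}$ (computed from the same formulas) into the Step~1 relation, the $w$-dependence should collapse to $q_nM_{p,w}^{n,n}=-n(\tau_p/w)'$. The specialization $w=\tau_p$ then gives $M^{n,n}=0$ immediately.

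\emph{Main obstacle.} The hardest part is the bookkeeping in Step~2: expressing $M^{k,k-2}$ in closed form so that the differentiated Step~1 relation really reduces to a constant. I would first verify the $k=1,2$ cases by hand using $M^{0,1}=-\tau_p/w$ and $M^{1,1}=-(\tau_p/w)'$ to fix the pattern, namely $M^{k,k-2}=-(\text{polynomial in }x)\cdot(\tau_p/w)^{(\cdot)}$ up to constants. I would then strengthen the induction hypothesis to include an explicit formula for $M^{k,k}$.
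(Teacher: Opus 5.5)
Your Step~1 relation $q_kM^{k,k}=\lambda_kM^{k,k-1}+\tau_pM^{k,k-2}$ (writing $M^{i,j}=M^{i,j}_{p,w}$) is correct. Your overall route, induction on $k$ driven by this relation and \eqref{eq:M-derivative-recursion}, is the paper's.

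\textbf{The gap.} The identity you say one ``should obtain'' in Step~2, $q_k(M^{k,k})'=-k(\tau_p/w)''-kC_{k,w}\tau_p''+C_{k,w}(-1-\tau_p'')$, is false as soon as $\tau_p''\neq0$ and $C_{k,w}\neq0$. Consequently the displayed recursion for $C_{n+1,w}$ cannot be proved. The paper's appendix argument asserts the same formula and contains the same slip.

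\emph{Counterexample.} Take $Z$ uniform on $(0,1)$ and $w\equiv1$. Then $\tau_p(x)=x(1-x)/2$, $\tau_p''=(\tau_p/w)''=-1$ and $q_2=3$. Since $\underline m_{p,1}(x)=x$ and $\overline m_{p,1}(x)=1-x$ have vanishing derivatives of order $\geq2$, we get $M^{2,1}=M^{3,2}=0$, i.e.\ $C_{2,w}=C_{3,w}=1$. The stated recursion instead gives $C_{3,w}=1+\tfrac13(2+2+0)=\tfrac73$.

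\emph{Indexing error.} Your formula $M^{k,k-2}=M^{k-1,k-2}+(M^{k-1,k-1})'$ is misindexed: by \eqref{eq:M-derivative-recursion} its right-hand side equals $M^{k,k-1}$. The correct identity is $M^{k,k-2}=(M^{k-1,k-2})'+M^{k-1,k-3}$.

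\textbf{The repair.} Your fallback of strengthening the induction hypothesis does close, and it produces the correct sign. Put $\theta=\tau_p/w$ and carry two statements: $M^{k-1,k-2}=-1/w+C_{k-1,w}$ and $q_{k-1}M^{k-1,k-1}=C_{k-1,w}\lambda_{k-1}-(k-1)\theta'$. The base case $k-1=1$ is Example~\ref{ex:low-order-kernel-means}.
\begin{itemize}
\item Since $(M^{k-1,k-1})'=(C_{k-1,w}\lambda_{k-1}'-(k-1)\theta'')/q_{k-1}$ is constant, $M^{k,k-1}=(M^{k-1,k-1})'+M^{k-1,k-2}=-1/w+C_{k,w}$ with $C_{k,w}$ constant.
\item Step~1 at level $k-1$ gives $\tau_pM^{k-1,k-3}=\lambda_{k-1}/w-(k-1)\theta'$. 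Hence $\tau_pM^{k,k-2}=-\tau_p(1/w)'+\lambda_{k-1}/w-(k-1)\theta'$.
\item Insert this into Step~1 at level $k$ and use $\lambda_k-\lambda_{k-1}=\tau_p'$. This yields $q_kM^{k,k}=C_{k,w}\lambda_k-k\theta'$.
\end{itemize}
Therefore $q_k(M^{k,k})'=-k\theta''+C_{k,w}\lambda_k'$ with $\lambda_k'=-1+(k-1)\tau_p''$. The middle term is $+kC_{k,w}\tau_p''$, not $-kC_{k,w}\tau_p''$.

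The correct recursion is $q_kC_{k+1,w}=q_{k-1}C_{k,w}-k\theta''$ (with $q_0=0$). Equivalently, $C_{n,w}=-\frac{n(n-1)}{2q_{n-1}}(\tau_p/w)''$ for $n\geq2$. The two versions differ as soon as $\kappa_2\neq0$ and $(\tau_p/w)''\neq0$.

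Everything else in the statement survives. $C_{n,w}$ is constant and $(M^{n,n-1})'=-(1/w)'$. If $\theta''=0$, then $C_{n,w}=0$ and $q_nM^{n,n}=-n\theta'$, which vanishes for $w=\tau_p$. This last computation is also what justifies your Step~3 claim that the $w$-dependence ``collapses''.
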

\subsection{Proofs from the Integrated Pearson examples}
\label{sn:proofformtheex}

\subsubsection{Example~\ref{ex:expon}: Exponential}\label{app:exponential-representations-simplifications}
\begin{lemma}\label{lem:exp-Mn-Un}
Let $p(x)$ be the exponential density with scale $\beta$, and take $w\equiv 1$. Then for every $n\ge 1$ and $j\ge 1$,
\[
M^{n,j}_{p,1}(x) = -\beta^{\,j-n+1}, \qquad\qquad U^{n,j}_{p,1}(x) = \beta^{\,j-n+1},
\]
for all $x\in(0,\infty)$; in particular both quantities are constant in $x$.
\end{lemma}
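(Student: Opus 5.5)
The plan is a direct computation from the definitions \eqref{eq:def-M} and \eqref{eq:general-absolute-kernel-envelope}. The key point is that for $w\equiv1$ the upper Mills ratio is constant, so every term involving $\underline P_{j+1}$ drops out once $n\geq1$.

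First, compute the weighted Mills ratios on $(0,\infty)$. We have $P(x)=1-e^{-x/\beta}$, $\overline P(x)=e^{-x/\beta}$ and $p(x)=\beta^{-1}e^{-x/\beta}$. Hence
\[
\overline m_{p,1}(x)=\frac{\overline P(x)}{p(x)}=\beta,
\qquad
\underline m_{p,1}(x)=\frac{P(x)}{p(x)}=\beta\bigl(e^{x/\beta}-1\bigr).
\]
Differentiating $n\geq1$ times gives
\[
\overline m_{p,1}^{(n)}\equiv0,
\qquad
\underline m_{p,1}^{(n)}(x)=\beta^{1-n}e^{x/\beta}.
\]
The second formula is positive, so no absolute values are needed later.

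Second, compute the iterated upper tails. By memorylessness, $\mathbb E[(Z-x)_+^{j}]=\mathbb E[Z^j]\,e^{-x/\beta}=j!\,\beta^j e^{-x/\beta}$. Equivalently, a one-line induction integrates $\overline P_{k}(t)=\beta^{k-1}e^{-t/\beta}$ over $(x,\infty)$. Either way,
\[
\overline P_{j+1}(x)=\beta^{j}e^{-x/\beta},\qquad j\geq0 .
\]
The lower tails $\underline P_{j+1}(x)$ are finite, since all moments exist, but they will not be needed.

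Third, substitute into the definitions. Since $\overline m_{p,1}^{(n)}=0$ for $n\geq1$,
\[
M^{n,j}_{p,1}(x)=-\underline m_{p,1}^{(n)}(x)\,\overline P_{j+1}(x)=-\beta^{1-n}e^{x/\beta}\,\beta^{j}e^{-x/\beta}=-\beta^{\,j-n+1},
\]
and likewise
\[
U^{n,j}_{p,1}(x)=|\underline m_{p,1}^{(n)}(x)|\,\overline P_{j+1}(x)=\beta^{\,j-n+1}.
\]
Both quantities are constant in $x$, as claimed. The same computation is valid for every $j\geq0$, and even for $j=-1$ with $\overline P_0=p$. It is therefore consistent with the value $M^{n,n-1}_{p,1}=-1$ used in Example~\ref{ex:expon}, and with $M^{1,0}_{p,1}=-1/w=-1$ from Example~\ref{ex:low-order-kernel-means}.

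There is no real obstacle here. The only points needing care are the closed form of $\overline P_{j+1}$, where the factorials cancel correctly, and the observation that the constant upper Mills ratio kills the $\underline P_{j+1}$ contribution for $n\geq1$.
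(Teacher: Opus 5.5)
Your proof is correct and matches the paper's argument essentially line by line. Both note that $\overline m_{p,1}\equiv\beta$ is constant, so the $\underline P_{j+1}$ terms vanish for $n\ge1$. Both then use $\underline m_{p,1}^{(n)}=\beta^{1-n}e^{x/\beta}$ and memorylessness, $\overline P_{j+1}=\beta^{j}e^{-x/\beta}$, so that the exponential factors cancel. Your observation that the computation holds for every $j\ge-1$ is a useful addition: it covers the value $M^{1,0}_{p,1}=-1$ used in Example~\ref{ex:expon}, which lies outside the lemma's stated range $j\ge1$.
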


\begin{proof}
Since
\(\overline P(x)=e^{-x/\beta}\) and
\(\overline m_{p,1}(x)=\overline P(x)/p(x)=\beta\), we have
\(\overline m_{p,1}^{(i)}\equiv0\) for every \(i\geq1\).
Consequently, for \(n\geq1\), the first terms in
\eqref{eq:def-M} and \eqref{eq:general-absolute-kernel-envelope} vanish,
and
\[
M^{n,j}_{p,1}(x) = -\underline m_{p,1}^{(n)}(x)\,\overline P_{j+1}(x), \qquad U^{n,j}_{p,1}(x) = \underline m_{p,1}^{(n)}(x)\,\overline P_{j+1}(x),
\]
where we used $\underline m_{p,1}^{(n)}>0$ to drop the absolute value in the second identity. A direct computation from $\underline m_{p,1}(x)=\beta(e^{x/\beta}-1)$ gives $\underline m_{p,1}^{(n)}(x)=\beta^{1-n}e^{x/\beta}$ for $n\ge1$, while memorylessness of the exponential distribution yields $\overline P_{j+1}(x)=\beta^{j}e^{-x/\beta}$ for $j\ge1$. Substituting these two expressions into the displayed identities, the factors $e^{\pm x/\beta}$ cancel exactly, giving the stated formulas.
\end{proof}

\begin{remark}
For \(n\geq2\), taking $j=n-1,n,n+1$ recovers, respectively,
\[
U^{n,n-1}_{p,1}=1, \qquad M^{n,n-1}_{p,1}=-1,\ \ U^{n,n}_{p,1}=\beta, \qquad M^{n,n}_{p,1}=-\beta,\ \ U^{n,n+1}_{p,1}=\beta^2.
\]
Thus the diagonal correction terms of Theorem~\ref{thm:kernel-representations-of-derivatives} vanish, but the second identity in
\eqref{eq:closed-three-neighbour-condition} fails because
\(M^{n,n}_{p,1}=-\beta\neq0\). 
\end{remark}
\subsubsection{Example~\ref{ex:gamma}: Gamma}
\label{app:gamma-diagonal-factor}

We prove the formula used in Example~\ref{ex:gamma}. By scaling, it suffices to consider the case \(\beta=1\). Let \(Z\) have a Gamma distribution with
shape parameter \(\alpha>0\), scale parameter \(1\), and density
\[
 p(x)=\frac{x^{\alpha-1}e^{-x}}{\Gamma(\alpha)},\qquad \tau(x) =x,
 \qquad x>0.
\]
The upper and lower truncated moments are explicit:
\[
\mathbb E[(x-Z)_+^n]
 =
 \frac1{\Gamma(\alpha)}
 \sum_{j=0}^n
 (-1)^j\binom nj x^{n-j}\gamma(\alpha+j,x),
\]
and
\[
 \mathbb E[(Z-x)_+^n]
 =
 \frac1{\Gamma(\alpha)}
 \sum_{j=0}^n
 (-1)^{n-j}\binom nj x^{n-j}\Gamma(\alpha+j,x),
\]
where $\gamma(s,t)$ and $\Gamma(s,t)$ are the lower and upper incomplete Gamma function respectively.
Thus the diagonal factor is always the explicit one-dimensional quantity
\[
 c^{n,n}_{p,\tau}
 :=
 \sup_{x>0}U^{n,n}_{p,\tau_p}(x)=\sup_{x>0} \frac{2}{n!}
 \frac{\mathbb E[(x-Z)_+^n]\mathbb E[(Z-x)_+^n]}
      {p(x)x^{n+1}}.
\]

We now identify the supremum when \(0<\alpha\le n+1\). Differentiating
\(\log U_p^n\), using
\begin{align*}
\bigl(\mathbb E[(x-Z)_+^n]\bigr)'
&=n\mathbb E[(x-Z)_+^{n-1}],\\
\bigl(\mathbb E[(Z-x)_+^n]\bigr)'
&=-n\mathbb E[(Z-x)_+^{n-1}],\\
\frac{p'(x)}{p(x)}
&=\frac{\alpha-1}{x}-1.
\end{align*}
gives
\[
 \frac{d}{dx}\log U_p^n(x)
 =\left(
 n\frac{\mathbb E[(x-Z)_+^{n-1}]}{\mathbb E[(x-Z)_+^n]}
 -
 n\frac{\mathbb E[(Z-x)_+^{n-1}]}{\mathbb E[(Z-x)_+^n]}
 +1-\frac{\alpha+n}{x}\right).
\]
Set
\(A_k(x)=\mathbb E[(x-Z)_+^k]\) and
\(B_k(x)=\mathbb E[(Z-x)_+^k]\). The Gamma Stein identity gives,
for $k\geq1$,
\[
A_{k+1}=(x-\alpha-k)A_k+kxA_{k-1},
\qquad
B_{k+1}=(\alpha+k-x)B_k+kxB_{k-1}.
\]
Successive substitution in these two recursions shows, by induction on
$n$, that for $0<\alpha\leq n+1$,
\[
nxA_{n-1}B_{n}-nxA_{n}B_{n-1}
+(x-\alpha-n)A_{n}B_{n}\leq0.
\]
After division by the positive quantity $xA_{n}B_{n}$, this is exactly
the assertion that the preceding logarithmic derivative is
non-positive. Hence
\(U_p^n\) is non-increasing and its supremum is attained
at \(x\to0\).

It remains to compute this endpoint value. As \(x\downarrow0\),
\[
 \mathbb E[(x-Z)_+^n]
 \sim
 \frac{n!\,x^{\alpha+n}}{\Gamma(\alpha+n+1)},
\]
while
\[
 \mathbb E[(Z-x)_+^n]\longrightarrow \mathbb E[Z^n]
 =
 \frac{\Gamma(\alpha+n)}{\Gamma(\alpha)}.
\]
Also
\[
 p(x)x^{n+1}
 \sim
 \frac{x^{\alpha+n}}{\Gamma(\alpha)}.
\]
Therefore
\[
 \lim_{x\downarrow0}U_p^n(x)
 =
 \frac{2}{n!}
 \frac{
  n!\Gamma(\alpha+n)
 }{
  \Gamma(\alpha+n+1)
 }
 =
 \frac{2}{\alpha+n}.
\]
Rescaling back to general \(\beta\) gives
\[
 c^{n,n}_{p,\tau}=\frac{2}{\beta(\alpha+n)},
\]
which proves \eqref{eq:gamma-diagonal-factor}.

We next prove the monotonicity condition needed to apply
Proposition~\ref{prop:all-order-weighted-factor} to the Gamma distribution when
\(\alpha\geq1\).
\begin{lemma}\label{lem:gamma-weighted-monotonicity}
Let \(p\) be the Gamma density with shape parameter \(\alpha\geq1\) and
scale parameter \(\beta>0\). For every \(n\in\mathbb N\),
\begin{equation}
(-1)^n(\tau\overline{m}_\tau^{(n)})'\leq0,
\qquad
(\tau\underline m_\tau^{(n)})'\geq0.
\end{equation}
\end{lemma}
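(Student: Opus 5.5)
We will reduce both inequalities to statements about the iterated tails $\underline P_k,\overline P_k$ through Proposition~\ref{prop:equivalence-m-tau-P}. By scaling it suffices to take $\beta=1$, so that $\tau(x)=x$, $\tau'=1$, $\kappa_2=0$ and $q_i=i$. Proposition~\ref{prop:equivalence-m-tau-P} then gives, for every $k\ge0$,
\[
\tau\,\underline m_\tau^{(k)}=\frac{k!\,\underline P_{k+1}}{p\,\tau^{k}},\qquad
\tau\,\overline m_\tau^{(k)}=(-1)^k\frac{k!\,\overline P_{k+1}}{p\,\tau^{k}}.
\]
Instead of differentiating these quotients directly, we will use the Leibniz rule $(\tau m^{(n)})'=\tau' m^{(n)}+\tau m^{(n+1)}=m^{(n)}+\tau m^{(n+1)}$ and insert the identities at orders $n$ and $n+1$.

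\emph{Lower Mills ratio.} This gives
\[
(\tau\underline m_\tau^{(n)})'=\frac{n!\,\underline P_{n+1}+(n+1)!\,\underline P_{n+2}}{p\,\tau^{n+1}}\ge0 .
\]
The right-hand side is a sum of nonnegative terms, so this inequality holds for every $\alpha>0$.

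\emph{Upper Mills ratio.} In the same way,
\[
(-1)^n(\tau\overline m_\tau^{(n)})'=\frac{n!\,\overline P_{n+1}-(n+1)!\,\overline P_{n+2}}{p\,\tau^{n+1}} .
\]
So the claim is equivalent to $g_{n+1}\ge0$, where we define $g_k:=k\,\overline P_{k+1}-\overline P_k$ for $k\ge1$. We will prove $g_k\ge0$ for all $k\ge1$ by induction on $k$.

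\emph{Derivative relation.} Since $\overline P_j'=-\overline P_{j-1}$, we have
\[
g_{k+1}'=-(k+1)\overline P_{k+1}+\overline P_k=-g_k .
\]
Hence if $g_k\ge0$, then $g_{k+1}$ is non-increasing on $(0,\infty)$.

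\emph{Limit at infinity.} The function $g_{k+1}$ tends to $0$ as $x\to\infty$, because all moments of the Gamma law are finite and hence $\overline P_j(x)=\mathbb E[(Z-x)_+^{j-1}]/(j-1)!\to0$ by dominated convergence. Together with monotonicity, this gives $g_{k+1}\ge0$. Alternatively, the recursion of Lemma~\ref{lem:integrated-pearson-tail-recursions} at order $k+1$ rewrites $g_{k+1}=(\alpha+k-1-x)\overline P_{k+1}+x\overline P_k$, which is manifestly nonnegative for $x\le\alpha+k-1$. This can serve as a cross-check.

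\emph{Base case (main obstacle).} The base case is $g_1=\overline P_2-\overline P_1=\mathbb E[(Z-x-1)\mathbf 1_{\{Z>x\}}]\ge0$, that is, the mean residual life $\mathbb E[Z-x\mid Z>x]$ is at least $1=\beta$. This is the only step that uses $\alpha\ge1$, and it is the one I expect to need the most care. I would argue through the conditional density of $Z-x$ given $Z>x$, which is proportional to
\[
\frac{p(x+t)}{p(x)}=\Bigl(1+\frac tx\Bigr)^{\alpha-1}e^{-t}.
\]
For $\alpha\ge1$ its likelihood ratio against the $\mathrm{Exp}(1)$ density $e^{-t}$ is non-decreasing in $t$. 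The monotone likelihood ratio order implies the usual stochastic order, so the residual dominates an $\mathrm{Exp}(1)$ variable and its mean is at least $1$. This also shows why the condition $\alpha\ge1$ is needed: for $\alpha<1$ the ratio is decreasing and the mean residual life is below $\beta$.

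Undoing the scaling ($\tau=\beta x$, $q_i=i$) multiplies the two terms in each numerator by the same power of $\beta$ relative to one another, so the signs are unchanged. This gives the lemma for general $\beta>0$.
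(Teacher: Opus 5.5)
Your argument is correct apart from one identity. From $g_{k+1}=(k+1)\overline P_{k+2}-\overline P_{k+1}$ you get
\[
g_{k+1}'=\overline P_k-(k+1)\overline P_{k+1}=-g_k-\overline P_{k+1},
\]
not $-g_k$. The extra term has the right sign, so $g_k\ge0$ still gives $g_{k+1}'\le0$. Together with $g_{k+1}(x)\to0$ as $x\to\infty$ (equivalently, $g_{k+1}(x)=\overline P_{k+2}(x)+\int_x^\infty g_k$), the induction goes through unchanged. The exact relation you wrote does hold for $\tilde g_k:=\overline P_{k+1}-\overline P_k$. Running your argument on $\tilde g_k$, from the same base case $\tilde g_1=g_1\ge0$, gives the stronger bound $\overline P_{k+1}\ge\overline P_k$ for all $k\ge1$.

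Two parts coincide with the paper: the reduction via Proposition~\ref{prop:equivalence-m-tau-P} and the Leibniz rule, and the lower Mills ratio. For the upper tail, the paper proves exactly that stronger chain, $\overline P_m\ge\beta\overline P_{m-1}$ for all $m\ge1$. It uses a joint induction on two statements, ``$e^{x/\beta}\overline P_m$ is non-decreasing'' and ``$\overline P_m\ge\beta\overline P_{m-1}$''. The induction starts from the fact that $e^{x/\beta}p\propto x^{\alpha-1}$ is non-decreasing, and the factor $n+1$ in $(n+1)\overline P_{n+2}\ge\beta\overline P_{n+1}$ is then simply discarded.

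Your base case is the same fact in probabilistic form: the residual life has monotone likelihood ratio against $\mathrm{Exp}(\beta)$, so its mean is at least $\beta$. In both proofs $\alpha\ge1$ enters only there.

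What each route gives: the paper's exponential-tilt induction yields an $n$-free inequality and the monotonicity of every tilted tail $e^{x/\beta}\overline P_m$. Your integrate-from-infinity induction targets precisely the needed inequality $k\overline P_{k+1}\ge\overline P_k$. The small extra cost is that it uses the vanishing of the iterated tails at $+\infty$, which follows from the finite Gamma moments.
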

\begin{proof}
Proposition~\ref{prop:equivalence-m-tau-P} and
Lemma~\ref{lem:integrated-pearson-tail-recursions} reduce the first
inequality to
\[
\sgn \left((-1)^n\big(\tau\,\overline{m}_\tau^{(n)}\big)'  \right) =\sgn\left( \beta\,\overline{P}_{n+1} - (n+1)\,\overline{P}_{n+2}\right),
\]
so it suffices to establish the stronger bound
\(\overline{P}_m(x)\geq\beta\,\overline{P}_{m-1}(x)\) for every \(m\geq1\).
We prove this by joint induction on \(m\), using the statements
\[
(A_m):\ x\mapsto \overline{P}_m(x)e^{x/\beta}\ \text{is non-decreasing on } (0,\infty),
\qquad
(B_m):\overline{P}_m(x) \ge \beta\,\overline{P}_{m-1}(x).
\]
For the base case \((A_0)\),
\(p(x)e^{x/\beta}=x^{\alpha-1}/(\Gamma(\alpha)\beta^\alpha)\),
which is non-decreasing because \(\alpha\geq1\). Now
\((A_{m-1}) \Rightarrow (B_m)\): by definition and \((A_{m-1})\),
\begin{align*}
\overline{P}_m(x)
&= \int_x^\infty \overline{P}_{m-1}(t)\,dt\\
&= \int_x^\infty
\big[\overline{P}_{m-1}(t)e^{t/\beta}\big]e^{-t/\beta}\,dt\\
&\geq
\overline{P}_{m-1}(x)e^{x/\beta}
\int_x^\infty e^{-t/\beta}\,dt
= \beta\overline{P}_{m-1}(x).
\end{align*}
Finally, $(B_m) \Rightarrow (A_m)$ as
\[
\big(\overline{P}_m(x)e^{ x/\beta}\big)' = \big(\overline{P}_m'(x) + \overline{P}_m(x)/\beta\big)e^{x/\beta}
= \big(\overline{P}_m(x)/\beta - \overline{P}_{m-1}(x)\big)e^{x/\beta} \ge 0
\]
by $(B_m)$. In particular $(B_{n+2})$ gives $\overline{P}_{n+2}(x)\ge \beta  \overline{P}_{n+1}(x)$ for every $n\ge 0$, which is exactly the required inequality to prove the claim.

For the second claim, \(\underline m_\tau\) is absolutely monotone,
while \(\tau(x)=\beta x\) and \(\tau'(x)=\beta\) are non-negative.
Hence
\[
(\tau\underline m_\tau^{(n)})'
=\tau'\underline m_\tau^{(n)}
+\tau\underline m_\tau^{(n+1)}
\geq0.
\]
\end{proof}
\subsubsection{Example~\ref{ex:beta}: Beta}
\label{app:beta-diagonal-factor}

We prove the claims about \(U^{n,n}_{p,\tau_p}\) for the Beta distribution stated in
Example~\ref{ex:beta}. Let \(Z\) have the Beta distribution with parameters
\(\alpha,\beta>0\) and density
\[
 p(x)=\frac{x^{\alpha-1}(1-x)^{\beta-1}}{B(\alpha,\beta)},
 \qquad
 \tau(x)=\frac{x(1-x)}{\alpha+\beta}.
\]
Here \(B(s,t)\) is the Euler beta function.
For \(n\geq1\),
\[
 U^{n,n}_{p,\tau_p}(x)
 =
 \frac2{n!}
 \prod_{j=0}^{n-1}\left(1+\frac j{\alpha+\beta}\right)
 \frac{
  \mathbb E[(x-Z)_+^n]\mathbb E[(Z-x)_+^n]
 }{
  p(x)\tau(x)^{n+1}
 },
\]
Expanding the powers gives the incomplete-beta representations
\[
 \mathbb E[(x-Z)_+^n]
 =
 \frac1{B(\alpha,\beta)}
 \sum_{j=0}^n
 (-1)^j\binom nj x^{n-j}B_x(\alpha+j,\beta),
\]
and
\[
 \mathbb E[(Z-x)_+^n]
 =
 \frac1{B(\alpha,\beta)}
 \sum_{j=0}^n
 (-1)^{n-j}\binom nj x^{n-j}
 \bigl\{B(\alpha+j,\beta)-B_x(\alpha+j,\beta)\bigr\},
\]
where $B_x(s,t)$ is the incomplete Euler Beta function. Consequently,
\(
 c^{n,n}_{p,\tau}:=\sup_{0<x<1}U_{p,n}(x)
\)
is an explicit one-dimensional supremum.

The endpoint limits follow by the following asymptotics. As \(x\downarrow0\),
\[
 \mathbb E[(x-Z)_+^n]
 \sim
 \frac{x^{\alpha+n}}{B(\alpha,\beta)}B(\alpha,n+1),
 \qquad
 \mathbb E[(Z-x)_+^n]\to\mathbb E[Z^n]
 =
 \frac{B(\alpha+n,\beta)}{B(\alpha,\beta)}.
\]
Since
\[
 p(x)\tau(x)^{n+1}
 \sim
 \frac{x^{\alpha+n}}{B(\alpha,\beta)(\alpha+\beta)^{n+1}},
\]
we obtain
\[
 \lim_{x\downarrow0}U^{n,n}_{p,\tau_p}(x)
 =
 \frac2{n!}
 \prod_{j=0}^{n-1}\left(1+\frac j{\alpha+\beta}\right)
 (\alpha+\beta)^{n+1}
 \frac{B(\alpha,n+1)B(\alpha+n,\beta)}
      {B(\alpha,\beta)}
 =
 \frac{2(\alpha+\beta)}{\alpha+n}.
\]
Similarly,
\[
 \lim_{x\uparrow1}U_{p,n}(x)=\frac{2(\alpha+\beta)}{\beta+n}.
\]
These two endpoint values are useful checks on the numerical
maximization, but they do not in general determine the supremum, since
\(U_{p,n}\) may have an interior maximizer.

\subsubsection{Example~\ref{ex:student}: Student}
\label{app:student-diagonal-factor}

We prove the formula used in Example~\ref{ex:student}. Let \(Z\) have the
Student distribution with \(\nu>1\) degrees of freedom,
\[
 p(x)
 =
 \frac{\Gamma((\nu+1)/2)}
      {\sqrt{\nu\pi}\,\Gamma(\nu/2)}
 \left(1+\frac{x^2}{\nu}\right)^{-(\nu+1)/2},
 \qquad
 \tau(x)=\frac{x^2+\nu}{\nu-1}.
\]
For \(n<\nu\), the integrated-Pearson diagonal envelope is
\[
 U^{n,n}_{p,\tau_p}(x)
 =
 \frac{2}{(n!)^2}
 \left(\prod_{i=1}^n\frac{i(\nu-i)}{\nu-1}\right)
 \frac{
  \mathbb E[(x-Z)_+^n]\mathbb E[(Z-x)_+^n]
 }{
  p(x)\tau(x)^{n+1}
 }.
\]

At \(x=0\), symmetry gives
\[
 \mathbb E[(0-Z)_+^n]
 =
 \mathbb E[(Z-0)_+^n]
 =
 \frac12\mathbb E|Z|^n.
\]
Moreover,
\[
 p(0)
 =
 \frac{\Gamma((\nu+1)/2)}
      {\sqrt{\nu\pi}\,\Gamma(\nu/2)},
 \qquad
 \tau(0)=\frac{\nu}{\nu-1},
\]
and the Student absolute moment is
\[
 \mathbb E|Z|^n
 =
 \nu^{n/2}
 \frac{\Gamma((n+1)/2)\Gamma((\nu-n)/2)}
      {\sqrt{\pi}\Gamma(\nu/2)},
 \qquad n<\nu.
\]
Substituting these identities into \(U^{n,n}_{p,\tau_p}(0)\) gives
\[
 U^{n,n}_{p,\tau_p}(0)
 =
 \frac{2}{(n!)^2}
 \left(\prod_{i=1}^n\frac{i(\nu-i)}{\nu-1}\right)
 \frac{(\mathbb E|Z|^n)^2/4}
      {p(0)(\nu/(\nu-1))^{n+1}},
\]
and simplifying yields
\[
 U^{n,n}_{p,\tau_p}(0)
 =
 \frac{\nu-1}{2\sqrt{\pi\nu}}\,
 \frac{
  \Gamma((n+1)/2)^2\Gamma((\nu-n)/2)^2
 }{
  (n!)^2\Gamma(\nu/2)\Gamma((\nu+1)/2)
 }
 \prod_{i=1}^n i(\nu-i).
\]
This proves \eqref{eq:student-diagonal-factor}.

\section{Proofs from Section~\ref{sec:applications}}

\subsection{Proofs from Section~\ref{subsec:beta-edgeworth}}
\label{app:beta-edgeworth}

Throughout, let \(p\) be the \(\operatorname{Beta}(a,b)\) density and use
the notation
\[
\eta(x)=x(1-x),\qquad
\gamma(x)=a-(a+b)x,\qquad
\ell(x)=\frac{(b-a)x+a}{2},\qquad
f_h=f_{p,\eta,h}.
\]
Thus \(\eta=(a+b)\tau_p\) and we consider $f_h:=f_{p,\eta,h}$ the solution to
\[
\eta f_h'+\gamma f_h=h-\mathbb E[h(Z)].
\]
Since \(\tau_p/\eta\) is constant,
Theorem~\ref{thm:integrated-pearson-kernel-representations} gives, for every \(r\),
\begin{equation}
\label{eq:beta-app-diagonal-representation}
f_h^{(r)}(x)
=
\mathbb E[K_{p,\eta}^{r,r}(x,Z)h^{(r)}(Z)],
\qquad
\|f_h^{(r)}\|
\leq c_{p,\eta}^{r,r}\|h^{(r)}\|.
\end{equation}

\subsubsection{First-order correction}

Let \(W_n'\) be obtained by redrawing one of the first \(n\) urn draws.
By \cite[Propositions~2.2 and~2.3]{dobler_beta_2015},
\((W_n,W_n')\) is exchangeable and, with
\(\lambda=\{n(a+b+n-1)\}^{-1}\),
\begin{equation}
\label{eq:beta-app-conditional-moments}
\mathbb E[W_n'-W_n\mid W_n]=\lambda\gamma(W_n),
\qquad
\frac{\mathbb E[(W_n'-W_n)^2\mid W_n]}{2\lambda}
=
\eta(W_n)+\frac{\ell(W_n)}n.
\end{equation}
Writing \(\Delta_n=W_n'-W_n\), we have
\(\Delta_n\in\{-1/n,0,1/n\}\), and therefore
\begin{equation}
\label{eq:beta-app-increment-powers}
\Delta_n^{2k+1}=n^{-2k}\Delta_n,
\qquad
\Delta_n^{2k+2}=n^{-2k}\Delta_n^2,
\qquad k\geq0.
\end{equation}
Fix \(x_0\in(0,1)\) and set
\(G_h(x)=\int_{x_0}^xf_h(t)\,dt\). Taylor's formula gives
\[
G_h(x')-G_h(x)
=
f_h(x)\delta+\frac12f_h'(x)\delta^2
+\frac16f_h''(x)\delta^3+R_h(x,x'),
\]
where \(\delta=x'-x\) and
\begin{equation}
\label{eq:beta-app-Taylor-remainder}
R_h(x,x')
=
\frac{\delta^3}{2}
\int_0^1(1-t)^2
\{f_h''(x+t\delta)-f_h''(x)\}\,dt.
\end{equation}
In particular,
\begin{equation}
\label{eq:beta-app-Taylor-remainder-bound}
|R_h(x,x')|
\leq
\frac{|\delta|^4}{24}\|f_h^{(3)}\|.
\end{equation}

Exchangeability implies
\(\mathbb E[G_h(W_n')-G_h(W_n)]=0\). Applying
\eqref{eq:beta-app-conditional-moments} and
\eqref{eq:beta-app-increment-powers} to the Taylor expansion yields
\begin{align*}
0={}&
\lambda\mathbb E[\gamma(W_n)f_h(W_n)]
+\lambda\mathbb E\left[
\left(\eta(W_n)+\frac{\ell(W_n)}n\right)f_h'(W_n)
\right]\\
&\quad+
\frac{\lambda}{6n^2}\mathbb E[\gamma(W_n)f_h''(W_n)]
+\mathbb E[R_h(W_n,W_n')].
\end{align*}
Using the Stein equation and dividing by \(\lambda\), we obtain the
identity
\begin{equation}
\label{eq:beta-app-key-identity}
\mathbb E[h(W_n)]-\mathbb E[h(Z)]
=
-\frac1n\mathbb E[\ell(W_n)f_h'(W_n)]
-\frac1{6n^2}\mathbb E[\gamma(W_n)f_h''(W_n)]
-\frac1\lambda\mathbb E[R_h(W_n,W_n')].
\end{equation}

Since \(|\gamma|\leq\max(a,b)\), by \eqref{eq:beta-app-diagonal-representation},
\(
\frac{\max(a,b)c_{p,\eta}^{2,2}}{6n^2}\|h^{(2)}\|
\) bounds the second term on the right-hand side.
Furthermore, \(\Delta_n^4=n^{-2}\Delta_n^2\), so
\eqref{eq:beta-app-conditional-moments} and
\eqref{eq:beta-app-Taylor-remainder-bound} give
\[
\frac1\lambda\mathbb E|R_h(W_n,W_n')|
\leq
\frac{\|f_h^{(3)}\|}{12n^2}
\mathbb E\left[\eta(W_n)+\frac{\ell(W_n)}n\right].
\]
Using \(0\leq\eta\leq1/4\), \(0\leq \ell\leq\max(a,b)/2\), and
\eqref{eq:beta-app-diagonal-representation}, we conclude that
\begin{equation}
\label{eq:beta-app-remainder-final}
\frac1\lambda\mathbb E|R_h(W_n,W_n')|
\leq
\frac{M(a,b)c_{p,\eta}^{3,3}}{n^2}\|h^{(3)}\|,
\qquad
M(a,b):=\frac1{48}+\frac{\max(a,b)}{24}.
\end{equation}

To treat the leading term, set \(\psi_h=\ell f_h'\). The smooth beta
approximation estimate obtained from
\cite[Theorem~2.1]{dobler_beta_2015}, with the factors from 
\eqref{eq:tau-factor-n-1nn+1}
is
\begin{equation}
\label{eq:beta-app-smooth-approximation}
\left|
\mathbb E[\psi(W_n)]-\mathbb E[\psi(Z)]
\right|
\leq\frac{D(a,b)}n\|\psi''\|,
\end{equation}
where
\[
D(a,b)
=
c_{p,\eta}^{2,2}
\left(\frac1{12}+\frac{\max(a,b)}6\right)
+c_{p,\eta}^{1,2}\frac{ab}{a+b}.
\]
Because \(\ell\) is affine,
\(\psi_h''=2\ell'f_h''+\ell f_h^{(3)}\), and hence
\begin{equation}
\label{eq:beta-app-psi-bound}
\|\psi_h''\|
\leq
|b-a|c_{p,\eta}^{2,2}\|h^{(2)}\|
+\frac{\max(a,b)}2c_{p,\eta}^{3,3}\|h^{(3)}\|.
\end{equation}

Finally, the case \(r=1\) of
\eqref{eq:beta-app-diagonal-representation} and Fubini's theorem give
\[
-\mathbb E[\ell(Z)f_h'(Z)]
=
\mathbb E[\mathrm e_1(Z)h'(Z)],
\quad
\mathrm e_1(z)
=
-\int_0^1\ell(x)K_{p,\eta}^{1,1}(x,z)p(x)\,dx.
\]
Evaluating the kernel through \eqref{eq:def-kernels} gives
\(\mathrm{e}_1(z)=\frac{(a+b)z-a}{2}.
\)
Combining \eqref{eq:beta-app-key-identity},
\eqref{eq:beta-app-remainder-final},
\eqref{eq:beta-app-smooth-approximation}, and
\eqref{eq:beta-app-psi-bound} proves
Theorem~\ref{thm:edgeworth-one-term}, with
\begin{align}
C_2(a,b)
&=
c_{p,\eta}^{2,2}
\left\{\frac{\max(a,b)}6+D(a,b)|b-a|\right\},
\notag\\
C_3(a,b)
&=
c_{p,\eta}^{3,3}
\left\{M(a,b)+\frac{\max(a,b)}2D(a,b)\right\}.
\label{eq:beta-app-explicit-constants}
\end{align}

\subsubsection{Iteration to arbitrary order}
\label{app:beta-edgeworth-all-orders}

For a test function \(h\), write \(f_h=f_{p,\eta,h}\). We use the
following abbreviations for the Taylor contributions:
\begin{align}
\mathcal R_1h&:=\ell f_h', \quad
\mathcal R_{2k}h
:=
\frac{\gamma f_h^{(2k)}}{(2k+1)!}
+\frac{2\eta f_h^{(2k+1)}}{(2k+2)!} \ (k\geq1),
\quad 
\mathcal R_{2k+1}h
:=
\frac{2\ell f_h^{(2k+1)}}{(2k+2)!}
\ (k\geq1).
\label{eq:beta-app-Taylor-coefficients}
\end{align}
These are not new quantities, they only collect the terms obtained
from \eqref{eq:beta-app-conditional-moments} and
\eqref{eq:beta-app-increment-powers}.

Let
\(\Delta_n(h):=\mathbb E[h(W_n)]-\mathbb E[h(Z)],
\) fix \(m\geq1\) and Taylor-expand \(G_h\), where \(G_h'=f_h\), through
order \(m+2\). After applying
\eqref{eq:beta-app-conditional-moments} and
\eqref{eq:beta-app-increment-powers}, the terms of orders
\(n^{-1},\ldots,n^{-m}\) are exactly those in
\eqref{eq:beta-app-Taylor-coefficients}. Thus
\begin{equation}
\label{eq:beta-app-preliminary-expansion}
\Delta_n(h)
=
-\sum_{r=1}^m\frac1{n^r}
 \mathbb E[\mathcal R_rh(W_n)]
+A_{m,n}(h).
\end{equation}

To bound the remainder, Taylor's integral formula and
\(|\Delta_n|\leq1/n\) reduce every omitted term to a conditional first
or second moment of \(\Delta_n\). Equations
\eqref{eq:beta-app-conditional-moments} and
\eqref{eq:beta-app-increment-powers} then give
\[
|A_{m,n}(h)|
\leq
\frac{C_m^{(0)}(a,b)}{n^{m+1}}
\sum_{r=1}^{m+2}\|f_h^{(r)}\|.
\]
Using \eqref{eq:beta-app-diagonal-representation},
\begin{equation}
\label{eq:beta-app-preliminary-remainder}
|A_{m,n}(h)|
\leq
\frac{C_m^{(0)}(a,b)}{n^{m+1}}
\sum_{r=1}^{m+2}
c_{p,\eta}^{r,r}\|h^{(r)}\|.
\end{equation}

Write
\[
\mathbb E[\mathcal R_rh(W_n)]
=
\mathbb E[\mathcal R_rh(Z)]
+
\Delta_n(\mathcal R_rh)
\]
in \eqref{eq:beta-app-preliminary-expansion}. Define \(\mathcal{E}_j\) recursively
by
\begin{equation}
\label{eq:beta-app-correction-recursion}
\mathcal E_1(h):=-\mathbb E[\mathcal R_1h(Z)],
\qquad
\mathcal E_j(h):=
-\mathbb E[\mathcal R_jh(Z)]
-\sum_{r=1}^{j-1}\mathcal E_{j-r}(\mathcal R_rh),
\quad j\geq2.
\end{equation}
Repeated application of \eqref{eq:beta-app-preliminary-expansion} to
the differences \(\Delta_n(\mathcal R_rh)\), followed by collection of
equal powers of \(n^{-1}\), yields
\begin{equation}
\label{eq:beta-app-functional-expansion}
\Delta_n(h)
=
\sum_{j=1}^m\frac{\mathcal E_j(h)}{n^j}
+R_{m,n}(h).
\end{equation}
At each iteration, the product rule and
\eqref{eq:beta-app-diagonal-representation} bound the derivatives of
\(\mathcal R_rh\). Induction on \(m\), starting from
\eqref{eq:beta-app-preliminary-remainder}, therefore gives
\[
|R_{m,n}(h)|
\leq
\frac{C_m(a,b)}{n^{m+1}}
\sum_{r=1}^{2m+2}
c_{p,\eta}^{r,r}\|h^{(r)}\|
\]
for an explicit finite constant \(C_m(a,b)\).

It remains to identify the test-function derivative appearing in
\(\mathcal E_j\). By Theorem~\ref{thm:integrated-pearson-kernel-representations}, every
derivative of \(f_h\) occurring in
\eqref{eq:beta-app-Taylor-coefficients} admits the diagonal
representation
\[
f_h^{(n)}(x)
=
\mathbb E[K_{p,\eta}^{n,n}(x,Z)h^{(n)}(Z)].
\]
This gives the following finite recursion for the correction
functions. Expand $\mathcal E_j$ completely using
\eqref{eq:beta-app-correction-recursion}; in every resulting term,
replace $f_h^{(n)}$ by the displayed diagonal kernel integral.
If $n<j$, apply
\eqref{eq:SIBP-for-kernels} exactly $j-n$ times, and then perform
the remaining integrations in the outer variables. The coefficient of
$h^{(j)}(z)p(z)\,dz$ is, by definition, $\mathrm{e}_j(z)$. This is an
explicit recursive algorithm involving only the kernels
$K_{p,\eta}^{r,r}$, the affine functions $\ell,\gamma$, and the
polynomial $\eta$. Fubini's theorem and each integration by parts are
valid under the conditions stated in
Lemma~\ref{lma:basis-prop-of-kernels}. The centring identities in
Theorem~\ref{thm:integrated-pearson-kernel-representations} remove the lower-order
terms because $\tau_p/\eta$ is constant. Induction on $j$ therefore
gives
\begin{equation}
\label{eq:beta-app-qj}
\mathcal{E}_j(h)=\mathbb E[\mathrm{e}_j(Z)h^{(j)}(Z)].
\end{equation}
Substituting \eqref{eq:beta-app-qj} into
\eqref{eq:beta-app-functional-expansion} proves
Theorem~\ref{thm:beta-edgeworth-all-orders}.

For the first two orders,
\[
\mathcal E_1(h)=-\mathbb E[\mathcal R_1h(Z)],
\qquad
\mathcal E_2(h)
=
-\mathbb E[\mathcal R_2h(Z)]
+\mathbb E[\mathcal R_1(\mathcal R_1h)(Z)].
\]
Thus \(\mathcal E_1(h)=\mathbb E[\mathrm{e}_1(Z)h'(Z)]\), with
\(\mathrm{e}_1(z)=((a+b)z-a)/2\); the same procedure applied to \(\mathcal{E}_2\) produces
\(\mathrm{e}_2\). The signs of all higher corrections are incorporated into the
functions \(\mathrm{e}_j\).

\subsection{Details for the Curie--Weiss Edgeworth correction}
\label{app:CW-edgeworth}

We prove the two estimates used in
Theorem~\ref{thm:critical-CW-edgeworth}. Let
\[
S_n=\sum_{i=1}^n\sigma_i,
\qquad
W_n=n^{-3/4}S_n,
\qquad
y_n=\frac{S_n}{n}=n^{-1/4}W_n.
\]
We shall use the standard uniform moment bounds
\begin{equation}
\label{eq:CW-uniform-moments}
\sup_{n\geq1}\mathbb E|W_n|^r<\infty,
\qquad r\geq0,
\end{equation}
which follow from the usual critical Curie--Weiss tail estimates; see
\cite{eichelsbacher_lowe_2010,Chatterjee_2011}.

In the exchangeable-pair construction, an index \(I\) is selected
uniformly from \(\{1,\ldots,n\}\), independently of the configuration,
and \(\sigma_I\) is resampled conditionally on the remaining spins.
Thus
\[
\mathbb E[\sigma_I'\mid\sigma,I=i]
=
\tanh\left(\frac{S_n-\sigma_i}{n}\right),
\qquad
\Delta_n:=W_n-W_n'
=
n^{-3/4}(\sigma_I-\sigma_I').
\]
Here, $\Delta_n$ is the oppositely defined as in the previous Section to preserve the notations of \cite{Chatterjee_2011}. Next we collect results from \cite{Chatterjee_2011} about $\Delta_n$.
\begin{lemma}[Conditional moment expansions]
\label{lem:CW-conditional-moment-expansions}
There exists \(C<\infty\) such that
\[
1-\frac{n^{3/2}}2
\mathbb E[\Delta_n^2\mid W_n]
=
n^{-1/2}W_n^2+R_{2,n},
\]
and
\[
n^{3/2}
\left\{
\mathbb E[\Delta_n\mid W_n]
-\frac{W_n^3}{3n^{3/2}}
\right\}
=
n^{-1/2}
\left(
W_n-\frac{2}{15}W_n^5
\right)
+R_{1,n},
\]
where
\[
\mathbb E|R_{1,n}|+\mathbb E|R_{2,n}|
\leq Cn^{-1}.
\]
\end{lemma}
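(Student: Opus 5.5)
The plan is to compute both conditional moments exactly as functions of $S_n$ (equivalently of $y_n$), then Taylor-expand $\tanh$. All error terms come out as polynomials in $W_n$ times negative powers of $n$, and \eqref{eq:CW-uniform-moments} turns these into $O(n^{-1})$ bounds in $L^1$. Both conditional expectations depend on $\sigma$ only through $S_n$, so conditioning on $\sigma$ and conditioning on $W_n$ give the same result.

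\emph{First moment.} Averaging over $I$ gives
\[
\mathbb E[\Delta_n\mid\sigma]=n^{-3/4}\Bigl(y_n-\frac1n\sum_{i=1}^n\tanh\bigl(y_n-\tfrac{\sigma_i}{n}\bigr)\Bigr).
\]
Expanding $\tanh(y-\sigma_i/n)$ in $\sigma_i/n$ to second order with Lagrange remainder, and using $\sum_i\sigma_i=ny$ and $\sigma_i^2=1$, gives
\[
\frac1n\sum_{i=1}^n\tanh\bigl(y-\tfrac{\sigma_i}{n}\bigr)=\tanh y-\frac{y}{n}\operatorname{sech}^2y+\frac{1}{2n^2}\tanh''(y)+O(n^{-3}),
\]
where the derivatives of $\tanh$ are bounded. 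Since $|y|\le1$, I would then use $\tanh y=y-y^3/3+2y^5/15+O(y^7)$, $\operatorname{sech}^2y=1+O(y^2)$ and $\tanh''(y)=O(|y|)$, with constants uniform on $[-1,1]$. Substituting $y=n^{-1/4}W_n$ turns the $y^3/3$ term into exactly $W_n^3/(3n^{3/2})$ at the level of $\mathbb E[\Delta_n\mid W_n]$. After multiplying by $n^{3/2}$ the main terms are
\[
n^{3/4}\cdot n^{-3/4}\cdot\bigl(-\tfrac{2}{15}n^{-5/4}W_n^5+n^{-5/4}W_n\bigr)=n^{-1/2}\bigl(W_n-\tfrac{2}{15}W_n^5\bigr).
\]
The leftover terms are of order $n^{-1}|W_n|^7$, $n^{-1}|W_n|^3$, $n^{-3/2}|W_n|$ and $n^{-9/4}$. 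These form $R_{1,n}$, and \eqref{eq:CW-uniform-moments} gives $\mathbb E|R_{1,n}|\le Cn^{-1}$.

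\emph{Second moment.} Since spins are $\pm1$, $(\sigma_I-\sigma_I')^2=2-2\sigma_I\sigma_I'$. Hence
\[
\frac{n^{3/2}}{2}\mathbb E[\Delta_n^2\mid\sigma]=1-\frac1n\sum_{i=1}^n\sigma_i\tanh\bigl(y-\tfrac{\sigma_i}{n}\bigr).
\]
A first-order expansion gives
\[
\frac1n\sum_{i=1}^n\sigma_i\tanh\bigl(y-\tfrac{\sigma_i}{n}\bigr)=y\tanh y-\frac1n\operatorname{sech}^2 y+O(n^{-2}).
\]
Using $y\tanh y=y^2+O(y^4)=n^{-1/2}W_n^2+O(n^{-1}W_n^4)$, we get
\[
1-\frac{n^{3/2}}{2}\mathbb E[\Delta_n^2\mid W_n]=n^{-1/2}W_n^2+R_{2,n},\qquad |R_{2,n}|\le C n^{-1}(1+W_n^4).
\]
Applying \eqref{eq:CW-uniform-moments} again gives $\mathbb E|R_{2,n}|\le Cn^{-1}$.

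The main obstacle is the bookkeeping in the first-moment expansion. One has to carry $\tanh$ to fifth order and keep the $O(y/n)$ term coming from the removal of $\sigma_i$. Both of these produce contributions at exactly the $n^{-1/2}$ level after the $n^{3/2}$ rescaling, so the order of each remainder must be checked carefully to confirm it is $O(n^{-1})$ times a polynomial in $|W_n|$. Uniform-in-$y$ remainder constants are available because $|y_n|\le1$.
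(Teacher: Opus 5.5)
Your proof is correct. The paper gives no argument of its own for this lemma. It attributes both expansions to \cite{Chatterjee_2011} and takes the uniform moment bounds \eqref{eq:CW-uniform-moments} as input.

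Your route is the direct computation that this citation stands in for. You write exact expressions for $\mathbb E[\Delta_n\mid\sigma]$ and $\mathbb E[\Delta_n^2\mid\sigma]$ in terms of $y_n$. These coincide with the conditional expectations given $W_n$, because $\frac1n\sum_i g(\sigma_i)=\frac{1+y_n}{2}g(1)+\frac{1-y_n}{2}g(-1)$. You then Taylor-expand in $\sigma_i/n$ and expand $\tanh$ at the origin.

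What this gains over the citation is that it shows where the two contributions at the $n^{-1/2}$ scale come from. One is the fifth-order coefficient $2/15$ of $\tanh$. The other is the term $y\operatorname{sech}^2 y/n$ created by removing $\sigma_i$ from the local field. This is exactly the refinement beyond what a first-order rate needs. Your remainder bookkeeping checks out: $n^{-1}|W_n|^7$, $n^{-1}|W_n|^3$, $n^{-3/2}|W_n|$ and $n^{-9/4}$ for the first moment, and $n^{-1}(1+W_n^4)$ for the second.

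One slip to fix: in your display of the main terms, the prefactor should be $n^{3/2}\cdot n^{-3/4}=n^{3/4}$. As written, $n^{3/4}\cdot n^{-3/4}\cdot n^{-5/4}=n^{-5/4}$, which is not $n^{-1/2}$. Your final expression is nonetheless correct.
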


\begin{lemma}[Transfer of the correction functional]
\label{lem:CW-correction-transfer}
Uniformly over \(1\)-Lipschitz functions \(h\),
\[
L_n(h)=L(h)+O(n^{-1/4}),
\]
where
\[
L_n(h)
=
\mathbb E\left[
W_n^2f_{p_4,1,h}'(W_n)
+
\left(
W_n-\frac{2}{15}W_n^5
\right)f_{p_4,1,h}(W_n)
\right]
\]
and
\[
L(h)
=
\mathbb E\left[
Z_4^2f_{p_4,1,h}'(Z_4)
+
\left(
Z_4-\frac{2}{15}Z_4^5
\right)f_{p_4,1,h}(Z_4)
\right].
\]
\end{lemma}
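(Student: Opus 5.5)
The plan is to reduce the claim to a comparison of $W_n$ and $Z_4$ tested against one fixed, polynomially growing function, and then to pay for the polynomial growth with a square root of the Wasserstein rate. As in the proof of Theorem~\ref{thm:critical-CW-edgeworth}, first take $h\in C^2$ with $\|h'\|\leq1$ and write $f=f_{p_4,1,h}$. Set
\[
g_h(x):=x^2f'(x)+\Bigl(x-\tfrac{2}{15}x^5\Bigr)f(x),
\]
so that $L_n(h)-L(h)=\mathbb E[g_h(W_n)]-\mathbb E[g_h(Z_4)]$. The lower-order Stein factors for the quartic law give constants independent of $h$:
\begin{itemize}
\item $\|f\|\leq\|\tau_{p_4}\|\,\|h'\|$ by \eqref{eq:subbotin-lipschitz-factor};
\item $\|f'\|\leq c^{1,1}_{p_4,1}\|h'\|$ by \eqref{eq:subbotin-first-derivative-envelope};
\item $\|f''\|\leq c^{2,1}_{p_4,1}\|h'\|$ by \eqref{eq:subbotin-second-derivative-envelope} and Remark~\ref{rmk:subbotin-comparison-w-litterature}.
\end{itemize}
Differentiating $g_h$ gives
\[
g_h'=2xf'+x^2f''+\Bigl(1-\tfrac23x^4\Bigr)f+\Bigl(x-\tfrac2{15}x^5\Bigr)f'.
\]
Hence $|g_h'(x)|\leq C_0(1+|x|^5)$ with $C_0$ independent of $h$. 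The mean value theorem then yields
\[
|g_h(x)-g_h(y)|\leq C_0|x-y|\,Q(x,y),\qquad Q(x,y):=1+|x|^5+|y|^5.
\]

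Next, pick a coupling $(W_n,Z_4)$ that attains (or nearly attains) the Wasserstein distance. By the bound \eqref{eq:improv-distances-CW}, or already by \cite{Chatterjee_2011}, this coupling satisfies $\mathbb E|W_n-Z_4|\leq Cn^{-1/2}$. Cauchy--Schwarz, with $|W_n-Z_4|$ split as $|W_n-Z_4|^{1/2}\cdot|W_n-Z_4|^{1/2}$, gives
\[
|L_n(h)-L(h)|\leq C_0\,\mathbb E\bigl[|W_n-Z_4|\,Q\bigr]
\leq C_0\bigl(\mathbb E|W_n-Z_4|\bigr)^{1/2}\bigl(\mathbb E\bigl[|W_n-Z_4|\,Q^2\bigr]\bigr)^{1/2}.
\]
The last factor is bounded by a polynomial in $\mathbb E|W_n|^{11}$ and $\mathbb E|Z_4|^{11}$. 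These are uniformly bounded by \eqref{eq:CW-uniform-moments} and because $Z_4$ has all moments. The right-hand side is therefore $O(n^{-1/4})$ uniformly in $h$, and this explains why only the rate $n^{-1/4}$ appears in the statement.

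The remaining step is the passage from $C^2$ test functions to general $1$-Lipschitz $h$. For this I would convolve $h$ with a mollifier $h_\varepsilon$ satisfying $\|h_\varepsilon'\|\leq1$. The constant $C_0$ does not depend on $\varepsilon$, and as $\varepsilon\to0$ the solutions $f_{h_\varepsilon},f_{h_\varepsilon}'$ converge locally uniformly to $f_h,f_h'$ by the kernel representations \eqref{eq:ker00} and \eqref{eq:ker11}. Dominated convergence, using the moment bounds, then passes the estimate to the limit.

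The main obstacle is justifying that $f''$ exists and is uniformly bounded for $C^2$ test functions. This is exactly the finiteness of $c^{2,1}_{p_4,1}=\sup_x\bigl(U^{2,1}_{p_4,1}(x)+1\bigr)$. The paper obtains this constant only numerically, so I would add a short tail argument: use Mills-ratio asymptotics of $p_4$ to show that $U^{2,1}_{p_4,1}$ stays bounded as $|x|\to\infty$. Everything else is routine.
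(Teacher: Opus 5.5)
Your proof is correct, but it handles the transfer step differently from the paper. Both arguments share several ingredients:
\begin{itemize}
\item the function $g_h$ (the paper's $F_h$);
\item the uniform bounds on $f_h,f_h',f_h''$;
\item the growth bound $|g_h'(x)|\le C_0(1+|x|^5)$;
\item the first-order rate $d_{\mathrm W}(W_n,Z_4)=O(n^{-1/2})$.
\end{itemize}
The paper never couples $W_n$ and $Z_4$. It multiplies $F_h$ by a smooth cut-off $\chi_M$ to get a globally Lipschitz function with constant $O(M^5)$, and applies the Wasserstein bound to it as an integral probability metric. It then controls $\{|x|>M\}$ by moment (or quartic-exponential) tail bounds and balances the two errors with $M=n^{1/40}$, or $M\asymp(\log n)^{1/4}$ in its explicit-constant version.

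You instead take the optimal quantile coupling, so that $\mathbb E|W_n-Z_4|=d_{\mathrm W}(W_n,Z_4)$, and absorb the polynomial weight by Cauchy--Schwarz. This avoids truncation and the choice of $M$. It needs only marginal moments up to order $11$, since the mixed terms $|W_n|^a|Z_4|^b$ with $a+b\le 11$ under the coupling are handled by Young's inequality. Using H\"older instead of Cauchy--Schwarz would even give $O(n^{-\theta/2})$ for any $\theta<1$. The paper's route, in turn, stays within the language of integral probability metrics and is set up to produce explicit numerical constants.

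Two minor points. First, the mollification step is unnecessary. For Lipschitz $h$, the representations \eqref{eq:ker11} and \eqref{eq:ker21} already make $f_h'$ locally absolutely continuous with $\|f_h''\|\le c^{2,1}_{p_4,1}$, so your mean-value inequality holds in integrated form; this is how the paper proceeds. Second, your proposed tail check of $c^{2,1}_{p_4,1}$ does go through. The asymptotics $\mathbb P(Z_4>x)\sim 3p_4(x)/x^3$ and $\mathbb E[(Z_4-x)_+]\sim 9p_4(x)/x^6$ give $U^{2,1}_{p_4,1}(x)\to 1$ as $|x|\to\infty$, so the numerically obtained constant is genuinely finite.
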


\begin{proof}
Set
\[
F_h(x) = x^2 f'_{p_4,1,h}(x) + \Bigl( x - \frac{2}{15}x^5 \Bigr) f_{p_4,1,h}(x).
\]
The quartic factors from \eqref{eq:subbotin-solution-envelope}--\eqref{eq:subbotin-second-derivative-envelope} give, uniformly over $\|h'\| \le 1$,
\[
\|f_{p_4,1,h}\| + \|f'_{p_4,1,h}\| + \|f''_{p_4,1,h}\| \le C_0 .
\]
Consequently, $F_h$ is locally absolutely continuous and satisfies
\begin{equation} \label{eq:edge-subbotin-functional-factors}
|F_h(x)| \le \tfrac{17}{15}\,C_0\,(1+|x|^5), \qquad
|F'_h(x)| \le \tfrac{47}{15}\,C_0 (1+|x|^5) \quad \text{a.e.}
\end{equation}

Fix $M \ge 1$, and let $\chi_M$ be a smooth cut-off satisfying $\chi_M = 1$ on
$[-M,M]$, $\chi_M = 0$ outside $[-2M,2M]$, and $\|\chi'_M\| \le C_\chi/M$. Put
$F_{h,M} = \chi_M F_h$. By \eqref{eq:edge-subbotin-functional-factors},
\[
\|F_{h,M}\|_\infty \le \,\tfrac{17}{15}C_0 \,33\,M^5 \le \tfrac{187}{5}C_0M^5 \quad  \|F'_{h,M}\| \le \tfrac{517}{5}C_0M^5(1+C_\chi),
\]
uniformly over $\|h'\|\le 1$.

The improvement on the first-order Curie--Weiss
Wasserstein bound of \cite{Chatterjee_2011}, \eqref{eq:improv-distances-CW}, gives
\begin{equation}
\label{eq:CW-truncated-W-bound}
\left|
\mathbb E[F_{h,M}(W_n)]
-
\mathbb E[F_{h,M}(Z_4)]
\right|
\leq C_1 M^5n^{-1/2} 
\end{equation}

On the complement of \([-M,M]\),
\eqref{eq:edge-subbotin-functional-factors}, the uniform moment bounds
\eqref{eq:CW-uniform-moments}, and the quartic tails of \(Z_4\) give, for
every fixed \(r>0\),
\begin{equation}
\label{eq:CW-tail-transfer-bound}
\mathbb E\left[
|F_h(W_n)|\mathbf 1_{\{|W_n|>M\}}
\right]
+
\mathbb E\left[
|F_h(Z_4)|\mathbf 1_{\{|Z_4|>M\}}
\right]
\leq C_rM^{-r}.
\end{equation}
Combining
\eqref{eq:CW-truncated-W-bound} and
\eqref{eq:CW-tail-transfer-bound}, choosing \(M=n^{1/40}\), and then
taking \(r=10\)  gives
\[
\left|
\mathbb E[F_h(W_n)]-\mathbb E[F_h(Z_4)]
\right|
\leq
C_1n^{-3/8}+C_rn^{-1/4}
\leq Cn^{-1/4}.
\]
Since \(L_n(h)=\mathbb E[F_h(W_n)]\) and
\(L(h)=\mathbb E[F_h(Z_4)]\), the claim follows.
\end{proof}

\newpage
\begin{proof}
Write $\|f_{p_4,1,h}\|\le c^{0,1}_{p_4,1}$, $\|f'_{p_4,1,h}\|\le c^{1,1}_{p_4,1}$,
$\|f''_{p_4,1,h}\|\le c^{2,1}_{p_4,1}$, uniformly over $\|h'\|\le1$, and set
\[
F_h(x) = x^2 f'_{p_4,1,h}(x) + \Bigl( x - \tfrac{2}{15}x^5 \Bigr) f_{p_4,1,h}(x).
\]
Differentiating and using $\sup_{t\ge0}(t+\frac{2}{15}t^5)/(1+t^5)=0.6355$,
$\sup_t t^2/(1+t^5)=0.5102$, $\sup_t(3t+\frac{2}{15}t^5)/(1+t^5)=1.8461$,
$\sup_t(1+\frac{2}{3}t^4)/(1+t^5)=1.0104$, we get that $F_h$ is locally absolutely
continuous and, a.e.,
\begin{equation}\label{eq:B18}
|F_h(x)|\le K_0(1+|x|^5),\qquad |F_h'(x)|\le K_1(1+|x|^5),
\end{equation}
\[
K_0 := 0.6355\,c^{0,1}_{p_4,1}+0.5102\,c^{1,1}_{p_4,1},\qquad
K_1 := 1.0104\,c^{0,1}_{p_4,1}+1.8461\,c^{1,1}_{p_4,1}+0.5102\,c^{2,1}_{p_4,1}.
\]

\smallskip
\emph{Cut-off.} Fix $M\ge1$ and let $\chi_M$ be smooth with $\chi_M=1$ on $[-M,M]$,
$\chi_M=0$ outside $[-(1+\varepsilon)M,(1+\varepsilon)M]$, $0\le\chi_M\le1$ and
$\|\chi_M'\|\le 2/(\varepsilon M)$; put $F_{h,M}=\chi_M F_h$. Since
$F_{h,M}'=\chi_M'F_h+\chi_MF_h'$ and both vanish off the support,
\eqref{eq:B18} gives
\begin{equation}\label{eq:B19a}
\|F_{h,M}'\|\le\Bigl(K_1+\tfrac{2K_0}{\varepsilon M}\Bigr)\bigl(1+(1+\varepsilon)^5M^5\bigr).
\end{equation}
The choice $\varepsilon=0.1486$ minimises the right-hand side at the value of $M$ used below
and yields $\|F_{h,M}'\|\le 5.689\,K_1M^5$.

\smallskip
\emph{Bulk.} By the first-order Curie--Weiss Wasserstein bound
$d_{\mathrm W}(W_n,Z_4)\le 30.70\,n^{-1/2}+4.612\,n^{-3/4}=:\delta_n$ and the fact that
$F_{h,M}$ is $\|F_{h,M}'\|$-Lipschitz,
\begin{equation}\label{eq:B19}
\bigl|\mathbb E[F_{h,M}(W_n)]-\mathbb E[F_{h,M}(Z_4)]\bigr|
\le 5.689\,K_1M^5\,\delta_n .
\end{equation}

\smallskip
\emph{Tails.} As $0\le\chi_M\le1$ and $\chi_M=1$ on $[-M,M]$, for $X\in\{W_n,Z_4\}$
\[
\bigl|\mathbb E[F_h(X)]-\mathbb E[F_{h,M}(X)]\bigr|
\le K_0\,\mathbb E\bigl[(1+|X|^5)\mathbf 1_{\{|X|>M\}}\bigr].
\]
If $\mathbb P(|X|>t)\le \alpha e^{-\beta t^4}$, then integrating by parts twice,
\begin{equation}\label{eq:B20}
\mathbb E\bigl[(1+|X|^5)\mathbf 1_{\{|X|>M\}}\bigr]
\le \alpha e^{-\beta M^4}\Bigl(1+M^5+\tfrac{5M}{4\beta}+\tfrac{5}{16\beta^2M^3}\Bigr).
\end{equation}
By \textup{(B.17)} this applies to $W_n$ with some $\alpha=A$, $\beta=b$ uniform in $n$;
for $Z_4$ it holds with $\alpha=1$, $\beta=1/12$ once $M\ge1.212$, since
$\mathbb P(|Z_4|>t)\le 6(\mathcal Z t^3)^{-1}e^{-t^4/12}$ with
$\mathcal Z=2\cdot12^{1/4}\Gamma(5/4)$.

\smallskip
\emph{Conclusion.} Put $\beta_*=\min(b,1/12)$ and $M=M_n=(\log n/(2\beta_*))^{1/4}$, so that
$e^{-\beta_*M_n^4}=n^{-1/2}$. Both \eqref{eq:B19} and \eqref{eq:B20} are then
$O(n^{-1/2}(\log n)^{5/4})$; using $\sup_{u>0}u^{5/4}e^{-u/4}=5^{5/4}e^{-5/4}=2.142$ and
combining,
\[
\bigl|\mathbb E[F_h(W_n)]-\mathbb E[F_h(Z_4)]\bigr|\le \bigl(5405\,K_1+140\,K_0\bigr)n^{-1/4}.
\]
Since $L_n(h)=\mathbb E[F_h(W_n)]$ and $L(h)=\mathbb E[F_h(Z_4)]$, the claim follows with
\[
|L_n(h)-L(h)|\le\bigl(5550\,c^{0,1}_{p_4,1}+10050\,c^{1,1}_{p_4,1}
+2758\,c^{2,1}_{p_4,1}\bigr)n^{-1/4}.
\]
\end{proof}

\end{document}